\newcommand{\bg} {\boldsymbol{\gamma}}
\newcommand{\asymb} {\alpha}
\newcommand{\DDelta} {\mathcal{D}}
\newcommand{\MX} {\mathfrak{X}}
\newcommand{\closure} {\mathsf{clos}}
\newcommand{\maxn} {\mathsf{max}}
\newcommand{\clos}[2] {\mathsf{clos}_{#1}(#2)}
\newcommand{\SP} {\mathcal{V}}
\newcommand{\CSP} {\ovl{\mathcal{V}}}
\newcommand{\SPAN}[1] {\boldsymbol{\langle} #1\boldsymbol{\rangle} }
\newcommand{\Gen} {\mathcal{G}}
\newcommand{\CGen} {\ovl{\mathcal{G}}}
\newcommand{\MXP} {\mathfrak{z}}
\newcommand{\ppsi} {\boldsymbol{\psi}}
\newcommand{\inverse}[1] {{\mathfrak{inv}(#1)}}
\newcommand{\inversee} {{\mathfrak{inv}}}
\newcommand{\Rest}  {\boldsymbol{\mathfrak{R}}}
\newcommand{\TPOL}  {\boldsymbol{\lambda}}
\newcommand{\APOL}  {\TPOL}
\newcommand{\BanC}  {\mathfrak{r}}
\newcommand{\Yps}   {\mathcal{Y}}
\newcommand{\Alp}  {\mathcal{T}}
\newcommand{\mU}   {U}
\newcommand{\Banach}  {\mathcal{A}}
\newcommand{\Banmult}  {\ast_\Banach}
\newcommand{\bannorm}[1] 
{\|#1\|_\Banach}
\newcommand{\banel}  {\mathfrak{a}}
\newcommand{\Module}  {\mathcal{N}}
\newcommand{\Modmult}  {\ast_\Module}
\newcommand{\model}  {\mathfrak{n}}
\newcommand{\unitM}  {1_\Module}
\newcommand{\unit}  {1_\Banach}
\newcommand{\bl} {\boldsymbol{[}}
\newcommand{\br} {\boldsymbol{]}}
\newcommand{\bil}[2]  {\bl #1, #2 \br}
\newcommand{\bilbr}[1] {\com{#1}}
\newcommand{\End}  {\mathrm{End}}
\newcommand{\Aut}  {\mathsf{Aut}}
\newcommand{\norm}[1]  {\|#1\|}
\newcommand{\Lip}  {\mathfrak{L}}
\newcommand{\Add} {\mathbf{Ad}}
\newcommand{\REGC}  {\mathfrak{R}}
\newcommand{\Map} {\mathsf{Maps}}
\newcommand{\limin} {\lim^\infty_{n}}
\newcommand{\NN} {\mathbb{N}}
\newcommand{\CC} {\mathbb{C}}
\newcommand{\RR} {\mathbb{R}}
\newcommand{\korp} {\mathbb{K}}
\newcommand{\kk} {\mathrm{k}}
\newcommand{\e} {\mathrm{e}}
\newcommand{\starm} {\star}
\newcommand{\expal} {\mathfrak{i}}
\newcommand{\expalinv} {\mathfrak{i}^{-1}}
\newcommand{\lip}  {\mathrm{lip}}
\newcommand{\const}  {\mathrm{c}}
\newcommand{\TMAP} {\mathfrak{T}}
\newcommand{\AES} {\mathrm{M}}
\newcommand{\NILS} {\mathrm{N}}
\newcommand{\AAE} {$\mathrm{AE}$-set\xspace}
\newcommand{\NIL}[1] {$\mathrm{Nil}_{#1}$-set\xspace}
\newcommand{\HLCV}  {\textsf{hlcVect}}
\newcommand{\Sem}[1]  {\textsf{Sem}(#1)}
\newcommand{\comp}[1]  {\widehat{#1}}
\newcommand{\mm} {\mathsf{m}}
\newcommand{\pp} {\mathsf{p}}
\newcommand{\qq} {\mathsf{q}}
\newcommand{\vv} {\mathsf{v}}
\newcommand{\ww} {\mathsf{w}}
\newcommand{\hh} {\mathsf{h}}
\newcommand{\zzs} {\mathsf{s}}
\newcommand{\SEMG} {\mathfrak{S}}
\newcommand{\SEMMM} {\mathfrak{H}}
\newcommand{\inv} {\mathsf{inv}}
\DeclareMathOperator*{\innt}{\ThisStyle{\vstretch{0.9}{\hstretch{1.5}{\rotatebox{10}{$\SavedStyle\hspace{-0.5pt}\!\int\!\hspace{-0.5pt}$}}}}}
\DeclareMathOperator*{\Der}{\ThisStyle{\hstretch{1.2}{\rotatebox{0}{$\SavedStyle\delta^r$}}}}
\newcommand{\interior} {\mathsf{int}}
\newcommand{\inter}[2] {\mathsf{int}_{#1}(#2)}
\newcommand{\DIDE} {\mathfrak{D}}
\newcommand{\DIDED} {\mathrm{D}}
\newcommand{\EV} {\mathrm{Evol}}
\newcommand{\evol} {{\mathrm{evol}}}
\newcommand{\B} {\mathrm{B}}
\newcommand{\OB} {\ovl{\B}}
\newcommand{\chart} {\Xi}
\newcommand{\chartinv} {\Xi^{-1}}
\newcommand{\RT} {\mathrm{R}}
\newcommand{\LT} {\mathrm{L}}
\newcommand{\ad} {\mathrm{ad}}
\newcommand{\Ad} {\mathrm{Ad}}
\newcommand{\com}[1] {\ad_{#1}}
\newcommand{\CP} {\mathrm{CP}}
\newcommand{\mackarrr} {\rightharpoonup_{\mathfrak{m}}}
\newcommand{\mackarr}[1] {\rightharpoonup_{\mathfrak{m}.#1}}
\newcommand{\etam} {\boldsymbol{\Lambda}}
\newcommand{\etamm} {\boldsymbol{\eta}}
\newcommand{\dind}  {\mathrm{s}}
\newcommand{\mackeyconst} {\mathfrak{c}}
\newcommand{\mackeyindex} {\mathfrak{l}}
\newcommand{\mgc}  {\comp{\mg}}
\newcommand{\mqc}  {\comp{\mq}}
\newcommand{\llleq} {\preceq}
\newcommand{\conj}  {\mathrm{Conj}}
\newcommand{\id} {\mathrm{id}}
\newcommand{\ovl}[1] {\overline{#1}}
\newcommand{\wt}[1] {\widetilde{#1}}
\newcommand{\wh}[1] {\widehat{#1}}
\newcommand{\dd} {\mathrm{d}}
\newcommand{\im} {\mathrm{im}}
\newcommand{\dom} {\mathrm{dom}}
\newcommand{\OO}  {\mathcal{O}}
\newcommand{\U}  {\mathcal{U}}
\newcommand{\V}  {\mathcal{V}}
\newcommand{\ind}  {\mathbf{I}}
\newcommand{\indj}  {\mathbf{J}}
\newcommand{\deff} {if and only if } 
\newcommand{\defff} {if } 
\newcommand{\mg} {\mathfrak{g}}
\newcommand{\mq} {\mathfrak{q}}
\newcommand{\mh} {\mathfrak{h}}
\newcommand{\mt} {\mathfrak{t}}
\newcommand{\cp} {\circ}
\newcommand{\mult}  {\mathsf{m}}
\newcommand{\op} {\mathrm{op}}
\newcommand{\compact} {\mathrm{C}}
\newcommand{\compacto} {\mathrm{K}}
\newcommand{\he} {\hspace{1pt}}
\renewcommand{\theenumi}{\arabic{enumi})} 
\renewcommand{\labelenumi}{\theenumi}
\let\origenumerate\enumerate
\def\enumerate{\origenumerate\itemsep0pt}
\let\origitemize\itemize
\def\itemize{\origitemize\itemsep0pt}
\newenvironment{customthm}[1]
  {\innercustomthm}
  {\endinnercustomthm}
\newenvironment{custompr}[1]
  {\innercustompr}
  {\endinnercustompr}
\newenvironment{customlem}[1]
  {\innercustomlem}
  {\endinnercustomlem}
\newtheorem*{rem}{Remark}
\newtheorem{statement}{Statement}
\newtheorem{theorem}{Theorem}
\newtheorem{proposition}{Proposition}
\newtheorem{lemma}{Lemma}
\newtheorem{corollary}{Corollary}
\newtheorem{remark}{Remark}
\newtheorem{example}{Example}
\newtheorem{definition}{Definition}
\newtheorem{convention}{Convention}
\def\blfootnote{\gdef\@thefnmark{}\@footnotetext}
\begin{document}
\title{The Lax Equation and Weak Regularity of\\
Asymptotic Estimate Lie Groups}
\author{
  \textbf{Maximilian Hanusch}\thanks{\texttt{mhanusch@math.upb.de}}
  \\[1cm]
  Institut f\"ur Mathematik \\
  Universit\"at Paderborn\\
  Warburger Stra\ss{e} 100 \\
  33098 Paderborn \\
  Germany
}
\date{April 8, 2023}
\maketitle

\begin{abstract}
We investigate the Lax equation in the context of infinite-dimensional Lie algebras. Explicit solutions are discussed in the sequentially complete asymptotic estimate context, and an integral expansion (sums of iterated Riemann integrals over nested commutators with correction term) is derived for the situation that the Lie algebra is inherited by an infinite-dimensional Lie group in Milnor's sense.  
In the context of Banach Lie groups (and Lie groups with suitable regularity properties), we generalize the Baker-Campbell-Dynkin-Hausdorff formula to the product integral (with additional nilpotency assumption in the non-Banach case). We combine 
this formula with the results obtained for the Lax equation     
to derive an explicit representation of the product integral in terms of the exponential map. An important ingredient in the non-Banach case is an integral transformation that we introduce. This transformation maps continuous Lie algebra-valued curves to smooth ones and leaves the product integral invariant. This transformation is also used to prove a regularity statement in the asymptotic estimate context.  
\end{abstract}

\tableofcontents

\section{Introduction}
\label{nmdsnmdsnmsdnmdsnmdsnmdsnmnmds}
We investigate regularity properties of Lie groups in Milnor's sense \cite{MIL,HG}, and extend the Baker-Campbell-Dynkin-Hausdorff formula for the exponential map to the product integral.  The product integral generalizes the Riemann integral (for curves in Hausdorff locally convex vector spaces) to Lie groups, so that Lie algebra-valued curves are ``integrated'' to Lie group elements. 
The results are based on a deeper analysis of the Lax equation in the context of asymptotic estimate and sequentially complete Lie algebras. A further ingredient is an integral transformation that we introduce. This transformation maps continuous Lie algebra-valued curves to smooth ones, and leaves the product integral invariant.

Explicitly, let $G$ be a Lie group in Milnor's sense\footnote{Throughout this paper, we work in the slightly more general setting introduced by Gl\"ockner in \cite{HG}. Specifically, this means that any completeness presumption on the modeling space is dropped. Moreover, Milnor's definition of an infinite-dimensional manifold $M$ involves the requirement that $M$ is a regular topological space, i.e., fulfills the separation axioms $T_2, T_3$. Deviating from that, in \cite{HG} only the $T_2$ property of $M$ is explicitly assumed. This, however, makes no difference in the Lie group case, because topological groups are automatically $T_3$. We also refer to  \cite{KHN2,KHNM} for  introductions into this topic.} \cite{MIL,HG} that is modeled over the Hausdorff locally convex vector space $E$. 
We denote the Lie algebra of $G$ by $(\mg,\bil{\cdot}{\cdot})$,  the  
adjoint action by $\Ad\colon G\times \mg\rightarrow \mg$, and set $\Ad_g:=\Ad(g,\cdot)$ for each $g\in G$. The left and the right translation by some $g\in G$ is denoted by $\LT_g$ and $\RT_g$, respectively. 
For $a<b$, we set
\begin{align*}
	C^1_*([a,b],G):=\{\mu\in C^{1}([a,b],G)\:|\:\mu(a)=e\}. 
\end{align*}
The evolution map is the inverse of the \emph{right logarithmic derivative}\footnote{Injectivity of $\Der$ is easily verified, confer,  e.g., Lemma 9 in \cite{RGM}.}
\hspace*{\fill}($a<b$)
\begin{align*}
 \Der\colon C^1_*([a,b],G)\rightarrow C^0([a,b],\mg),\qquad 
	 \mu\mapsto \dd_\mu\RT_{\mu^{-1}}(\dot\mu),
\end{align*} 
\vspace{-20pt}

\noindent 
and is denoted by
\vspace{-13pt}
\begin{align*}
	\textstyle\EV\colon \underbracket{\textstyle\bigcup_{a<b}\overbracket{\Der(C^1_*([a,b],G))}^{=:\: \DIDE_{[a,b]}}}_{=:\: \DIDE}\rightarrow \bigcup_{a<b}C_*^1([a,b],G).
\end{align*}
\vspace{-12pt}  

\noindent
The \emph{product integral} is defined by 
\begin{align*}
	\textstyle\innt_a^b \phi:=\EV(\phi|_{[a,b]})(b)\quad\text{for all}\quad a<b\quad\text{and}\quad \phi\in \DIDE\quad\text{with}\quad [a,b]\subseteq \dom[\phi];
\end{align*}
and for  $k\in \NN\cup \{\lip,\infty\}$, we set
\begin{align*}
	\textstyle \evol_k \colon \DIDE\cap C^k([0,1],\mg)\ni \phi\mapsto \innt_0^1 \phi\in G.
\end{align*}
We equip $C^k([0,1],\mg)$ for  $k\in \NN\cup \{\lip,\infty\}$ with the $C^k$-topology, as well as $\DIDE\cap C^k([0,1],\mg)$ with the corresponding subspace topology:\he\footnote{The $C^k$-topology is Hausdorff locally convex (the defining seminorms are recalled in Sect.\ \ref{dsdsdssdfdffddfdfdd}). In particular,   differentiability (smoothness) of the evolution map has to be understood w.r.t.\ Bastiani's differential calculus (recalled in Appendix \ref{Diffcalc}) in the following.}  
\vspace{-3pt}
	\begingroup
\setlength{\leftmargini}{11pt}
\begin{itemize}
\item
	We say that $G$ is $C^k$-semiregular if $C^k([0,1],\mg)\subseteq \DIDE$  holds. 
\item
	We say that $G$ is weakly $C^k$-regular if $G$ is $C^k$-semiregular and 
$\evol_k$ is differentiable.
	\item
	We say that $G$ is $C^k$-regular if $G$ is $C^k$-semiregular and $\evol_k$ is smooth. 
\end{itemize}
\endgroup
\noindent
We briefly want to report on the continuity problem (under which circumstances is $\evol_k$ continuous w.r.t.\ the $C^k$-topology): 
	\begingroup
\setlength{\leftmargini}{11pt}
\begin{itemize}
\item
In \cite{RGM}, this problem had been solved for the case $k=0$. Specifically, it was shown that $C^0$-continuity of $\evol_0$ on its domain (also if $C^0$-semiregularity is not assumed) is equivalent to local $\mu$-convexity of $G$ \cite{RGM,HGGG}.  Roughly speaking, local $\mu$-convexity is a generalized triangle inequality for the Lie group multiplication.  
\item
In \cite{AEH}, the continuity problem was completely solved in the asymptotic estimate context that we also consider in this paper (simply put, Theorem 1 in \cite{AEH} states that 
all continuity notions are equivalent in the asymptotic estimate context). 
\end{itemize}
\endgroup 
\noindent
A Lie group is said to be asymptotic estimate if its Lie algebra is asymptotic estimate. More generally,  
we have the following definitions for an 
infinite-dimensional Lie  algebra $(\mq,\bil{\cdot}{\cdot})$:\he\footnote{Specifically, $\mq$ is a Hausdorff locally convex vector space; and $\bil{\cdot}{\cdot}\colon \mq\times \mq\rightarrow \mq$ is bilinear, antisymmetric, continuous, and fulfills the  Jacobi identity 
 $\bil{Z}{\bil{X}{Y}}= \bil{\bil{Z}{X}}{Y} + \bil{X}{\bil{Z}{Y}}$ 
 for all $X,Y,Z\in \mq$.}  
\begingroup
\setlength{\leftmargini}{12pt}
\begin{itemize}
\item
A subset $\AES\subseteq \mq$ is said to be an \AAE \defff for each $\vv\in \Sem{\mq}$, there exists $\vv\leq \ww\in \Sem{\qq}$ with
\begin{align*}
	\vv(\bl X_1,\bl X_2,\bl \dots,\bl X_n,Y\br{\dots}\br\br])\leq \ww(X_1)\cdot{\dots}\cdot \ww(X_n)\cdot \ww(Y)
\end{align*}
for all $X_1,\dots,X_n,Y\in \AES$, and $n\geq 1$. 
\item
The Lie algebra $(\mq,\bil{\cdot}{\cdot})$ is said to be asymptotic estimate \defff $\mq$ is an \AAE.
\end{itemize}
\endgroup
\emph{For instance, abelian Lie groups are asymptotic estimate, and the same is true for Lie groups with nilpotent Lie algebras. Also Banach Lie groups are asymptotic estimate, because their Lie bracket is submultiplicative.    Notably, the class of asymptotic estimate Lie algebras has good permanence properties, as it is closed under passage to subalgebras, Hausdorff quotient Lie algebras, as well as  closed under arbitrary Cartesian products (hence, e.g., under projective limits).} 
\vspace{6pt}

As already indicated above, Theorem 1 in \cite{AEH} states that if $G$ is asymptotic estimate, then $G$ is locally $\mu$-convex if and only if $\evol_k$ is $C^k$-continuous for some (and then each) $k\in \NN\cup \{\lip,\infty\}$. Based on the semiregularity results obtained in \cite{RGM}, this statement was used in \cite{AEH} to prove that $C^\infty$-regularity implies $C^k$-regularity for each $k\in \NN\cup \{\lip,\infty\}$; whereby for $k=0$ additional completeness assumptions (on the Lie group not the Lie algebra) are required. Complementary to that, we prove the following theorem in the weakly regular context (cf.\ Theorem \ref{dkjkjfdkjfdjfdkfdfdfd}):
\begin{customthm}{I}
\label{oisiosdidsoiososdids}
	Assume that $(\mg,\bil{\cdot}{\cdot})$ is asymptotic estimate and sequentially complete. 
	If $G$ is weakly $C^\infty$-regular, then $G$ is weakly $C^k$-regular for each $k\in \NN\cup\{\lip,\infty\}$.
\end{customthm}
\noindent
This theorem is obtained by a comprehensive analysis of the Lax equation in the sequentially complete asymptotic estimate context, as well as by application of an integral transformation that we introduce. As a by-product of these investigations, we obtain a generalization of the Baker-Campbell-Dynkin-Hausdorff formula (BCDH formula) in the Banach case (Proposition \ref{lkjdfkfdjkdfaskkk}) as well as in the nilpotent weakly $C^k$-regular case (Theorem \ref{kfdhjfdhjfdfdfdfd}).  In particular, this yields an explicit formula for the product integral in terms of the exponential map that involves iterated Riemann integrals over nested commutators (see Point \ref{dodsodsoppodsdslkdslkdslkdslkdslkds0ds09ds09ds092221}). 
More explicitly:
\vspace{6pt}

Given an infinite-dimensional Lie algebra $(\mq,\bil{\cdot}{\cdot})$ (not necessarily the Lie algebra of a Lie group), we denote its completion by $\mqc$ and set
\begin{align*}
	\com{Z}\colon \mq\rightarrow \mq,\qquad Y\mapsto \bil{Z}{Y}
\end{align*} 
for each $Z\in \mq$. 
For $\psi\in C^0([a,b],\mq)$ and $X\in \mq$, we define the maps ($\ell\geq 1$)
\begin{align*}
	\textstyle \TPOL_{0,\psi}[X]\colon [a,b]\rightarrow \mqc,\qquad & t\mapsto X\\
	\textstyle \TPOL_{\ell,\psi}[X]\colon [a,b]\rightarrow \mqc,\qquad &\textstyle t\mapsto  \int_a^{t}\dd s_1\int_a^{s_1}\dd s_2 \: {\dots} \int_a^{s_{\ell-1}}\dd s_\ell \: (\bilbr{\psi(s_1)}\cp \dots \cp \bilbr{\psi(s_{\ell})})(X),
\end{align*}
and set
\vspace{-11pt}
\begin{align}
\label{podspodsposdlkdslkdslkdsdsds98ds9898ds9dsdsdsds}
	 \qquad\TPOL_{\ell,\psi}[t]\colon \mq\rightarrow \mqc,\qquad  X\mapsto \TPOL_{\ell,\psi}[X](t)\qquad\qquad\forall\: t\in [a,b],\: \ell\in \NN.
\end{align}
We have the following results:
\begingroup
\setlength{\leftmargini}{19pt}
{
\renewcommand{\theenumi}{{(\alph{enumi})}} 
\renewcommand{\labelenumi}{\theenumi}
\begin{enumerate}
\item
\label{dodsodsoppodsdslkdslkdslkdslkdslkds0ds09ds09ds092221}
	Let $G$ be a Banach Lie group such that the norm on $\mg$ fulfills $\norm{[X,Y]}\leq \norm{X}\cdot \norm{Y}$ for all $X,Y\in \mg$. Then, there exists $\BanC>0$ such that\hspace*{\fill}(Corollary \ref{lkjdfkfdjkdfas} in Sect.\ \ref{sdsssdhsdhcxuzcxuzcxcx})
\begin{align}
\label{jhsdjhsdjhsdq1dsds}
	\textstyle\innt_a^t \phi
	=\textstyle  \exp\big(\sum_{n=1}^\infty \frac{(-1)^{n-1}}{n}\cdot\int_a^t \big(\sum_{\ell=1}^\infty \TPOL_{\ell,\phi}[s]\big)^{n-1} (\phi(s))\: \dd s\he\big) \qquad\quad\forall\: t\in [a,b]
\end{align}
holds for all $a<b$ and $\phi\in C^0([a,b],\mg)$ with $\int_a^b \norm{\phi(s)}\: \dd s< \BanC$.
\vspace{2pt}

The same formula is proven in the nilpotent weakly $C^k$-regular context  (see Corollary \ref{dchdfhdfjhdf} in Sect.\ \ref{kjdkjsjksddxccxocoioicxoiiooicxiocx}), whereby the occurring sums are finite. 
For instance, if $G$ is abelian 
and weakly $C^k$-regular for some $k\in \NN\cup\{\lip,\infty\}$, then \eqref{jhsdjhsdjhsdq1dsds} specializes to the well-known formula \cite{TMICH}
\begin{align*}
	\textstyle \innt_a^t \phi = \exp\big(\int_a^t \phi(s)\: \dd s\big)
	\qquad\quad\forall\: a\leq t\leq b,\: \phi\in C^k([a,b],\mg).
\end{align*} 
Note that \eqref{jhsdjhsdjhsdq1dsds}   provides an explicit formula for holonomies in principal bundles, as such holonomies are locally given by product integrals of curves that are pairings of (smooth)
connections with derivatives of (smooth) curves in the base manifold of the principal bundle.
\item
\label{dodsodsoppodsdslkdslkdslkdslkdslkds0ds09ds09ds092222}
The maps \eqref{podspodsposdlkdslkdslkdsdsds98ds9898ds9dsdsdsds} are the elementary building blocks of solutions to the Lax equation
\begin{align}
\label{ksoidoidsdssddssdsds}
	\dot\alpha=\bil{\psi}{\alpha}\qquad\quad\text{with initial condition}\qquad\quad \alpha(a)=X,
\end{align}
where $X\in \mq$, $\psi\in C^0([a,b],\mq)$ ($a<b$) are fixed parameters, and $\alpha\in C^1([a,b],\mq)$ is the solution. 
\begingroup
\setlength{\leftmarginii}{12pt}
\begin{itemize}
\item
	Assume that $(\mq,\bil{\cdot}{\cdot})=(\mg,\bil{\cdot}{\cdot})$ is the Lie algebra of a Lie group $G$: 
\begingroup
\setlength{\leftmarginiii}{12pt}
\begin{itemize}
\item
For $\psi\in \DIDE_{[a,b]}$, the unique solution to \eqref{ksoidoidsdssddssdsds} is given by $\alpha=\Ad_{\innt_a^\bullet\!\psi}(X)$ (cf.\ Corollary \ref{lksdlkdslkdslk}).
\vspace{3pt} 
\item	
For $\psi\in \DIDE_{[a,b]}$ and $\ell\geq 1$, define
\begin{align*}
	\Rest_{\ell,\psi}[X]\colon [a,b]\ni t \textstyle \mapsto   \int_a^{t}\dd s_1\int_a^{s_1}\dd s_2 \: {\dots} \int_a^{s_{\ell-1}}\dd s_\ell \: (\bilbr{\psi(s_1)}\cp \dots \cp \bilbr{\psi(s_{\ell})})(\Ad_{\innt_a^{s_\ell}\!\psi}(X))\in \mgc.
\end{align*}  
A straightforward induction shows (cf.\ Lemma \ref{hjfdhjfdhjfd})
\begin{align*}
	\textstyle\Ad_{\innt_a^t\!\psi}(X)= \sum_{\ell=0}^n  \TPOL_{\ell,\psi}[X](t) + \Rest_{n+1,\psi}[X](t)\qquad\quad\: \forall\: n\in \NN,\: t\in [a,b]. 	
\end{align*}
In particular, if $\AES$ is an \AAE with $\im[\psi]\subseteq \AES$, then we have (cf.\ Corollary \ref{ljkdskjdslkjsda})
\begin{align}
\label{oisdoidsoioidscxxccxcx}
	\textstyle\Ad_{\innt_a^t\!\psi}(X)= \sum_{\ell=0}^\infty  \TPOL_{\ell,\psi}[X](t) \qquad\quad\: \forall\:  X\in \AES,\: t\in [a,b]. 
\end{align}
Notably, for $\psi\colon [0,1]\ni t\mapsto Y\in \mg$ constant,  \eqref{oisdoidsoioidscxxccxcx} reproduces the well-known formula  
\begin{align*}
\textstyle \Ad_{\exp(t\cdot Y)}(X)\stackrel{\eqref{odaidaooipidadais}}{=}\Ad_{\innt_0^t \psi}(X)=\textstyle\sum_{\ell=0}^\infty \frac{t^\ell}{\ell!}\cdot (\com{Y})^n(X)
\qquad\quad\forall\: X\in \mg,\: t\in [0,1].
\end{align*} 	
\end{itemize}
\endgroup
\item
Assume that $(\mq,\bil{\cdot}{\cdot})$ is sequentially complete and asymptotic estimate:
\vspace{4pt}

Mimicking \eqref{oisdoidsoioidscxxccxcx},  for $\psi\in C^0([a,b],\mq)$ and $X\in \mq$, we define
\begin{align}
\label{oisdpodspodspodspodspodspodspodsopdspodsdsds}
	\textstyle\Add_\psi[X]\colon [a,b]\rightarrow \mq,\qquad t\mapsto  \sum_{\ell=0}^\infty  \TPOL_{\ell,\psi}[X](t). 
\end{align} 
\begingroup
\setlength{\leftmarginiii}{12pt}
\begin{itemize}
\item	
It is straightforward to see that $\alpha:=\Add_\psi[X]$ 
solves the Lax equation \eqref{ksoidoidsdssddssdsds} -- and is of class $C^{k+1}$ if $\psi$ is of class $C^k$ for $k\in \NN\cup\{\infty\}$. 
\vspace{3pt}
\item
It is ad hoc not clear that this solution is unique.  
To prove uniqueness, one basically has to show that for each $t\in [a,b]$  one has
\begin{align*} 
	\Aut(\mq)\ni \Add_\psi[t]\colon \mq\rightarrow \mq,\qquad X\mapsto \Add_\psi[X](t).
\end{align*}  
Indeed, it is not hard to see that $\Add_\psi[t]$ admits a left inverse; but,  to prove the existence of a right inverse some effort is necessary. 
\end{itemize}
\endgroup
\end{itemize}
\endgroup
\end{enumerate}}
\endgroup
\noindent
In this paper, we use the solutions \eqref{oisdpodspodspodspodspodspodspodsopdspodsdsds} to prove 
Theorem \ref{oisiosdidsoiososdids}. 
We combine them with an integral transformation that we introduce, 
and that is also used to prove Formula \eqref{jhsdjhsdjhsdq1dsds} in the nilpotent weakly $C^k$-regular situation. It naturally arises in the context of semiregular Lie groups:
\begingroup
\setlength{\leftmargini}{12pt}
\begin{itemize}
\item
	Let $G$ be $C^k$-semiregular for $k\in \NN\cup \{\lip, \infty\}$. Then, the integral transformation  
\begin{align}
\label{ksoididsooidsoidsdsdsds}
	\textstyle\TMAP\colon C^k([a,b],\mg) \rightarrow C^\infty([0,1],\mg),\qquad \phi \mapsto \big[ [0,1]\ni t \mapsto  \int_a^b \Ad_{[\innt_s^{b}  t\cdot \phi]}(\phi(s)) \:\dd s\he\big]
\end{align}
is defined for all $a<b$ (cf.\ Proposition \ref{fdfdfd}), and has the invariance property 
\begin{align*}
	\textstyle\innt_a^b \phi=\innt_0^1  \TMAP(\phi)\qquad\quad\forall\: \phi\in C^k([a,b],\mg).
\end{align*} 
\item
Let $(\mq,\bil{\cdot}{\cdot})$ be sequentially complete and asymptotic estimate. Motivated by \eqref{ksoididsooidsoidsdsdsds}, we consider the integral transformation \hspace*{\fill}(defined by Lemma  \ref{klsdjklsdkjlsddjklsjklsd})
\begin{align}
\label{ksoididsooidsoidsdsdsdsas}
	\textstyle\TMAP\colon C^0([a,b],\mq) \rightarrow C^\infty([0,1],\mq),\qquad \phi \mapsto \big[ [0,1]\ni t \mapsto  \int_a^b \Add_{t\cdot \phi|_{[s,b]}}[b](\phi(s)) \:\dd s\he\big],
\end{align}
for which we have (Corollary \ref{kjfdkjfdkjfd})
\begin{align}
\label{oifdoifdoifdioifdiofd}
	\textstyle\innt _a^b\TMAP(\psi)=\innt_c^b\TMAP(\psi|_{[c,b]})\cdot  \innt_a^c\TMAP(\psi|_{[a,c]})\qquad\quad\forall\:  a<c<b,\:\psi\in C^0([a,b],\mq).
\end{align}
If $G$ is weakly $C^\infty$-semiregular and $\phi\in C^0([0,1],\mq)$, then 
\begin{align*}
	\textstyle\mu\colon [a,b]\ni z\mapsto \innt_0^1\TMAP(\phi|_{[a,z]})\in G
\end{align*}
is of class $C^1$, whereby     
 \eqref{oifdoifdoifdioifdiofd} implies $\Der(\mu)=\phi$ which    establishes Theorem \ref{oisiosdidsoiososdids}.
\end{itemize}
\endgroup
This paper is organized as follows. In Sect.\ \ref{dsdssd}, we fix the notations and provide some basic facts and definitions  concerning locally convex vector spaces, Lie groups, Lie algebras, and mappings. We furthermore discuss some elementary properties of power series that we need in the main text. Sect.\ \ref{kllkdslkdlklkdslds} contains some preliminary results. We discuss the integral transformation  \eqref{ksoididsooidsoidsdsdsds}, and provide an integral expansion for the adjoint action. We furthermore prove a differentiation result for the product integral in the weakly regular context. In Sect.\ \ref{kjdskjdskjdssd}, we generalize the Baker-Campbell-Dynkin-Hausdorff formula to the product integral, and give some applications to it. In particular, we prove formula \eqref{jhsdjhsdjhsdq1dsds}. In Sect.\ \ref{jkdkjdjkddsdsddds}, we discuss the Lax equation for asymptotic estimate and sequentially complete Lie algebras. We also investigate the elementary properties of the integral transformation  \eqref{ksoididsooidsoidsdsdsdsas} that we finally use to prove Theorem \ref{oisiosdidsoiososdids}.

\section{Preliminaries}
\label{dsdssd}
In this section, we fix the notations, and provide definitions and elementary facts concerning Lie groups, Lie algebras, and locally convex vector spaces that we shall need in the main text.  

\subsection{Conventions}
\label{dssddsdsdsds}
Given sets $X,Y$, the set of all mappings $X\rightarrow Y$ is denoted by $\Map(X,Y)\equiv Y^X$. Let $Z$ be a topological space, and $S\subseteq Z$ a subset:
\begingroup
\setlength{\leftmargini}{12pt}
\begin{itemize}
\item
	We denote the closure of $S$ in $Z$ by $\clos{Z}{S}$, or simply by $\closure(S)$ if it is clear from the context which topological space $Z$ is meant.
\item
	We denote the interior of $S$ in $Z$ by $\inter{Z}{S}$, or simply by $\interior(S)$ if it is clear from the context which topological space $Z$ is meant.
\end{itemize}
\endgroup
\noindent
The class of Hausdorff locally convex vector spaces is denoted by $\HLCV$. Given $F\in \HLCV$,  the system of continuous seminorms on $F$ 
is denoted by $\Sem{F}$. We define
\begin{align*}
	\B_{\qq,\varepsilon}:=\{X\in E\:|\: \qq(X)<\varepsilon\}\qquad\quad\text{and}\qquad\quad\OB_{\qq,\varepsilon}:=\{X\in E\:|\: \qq(X)\leq\varepsilon\}
\end{align*} 
for each $\qq\in \Sem{F}$ and $\varepsilon>0$. 
The completion of $F$ is denoted by $\comp{F}\in \HLCV$. For each $\qq\in \Sem{F}$, we let 
$\comp{\qq}\colon \comp{F}\rightarrow [0,\infty)$ denote the continuous extension of $\qq$ to $\comp{F}$. Given $F_1,\dots,F_n\in \HLCV$, we have $F_1\times{\dots}\times F_n\in \HLCV$ via the Tychonoff topology  generated by the seminorms
\begin{align*}
	\maxn[\qq_1,\dots,\qq_n]\colon F_1\times{\dots}\times F_n\ni (X_1,\dots,X_n)\mapsto \max\{\qq_k(X_k)\:|\: k=1,\dots,n\}\in [0,\infty)
\end{align*}
with $\qq_k\in \Sem{F_k}$ for $k=1,\dots,n$. 
If $\Phi\colon F_1\times {\dots}\times F_n\rightarrow F$ is a continuous $n$-multilinear map, then $\comp{\Phi}\colon \comp{F}_1\times{\dots}\times \comp{F}_n\rightarrow \comp{F}$ denotes its continuous $n$-multilinear extension.

In this paper, manifolds and Lie groups are always understood to be in the sense of \cite{HG}, i.e., smooth, Hausdorff, and modeled over a Hausdorff locally convex vector space.\footnote{We explicitly refer to Definition 3.1 and Definition 3.3 in \cite{HG}. A review of the corresponding differential calculus is provided in Appendix \ref{Diffcalc}.} 
If $f\colon M\rightarrow N$ is a $C^1$-map between the manifolds $M$ and $N$, then $\dd f\colon TM \rightarrow TN$ denotes the corresponding tangent map between their tangent manifolds, and we write $\dd_xf\equiv\dd f(x,\cdot)\colon T_xM\rightarrow T_{f(x)}N$ for each $x\in M$. 
 An interval is a non-empty, non-singleton connected subset $D\subseteq \RR$. A curve is a continuous map $\gamma\colon D\rightarrow M$ for an interval and a manifold $M$. 
If $D\equiv I$ is open, then $\gamma$ is said to be of class $C^k$ for $k\in \NN\cup \{\infty\}$ \defff it is of class $C^k$ when considered as a map between the manifolds $I$ and $M$.
If $D$ is an arbitrary interval, then $\gamma$ is said to be of class $C^k$ for $k\in \NN\cup \{\infty\}$ (we write $\gamma\in C^k(D,M)$) \defff $\gamma=\gamma'|_D$ holds for a $C^k$-curve $\gamma'\colon I\rightarrow M$ that is defined on an open interval $I$ containing $D$.  
If $\gamma\colon D\rightarrow M$ is of class $C^1$, then we denote the corresponding tangent vector at $\gamma(t)\in M$ by $\dot\gamma(t)\in T_{\gamma(t)}M$. 
The above conventions also hold if $M\equiv F\in \HLCV$ is a Hausdorff locally convex vector space. 
       
Throughout this paper, $G$ denotes an infinite-dimensional Lie group in the sense of \cite{HG} that is modeled over $E\in \HLCV$. We fix a chart 
	$\chart\colon G\supseteq \U\rightarrow \V\subseteq E$, 
with $\V$ convex, $e\in \U$, and $\chart(e)=0$. 
We denote the Lie algebra of $G$ by $(\mg, \bil{ \cdot}{\cdot})$,  
 and set\footnote{In other words, we equip $\mg$ with the Hausdorff locally convex topology that is generated by the system of seminorms $\{\pp\cp\dd_e\chart\:|\: \pp\in \Sem{E}\}$. It is then not hard to see that $\Sem{\mg}=\{\pp\cp\dd_e\chart\:|\: \pp\in \Sem{E}\}$ holds, hence $\Sem{\mg}\cong\Sem{E}$.} 
\begin{align*}
	 \pp(X):=(\pp\cp\dd_e\chart)(X)\qquad\quad\forall\:\pp\in \Sem{E},\: X\in \mg. 
\end{align*} 
 We let $\mult\colon G\times G\rightarrow G$ denote the Lie group multiplication, $\RT_g:=\mult(\cdot, g)$ the right translation by $g\in G$, $\inv\colon G\ni g\mapsto g^{-1}\in G$ the inversion, and $\Ad\colon G\times \mg\rightarrow \mg$ the adjoint action. We have
\begin{align*}
	\Ad(g,X)\equiv \Ad_g(X):=\dd_e\conj_g(X)\qquad\quad\text{for}\qquad\quad \conj_g\colon G\ni h\mapsto g\cdot  h\cdot g^{-1}\in G
\end{align*}
for each $g\in G$ and $X\in  \mg$, as well as 
\begin{align}
\label{iufiugfiugfiugfiugfgf}
	\:\he\bil{X}{Y}:=\dd_e\Ad(Y)(X)\qquad\forall\: X\in \mg \qquad\quad\:\:\text{for}\qquad\quad\:\: \Ad(Y)\colon G\ni g\mapsto \Ad_g(Y)\in \mg.\hspace{6pt}
\end{align} 
We recall the Jacobi identity (cf.\ also Example \ref{fdpofsddopdpof})
\begin{align}
\label{jacobi}
	\textstyle \bil{Z}{\bil{X}{Y}}= \bil{\bil{Z}{X}}{Y}+ \bil{X}{\bil{Z}{Y}}\qquad\quad\:\forall\: X,Y,Z\in \mg
\end{align}
as well as the product rule
\begin{align}
\label{LGPR}
	\dd_{(g,h)}\mult(v,w)= \dd_g\RT_h(v) + \dd_h\LT_g(w)\qquad\quad\forall\: g,h\in G,\:\: v\in T_gG,\:\: w\in T_h G.
\end{align}

\subsection{Locally Convex Vector Spaces}
In this section, we collect some elementary facts and definitions  concerning locally convex vector spaces that we shall need in the main text.
\subsubsection{Sets of Curves}
\label{dsdsdssdfdffddfdfdd}
Let $F\in \HLCV$ be given. For each $X\in F$, we let $\mathcal{C}_X\colon \RR\ni t\mapsto X \in F$ denote the constant curve $X$. Let $a<b$ be given. We set $C^\const([a,b],F):=\{\mathcal{C}_X|_{[a,b]}\:|\: X\in F\}$;   
and let  
$C^\lip([a,b],F)$ denote the set of all Lipschitz curves on $[a,b]$, i.e., all curves $\gamma\colon [a,b]\rightarrow F$ with
\begin{align*}
	\textstyle \Lip(\qq,\gamma):=
	\sup\Big\{\frac{\qq(\gamma(t)-\gamma(t'))}{|t-t'|}\:\Big|\: t,t'\in [a,b],\: t\neq t'\Big\}\in [0,\infty)\qquad\quad\forall\: \qq\in \Sem{F}.
\end{align*}
We define  
$\const+1:=\infty$, $\infty +1:=\infty$, $\lip+1:=1$; as well as 
\begin{align*}
\qq^\dind_\infty(\gamma)&:=\sup\big\{\qq\big(\gamma^{(\ell)}(t)\big)\:\big|\: 0\leq \ell\leq \dind,\:\:t\in [a,b]\big\}\qquad\quad\forall\: \gamma\in C^k([a,b],F)\\
\qq_\infty(\gamma)&:=\qq_\infty^0(\gamma)\qquad\quad\hspace{147.2pt}\forall\: \gamma\in C^0([a,b],F)\\
\qq_\infty^\lip(\gamma)&:= \max(\qq_\infty(\gamma), \Lip\big(\qq,\gamma)\big)\qquad\quad \hspace{79.3pt}\forall\: \gamma\in C^\lip([a,b],F)
\end{align*}
for $\qq\in \Sem{F}$, $k\in \NN\cup\{\infty,\const\}$, $0\leq \dind\llleq k$.   
Here, the notation $s\llleq k$ means 
\begingroup
\setlength{\leftmargini}{12pt}
\begin{itemize}
\item
	$\dind= \lip$\hspace{20pt} for\: $k= \lip$,
\item
	$\NN\ni \dind\leq k$\hspace{2.3pt}\: for\: $k\in \NN$, 
\item
	$\dind\in \NN$\hspace{3pt}\:\hspace{20pt} for\: $k=\infty$,
\item
	$\dind=0$\hspace{4.16pt}\:\hspace{20pt} for\: $k= \const$.
\end{itemize}
\endgroup
\noindent
Given $k\in \NN\cup\{\lip,\infty,\const\}$, 
the $C^k$-topology on $C^k([a,b],F)$ is the Hausdorff locally convex topology that is generated by the seminorms $\qq_\infty^\dind$ for $\qq\in \Sem{F}$ and $0\leq \dind\llleq k$. 
\vspace{6pt}

\noindent
By an element in $\CP^0([a,b],F)$ (piecewise continuous curves), we understand a decomposition $a= t_0<{\dots}<t_n=b$ (for $n\geq 1$) together with a collection $\gamma\equiv\{\gamma[p]\}_{0\leq p\leq n-1}$ of curves $\gamma[p]\in C^0([t_p,t_{p+1}],F)$ for $0\leq p\leq n-1$. Linear combinations as well as restrictions to compact  intervals of elements in $\CP^0([a,b],F)$ are defined in the obvious way. We equip $\CP^0([a,b],F)$ with the $C^0$-topology, i.e., the Hausdorff locally convex topology that is generated by the seminorms 
\begin{align*}
	\qq_\infty(\gamma)=\max\{\qq_\infty(\gamma[p])\:|\: 0\leq p\leq n-1 \}
\end{align*}
for $\qq\in \Sem{F}$, and $\gamma\equiv\{\gamma[p]\}_{0\leq p\leq n-1}$ as above.   
\subsubsection{Mackey Convergence}
Let $F\in \HLCV$. A subsystem $\SEMMM\subseteq \Sem{F}$ is said to be a fundamental system \defff $\{\B_{\hh,\:\varepsilon}(0)\}_{\hh\in \SEMMM,\varepsilon>0}$ is a local base of zero in $F$. We recall the following standard result \cite{MV}:
\begin{lemma}
\label{xcxcxccxxcxc}
Let $\SEMMM\subseteq \Sem{F}$ be a fundamental system, and $\SEMG\subseteq \Sem{F}$ a subsystem. Then, $\SEMG$ is a fundamental system \deff to each $\hh\in \SEMMM$ there exist $c>0$ and $\zzs\in  \SEMG$ with $\hh\leq c\cdot \zzs$.
\end{lemma}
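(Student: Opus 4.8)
This is a standard characterization, and the plan is to verify both implications directly from the definition of a fundamental system as a neighbourhood basis of $0$ built from seminorm balls; the reference \cite{MV} is cited for the underlying general fact, and the argument below is essentially that proof adapted to the present notation.

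First I would treat the ``if'' direction. Assume that to each $\hh\in\SEMMM$ there are $c>0$ and $\zzs\in\SEMG$ with $\hh\leq c\cdot\zzs$. Since $\SEMG\subseteq\Sem{F}$, every ball $\B_{\zzs,\delta}(0)$ with $\zzs\in\SEMG$ and $\delta>0$ is already a $0$-neighbourhood, so it remains to show that each $0$-neighbourhood $U$ contains some such ball. Because $\SEMMM$ is a fundamental system, there exist $\hh\in\SEMMM$ and $\varepsilon>0$ with $\B_{\hh,\varepsilon}(0)\subseteq U$; pick $c>0$ and $\zzs\in\SEMG$ with $\hh\leq c\cdot\zzs$. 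Then $\zzs(X)<\varepsilon/c$ forces $\hh(X)\leq c\,\zzs(X)<\varepsilon$, i.e.\ $\B_{\zzs,\varepsilon/c}(0)\subseteq\B_{\hh,\varepsilon}(0)\subseteq U$, which is what we need.

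For the ``only if'' direction, assume $\SEMG$ is a fundamental system and fix $\hh\in\SEMMM$. Then $\B_{\hh,1}(0)$ is a $0$-neighbourhood, so there are $\zzs\in\SEMG$ and $\delta>0$ with $\B_{\zzs,\delta}(0)\subseteq\B_{\hh,1}(0)$. From this ball inclusion I would extract the estimate $\hh\leq(1/\delta)\cdot\zzs$ by positive homogeneity: if $\zzs(X)>0$, then for every $\lambda$ with $0<\lambda<\delta/\zzs(X)$ one has $\zzs(\lambda X)=\lambda\,\zzs(X)<\delta$, hence $\lambda X\in\B_{\hh,1}(0)$, i.e.\ $\lambda\,\hh(X)=\hh(\lambda X)<1$; letting $\lambda\nearrow\delta/\zzs(X)$ yields $\hh(X)\leq\zzs(X)/\delta$. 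If instead $\zzs(X)=0$, then $\lambda X\in\B_{\zzs,\delta}(0)\subseteq\B_{\hh,1}(0)$ for all $\lambda>0$, so $\lambda\,\hh(X)<1$ for all $\lambda>0$, forcing $\hh(X)=0$. In either case $\hh(X)\leq(1/\delta)\,\zzs(X)$, so $c:=1/\delta$ and $\zzs$ do the job.

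The only step that is not purely formal — the ``main obstacle'', such as it is — is this last passage from a ball inclusion to a seminorm inequality in the ``only if'' direction, where one must use positive homogeneity and handle the degenerate case $\zzs(X)=0$ separately; the rest follows immediately from reading off the neighbourhood-basis definition of a fundamental system.
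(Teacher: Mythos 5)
Your proof is correct: both directions are handled properly, and the passage from the ball inclusion $\B_{\zzs,\delta}(0)\subseteq\B_{\hh,1}(0)$ to the estimate $\hh\leq(1/\delta)\cdot\zzs$ via positive homogeneity (including the degenerate case $\zzs(X)=0$) is exactly the point that needs care. The paper itself does not give an argument but merely cites Lemma 20 in \cite{MDM}; your write-up is the standard proof that such a reference would supply, so nothing further is needed.
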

\begin{proof}
Confer, e.g., Lemma 20 in \cite{MDM}.
\end{proof}
\noindent
Let $\SEMMM\subseteq \Sem{F}$ be a fundamental system. 
We write $\{X_n\}_{n\in \NN} \mackarrr X$ for $\{X_n\}_{n\in \NN}\subseteq F$, $X\in F$ \defff 
\begin{align}
\label{podsopdspodssdnbbn}
	\hh(X-X_n)\leq \mackeyconst_\hh\cdot \lambda_{n} \qquad\quad\forall\: n\in \NN,\:\:\hh\in \SEMMM
\end{align}
holds for certain $\{\mackeyconst_\hh\}_{\hh\in \SEMMM}\subseteq \RR_{\geq 0}$, and $\RR_{> 0}\supseteq \{\lambda_{n}\}_{n\in \NN}\rightarrow 0$. In this case, $\{X_n\}_{n\in \NN}$ is said to be \emph{Mackey convergent} to $X$.
\begin{remark} 
\begingroup
\setlength{\leftmargini}{12pt}{
\begin{itemize}
\item
It is immediate from Lemma \ref{xcxcxccxxcxc} that the definition made in \eqref{podsopdspodssdnbbn} does not depend on the explicit choice of the fundamental system $\SEMMM\subseteq \Sem{F}$. 
\item
Since we will make use of the differentiability results obtained in \cite{RGM}, we explicitly mention that in \cite{RGM} an equivalent definition of Mackey convergence (more suitable for the technical argumentation there) was used. Specifically, \eqref{podsopdspodssdnbbn} is equivalent to require that 
 \begin{align}
\label{podsopdspodssdnbbnjjj}
	\hh(X-X_n)\leq \mackeyconst_\hh\cdot \lambda_{n} \qquad\quad\forall\: n\geq \mackeyindex_\hh,\:\:\hh\in \SEMMM
\end{align}
holds for certain $\{\mackeyconst_\hh\}_{\hh\in \SEMMM}\subseteq \RR_{\geq 0}$, $\{\mackeyindex_\hh\}_{\hh\in \SEMMM}\subseteq \NN$, and $\RR_{\geq 0}\supseteq \{\lambda_{n}\}_{n\in \NN}\rightarrow 0$.

In other words, the indices $\{\mackeyindex_\hh\}_{\hh\in \SEMMM}$ in \eqref{podsopdspodssdnbbnjjj} can be circumvented (can be set equal to zero) if $\{\lambda_n\}_{n\in \NN}\subseteq \RR_{>0}$ instead of $\{\lambda_n\}_{n\in \NN}\subseteq \RR_{\geq0}$ is required. \hspace*{\fill}$\qed$
\end{itemize}
}
\endgroup
\end{remark}
\noindent
Assume now $F= C^k([a,b],\mg)$ for  $k\in \NN\cup\{\lip,\infty,\const\}$ and $a<b$. We write 
\begin{align}
\label{iudsiudsudspodsisdpoids}
\{\phi_n\}_{n\in \NN}\mackarr{\kk}\phi\qquad\text{for}\qquad \{\phi_n\}_{n\in \NN}\subseteq C^k([a,b],\mg)\qquad\text{and}\qquad \phi\in C^k([a,b],\mg)
\end{align}
\defff $\{\phi_n\}_{n\in \NN}$ is Mackey convergent to $\phi$ w.r.t.\ to the $C^k$-topology.

\subsubsection{The Riemann Integral and Completeness}
\label{opsdpods}
Let $F\in \HLCV$. The Riemann integral of $\gamma\in C^0([a,b],F)$ for $a<b$ is denoted by 
$\int \gamma(s) \:\dd s\in \comp{F}$. 
\begin{remark}
\label{seqcomp}
The Riemann integral in complete Hausdorff locally convex vector spaces can be defined exactly as in the finite-dimensional case; namely, as the limit of Riemann sums (which form a Cauchy sequence in $F$), confer Sect.\ 2 in  \cite{COS}. In particular, the following assertions hold: 
\vspace{-2pt}
\begingroup
\setlength{\leftmargini}{12pt}{
\begin{itemize}
\item
	If $F$ is sequentially complete, then $\int \gamma(s) \:\dd s\in F$ holds for $\gamma\in C^0([a,b],F)$.	
\item
	For $\gamma\in C^0([a,b],F)$ and $\gamma_-\colon [a,b]\ni t\mapsto \gamma(a+b-t)$, we have $\int \gamma_-(s)\:\dd s=\int \gamma(s)\: \dd s$.
\hspace*{\fill}$\qed$	
\end{itemize}}
\endgroup
\end{remark}
\noindent
Given $x<y$ and $\gamma\in C^0([x,y],F)$, then for $x\leq a<b\leq y$ and $x\leq c\leq y$, we set
\begin{align}
\label{fdopfdpo}
	\textstyle\int_a^b \gamma(s)\:\dd s:= \int \gamma|_{[a,b]}(s) \:\dd s,\qquad\:\:\int_b^a \gamma(s) \:\dd s:= - \int_a^b \gamma(s) \:\dd s,\qquad\:\:
	 \int_c^c \gamma(s)\: \dd s:=0.\quad\:\:
\end{align}
Clearly, the Riemann integral is linear with	
\begin{align}
\label{isdsdoisdiosd0}
	\textstyle\int_a^c \gamma(s) \:\dd s&\textstyle=\int_a^b \gamma(s)\:\dd s+ \int_b^c \gamma(s)\:\dd s\qquad\quad\he \forall\: x\leq a< b< c\leq y.
\end{align}
The Riemann integral admits the following additional properties:
\begingroup
\setlength{\leftmargini}{12pt}{
\begin{itemize}
\item
For $a<b$, we have
\begin{align}
	\label{isdsdoisdiosd}
		\textstyle\gamma-\gamma(a)\hspace{4pt}&\textstyle=\int_a^\bullet \dot\gamma(s)\:\dd s\qquad\qquad\hspace{3.25pt}\forall\:  \gamma\in C^1([a,b],F),\\[2pt]
	\label{isdsdoisdiosd1}
		\qq(\gamma-\gamma(a))&\textstyle\leq \int_a^\bullet \qq(\dot\gamma(s))\: \dd s\qquad\quad\forall\:  \gamma\in C^1([a,b],F),\: \qq\in \Sem{F},\\[2pt]
	\label{ffdlkfdlkfd}
		\textstyle\comp{\qq}\big(\int_a^\bullet \gamma(s)\:\dd s\big)&\textstyle\leq \int_a^\bullet \qq(\gamma(s))\:\dd s\qquad\quad\forall\: \gamma\in C^0([a,b],F),\: \qq\in \Sem{F}.
	\end{align}
\item	
For $a<b$ and $\gamma\in C^0([a,b],F)$, we have
\begin{align}
\label{oidsoidoisdoidsoioidsiods}
	\textstyle C^1([a,b],\comp{F})\ni\Gamma\colon [a,b]\ni t\mapsto \int_a^t \gamma(s)\:\dd s \in \comp{F}\qquad\quad\text{with}\qquad\quad	\dot\Gamma=\gamma.
\end{align}
\vspace{-18pt}
\item	  
	For $a<b$, $a'<b'$, $\gamma\in C^1([a,b],F)$, $\varrho\colon [a',b'] \rightarrow [a,b]$ of class $C^1$, and $t\in [a',b']$, we have the substitution formula 
\begin{align}
\label{substitRI}
	\textstyle\int_a^{\varrho(t)} \gamma(s)\: \dd s=\int_{a'}^t \dot\varrho(s)\cdot (\gamma\cp\varrho)(s)\:\dd s + \int_a^{\varrho(a')} \gamma(s)\:\dd s.
\end{align}
\vspace{-18pt}
\item 
Let $\tilde{F}\in \HLCV$, $a<b$, and  
$\mathcal{L}\colon F\rightarrow \tilde{F}$ a continuous and linear map. Then, the following implication holds:
\begin{align}
\label{pofdpofdpofdsddsdsfd}
	\textstyle\int_a^b \gamma(s)\:\dd s\in F \quad\text{for}\quad\gamma\in C^0([a,b],F)\qquad\Longrightarrow\qquad 
	\textstyle \mathcal{L}\big(\int_a^b \gamma(s)\:\dd s\big)=\int_a^b \mathcal{L}(\gamma(s)) \: \dd s.
\end{align}
\vspace{-20pt}
\end{itemize}}
\endgroup
\noindent
Next, let us recall the following definitions: 
\begingroup
\setlength{\leftmargini}{12pt}{
\begin{itemize}
\item
	We say that $F$ is Mackey complete \defff $\int_0^1 \gamma(s)\:\dd s \in F$ exists for each $\gamma\in C^\infty([0,1],F)$.
	  
	 	This is equivalent to require that 	$\int_a^b \gamma(s)\:\dd s \in F$ exists for each $\gamma\in C^\lip([a,b],F)$ and $a<b$, confer Theorem 2.14 in \cite{COS} or Corollary 6 in \cite{RGM}.
\item
	We say that $F$ is integral complete \defff $\int_a^b \gamma(s)\:\dd s \in F$ exists for all $a<b$ and $\gamma\in C^0([a,b],F)$.
\end{itemize}
}
\endgroup
\noindent
Evidently, $F$ is Mackey complete if $F$ is integral complete. Moreover, 
Remark \ref{seqcomp} implies that $F$ is integral complete (hence, Mackey complete) if $F$ is sequentially complete. 
\vspace{6pt}

\noindent
Finally, for $\gamma\equiv\{\gamma[p]\}_{0\leq p\leq n-1}\in \CP^0([a,b],F)$ ($n\geq 1$), we define
\begin{align}
\label{opofdpopfd}
	\textstyle\int_a^{b}\!\gamma(s)\:\dd s:=\sum_{p=0}^{n-1}\int_{t_p}^{t_{p+1}} \gamma[p](s)\:\dd s. 
\end{align} 
Clearly, the so-extended Riemann integral is linear, and $C^0$-continuous -- specifically, \eqref{ffdlkfdlkfd} yields
\begin{align*}
	\textstyle\comp{\qq}\big(\int_a^t \gamma(s)\:\dd s\big)&\textstyle\leq (t-a)\cdot \qq_\infty(\gamma)\qquad\quad\forall\: a<t\leq b,\:\qq\in \Sem{F}.
\end{align*}    
For $x<y$ and $\gamma\in \CP^0([x,y],F)$, we define $\int_a^b\gamma(s)\: \dd s$, $\int_b^a\gamma(s)\: \dd s$, $\int_c^c \gamma(s)\: \dd s$ for $x\leq a<b\leq y$ and $x\leq c\leq y$ as in \eqref{fdopfdpo}. Clearly, then  \eqref{isdsdoisdiosd0} also holds for the extended Riemann integral. We furthermore note that $[a,b]\ni t\mapsto \int_a^t \gamma(s)\:\dd s\in F$ is continuous for each $x\leq a<b\leq y$, as well as of class $C^1$ if $[a,b]\subseteq [t_p,t_{p+1}]$ holds for some $0\leq p\leq n-1$. 

\subsubsection{Extensions and Completeness}
Let $F\in \HLCV$ be given. In this subsection, we collect some statements that we shall need in Sect.\ \ref{jkdkjdjkddsdsddds}. We recall the following result.\footnote{The statement is due to a construction that goes back to Seeley \cite{SEELEY}. We also refer to Proposition 24.10 in \cite{COS} for the case $k=\infty$ in the convenient setting.}
\begin{lemma}
\label{fdfdfdfdsddds}
	Let $D\subseteq \RR$ be an interval, $k\in \NN\cup\{\infty\}$, and      
 $\gamma\in C^k(J,F)$ for $J:=\inter{\RR}{D}$. Let $\{\gamma_\ell\}_{0\leq \ell\llleq k}\subseteq C^0(D,F)$ be given such that $\gamma^{(\ell)}=\gamma_\ell|_{J}$ holds for all $0\leq \ell\llleq k$.  
Then, there exists an open interval $I\subseteq \RR$ with $D\subseteq I$ as well as 
$\tilde{\gamma}\in C^k(I,F)$ such that $\tilde{\gamma}|_D=\gamma$ holds.
\end{lemma}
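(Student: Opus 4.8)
The plan is to localize the problem at the endpoints of $D$ and then carry out classical one–dimensional extension arguments, rephrased via seminorm estimates so that no completeness of $F$ is needed. If $D=J$ is open we are done with $I:=D$, $\tilde\gamma:=\gamma$. Otherwise $D$ has one or two endpoints lying in $D$; replacing $t$ by $-t$ if necessary, it suffices to construct, for $b:=\sup D\in D$ and suitable $\varepsilon>0$, a curve $\hat\gamma\in C^k((b-\delta,b+\varepsilon),F)$ (where $\delta>0$ is chosen with $(b-\delta,b)\subseteq J$) with $\hat\gamma|_{(b-\delta,b)}=\gamma$. Performing the analogous construction at a left endpoint of $D$ (when present) and gluing $\gamma$ together with the resulting one or two one–sided extensions along the induced open cover of an open interval $I\supseteq D$ — which is legitimate because being of class $C^k$ is a local property and all pieces agree with $\gamma$ on overlaps — then yields $\tilde\gamma\in C^k(I,F)$ with $\tilde\gamma|_J=\gamma$, hence $\tilde\gamma|_D=\gamma$ once $\gamma$ is identified with its continuous extension $\gamma_0\in C^0(D,F)$.

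\emph{Step 1 (identifying the boundary data as one–sided derivatives).} For $\ell\in\NN$ with $1\le\ell$ and $\ell\llleq k$, and $s,t\in(b-\delta,b)$, applying \eqref{isdsdoisdiosd} to $\gamma^{(\ell-1)}\in C^1(J,F)$ gives $\gamma_{\ell-1}(t)-\gamma_{\ell-1}(s)=\int_s^t\gamma_\ell(u)\,\dd u$ in $\comp F$. Letting $t\to b^-$ and using continuity of $\gamma_{\ell-1}$ on $D$ together with \eqref{oidsoidoisdoidsoioidsiods}, this identity persists for all $s\in(b-\delta,b]$, so $\gamma_{\ell-1}(s)=\gamma_{\ell-1}(b)-\int_s^b\gamma_\ell(u)\,\dd u$; differentiating in $s$ by \eqref{oidsoidoisdoidsoioidsiods} once more shows $\gamma_{\ell-1}$ is differentiable on $(b-\delta,b]$ with derivative $\gamma_\ell$ there. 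Iterating over $\ell$, all one–sided derivatives of $\gamma$ at $b$ up to order $k$ exist in $F$, equal $\gamma_\ell(b)$, and $\gamma^{(\ell)}(t)\to\gamma_\ell(b)$ as $t\to b^-$. \emph{Step 2 ($k\in\NN$).} Let $P\colon\RR\to F$, $P(t):=\sum_{\ell=0}^k\tfrac1{\ell!}\gamma_\ell(b)(t-b)^\ell$, so $P^{(\ell)}(b)=\gamma_\ell(b)$ for $0\le\ell\le k$; put $\hat\gamma:=\gamma$ on $(b-\delta,b)$, $\hat\gamma(b):=\gamma_0(b)=P(b)$, and $\hat\gamma:=P$ on $(b,b+\varepsilon)$. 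For each $0\le\ell\le k$, $\hat\gamma^{(\ell)}$ exists and is continuous on $(b-\delta,b)$ (where it is $\gamma_\ell$) and on $(b,b+\varepsilon)$ (where it is $P^{(\ell)}$) with common one–sided limit $\gamma_\ell(b)$ at $b$, and by Step 1 its one–sided derivatives at $b$ coincide; a routine induction on $\ell$ gives $\hat\gamma\in C^k((b-\delta,b+\varepsilon),F)$, extending $\gamma$.

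\emph{Step 3 ($k=\infty$).} Here the polynomial is replaced by Seeley's reflection. Fix $\varphi\in C^\infty(\RR,\RR)$ with $\varphi\equiv1$ near $0$ and $\varphi|_{[1,\infty)}\equiv0$, and — by the classical combinatorial lemma, see \cite{SEELEY} — a sequence $(a_j,b_j)_{j\in\NN}$ with $b_j\ge1$, $b_j\to\infty$, and $\sum_j a_j(-b_j)^m=1$, $\sum_j|a_j|\,b_j^m<\infty$ for every $m\in\NN$. For small $\varepsilon>0$ and $t\in(b,b+\varepsilon)$ set
\[
	\textstyle Ef(t):=\sum_{j}a_j\,\varphi\big(b_j(t-b)\big)\,\gamma_0\big(b-b_j(t-b)\big).
\]
For each fixed $t>b$ the sum is finite, since $\varphi(b_j(t-b))=0$ once $b_j(t-b)\ge1$, which holds for all large $j$; hence $Ef$ is a well–defined $F$–valued curve on $(b,b+\varepsilon)$, smooth there with all derivatives computed termwise. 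Leibniz' rule together with Step 1 gives $\qq\big((Ef)^{(m)}(t)\big)\le C_{\qq,m}\cdot\sum_j|a_j|\,b_j^m<\infty$ for every $\qq\in\Sem{F}$, where $C_{\qq,m}$ is assembled from $\max_{p\le m}\|\varphi^{(p)}\|_\infty$ and the finite suprema $\sup_{[b-1,b]}\qq(\gamma_i)$, $i\le m$ (finite by Step 1); so the derivatives of $Ef$ stay bounded as $t\to b^+$, and the relations $\sum_j a_j(-b_j)^m=1$ force $(Ef)^{(m)}(t)\to\gamma_m(b)$ as $t\to b^+$. Gluing $Ef$ with $\gamma_0$ at $b$ exactly as in Step 2 produces the required $\hat\gamma\in C^\infty((b-\delta,b+\varepsilon),F)$.

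The main obstacle is precisely Step 3 under the bare hypothesis $F\in\HLCV$: a Borel/Seeley series whose coefficients depend on the data need not converge in an incomplete space. The resolution is that Seeley's reflection can be set up with universal coefficients $a_j,b_j$, so that the only $F$–dependent ingredients are finite suprema of continuous curves over the compact segment $[b-1,b]$, and the series is genuinely locally finite on $(b,b+\varepsilon)$; with that in hand, Steps 1–2, the gluing step, and the global assembly are all straightforward consequences of \eqref{isdsdoisdiosd}, \eqref{oidsoidoisdoidsoioidsiods}, and the definition of $C^k$–curves on arbitrary intervals.
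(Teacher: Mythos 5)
Your proposal is correct in substance, but it takes a genuinely different route from the paper: the paper disposes of Lemma \ref{fdfdfdfdsddds} in one line by citing Theorem 1 of \cite{HASEE}, the author's separate $C^k$-Seeley-extension theorem for Bastiani's calculus, whereas you reprove the extension from scratch. What your argument buys is self-containedness, and you correctly isolate the one genuinely nontrivial point, namely that Seeley's reflection series works in an arbitrary $F\in\HLCV$ because it is locally finite on $(b,b+\varepsilon)$ and the only limits required as $t\to b^{+}$ have explicitly named candidates $\gamma_m(b)\in F$, so no completeness of $F$ enters; Step 1 is likewise sound, since \eqref{isdsdoisdiosd} exhibits $\int_s^t\gamma_\ell(u)\,\dd u$ as a difference of elements of $F$ before any passage to $\comp{F}$. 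Two routine points deserve an explicit sentence in a polished write-up: first, the expression $\gamma_0(b-b_j(t-b))$ presupposes $[b-1,b]\subseteq D$, so you should either reparametrize so that $\delta=1$ or replace $b_j(t-b)$ by $b_j(t-b)/\delta'$ for some $0<\delta'<\delta$; second, the gluing of $Ef$ with $\gamma$ at $b$ in the $k=\infty$ case is not literally ``as in Step 2'' (there is no polynomial), but the same order-by-order argument applies once you have shown that every $(Ef)^{(m)}$ extends continuously to $b$ with value $\gamma_m(b)$ and that the one-sided $m$-th derivatives of the glued curve at $b$ exist and agree, which follows from the integral representation \eqref{oidsoidoisdoidsoioidsiods} applied on each side. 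With these details supplied, your proof is a valid stand-alone replacement for the citation.
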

\begin{proof}
This is immediate from Theorem 1 in \cite{HASEE}.
\end{proof}

\begin{lemma}
\label{kjdskjskjds}
If $F$ is sequentially complete, then $C^k([a,b],F)$ is sequentially complete w.r.t.\ the $C^k$-topology for each $k\in \NN\cup\{\infty\}$ and $a<b$.
\end{lemma}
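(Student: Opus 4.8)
The plan is to treat the case $k\in\NN$ first, by reducing everything to the classical ``uniform limit of derivatives'' argument with the Riemann-integral calculus of Section~\ref{opsdpods} playing the role of the fundamental theorem of calculus, and then to obtain $k=\infty$ by a one-line reduction. The key preliminary observation is that $C^0([a,b],F)$ is already sequentially complete: if $\{\gamma_n\}_{n\in\NN}$ is Cauchy in $C^0([a,b],F)$, then for each $t\in[a,b]$ the sequence $\{\gamma_n(t)\}_{n\in\NN}$ is Cauchy in $F$ (as $\qq(\gamma_n(t)-\gamma_m(t))\le\qq_\infty(\gamma_n-\gamma_m)$), hence convergent to some $\gamma(t)\in F$ by sequential completeness of $F$; continuity of the seminorms $\qq\in\Sem{F}$ then gives, by the usual $\varepsilon/3$-type estimate, that $\qq_\infty(\gamma_n-\gamma)\to 0$ for each $\qq$ and that the uniform limit $\gamma$ is continuous, so $\gamma\in C^0([a,b],F)$.

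Now fix $k\in\NN$ and let $\{\gamma_n\}_{n\in\NN}$ be Cauchy in $C^k([a,b],F)$. From the shape of the generating seminorms $\qq^\dind_\infty$ one reads off that, for each $0\le\ell\le k$, the sequence $\{\gamma_n^{(\ell)}\}_{n\in\NN}$ is Cauchy in $C^0([a,b],F)$, hence converges uniformly to some $\delta_\ell\in C^0([a,b],F)$ by the preceding observation. I would then pass to the limit in the identity $\gamma_n^{(\ell)}=\gamma_n^{(\ell)}(a)+\int_a^\bullet\gamma_n^{(\ell+1)}(s)\,\dd s$ (valid for $0\le\ell\le k-1$ by \eqref{isdsdoisdiosd}). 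This is precisely where sequential completeness of $F$ is used: it guarantees via Remark~\ref{seqcomp} that all integrals $\int_a^t\gamma_n^{(\ell+1)}(s)\,\dd s$ and $\int_a^t\delta_{\ell+1}(s)\,\dd s$ lie in $F$, and the estimate \eqref{ffdlkfdlkfd} combined with $\qq_\infty(\gamma_n^{(\ell+1)}-\delta_{\ell+1})\to 0$ yields $\int_a^t\gamma_n^{(\ell+1)}(s)\,\dd s\to\int_a^t\delta_{\ell+1}(s)\,\dd s$ for each $t\in[a,b]$. Hence $\delta_\ell=\delta_\ell(a)+\int_a^\bullet\delta_{\ell+1}(s)\,\dd s$ for $0\le\ell\le k-1$, and \eqref{oidsoidoisdoidsoioidsiods} (restricted to the interior, where the $F$-valued difference quotients converge in $F$ since $F$ carries the subspace topology of $\comp F$) shows that each $\delta_\ell$ with $0\le\ell\le k-1$ is continuously differentiable on $(a,b)$ with derivative $\delta_{\ell+1}|_{(a,b)}$.

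Iterating this finitely many times gives $\delta_0\in C^k((a,b),F)$ with $\delta_0^{(\ell)}=\delta_\ell|_{(a,b)}$ for $0\le\ell\le k$, where moreover each $\delta_\ell$ extends continuously to all of $[a,b]$. Lemma~\ref{fdfdfdfdsddds} (applied with $D=[a,b]$, $J=(a,b)$, the given curve $\delta_0|_{(a,b)}$ and the continuous extensions $\delta_\ell$) then produces a $C^k$-extension of $\delta_0$ to an open interval containing $[a,b]$, so $\delta_0\in C^k([a,b],F)$ with $\delta_0^{(\ell)}=\delta_\ell$ for $0\le\ell\le k$. Since $\gamma_n^{(\ell)}\to\delta_0^{(\ell)}$ uniformly for all $0\le\ell\le k$, we conclude $\gamma_n\to\delta_0$ in the $C^k$-topology. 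For $k=\infty$ one observes that a Cauchy sequence in $C^\infty([a,b],F)$ is Cauchy in every $C^m([a,b],F)$ ($m\in\NN$), hence converges there to a limit which is independent of $m$ (it is in each case the uniform limit $\delta$ of the sequence); thus $\delta\in C^m([a,b],F)$ for all $m$, i.e.\ $\delta\in C^\infty([a,b],F)$, and $\gamma_n\to\delta$ in each $C^m$-topology, hence in the $C^\infty$-topology.

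I do not expect a genuine obstacle here. The only point requiring a little care is the bookkeeping forced by the paper's convention that $C^k([a,b],F)$ consists of restrictions of $C^k$-curves defined on \emph{open} intervals; this is the reason Lemma~\ref{fdfdfdfdsddds} is invoked at the very end, rather than trying to read off differentiability at the endpoints directly from \eqref{oidsoidoisdoidsoioidsiods}, which only lands a priori in $\comp F$. Everything else is routine once the hypothesis ``$F$ sequentially complete'' is exploited both for the $C^0$-base case and for keeping all Riemann integrals inside $F$.
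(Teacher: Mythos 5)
Your proposal is correct and follows essentially the same route as the paper's proof: uniform (Cauchy) convergence of each derivative sequence, passage to the limit in the identity $\gamma_n^{(\ell)}=\gamma_n^{(\ell)}(a)+\int_a^\bullet\gamma_n^{(\ell+1)}(s)\,\dd s$ via the $C^0$-continuity estimate \eqref{ffdlkfdlkfd}, differentiability on the open interior via \eqref{oidsoidoisdoidsoioidsiods}, induction on the order, and finally Lemma \ref{fdfdfdfdsddds} to recover a $C^k$-curve on $[a,b]$. The only cosmetic difference is that you treat $k=\infty$ by reduction to all finite orders, whereas the paper's induction runs uniformly over $0\leq\ell\llleq k$.
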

\begin{proof}
Let $k\in \NN\cup\{\infty\}$ be fixed; and let   
 $\{\gamma_n\}_{n\in \NN}\subseteq C^k([a,b],F)$ be a Cauchy sequence  w.r.t.\ the $C^k$-topology.  
Then, $\{(\gamma_n)^{(\ell)}\}_{n\in \NN}\subseteq C^0([a,b],F)$ is  a Cauchy sequence  w.r.t.\ the $C^0$-topology for $0\leq \ell\llleq k$,  and a standard $\frac{\varepsilon}{3}$-argument shows that it converges w.r.t.\ the $C^0$-topology to some $\gamma_\ell\in C^0([a,b],F)$. Let $\gamma:=\gamma_0|_{(a,b)}$. By Lemma \ref{fdfdfdfdsddds}, it suffices to show 
\begin{align}
\label{jhhjnbnbnbnbcvvccvyxxyxy}
	\gamma\in C^k((a,b),F)\qquad\text{with}\qquad\gamma^{(\ell)}=\gamma_\ell|_{(a,b)}\qquad\text{for}\qquad 0\leq \ell\llleq k.
\end{align}
Since the Riemann integral is $C^0$-continuous by \eqref{ffdlkfdlkfd}, we have for $0\leq \ell < k$ and $t\in [a,b]$ that
\begin{align*}
	\textstyle\gamma_{\ell}(t)
	\textstyle= \lim_n(\gamma_n)^{(\ell)}(t)
	\textstyle\stackrel{\eqref{isdsdoisdiosd}}{=} \lim_n\big((\gamma_n)^{(\ell)}(a)+ \int_a^t (\gamma_n)^{(\ell+1)}(s) \:\dd s \big)
	=\textstyle\gamma_\ell(a) + \int_{a}^t \gamma_{\ell+1}(s)\: \dd s. 
\end{align*}
Then, \eqref{oidsoidoisdoidsoioidsiods} shows $\gamma_\ell|_{(a,b)}\in C^1((a,b),F)$ with $(\gamma_\ell|_{(a,b)})^{(1)}=\gamma_{\ell+1}|_{(a,b)}$ for $0\leq \ell < k$.
\vspace{6pt}

\noindent
Now, since $\gamma=\gamma_0|_{(a,b)}$ holds by definition, we can assume that there exists some $0\leq q< k$ such that \eqref{jhhjnbnbnbnbcvvccvyxxyxy} holds for $k\equiv q$. We obtain 
\begin{align*}
	\gamma_{q+1}|_{(a,b)}=(\gamma_q|_{(a,b)})^{(1)}=(\gamma^{(q)})^{(1)}=\gamma^{(q+1)},
\end{align*}
hence $\gamma\in C^{q+1}((a,b),F)$.
\end{proof}

\begin{corollary}
\label{dsdsdsdsdsdsdsdsasas}
Assume that $F$ is sequentially complete, and let $\SEMMM\subseteq \Sem{F}$ be a fundamental system. Let $k\in \NN\cup\{\infty\}$, $a<b$, and $\{\gamma_p\}_{p\in \NN}\subseteq C^k([a,b],F)$ be given with 
\begin{align*}
	\textstyle\sum_{p=0}^\infty \qq^\dind_\infty(\gamma_p)< \infty\qquad\quad\forall\: \qq\in \SEMMM,\: 0\leq \dind\llleq k.
\end{align*}
Then, $\textstyle  \gamma:= \sum_{p=0}^\infty \gamma_p\in C^k([a,b],F)$ converges w.r.t.\ the $C^k$-topology, i.e., $\gamma^{(\ell)}=\sum_{p=0}^\infty (\gamma_p)^{(\ell)}$ converges w.r.t.\ the $C^0$-topology for each $0\leq \ell\llleq k$. 
\end{corollary}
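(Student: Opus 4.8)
The plan is to show that the sequence of partial sums $S_N := \sum_{p=0}^N \gamma_p$ is Cauchy with respect to the $C^k$-topology, and then invoke Lemma \ref{kjdskjskjds} (sequential completeness of $C^k([a,b],F)$) to conclude that it converges to some $\gamma \in C^k([a,b],F)$. By Lemma \ref{xcxcxccxxcxc}, the $C^k$-topology is generated by the seminorms $\qq^\dind_\infty$ with $\qq \in \SEMMM$ and $0\leq \dind\llleq k$, so it suffices to check the Cauchy condition with respect to each such seminorm. For $M<N$ we have, using that $\qq^\dind_\infty$ is a seminorm,
\begin{align*}
	\qq^\dind_\infty(S_N - S_M) = \qq^\dind_\infty\Big(\sum_{p=M+1}^N \gamma_p\Big) \leq \sum_{p=M+1}^N \qq^\dind_\infty(\gamma_p),
\end{align*}
and the right-hand side is a tail of the convergent series $\sum_{p=0}^\infty \qq^\dind_\infty(\gamma_p) < \infty$, hence tends to $0$ as $M\to\infty$. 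Thus $\{S_N\}_{N\in\NN}$ is Cauchy, and by Lemma \ref{kjdskjskjds} it converges w.r.t.\ the $C^k$-topology to some $\gamma\in C^k([a,b],F)$; this is by definition what it means for $\gamma = \sum_{p=0}^\infty \gamma_p$ to converge w.r.t.\ the $C^k$-topology.

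It then remains to identify the derivatives of $\gamma$. Since convergence in the $C^k$-topology is in particular convergence in each seminorm $\qq^\dind_\infty$, and $\qq^\dind_\infty(\eta) \geq \qq_\infty(\eta^{(\ell)})$ for $0\leq \ell \leq \dind \llleq k$, we get that $S_N^{(\ell)} = \sum_{p=0}^N (\gamma_p)^{(\ell)}$ converges to $\gamma^{(\ell)}$ w.r.t.\ the $C^0$-topology for each $0\leq \ell\llleq k$; equivalently $\gamma^{(\ell)} = \sum_{p=0}^\infty (\gamma_p)^{(\ell)}$ with convergence w.r.t.\ the $C^0$-topology. (Here one uses that for $k\in\NN\sqcup\{\infty\}$ the index set $\{\dind : \dind\llleq k\}$ contains every $\ell$ with $0\leq \ell\llleq k$, so each relevant derivative order is covered.)

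I do not expect any serious obstacle here: the statement is essentially the standard fact that an absolutely summable series in a sequentially complete locally convex space converges, applied to the space $C^k([a,b],F)$ whose sequential completeness was just established in Lemma \ref{kjdskjskjds}. The only mild point of care is bookkeeping with the notation $\dind\llleq k$ — in particular that the hypothesis ranges over all admissible $\dind$, which in the cases $k\in\NN$ and $k=\infty$ is exactly what is needed to control every derivative up to order $k$ — but this is purely formal.
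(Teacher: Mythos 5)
Your proof is correct and follows essentially the same route as the paper: the hypothesis makes the partial sums Cauchy w.r.t.\ the $C^k$-topology (via the triangle inequality for the seminorms $\qq^\dind_\infty$, with Lemma \ref{xcxcxccxxcxc} reducing to the fundamental system $\SEMMM$), and Lemma \ref{kjdskjskjds} then yields convergence in $C^k([a,b],F)$, from which the termwise identification of the derivatives is immediate. The paper's own proof is just a terser version of exactly this argument.
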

\begin{proof}
The assumptions imply that $\{\sum_{p=0}^n\gamma_p\}_{n\in \NN}\subseteq C^k([a,b],F)$ is Cauchy w.r.t.\ the $C^k$-topology. The rest is clear from Lemma \ref{kjdskjskjds}. 
\end{proof}

\begin{corollary}
\label{dsdsdsdsdsdsdsds}
Assume that $F$ is sequentially complete, and let $\SEMMM\subseteq \Sem{F}$ be a fundamental system.  
Let $\{X_p\}_{p\in \NN}\subseteq F$ and $\delta>0$ be given, such that 
\begin{align*}
	\textstyle \gamma[\qq]\colon (-\delta,\delta)\ni t\mapsto \sum_{p=0}^\infty t^p\cdot \qq(X_p)\in [0,\infty)
\end{align*}
is defined for each $\qq\in \SEMMM$. Define $\gamma_p\colon (-\delta,\delta)\ni t\mapsto  t^p\cdot X_p\in F$ for each $p\in \NN$. 
\begingroup
\setlength{\leftmargini}{17pt}{
\renewcommand{\theenumi}{{\alph{enumi}})} 
\renewcommand{\labelenumi}{\theenumi}
\begin{enumerate}
\item
\label{lksdkltzastzsatzsatzsatzsatzastz1}
	Let $-\delta<a<b<\delta$. Then, 
		$\textstyle\gamma[a,b]:= \sum_{p=0}^\infty\gamma_p|_{[a,b]}\in C^\infty([a,b],F)$ 		 
 converges  w.r.t.\ the $C^\infty$-topology, with 
\begin{align*}
	 \textstyle\gamma[a,b]^{(\ell)}=\sum_{p=0}^\infty(\gamma_p|_{[a,b]})^{(\ell)}\colon [a,b]\ni t\mapsto \sum_{p=\ell}^\infty \frac{p !}{(p-\ell)!}\cdot t^{p-\ell}\cdot X_p\in F\qquad\quad\forall\: \ell\in \NN.
\end{align*} 
\vspace{-17pt}
\item
\label{lksdkltzastzsatzsatzsatzsatzastz2}
	We have $\gamma :=\sum_{p=0}^\infty\gamma_p\in C^\infty((-\delta,\delta),F)$ with
	\begin{align*}
		\textstyle\gamma^{(\ell)}=\sum_{p=0}^\infty(\gamma_p)^{(\ell)}\colon (-\delta,\delta)\ni t\mapsto \sum_{p=\ell}^\infty \frac{p !}{(p-\ell)!}\cdot t^{p-\ell}\cdot X_p\in F \qquad\quad\forall\: \ell\in \NN. 
\end{align*}	 
\vspace{-22pt}
\end{enumerate}}
\endgroup
\end{corollary}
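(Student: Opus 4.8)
The plan is to reduce Corollary~\ref{dsdsdsdsdsdsdsds} to Corollary~\ref{dsdsdsdsdsdsdsdsasas} by checking, for each fixed compact subinterval, that the partial sums of $\{\gamma_p|_{[a,b]}\}_{p\in\NN}$ together with their derivatives up to any order are absolutely summable with respect to the $C^\ell_\infty$-seminorms. The key observation is that $\gamma_p(t)=t^p\cdot X_p$ has $\ell$-th derivative $\gamma_p^{(\ell)}(t)=\frac{p!}{(p-\ell)!}\cdot t^{p-\ell}\cdot X_p$ for $p\geq\ell$ and $\gamma_p^{(\ell)}\equiv 0$ for $p<\ell$, so for $\qq\in\SEMMM$ and $-\delta<a<b<\delta$, setting $r:=\max(|a|,|b|)<\delta$, one gets the bound
\begin{align*}
	\qq^\ell_\infty\big(\gamma_p|_{[a,b]}\big)=\sup_{t\in[a,b]}\max_{0\leq m\leq \ell}\Big|\tfrac{p!}{(p-m)!}\Big|\cdot |t|^{p-m}\cdot\qq(X_p)\leq p^\ell\cdot r^{p-\ell}\cdot\qq(X_p)
\end{align*}
for $p\geq\ell$.

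First I would fix $\ell\in\NN$ and $\qq\in\SEMMM$, pick $r<\rho<\delta$, and use the elementary fact (which the preliminaries on power series provide, or which follows directly from the root/ratio test applied to $\sum t^p\qq(X_p)$ converging on $(-\delta,\delta)$) that $\sum_{p}\rho^p\cdot\qq(X_p)<\infty$; since $p^\ell\cdot(r/\rho)^{p-\ell}\to 0$, hence is bounded by some constant $c=c(\ell,\qq,r,\rho)$, I obtain $\sum_{p\geq\ell} p^\ell r^{p-\ell}\qq(X_p)\leq c\cdot\rho^{-\ell}\sum_{p\geq\ell}\rho^p\qq(X_p)<\infty$. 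This verifies the hypothesis $\sum_{p=0}^\infty\qq^\dind_\infty(\gamma_p|_{[a,b]})<\infty$ of Corollary~\ref{dsdsdsdsdsdsdsdsasas} for $k=\infty$ and every $0\leq\dind<\infty$, so that corollary gives $\gamma[a,b]:=\sum_{p=0}^\infty\gamma_p|_{[a,b]}\in C^\infty([a,b],F)$ with $\gamma[a,b]^{(\ell)}=\sum_{p=0}^\infty(\gamma_p|_{[a,b]})^{(\ell)}=\sum_{p=\ell}^\infty\frac{p!}{(p-\ell)!}t^{p-\ell}X_p$ converging in the $C^0$-topology; this is precisely part~\ref{lksdkltzastzsatzsatzsatzsatzastz1}.

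For part~\ref{lksdkltzastzsatzsatzsatzsatzastz2}, I would argue locally: smoothness and the formula for $\gamma^{(\ell)}$ on $(-\delta,\delta)$ are local statements, and every point of $(-\delta,\delta)$ lies in some $[a,b]$ with $-\delta<a<b<\delta$. Applying part~\ref{lksdkltzastzsatzsatzsatzsatzastz1} on such intervals (and noting the restrictions of $\gamma$ and of the claimed derivative series agree with $\gamma[a,b]$ and $\gamma[a,b]^{(\ell)}$, by uniqueness of pointwise limits) shows $\gamma\in C^\infty((-\delta,\delta),F)$ with the asserted derivative series converging in the $C^0$-topology on each compact subinterval. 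I do not expect any genuine obstacle here; the only mildly delicate point is the uniform-in-$p$ polynomial-times-geometric bound, i.e.\ choosing the intermediate radius $\rho$ so that the factor $p^\ell(r/\rho)^{p-\ell}$ is absorbed into a constant — but this is the standard trick that power series can be differentiated termwise within their radius of convergence, now carried out seminorm-by-seminorm in the sequentially complete space $F$.
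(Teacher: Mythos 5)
Your proposal is correct and follows essentially the same route as the paper: both reduce to Corollary \ref{dsdsdsdsdsdsdsdsasas} by verifying $\sum_{p}\qq^{\dind}_\infty(\gamma_p|_{[a,b]})<\infty$, the paper by invoking smoothness and termwise differentiability of the scalar power series $\gamma[\qq]$, you by spelling out the underlying intermediate-radius estimate explicitly. (Minor cosmetic point: for $r>1$ your bound $p^\ell r^{p-\ell}\qq(X_p)$ needs an extra $p$-independent factor such as $\max(1,r^\ell)$, which changes nothing.)
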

\begin{proof}
Point \ref{lksdkltzastzsatzsatzsatzsatzastz2} is clear from Point \ref{lksdkltzastzsatzsatzsatzsatzastz1}. To prove Point \ref{lksdkltzastzsatzsatzsatzsatzastz1}, we observe that $\gamma[\qq]$ is smooth for each $\qq\in \SEMMM$, with 
\begin{align*}
	\textstyle\gamma[\qq]^{(\ell)}(t)= \sum_{p=\ell}^\infty \frac{p !}{(p-\ell)!}\cdot t^{p-\ell}\cdot \qq(X_p)\in [0,\infty) \qquad\quad\forall\: t\in (-\delta,\delta),\: \ell\in \NN. 
\end{align*}
This implies $\sum_{p=0}^\infty \qq_\infty^\dind(\gamma_p|_{[a,b]})<\infty$ for all $\qq\in \SEMMM$, $\dind\in \NN$, $a<b$, so that the claim is clear from Corollary \ref{dsdsdsdsdsdsdsdsasas}. 
\end{proof} 
\subsubsection{Lie Algebras}
\label{dlkdslklkds} 
Let $(\mq,\bil{\cdot}{\cdot})$ be a fixed Lie algebra (not necessarily the Lie algebra of a Lie group), i.e.,
\begingroup
\setlength{\leftmargini}{12pt}
\begin{itemize}
\item
	$\mq\in \HLCV$,
\item
	$\bil{\cdot}{\cdot}\colon \mq\times \mq\rightarrow \mq$ is bilinear, antisymmetric, and continuous, with (Jacobi identity)
\begin{align}
\label{nmvcnmvcnmnkjsakjsakjsaa}
 \bil{Z}{\bil{X}{Y}}= \bil{\bil{Z}{X}}{Y} + \bil{X}{\bil{Z}{Y}}\qquad\quad\forall\: X,Y,Z\in \mq.
\end{align}   
\end{itemize}
\endgroup
\noindent
We shall need the following definitions:
\begingroup
\setlength{\leftmargini}{12pt}
\begin{itemize}
\item
Given $X\in \mq$, we denote $\com{X}^1\equiv\bilbr{X}\colon \mq\ni Y\mapsto \bil{X}{Y}\in \mq$,\footnote{We note that if $(\mq,\bil{\cdot}{\cdot})$ is the Lie algebra of a Lie group $Q$, then one usually  defines $\com{X}\colon \mq\ni Y\mapsto \dd_e\Ad(Y)(X)\in \mq$ for $X\in \mg$. According to our definition \eqref{iufiugfiugfiugfiugfgf} of the commutator, this is consistent with our notation in the general case (i.e., where $(\mq,\bil{\cdot}{\cdot})$ is not necessarily the Lie algebra of a Lie group).}
and define inductively  
\begin{align*}
	\com{X}^0:=\id_\mq \qquad\quad\text{as well as}\qquad\quad \com{X}^n:=\com{X}\cp\com{X}^{n-1}\qquad\forall\: n\geq 1.\hspace{17pt}
\end{align*} 
\vspace{-18pt}	
\item
Given a subset $S\subseteq \mq$, then $\SPAN{S}\subseteq \mq$ denotes the linear subspace that is generated by $S$. We define inductively
\begin{align*}
	\SP_1(S):=\SPAN{S}\qquad\quad\text{as well as}\qquad\quad \SP_{n+1}(S):=\SPAN{\bil{\SP(S)}{\SP_n(S)}}\qquad\forall\: n\geq 1;
\end{align*}
and set $\textstyle\Gen_n(S):= \SPAN{\bigcup_{\ell= n}^{\infty} \SP_\ell(S)}$ for each $n\geq 1$.
\end{itemize}
\endgroup
\noindent  
A straightforward induction involving \eqref{nmvcnmvcnmnkjsakjsakjsaa} shows (confer Appendix \ref{asassadsdsdsdsdsdsdsaaa} or Exercise 5.2.7 in \cite{HIGN}) 
\begin{align}
\label{kjsdkjdsjdsa}
	\SPAN{\bil{\mathcal{V}_m(S)}{\mathcal{V}_n(S)}}\subseteq \mathcal{V}_{m+n}(S)\qquad\quad\forall\:m,n\geq 1.
\end{align}
For $n\geq 1$, we set $\CSP_n:=\clos{\mq}{\SP_n}$ as well as $\CGen_n:=\clos{\mq}{\Gen_n}$. Since $\bil{\cdot}{\cdot}$ is continuous,  \eqref{kjsdkjdsjdsa} implies
\begin{align}
	\label{kjfdkjkjfdkjfdkjfd}
	\SPAN{\bil{\CSP_m(S)}{\CSP_n(S)}}\subseteq \CSP_{m+n}(S)\qquad\quad\forall\:m,n\geq 1. 
\end{align}
We observe that $\CGen_{n+\ell}(S)\subseteq \CGen_{n}(S)$ holds for all $n,\ell\in \NN$. We shall need the following definitions:\he\footnote{Note that the first definition is recalled from Sect.\ \ref{nmdsnmdsnmsdnmdsnmdsnmdsnmnmds}.} 
\begingroup
\setlength{\leftmargini}{12pt}
\begin{itemize}
\item
A subset $\AES\subseteq \mq$ is said to be an \AAE \defff to each $\vv\in \Sem{\mq}$, there exist $\vv\leq \ww\in \Sem{\qq}$ with
\begin{align}
\label{assaaas1}
	\vv(\bl X_1,\bl X_2,\bl \dots,\bl X_n,Y\br{\dots}\br\br])\leq \ww(X_1)\cdot{\dots}\cdot \ww(X_n)\cdot \ww(Y)
\end{align}
for all $X_1,\dots,X_n,Y\in \AES$, and $n\geq 1$. 
We say that $(\mq,\bil{\cdot}{\cdot})$ is asymptotic estimate \defff $\mq$ is an \AAE.
\item
A subset $\NILS\subseteq \mq$ is said to be a \NIL{q} for $q\geq 2$ \defff  
\begin{align}
\label{assaaas2}
	\bl X_1,\bl X_2,\bl \dots,\bl X_{q-1},X_q\br{\dots}\br\br\br=0
	\qquad\quad\forall\: X_1,\dots,X_q\in \NILS.
\end{align}
Evidently, each \NIL{q} (for $q\geq 2$) is an \AAE.   
We say that $(\mq,\bil{\cdot}{\cdot})$ is nilpotent \defff $\mq$ is a \NIL{q} for some $q\geq 2$.
\end{itemize}
\endgroup
\noindent
In view of Sect.\ \ref{dsdsdsddssddsds} (the proof of Lemma \ref{mnsdmnsdmnsdmndsmnsd}), we observe the following.
\begin{remark}
\label{dsdsdsdsdsv}
Let $\NILS\subseteq \mq$ be a \NIL{q} for some $q\geq 2$. Then, 
\vspace{-4pt}
\begingroup
\setlength{\leftmargini}{17pt}
{
\renewcommand{\theenumi}{\emph{\arabic{enumi})}} 
\renewcommand{\labelenumi}{\theenumi}
\begin{enumerate}
\item
\label{dsdsdsdsdsv1}
$\CGen_{q+n}(\NILS)=\{0\}$ holds for each $n\in \NN$.
\item
\label{dsdsdsdsdsv4}
$\bil{\CGen_m(\NILS)}{\CGen_n(\NILS)}\subseteq \CGen_{m+n}(\NILS)$ holds for $m,n\geq 1$, by \eqref{kjfdkjkjfdkjfdkjfd}. 
\item
\label{dsdsdsdsdsv2}
	$\CGen_n(\NILS)$ is a \NIL{q} for each $n\geq 1$, by the previous  points  \ref{dsdsdsdsdsv1} and \ref{dsdsdsdsdsv4}.
\item
\label{dsdsdsdsdsv3}
Let $k\in \NN\cup\{\lip,\infty\}$ be given; and assume that $\mq$ is Mackey complete for $k\in \NN_{\geq 1}\cup\{\lip,\infty\}$, as well as integral complete for $k\equiv 0$. Then, for $n\in \{1,\dots,q\}$ and $\phi\in C^k([a,b],\mq)$ with $\im[\phi]\subseteq \CGen_n(\NILS)$, we have 
\begin{align*}
	\textstyle\int_s^t \phi(s)\:\dd s \in \CGen_n(\NILS)\qquad\quad\forall\: a\leq s<t\leq b.
\end{align*}
\vspace{-40pt}

\noindent	
\hspace*{\fill}\qed
\end{enumerate}}
\endgroup		  
\end{remark}

\subsubsection{Some Properties of Maps}
Let $F_1,\dots,F_n,F,E\in \HLCV$ be given.
\begin{lemma}
\label{alalskkskaskaskas}
Let $X$ be a topological space; and let $\Phi\colon X\times F_1\times{\dots}\times F_n\rightarrow F$ be continuous, such that $\Phi(x,\cdot)$ is 
	$n$-multilinear for each $x\in X$. Then, to each compact $\compacto\subseteq X$ and each $\qq\in \Sem{F}$, there exist $\qq_1\in \Sem{F_1},\dots,\qq_n\in \Sem{F_n}$ as well as $O\subseteq X$ open with $\compacto\subseteq O$, such that
	\begin{align*}
		(\qq\cp\Phi)(y,X_1,\dots,X_n) \leq \qq_1(X_1)\cdot {\dots}\cdot \qq_n(X_n)\qquad\quad\forall\: y\in O,\: X_1\in F_1,\dots,X_n\in F_n.
	\end{align*}  
\end{lemma}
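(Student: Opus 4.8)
The plan is to reduce the multi-factor estimate to the familiar single-variable statement about jointly continuous bilinear (here $n$-multilinear) maps, by induction on $n$ together with a compactness argument that absorbs the parameter $x \in X$ into a single open neighbourhood of $\compacto$. The base case $n=1$ is the standard fact: a continuous map $\Phi \colon X \times F_1 \rightarrow F$ that is linear in the second slot satisfies, near any compact $\compacto \subseteq X$, an estimate $(\qq \cp \Phi)(y,X_1) \leq \qq_1(X_1)$ for suitable $\qq_1 \in \Sem{F_1}$ and open $O \supseteq \compacto$. This in turn is obtained from continuity of $\Phi$ at points $(x,0)$: for each $x \in \compacto$ there is an open $O_x \ni x$ and a seminorm $\qq_x \in \Sem{F_1}$ and $\varepsilon_x > 0$ with $(\qq \cp \Phi)(O_x \times \OB_{\qq_x,\varepsilon_x}) \subseteq [0,1)$; covering $\compacto$ by finitely many $O_{x_1},\dots,O_{x_m}$, setting $O := \bigcup_j O_{x_j}$ and $\qq_1 := \tfrac{1}{\min_j \varepsilon_{x_j}} \cdot \max\{\qq_{x_1},\dots,\qq_{x_m}\} \in \Sem{F_1}$, and using linearity in $X_1$ to scale an arbitrary $X_1$ into the unit ball (the case $\qq_1(X_1)=0$ being handled by scaling by arbitrarily large factors), one gets $(\qq\cp\Phi)(y,X_1) \leq \qq_1(X_1)$ for all $y \in O$, $X_1 \in F_1$.

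For the inductive step, suppose the statement holds for $n-1$ multilinear arguments. Given $\Phi \colon X \times F_1 \times \dots \times F_n \rightarrow F$ as in the hypothesis, I would view it as a map that is continuous and linear in the last slot $F_n$ with the "parameter space" being $X \times F_1 \times \dots \times F_{n-1}$. Applying the $n=1$ case is not directly possible because $\compacto$ sits in $X$, not in $X \times F_1 \times \dots \times F_{n-1}$; instead I would apply continuity of $\Phi$ at points $(x,0,\dots,0)$ with $x \in \compacto$. Concretely, for each $x \in \compacto$, continuity at $(x,0,\dots,0)$ yields an open $O_x \ni x$, seminorms $\qq^x_1 \in \Sem{F_1},\dots,\qq^x_n \in \Sem{F_n}$, and $\varepsilon_x > 0$ with $(\qq \cp \Phi)(y,X_1,\dots,X_n) < 1$ whenever $y \in O_x$ and $\qq^x_k(X_k) \leq \varepsilon_x$ for all $k$. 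Compactness of $\compacto$ gives a finite subcover $O_{x_1},\dots,O_{x_m}$; put $O := \bigcup_j O_{x_j}$ and $\qq_k := \tfrac{1}{\varepsilon} \cdot \max_j \qq^{x_j}_k \in \Sem{F_k}$ with $\varepsilon := \min_j \varepsilon_{x_j}$. Then for $y \in O$ with each $\qq_k(X_k) \leq 1$ we have $(\qq\cp\Phi)(y,X_1,\dots,X_n) < 1$, and $n$-homogeneity (rescale each $X_k$ by $1/\qq_k(X_k)$, or by arbitrarily large scalars in the degenerate case $\qq_k(X_k)=0$) upgrades this to $(\qq\cp\Phi)(y,X_1,\dots,X_n) \leq \qq_1(X_1)\cdot {\dots}\cdot \qq_n(X_n)$ for all $X_1,\dots,X_n$. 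This actually subsumes the base case, so the argument can be done in one shot rather than by induction.

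The only genuinely delicate point is the handling of the degenerate case $\qq_k(X_k) = 0$ for some $k$: there multilinearity lets one scale $X_k$ by an arbitrary large $t > 0$ without changing $\qq_k(tX_k) = 0$, so $t^{?}\cdot(\qq\cp\Phi)(y,X_1,\dots,X_k,\dots,X_n)$ stays bounded as $t \to \infty$ — forcing $(\qq\cp\Phi)(y,X_1,\dots,X_n) = 0$, which is consistent with the right-hand side being $0$. Apart from this bookkeeping, everything is routine; no completeness or locally convex subtleties beyond the existence of continuous seminorms (which is guaranteed since $F_k \in \HLCV$) are needed. The main conceptual move — and the only place where the compact set $\compacto$ genuinely enters — is replacing the pointwise continuity estimates by a single uniform one via a finite subcover, so I would expect writing that step carefully (in particular choosing the common $\varepsilon$ and the common seminorms) to be where the bulk of the care goes.
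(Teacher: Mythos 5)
Your proof is correct: the paper itself gives no argument here (it just cites Corollary 1 of \cite{RGM}), and your one-shot argument — continuity of $\qq\cp\Phi$ at the points $(x,0,\dots,0)$, a finite subcover of $\compacto$, passing to the maxima of the finitely many seminorms scaled by $1/\min_j\varepsilon_{x_j}$, and then upgrading the bound on the "unit polyball" to the full estimate via $n$-homogeneity (with the $\qq_k(X_k)=0$ case forced to zero by scaling) — is exactly the standard proof of that cited result. The only cosmetic gap is the stray ``$t^{?}$'' in the degenerate case, where the exponent is just the number of indices $k$ with $\qq_k(X_k)=0$.
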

\begin{proof}
Confer, e.g., Corollary 1 in \cite{RGM}.
\end{proof}

\begin{lemma}
\label{hjfdkjfdkjfd}
Let $V\subseteq F$ be open with $0\in V$. Let furthermore  
$\Psi\colon V\times E\rightarrow E$ 
be smooth with $\Psi(0,\cdot)=\id_{E}$, such that $\Psi(x,\cdot)$ is linear for each $x \in V$. Then, to each $\pp\in \Sem{E}$, there exist  $\qq\in \Sem{F}$ and $\ww\in \Sem{E}$ with
\begin{align*}
	\pp(\Psi(x,Y)-Y)\leq \qq(x)\cdot \ww(Y)\qquad\quad\forall\: x\in \B_{\qq,1}\subseteq V,\: Y\in E.
\end{align*}
\end{lemma}
\begin{proof}
Confer Appendix \ref{asassadsdsdsdsdsdsds}.	
\end{proof}
\begin{lemma}
\label{sdsdds}
Let $F_1,F_2\in \HLCV$, and $f\colon F_1\supseteq U\rightarrow F_2$ of class $C^{2}$. Assume that 
$\gamma\colon D\rightarrow F_1$ is continuous at $t\in D$, such that $\lim_{h\rightarrow 0} 1/h \cdot (\gamma(t+h)-\gamma(t))=:X\in \comp{F}_1$ exists. Then, we have
\begin{align*}
	\textstyle\lim_{h\rightarrow 0} 1/h\cdot (f(\gamma(t+h))-f(\gamma(t)))=\comp{\dd_{\gamma(t)}f}\he(X).
\end{align*}  
\end{lemma}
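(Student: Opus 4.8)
The plan is to expand $f$ to second order along the segment joining $\gamma(t)$ to $\gamma(t+h)$, so that the difference quotient splits into a linear term, governed by the continuity of the extended differential $\comp{\dd_{\gamma(t)}f}\colon\comp{F}_1\to\comp{F}_2$, and a quadratic remainder that is estimated uniformly by means of the $C^2$-hypothesis together with Lemma~\ref{alalskkskaskaskas}. Throughout, for $h$ with $t+h\in D$ and $h\neq 0$ I write $v_h:=\gamma(t+h)-\gamma(t)$; by assumption $v_h\to 0$ in $F_1$ and $v_h/h\to X$ in $\comp{F}_1$, and $\frac1h(f(\gamma(t+h))-f(\gamma(t)))$ is then a well-defined element of $F_2\subseteq\comp{F}_2$ for $h$ near $0$.

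First I would fix $\qq\in\Sem{F_2}$ and apply Lemma~\ref{alalskkskaskaskas} to the second differential $\dd^{(2)}f\colon U\times F_1\times F_1\to F_2$ (continuous, and bilinear in the last two entries because $f$ is of class $C^2$) with the compact set $\{\gamma(t)\}\subseteq U$. This produces $\qq_1,\qq_2\in\Sem{F_1}$ and an open set $O$ with $\gamma(t)\in O\subseteq U$ such that $\qq(\dd^{(2)}_yf(Y_1,Y_2))\leq\qq_1(Y_1)\,\qq_2(Y_2)$ for all $y\in O$ and $Y_1,Y_2\in F_1$. Intersecting $O$ with a convex open neighbourhood of $\gamma(t)$ contained in $U$, I may assume $O$ is convex (the estimate persists on the smaller set); by continuity of $\gamma$ at $t$ there is $\delta>0$ with $\gamma(t+h)\in O$ whenever $t+h\in D$ and $|h|<\delta$, so that the whole segment $\{\gamma(t)+sv_h\:|\:s\in[0,1]\}$ lies in $O$ for such $h$.

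Next I would invoke the second-order Taylor formula with integral remainder, which for $C^2$-maps follows from two applications of the fundamental theorem of calculus \eqref{isdsdoisdiosd} — to $s\mapsto f(\gamma(t)+sv_h)$ and to $s\mapsto\dd_{\gamma(t)+sv_h}f(v_h)$ — combined with an integration by parts (cf.\ Appendix~\ref{Diffcalc}): for $|h|<\delta$ one has
\begin{align*}
	f(\gamma(t+h))-f(\gamma(t))=\dd_{\gamma(t)}f(v_h)+\textstyle\int_0^1(1-s)\,\dd^{(2)}_{\gamma(t)+sv_h}f(v_h,v_h)\,\dd s.
\end{align*}
Dividing by $h$ and moving the scalar $1/h$ through $\dd_{\gamma(t)}f$ and into the first slot of $\dd^{(2)}_yf$, the first summand becomes $\dd_{\gamma(t)}f(v_h/h)=\comp{\dd_{\gamma(t)}f}(v_h/h)$, which tends to $\comp{\dd_{\gamma(t)}f}(X)$ since $\comp{\dd_{\gamma(t)}f}$ is continuous on $\comp{F}_1$ and restricts to $\dd_{\gamma(t)}f$ on $F_1$, while $v_h/h\to X$. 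For the second summand, \eqref{ffdlkfdlkfd} together with the estimate from Lemma~\ref{alalskkskaskaskas} (applicable because $\gamma(t)+sv_h\in O$) gives
\begin{align*}
	\comp{\qq}\Big(\textstyle\int_0^1(1-s)\,\dd^{(2)}_{\gamma(t)+sv_h}f(v_h/h,v_h)\,\dd s\Big)\leq\textstyle\int_0^1(1-s)\,\qq_1(v_h/h)\,\qq_2(v_h)\,\dd s=\tfrac12\,\qq_1(v_h/h)\,\qq_2(v_h),
\end{align*}
where $\qq_1(v_h/h)=\comp{\qq_1}(v_h/h)\to\comp{\qq_1}(X)$ stays bounded as $h\to0$ and $\qq_2(v_h)\to0$ by continuity of $\gamma$ at $t$; hence this summand tends to $0$ in $\comp{F}_2$. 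Adding the two contributions yields $\comp{\qq}\big(\tfrac1h(f(\gamma(t+h))-f(\gamma(t)))-\comp{\dd_{\gamma(t)}f}(X)\big)\to0$ for every $\qq\in\Sem{F_2}$, which proves the claim since the extended seminorms generate the topology of $\comp{F}_2$.

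The only genuinely delicate point is the bookkeeping with the completion: the difference quotient $v_h/h$ converges merely in $\comp{F}_1$, so it cannot be fed directly into $\dd_{\gamma(t)}f$ and passed to the limit without first passing to the continuous extension $\comp{\dd_{\gamma(t)}f}$, and the seminorm estimates on $F_1$ must be read via their extensions $\comp{\qq_i}$ to make sense of the boundedness of $\qq_1(v_h/h)$. The quadratic remainder — hence the hypothesis $C^2$ rather than just $C^1$ — is precisely what renders the estimate uniform in the base point $\gamma(t)+sv_h$ through Lemma~\ref{alalskkskaskaskas}; the underlying second-order Taylor formula itself is a standard fact of the Bastiani calculus recorded in Appendix~\ref{Diffcalc}.
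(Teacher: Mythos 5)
Your argument is correct. Note that the paper itself gives no internal proof of this lemma but merely cites Lemma~7 of \cite{RGM}, so there is nothing to compare line by line; your write-up supplies the standard self-contained argument that such a citation stands in for. The two essential points are exactly the ones you isolate: the linear term must be read through the continuous extension $\comp{\dd_{\gamma(t)}f}$ because $v_h/h$ converges only in $\comp{F}_1$, and the quadratic remainder is killed by the locally uniform bilinear estimate on $\dd^{(2)}f$ furnished by Lemma~\ref{alalskkskaskaskas} together with $\qq_2(v_h)\to 0$ and the boundedness of $\comp{\qq_1}(v_h/h)$. One small remark on economy: the weighted Taylor remainder $\int_0^1(1-s)\,\dd^{(2)}_{\gamma(t)+sv_h}f(v_h,v_h)\,\dd s$ requires a Fubini/integration-by-parts identity for vector-valued Riemann integrals that the paper does not record; you can sidestep it entirely by writing $f(\gamma(t+h))-f(\gamma(t))=\int_0^1\dd_{\gamma(t)+sv_h}f(v_h)\,\dd s$ via a single application of \eqref{isdsdoisdiosd} and then estimating $\dd_{\gamma(t)+sv_h}f(v_h/h)-\dd_{\gamma(t)}f(v_h/h)=\int_0^s\dd^{(2)}_{\gamma(t)+uv_h}f(v_h,v_h/h)\,\dd u$ with the same seminorm bound and \eqref{ffdlkfdlkfd}; this yields the identical conclusion with slightly less machinery. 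Either way the proof is complete.
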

\begin{proof}
Confer, e.g., Lemma 7 in \cite{RGM}.
\end{proof}
\noindent
We close this subsection with the following convention concerning differentiable maps with values in Lie groups.
\begin{convention}
\label{lkcxlkcxlkxlkcxpoidsoids}
Let $F\in \HLCV$, $U\subseteq F$, $G$ a Lie group modeled over $E\in \HLCV$. A map $f\colon U\rightarrow G$ is said to be  
\begingroup
\setlength{\leftmargini}{12pt}
\begin{itemize}
\item
differentiable at $x\in U$ \defff there exists a chart $(\chart',\U')$ of $G$ with $f(x)\in \U'$, such that 
\begin{align}
\label{pofdpofdpofdfd}
	\textstyle (D^{\chart'}_v f)(x):=\lim_{h\rightarrow 0} 1/h\cdot ((\chart'\cp f)(x+h\cdot v)-(\chart'\cp f)(x))\in E\qquad\quad\forall\: v\in F
\end{align}
exists. Lemma \ref{sdsdds} applied to coordinate changes shows that \eqref{pofdpofdpofdfd} holds for one chart around $f(x)$ \deff it holds for each chart around $f(x)$, and that
\begin{align*}
	\dd_x f(v):= \big(\dd_{\chart'(f(x))}\chart'^{-1} \cp (D^{\chart'}_v f)\big)(x)\in T_{f(x)}G \qquad\quad\forall\: v\in F	
\end{align*}  
is independent of the explicit choice of $(\chart',U')$.
\item
differentiable \defff $f$ is differentiable at each $x\in U$. \hspace*{\fill}\qed
\end{itemize}
\endgroup
\end{convention}

\subsection{Power Series}
\label{ouwuiewuiewuoew}
In this subsection, we collect some statements concerning power series in (Banach) algebras that we shall need to work informally in Sect.\ \ref{sdsssdhsdhcxuzcxuzcxcx}. We set
\begin{align*}
	\mU_{\varepsilon}(z):=\{w\in \CC \: |\:  |w-z|<\varepsilon\}\qquad\quad\forall\: \varepsilon>0,\: z\in \CC, 
\end{align*}
and let $\korp\in \{\RR,\CC\}$ be fixed. 
\begingroup
\setlength{\leftmargini}{12pt}{
\begin{itemize}
\item
Let $(\Module,\Modmult,\unitM)$ be a unital $\korp$-algebra, and let $\Module_q:=\{\model\in \Module\:|\: \model^{q+1}=0\}$ for $q\geq 1$. Set furthermore $\model^0:=\unitM$ and $\model^1:=\model$ for each $\model\in \Module$.
\item
Let $(\Banach,\Banmult,\unit,\bannorm{\cdot})$ be a unital submultiplicative Banach algebra over $\korp$, and set $\banel^0:=\unit$ as well as $\banel^1:=\banel$ for each $\banel\in\Banach$.
\end{itemize}
}
\endgroup
\noindent 
Let $f\colon  \mU_{R}(0)\ni z\mapsto \sum_{n=0}^\infty a_n\cdot z^n\in \CC$ for $\{a_n\}_{n\in \NN}\subseteq \korp$ be a power series with radius of convergence $R >0$. We define
\begin{align*}
	\textstyle|f|_r:=\sum_{n=0}^\infty |a_n|\cdot r^n\in [0,\infty)\qquad\quad\forall\: 0\leq r<R,
\end{align*}
and set $\textstyle f_p\colon \mU_{R}(0)\ni z\mapsto \sum_{n=0}^p a_n\cdot z^n\in \CC$ for each $p\in \NN$. 
We furthermore let
\begin{align*}
	f(\model)&\textstyle:= \sum_{n=0}^q a_n\cdot \model^n\in \Module\qquad\quad\forall\: \model\in \Module_q\:\: \text{with}\:\: \hspace{0.8pt} q\geq 1\\[1pt]
	f(\banel)&\textstyle:=\sum_{n=0}^\infty a_n\cdot \banel^n \in \Banach\qquad\quad\hspace{0.5pt}\forall\: \banel\in \Banach\hspace{9.8pt} \text{with}\:\: \bannorm{\banel}<R.
\end{align*}
Let $g\colon  \mU_{S}(0)\ni z\mapsto \sum_{n=0}^\infty b_n\cdot z^n\in \CC$ be a power series with radius of convergence $S>0$. 
\begingroup
\setlength{\leftmargini}{12pt}{
\begin{itemize}
\item
Assume $S=R$. The Cauchy product formula yields
\begin{align*}
		\textstyle f \ast g\colon \mU_R\ni z\mapsto \sum_{n=0}^\infty \big(\sum_{\ell=0}^n a_\ell\cdot b_{n-\ell}\big) \cdot z^n=f(z)\cdot g(z)\in \CC.
\end{align*}
\begingroup
\setlength{\leftmarginii}{12pt}{
\begin{itemize}
\item[$\circ$]
For $\model\in \Module_q$ with $q\geq 1$, we evidently have
\begin{align*}
	\textstyle f(\model)\Modmult g(\model) = \sum_{n=0}^q \big(\sum_{\ell=0}^n a_\ell\cdot b_{n-\ell}\big) \cdot \model^n = (f\ast g)(\model)\in \Module. 
\end{align*}  
\item[$\circ$]
For $\banel\in \Banach$ with $\bannorm{\banel}< R$, we obtain
\begin{align*}
	\textstyle f(\banel)\Banmult g(\banel) = \sum_{n=0}^\infty \big(\sum_{\ell=0}^n a_\ell\cdot b_{n-\ell}\big) \cdot \banel^n = (f\ast g)(\banel)\in \Banach.
\end{align*}
(Apply, e.g., Exercise 3.1.3 in \cite{HIGN}, with $X,Y,Z\equiv \Banach$ and $\beta \equiv \Banmult$ there.)
\end{itemize}
}
\endgroup
\item
Assume $|g|_s=\sum_{n=0}^\infty |b_n|\cdot s^n < R$ for all $0\leq s< S$, thus $g(\mU_{S})\subseteq \mU_{R}(0)$. 
By analyticity, we have
\begin{align*}
	\textstyle f\diamond g\colon \mU_{S}\ni z\mapsto  \sum_{n=0}^\infty \underbracket{\textstyle\frac{(f\cp g)^{(n)}(0)}{n!}}_{=:\: c_n} \:\cdot\: z^n = f(g(z))\in \CC.
\end{align*}
The following statements are verified in Appendix \ref{asassadsdsdsdfdfdsdsdsds}:
\begingroup
\setlength{\leftmarginii}{12pt}{
\begin{itemize}
\item[$\circ$]
Assume $b_0=0$, and let $\model\in \Module_q$ with $q\geq 1$. We have $g(\model)\in \Module_q$ with
\begin{align}
\label{ljksdkljsdklsdnil}
	\textstyle f(g(\model))= \sum_{n=0}^q c_n \cdot \model^n = (f\diamond g)(\model)\in \Module.
\end{align}
\vspace{-15pt}
\item[$\circ$]
For $\banel\in \Banach$ with $\bannorm{\banel}< S$, we have 
\begin{align}
\label{ljksdkljsdklsd}
	\textstyle f(g(\banel))= \sum_{n=0}^\infty c_n \cdot \banel^n = (f\diamond g)(\banel)\in \Banach.
\end{align}
\end{itemize}
}
\endgroup
\end{itemize}
}
\endgroup
\noindent
\begin{example}
\label{iureiuriureiuiure}
Let $\korp=\RR$, and define the power series
\begin{align*}
	\qquad\quad f&\colon  \hspace{36pt}(-1,1)\ni \textstyle t\mapsto              	 \sum_{n=1}^\infty \frac{(-1)^{n-1}}{n}\cdot t^{n-1}\in \RR\hspace{49.5pt} \Big(\frac{\ln(1+t)}{t}\Big)\\
	 g&\colon  (-\ln(2),\ln(2))\ni    \textstyle t\mapsto    			   \sum_{n=1}^\infty \frac{1}{n!}\cdot t^n\in (-1,1)\hspace{59pt}\big(\e^t-1\big) \\
	 h&\colon  \hspace{38.2pt}(-1,1)\ni    \textstyle t\mapsto    \sum_{n=0}^\infty\frac{1}{(n+1)!}\cdot t^n \in \RR.\hspace{62.2pt}\Big(\frac{\e^t-1}{t}\Big)
\end{align*}
In the context of the above notations, we have $b_0=0$, $R=1$, $S=\ln(2)$, with $|g|_s<\e^{S}-1=R$ for $0\leq s<S$ as well as
\begin{align}
\label{uisiusuisuisuis}
	\textstyle ((f\diamond g) \ast h)(t) =f(g(t))\cdot h(t)=1\qquad\quad\forall\: t\in (-\ln(2),\ln(2)).
\end{align}
We consider the power series\footnote{More specifically, $\tilde{f}=p\ast f$ for $p\colon (-1,1)\ni t\mapsto (1+t)\in \RR$, and $\tilde{g}\colon (-\ln(2),\ln(2))\ni t\mapsto \sum_{n=1}^\infty \frac{1}{n!}\cdot (-t)^n \in (-1,1)$.}
\begin{align*}
	&\tilde{f}\colon\hspace{36pt} (-1,1)\ni t\mapsto (1+t)\cdot f(t) \in \RR\\
	&\tilde{g}\colon (-\ln(2),\ln(2))\ni t\mapsto g(-t)\hspace{31.5pt} \in (-1,1),
\end{align*}
and obtain for $t\in (-\ln(2),\ln(2))$ that
\begin{align}
\label{ljkdlkjfdlkjfd}
	\textstyle(\tilde{f}\diamond\tilde{g})(t)=\tilde{f}(\tilde{g}(t))=\tilde{f}(e^{-t}-1)= e^{-t}\cdot \frac{-t}{e^{-t}-1}=\frac{t}{e^t-1}=f(g(t))=(f\diamond g)(t).
\end{align}
Let now  $Z\in \mg$ be fixed, and assume that one of the following  situations hold:
\begingroup
\setlength{\leftmargini}{19pt}
{
\renewcommand{\theenumi}{\rm\Alph{enumi})} 
\renewcommand{\labelenumi}{\theenumi}
\begin{enumerate}
\item
\label{oidiudiudiudiu2}
Let $\NILS\subseteq \mg$ be a \NIL{q} for $q\geq 1$. We define $V:=\CGen_1(\NILS)$, and let $\End(V)$ denote the set of all linear maps $V\rightarrow V$. Then, $(\Module,\Modmult,\unitM)\equiv(\End(V),\cp,\id_{V})$ is a unital 
$\RR$-algebra, with $\com{Z}\in \Module_{q-1}$ for each $Z\in V$ by Remark \ref{dsdsdsdsdsv}.\ref{dsdsdsdsdsv2}. 
\item
\label{oidiudiudiudiu1}
$G$ is a Banach Lie group with 
\begin{align*}
	\norm{\bil{X}{Y}}\leq \norm{X}\cdot \norm{Y}
	\qquad\quad\forall\: X,Y\in \mg,
\end{align*}
and we have $\norm{Z}<\ln(2)$. We define $V:=\mg$, and let 
	$\End^{\mathrm{c}}(V)$ denote the set of all continuous linear maps $V\rightarrow V$. Then, 
	 $(\Banach,\Banmult,\unit,\bannorm{\cdot})\equiv (\End^{\mathrm{c}}(V),\cp,\id_{V},\norm{\cdot}_\op)$ is a unital submultiplicative Banach algebra over $\RR$, and we have $\com{Z}\in \End^{\mathrm{c}}(V)$ with $\norm{\com{Z}}_\op< \ln(2)$.   
\end{enumerate}
}
\endgroup
\noindent
In both situations \ref{oidiudiudiudiu2} and \ref{oidiudiudiudiu1}, the above discussions together with \eqref{uisiusuisuisuis} and \eqref{ljkdlkjfdlkjfd} show 
\begin{align*}
	f(g(\com{Z}))\cp h(\com{Z})&=((f\diamond g) \ast h)(\com{Z})=\id_V\\[2pt]
	\tilde{f}(\tilde{g}(\com{Z}))&=f(g(\com{Z}))
\end{align*}
\vspace{-20pt}

\noindent
for $Z\in V$. This can be rewritten as
\begin{align}
\label{lkjfdjlfdkjfdlka}
\begin{split}
	\textstyle\Psi\big(\sum_{n=0}^\infty \frac{1}{n!}\cdot\com{Z}^n\big)(\Phi(\com{Z})(Y))&\textstyle=Y
	\\[2pt]
	\textstyle\wt{\Psi}\big(\sum_{n=0}^\infty \frac{1}{n!}\cdot\com{-Z}^n\big)(Y)&\textstyle=\Psi\big(\sum_{n=0}^\infty \frac{1}{n!}\cdot\com{Z}^n\big)(Y) 
\end{split}
\end{align}
for each $Y\in V$, 
whereby for maps $\xi,\zeta\colon \mg\rightarrow \mg$ and $X\in \mg$ we set (convergence presumed)
\begin{align*}
	\textstyle \Psi(\xi)(X)&\textstyle:= \sum_{n=1}^\infty \frac{(-1)^{n-1}}{n}\cdot(\xi-\id_\mg)^{n-1}(X)\\
	\textstyle \wt{\Psi}(\xi)(X)&\textstyle:= \sum_{n=1}^\infty \frac{(-1)^{n-1}}{n}\cdot\big(\xi\cp (\xi-\id_\mg)^{n-1}\big)(X)\\[3pt]
	\textstyle \Phi(\zeta)(X)&\textstyle:= \sum_{n=0}^\infty\frac{1}{(n+1)!}\cdot \zeta^{n}(X).
\end{align*}
Equation \eqref{lkjfdjlfdkjfdlka} will be relevant for our discussions in Sect.\ \ref{sdsssdhsdhcxuzcxuzcxcx}.  
\hspace*{\fill}\qed
\end{example}

\subsection{The Evolution Map}
\label{dsdsdsdsdspopopo}
The subject of this section is the evolution map. We recall its elementary properties (confer also \cite{RGM}), as well as the differentiability results obtained in \cite{MDM}. We furthermore introduce the notion of weak $C^k$-regularity (confer Definition \ref{tztztggfgf}).

\subsubsection{Elementary Definitions}
The right logarithmic derivative is given by
\begin{align*}
	\Der\colon C^1(D,G)\rightarrow C^0(D,\mg),\qquad \mu\mapsto \dd_\mu\RT_{\mu^{-1}}(\dot \mu)
\end{align*}
for each interval $D\subseteq \RR$. Notably, for $\mu\in C^1(D,G)$, $g\in G$, an interval $D'\subseteq D$, and $\varrho\colon  D''\rightarrow D$ of class $C^1$ for $D''\subseteq \RR$ an interval, we have
\begin{align}
\label{fgfggf}
	\Der(\mu\cdot g)=\Der(\mu)\qquad\quad\: \Der(\mu|_{D'})=\Der(\mu)|_{D'}\qquad\quad\: \Der( \mu\cp\varrho)=\dot\varrho\cdot(\Der(\mu)\cp\varrho).
\end{align}
Moreover, for $\mu,\nu\in C^1(D,G)$, it follows from the product rule \eqref{LGPR} that
	\begin{align*}
		\Der(\mu\cdot \nu)= \Der(\mu)+\Ad_\mu(\Der(\nu))
	\end{align*}
holds.  
For $a<b$ and $k\in \NN\cup\{\lip,\infty,\const\}$, we define
\begin{align*}
\textstyle \DIDE_{[a,b]}:=\Der(C^1([a,b],G))\qquad\quad\text{as well as}\qquad\quad
\textstyle\DIDE_{[a,b]}^k:=\DIDE_{[a,b]}\cap C^k([a,b],\mg).
\end{align*}
Now, $\Der$ restricted to the set
\begin{align*}
	C_*^1([a,b],G):=\{\mu\in C^1([a,b],G)\:|\: \mu(a)=e\}
\end{align*}
is injective for $a<b$ (confer, e.g., Lemma 9 in \cite{RGM}). We thus obtain a map
\begin{align*}
	\textstyle\EV\colon \DIDE:=\bigcup_{a<b}\DIDE_{[a,b]}\rightarrow \bigcup_{a<b}C_*^1([a,b],G),
\end{align*}
if for $a<b$ we define
\begin{align*}
	\EV\colon \DIDE_{[a,b]}\rightarrow C_*^{1}([a,b],G),\qquad\Der(\mu)\mapsto \mu\cdot \mu(a)^{-1}.
\end{align*}
Notably, for $a<b$ and $k\in \NN\cup\{\lip,\infty,\const\}$, we have (confer, e.g., Lemma 10 in \cite{RGM})
\begin{align}
\label{kjdskjdsjkdskjdsjk}
	\EV|_{\DIDE_{[a,b]}^k}\colon \DIDE_{[a,b]}^k\rightarrow C^{k+1}([a,b],G).
\end{align}
Moreover, for $a<b$ and $\phi\in \DIDE_{[a,b]}^k$, we have 
\begin{align*}
	\phi|_{[a',b']}\in \DIDE_{[a',b']}^k\qquad\quad\forall\: a\leq a'<b'\leq b
\end{align*}
by the second equality in \eqref{fgfggf} as well as Lemma 10 in \cite{RGM}.
\begin{remark}
\label{iudsouidsuiodsiuduoisd}
Given $X\in \mg$, there exist $\varepsilon>0$ and $\phi_X\in \DIDE^\infty_{[0,\varepsilon]}$ with $\phi_X(0)=X$. 
In fact, fix $\varepsilon>0$ with $(-2\varepsilon,2\varepsilon)\cdot \dd_e\chart(X)\subseteq \V$, and define  
\begin{align*}
	\mu_X\colon (-\varepsilon,\varepsilon)\ni t\mapsto \chartinv(t\cdot \dd_e\chart(X))\in \U.
\end{align*}
Then, $\mu_X$ is of class $C^\infty$, and $\phi_X:=\Der(\mu_X|_{[0,\varepsilon]})$ has the desired properties. \hspace*{\fill}\qed
\end{remark}
\begin{example}[The Riemann Integral]
\label{hjfdhjhjfd}
Assume $(G,\cdot)\equiv (F,+)$ equals the additive group of some $F\in \HLCV$. We have $\Der\colon C^1([a,b],F)\ni\gamma\mapsto \dot\gamma\in C^0([a,b],F)$ for each $a<b$, hence 
\begin{align*}
	\textstyle\DIDE_{[a,b]}=\big\{\gamma\in C^0([a,b],\mg)\:\big|\: \int_a^t \gamma(s)\:\dd s \in \mg\:\text{ for each }\: t\in [a,b]\big\}
\end{align*}
as well as $\EV(\gamma)\colon [a,b]\ni t\mapsto \int_a^t \gamma(s)\:\dd s$ for each $\gamma \in \DIDE_{[a,b]}$.\hspace*{\fill}\qed
\end{example}

\subsubsection{The Product Integral}
\label{wqwqztwzuwqzuiusxy}
The product integral is defined by
\begin{align*}
	\textstyle\innt_s^t\phi:= \EV\big(\phi|_{[s,t]}\big)(t)\in G\qquad\quad \forall \:[s,t]\subseteq \dom[\phi],\: \phi\in \DIDE.
\end{align*}
We set $\innt\phi\equiv\innt_a^{b}\phi$ as well as $\innt_c^c\phi:= e$ for $a<b$, $\phi\in \DIDE_{[a,b]}$, $c\in [a,b]$, and define
\begin{align*}
\evol_{[a,b]}^k&\textstyle\equiv \innt\big|_{\DIDE_{[a,b]}^k}\qquad\quad\forall\:  k\in \NN\cup\{\lip,\infty,\const\}. 
\end{align*} 
We let $\evol_\kk\equiv \evol_{[0,1]}^k$ as well as $\DIDED_\kk\equiv \DIDE^k_{[0,1]}$ for each $k\in \NN\cup\{\lip,\infty,\const\}$.
We furthermore let
\begin{align*}
	\evol\equiv \evol_{\mathrm{0}}\colon \DIDED\equiv \DIDED_0\rightarrow G.
\end{align*}
The following elementary identities hold for $a<b$, confer   \cite{HGGG,MK} or Sect.\ 3.5.2 in \cite{RGM}: 
\vspace{2pt}
\begingroup
\setlength{\leftmargini}{19pt}
{
\renewcommand{\theenumi}{{(\alph{enumi})}} 
\renewcommand{\labelenumi}{\theenumi}
\begin{enumerate}
\item
\label{kdsasaasassaas}
For each $\phi,\psi\in \DIDE_{[a,b]}$, we have $\phi+\Ad_{\innt_a^\bullet\phi}(\psi)\in \DIDE_{[a,b]}$ with
\begin{align*}
	\textstyle\innt_a^t \phi \cdot \innt_a^t\psi=\innt_a^t \phi+\Ad_{\innt_a^\bullet\phi}(\psi)\qquad\quad  
	t\in [a,b].
\end{align*}
	\vspace{-18pt}
\item
\label{kdskdsdkdslkds}
For each $\phi,\psi\in \DIDE_{[a,b]}$, we have $\Ad_{[\innt_a^\bullet\phi]^{-1}}(\psi-\phi)\in \DIDE_{[a,b]}$ with
\begin{align*}
	\textstyle[\innt_a^t \phi]^{-1} [\innt_a^t\psi]=\innt_a^t\Ad_{[\innt_a^\bullet\phi]^{-1}}(\psi-\phi)\qquad\quad  
	t\in [a,b].
\end{align*}
	\vspace{-18pt}
\item
\label{oitoioiztoiztoiztoizt}
For each $\phi\in \DIDE_{[a,b]}$, we have $-\Ad_{[\innt_a^\bullet\phi]^{-1}}(\phi)\in \DIDE_{[a,b]}$ with
\begin{align*}
	\textstyle [\innt_a^t\phi]^{-1}=\innt_a^t -\Ad_{[\innt_a^\bullet\phi]^{-1}}(\phi)\qquad\quad  
	t\in [a,b].
\end{align*}
	\vspace{-18pt}
\item
\label{pogfpogf}
For $a=t_0<{\dots}<t_n=b$ and $\phi\in \DIDE_{[a,b]}$, we have 
	\begin{align*}
		\textstyle\innt_a^t\phi=\innt_{t_{p}}^t\! \phi\cdot \innt_{t_{p-1}}^{t_{p}} \!\phi \cdot {\dots} \cdot \innt_{a}^{t_1}\!\phi\qquad\quad\forall\:t\in (t_p,t_{p+1}],\: p=0,\dots,n-1.
	\end{align*}
		\vspace{-15pt}
\item
\label{subst}
For $\varrho\colon [a',b']\rightarrow [a,b]$ 
of class $C^1$ and $\phi\in \DIDE_{[a,b]}$, we have $\dot\varrho\cdot (\phi\cp\varrho)\in \DIDE_{[a',b']}$ with
\begin{align*}
	 \textstyle\innt_a^{\varrho(\bullet)}\phi=[\innt_{a'}^\bullet\dot\varrho\cdot (\phi\cp\varrho)]\cdot [\innt_a^{\varrho(a')}\phi].
\end{align*} 
\item
\label{homtausch}
For each homomorphism $\Psi\colon G\rightarrow H$ between Lie groups $G$ and $H$ that is of class $C^1$, we have
\begin{align*}
	\textstyle\Psi\cp \innt_a^\bullet \phi = \innt_a^\bullet \dd_e\Psi\cp\phi\qquad\quad\forall\:\phi\in \DIDE_{[a,b]}.
\end{align*}
\end{enumerate}}
\endgroup
\begin{remark}
\label{dsdsdsdsjkjkkjkjkjjkkjkj}
Let $k\in \NN\cup\{\lip,\infty\}$ and $a<b$ be given. We have by  \ref{kdsasaasassaas}, \ref{oitoioiztoiztoiztoizt}, and Lemma 13 in \cite{RGM} (confer also Lemma \ref{Adlip}) that 
\begin{align*}
	\psi^{-1}&:= -\Ad_{[\innt_a^\bullet\psi]^{-1}}(\psi)\in \DIDE^k_{[a,b]}\qquad\quad		
	\text{with}\qquad\quad \textstyle\innt_a^\bullet \psi^{-1}\hspace{5.5pt}=[\innt_a^\bullet\psi]^{-1}\\ 
	\phi\star\psi&:= \phi + \Ad_{\innt_a^\bullet\phi}(\psi)\hspace{4.1pt}\in \DIDE^k_{[a,b]}\qquad\quad		
	\text{with}\qquad\quad \textstyle \innt_a^\bullet \phi\star\psi=\innt_a^\bullet\phi\cdot \innt_a^\bullet\psi
\end{align*}  
holds for each $\phi,\psi\in \DIDE^k_{[a,b]}$. 
It is then not hard to see that $(\DIDE^k_{[a,b]},\star,\cdot^{-1},0)$ is a group:
\begingroup
\setlength{\leftmargini}{12pt}{
\begin{itemize}
\item
	We have $(\psi^{-1})^{-1}=\psi$ for each $\psi\in \DIDE^k_{[a,b]}$. In fact, applying \ref{oitoioiztoiztoiztoizt} twice, we obtain 
	\begin{align*}
		\textstyle\innt_a^\bullet (\psi^{-1})^{-1}=[\innt_a^\bullet \psi^{-1}]^{-1}=[[\innt_a^\bullet \psi]^{-1}]^{-1}=\innt_a^\bullet \psi. 
	\end{align*}			
	The claim is thus clear from injectivity of $\Der|_{C_*^1([a,b],G)}$.   
\item 
	We have $\psi\star \psi^{-1} =0=\psi^{-1}\star \psi$ for each $\psi\in \DIDE^k_{[a,b]}$. In fact, it is clear that
	\begin{align*}
		\psi\star \psi^{-1} =0\qquad\quad\forall\: \psi\in \DIDE^k_{[a,b]}.
	\end{align*}		
	Then, we obtain from the previous point that
\begin{align*}
	\psi^{-1}\star \psi=\psi^{-1}\star (\psi^{-1})^{-1}=0\qquad\quad\forall\: \psi\in \DIDE^k_{[a,b]}.
\end{align*}	
\vspace{-20pt}
\item
	We have $\phi\star(\psi\star\chi) = (\phi\star\psi)\star \chi$ for all $\phi,\psi,\chi\in \DIDE^k_{[a,b]}$. In fact, we obtain from \ref{kdsasaasassaas} that
	\begin{align*}
		\textstyle\innt_a^\bullet \phi\star(\psi\star\chi)= \innt_a^\bullet \phi \cdot \innt_a^\bullet (\psi\star\chi) = \innt_a^\bullet \phi \cdot \innt_a^\bullet \psi\cdot  \innt_a^\bullet\chi= \innt_a^\bullet (\phi\star \psi) \cdot \innt_a^\bullet \chi = \innt_a^\bullet (\phi\star\psi)\star\chi.
	\end{align*}
	The claim is thus clear from injectivity of $\Der|_{C_*^1([a,b],G)}$.
\end{itemize}}
\endgroup
\noindent 
We will reconsider this group structure in Sect.\ \ref{kjdkjsjksddxccxocoioicxoiiooicxiocx} and Sect.\  \ref{jkdkjdjkddsdsddds}. 
\hspace*{\fill}\qed
\end{remark}
\begin{example}[The Riemann Integral]
\label{mmbvcvccv}
Assume we are in the situation of Example \ref{hjfdhjhjfd}. Then, 
\begin{align*}
	\textstyle\innt_a^t \phi=\int_a^t \phi(s)\: \dd s\qquad\quad\forall\:  
\phi\in \DIDE_{[a,b]},\:t\in [a,b]
\end{align*} 
holds; and, the identities \ref{kdsasaasassaas}, \ref{pogfpogf}, \ref{subst}, and \ref{homtausch} encode (in the given order) the additivity of the Riemann integral,  \eqref{isdsdoisdiosd0}, \eqref{substitRI}, and \eqref{pofdpofdpofdsddsdsfd} (for if in \ref{homtausch}, $(H,\cdot)\equiv (\tilde{F},+)$ is the additive group of some further $\tilde{F}\in \HLCV$), respectively. 
\hspace*{\fill}\qed
\end{example}
\begin{example}
\label{fdpofdopdpof}
For $a<b$, we define
\begin{align}
\label{iudsoiudsiudsiudsods}
\begin{split}
	\inversee\colon C^0([a,b],\mg)&\rightarrow C^0([a,b],\mg)\\
	\phi&\mapsto [t\mapsto -\phi(a+b-t)].
\end{split}
\end{align}
Let $a<b$ be fixed. Then, $\inversee|_{C^0([a,b],\mg)}$ is linear. 
Moreover, for $\phi\in C^0([a,b],\mg)$ and  
$\varrho\colon [a,b]\ni t\mapsto a +b -t\in [a,b]$, we have $\inverse{\phi}=\dot\varrho\cdot \phi\cp\varrho$. Then, \ref{subst} shows that $\inverse{\phi}\in \DIDE^k_{[a,b]}$ holds for each $\phi\in 
\DIDE^k_{[a,b]}$ and $k\in \NN\cup\{\lip,\infty\}$, with
\begin{align}
\label{pfifpfpofdpofd}
	\textstyle e=\innt_a^{\varrho(b)}\phi\stackrel{\emph{\ref{subst}}}{=}[\innt_{a}^{b} \inverse{\phi}] \cdot [\innt_a^{b}\phi ]\qquad\quad\text{hence}\qquad\quad [\innt_a^b \phi]^{-1}=\innt_a^b \inverse{\phi}.
\end{align}
For instance, in the situation of Example \ref{mmbvcvccv}, the right side of \eqref{pfifpfpofdpofd} reads 
\begin{align*}
	\textstyle -\int_a^{b} \phi(s)\:\dd s = \int_a^{b} -\phi(a+b-s)\:\dd s,
\end{align*}
which is in line with the second point in Remark \ref{seqcomp}. 
The relation $[\innt_a^{b} \phi]^{-1}=\innt_a^{b} \inverse{\phi}$  
will be useful for our argumentation in Sect.\ \ref{dlkjlkjfdlkjfddofdoifdoi}.
\hspace*{\fill}\qed
\end{example}
\begin{example}[The Lie bracket and Homomorphisms]
\label{fdpofsddopdpof}
Assume that we are in the situation of \ref{homtausch}, and let $(\mh,\bil{\cdot}{\cdot}_\mh)$ denote the Lie algebra of $H$. Then, we have
\begin{align}
\label{dsdssddsdsdsdxcxcxgfggfgfs}
	\textstyle\dd_e\Psi(\bil{X}{Y})=\bil{\dd_e\Psi(X)}{\dd_e\Psi(Y)}_\mh\qquad\quad\forall\: X,Y\in \mg.
\end{align}
\begin{proof}[Proof of Equation \eqref{dsdssddsdsdsdxcxcxgfggfgfs}]
Let $X,Y\in \mg$ be fixed, and choose $\phi_{X}\colon [0,\varepsilon_1]\rightarrow\mg$ as well as $\phi_{Y}\colon [0,\varepsilon_2]\rightarrow\mg$ as in Remark \ref{iudsouidsuiodsiuduoisd}. We obtain
\begin{align*}
	\textstyle\dd_e\Psi(\bil{X}{Y})&\textstyle=\frac{\dd}{\dd x}\big|_{x=0}\: \dd_e\Psi(\Ad_{\innt_0^x \phi_{X}}(Y))\\
	&\textstyle=\frac{\dd}{\dd x}\big|_{x=0}\:\frac{\dd}{\dd y}\big|_{y=0}\: \Psi(\conj_{\innt_0^x \phi_{X}}(\innt_0^y \phi_{Y})) \\
	&\textstyle=\frac{\dd}{\dd x}\big|_{x=0}\:\frac{\dd}{\dd y}\big|_{y=0}\: \conj_{\Psi(\innt_0^x \phi_{X})}(\Psi(\innt_0^y \phi_{Y})) 
	\\
	&\textstyle=\frac{\dd}{\dd x}\big|_{x=0}\:\frac{\dd}{\dd y}\big|_{y=0}\: \conj_{\innt_0^x \dd_e\Psi(\phi_{X})}(\innt_0^y \dd_e\Psi(\phi_{Y})) 
	\\
	&\textstyle=\frac{\dd}{\dd x}\big|_{x=0}\:\Ad_{\innt_0^x \dd_e\Psi(\phi_{X})}(\dd_e\Psi(Y)) 
	\\
	&\textstyle=\bil{\dd_e\Psi(X)}{\dd_e\Psi(Y)}_\mh,
\end{align*}
which shows \eqref{dsdssddsdsdsdxcxcxgfggfgfs}.
\end{proof}
\noindent
For instance, let $\Psi\equiv \conj_g\colon G\rightarrow H= G$ with $g\in G$. We obtain
\begin{align}
\label{dsdsdsdsdsa}
	\Ad_g(\bil{X}{Y})=\bil{\Ad_g(X)}{\Ad_g(Y)}\qquad\quad\forall\: g\in G,\: X,Y\in \mg.
\end{align}
Then, given $X,Y,Z\in \mg$, fix $\mu\colon (-\varepsilon,\varepsilon)\rightarrow G$ ($\varepsilon>0$) of class $C^1$, with $\mu(0)=e$ and $\dot\mu(0)=Z$. We obtain from \eqref{dsdsdsdsdsa} and the parts \ref{linear}, \ref{chainrule}, \ref{productrule} of Proposition \ref{iuiuiuiuuzuzuztztttrtrtr} that
\begin{align*}
	\textstyle \bil{Z}{\bil{X}{Y}}&\textstyle= \frac{\dd}{\dd h}\big|_{h=0} \Ad_{\mu(h)}(\bil{X}{Y})
	\textstyle= \frac{\dd}{\dd h}\big|_{h=0} [\Ad_{\mu(h)}(X),\Ad_{\mu(h)}(Y)]
	\textstyle = \bil{\bil{Z}{X}}{Y}+ \bil{X}{\bil{Z}{Y}}
\end{align*}
holds, which is the Jacobi identity \eqref{jacobi}. 
\hspace*{\fill}\qed
\end{example}
 
\subsubsection{Weak Regularity}
\label{dsddsdsdsdsds}
In this section, we recall certain differentiation results from \cite{MDM}, and introduce the notion of weak $C^k$-regularity for $k\in \NN\cup\{\lip,\infty\}$ (cf.\ Definition \ref{tztztggfgf}). 
We say that $G$ is $C^k$-semiregular \cite{HGGG,RGM,MDM} for $k\in \NN\cup\{\lip,\infty,\const\}$ \defff $\DIDED_\kk=C^k([0,1],\mg)$ holds.  
\begin{remark}
\label{sdffdffdfd}
	It follows from \ref{subst} when applied to affine transformations that $G$ is $C^k$-semiregular \deff $\DIDE^k_{[a,b]}=C^k([a,b],\mg)$ holds for all $a<b$ (confer, e.g., Lemma 12 in \cite{RGM}).\hspace*{\fill}\qed
\end{remark}
\noindent
We write $\limin\mu_n=\mu$ for $\{\mu_n\}_{n\in \NN}\subseteq C^0([a,b],G)$ and $\mu\in C^0([a,b],G)$ \defff $\{\mu_n\}_{n\in \NN}$ converges uniformly to $\mu$, i.e., \defff to each open $U\subseteq G$ with $e\in U$, there exists some $N_U\in \NN$ with 
\begin{align*}
	\mu_n(t)\in \mu(t)\cdot U\qquad\quad\forall\: t\in [a,b]. 
\end{align*}
We recall \eqref{iudsiudsudspodsisdpoids}, and say that $G$ is Mackey k-continuous for $k\in \NN\cup\{\lip,\infty\}$ \defff 
\begin{align*}
\textstyle	\DIDED_\kk\supseteq\{\phi_n\}_{n\in \NN}\mackarr{\kk}\phi\in \DIDED_\kk\qquad\quad\Longrightarrow\qquad\quad \limin \innt_0^\bullet \phi_n = \innt_0^\bullet \phi.
\end{align*}
Lemma 13 in \cite{MDM} shows:
\begin{customlem}{A}\label{fdhjfdkjj}
$G$ is Mackey k-continuous for $k\in \NN\cup\{\lip,\infty\}$ if and only if for each $a<b$ the following implication holds:
\begin{align*}
	\textstyle\DIDE^k_{[a,b]}\supseteq \{\phi_n\}_{n\in \NN}\mackarr{\kk} \phi\in \DIDE^k_{[a,b]} \qquad\quad\Longrightarrow\qquad\quad \limin\innt_a^\bullet \phi_n=\innt_a^\bullet \phi.\hspace{22pt}
\end{align*}
\end{customlem}
\noindent
Theorem 1 in \cite{MDM} states: 
\begin{customthm}{B}\label{weakdiffdf}
If $G$ is $C^k$-semiregular for $k\in \NN\cup\{\lip,\infty\}$, then $G$ is Mackey k-continuous. 
\end{customthm}
\noindent
Theorem 3 in \cite{MDM} (in particular) states:
\begin{customthm}{C}
\label{ofdpofdpofdpofdpofd}
Assume that $G$ is Mackey k-continuous for $k\in \NN\cup\{\lip,\infty\}$. Let $\Phi\colon I\times [a,b]\rightarrow \mg$ ($I\subseteq \RR$ open) be given with $\Phi(z,\cdot)\in \DIDE^k_{[a,b]}$ for each $z\in I$.  
Then, for $x\in I$ we have
\begin{align*}
	\textstyle\frac{\dd}{\dd h}\big|_{h=0}\: \chart\big([\innt_a^{b} \Phi(x,\cdot)]^{-1}[\innt_a^{b}\Phi(x+h,\cdot)]\big)=\textstyle\int_a^{b} \big(\dd_e\chart\cp \Ad_{[\innt_a^s\Phi(x,\cdot)]^{-1}}\big)(\partial_z\Phi(x,s))\:\dd s \in \comp{E},
\end{align*}
provided that the following conditions hold:
\begingroup
\setlength{\leftmargini}{17pt}{
\renewcommand{\theenumi}{{\roman{enumi}})} 
\renewcommand{\labelenumi}{\theenumi}
\begin{enumerate}
\item
\label{saasaassasasa2}
We have $(\partial_1 \Phi)(x,\cdot)\in C^k([a,b],\mg)$.
\item
\label{saasaassasasa1}
To $\pp\in \Sem{E}$ and $\dind\llleq k$, there exists $L_{\pp,\dind}\geq 0$ as well as $I_{\pp,\dind}\subseteq I$ open with $x\in I_{\pp,\dind}$, such that
\begin{align*}
	\textstyle\pp^\dind_\infty(\Phi(x+h,\cdot)-\Phi(x,\cdot))\leq |h|\cdot L_{\pp,\dind}\qquad\quad \forall\: h\in \RR_{\neq 0}\:\text{ with }\: x+h\in I_{\pp,\dind}.
\end{align*}
\vspace{-22pt}
\end{enumerate}}
\endgroup
\noindent
In particular, we have (recall Convention \ref{lkcxlkcxlkxlkcxpoidsoids})
\begin{align*}
	\textstyle\frac{\dd}{\dd h}\big|_{h=0} \he \innt_a^{b}\Phi(x+h,\cdot)=\textstyle \dd_e\LT_{\innt_a^{b} \Phi(x,\cdot)}\big(\int_a^{b} \Ad_{[\innt_a^s\Phi(x,\cdot)]^{-1}}(\partial_1\Phi(x,s))\:\dd s\he\big)
\end{align*}
\deff the Riemann integral on the right side exists in $\mg$.  
\end{customthm}
\noindent
Recall from the end of Sect.\ \ref{opsdpods} that the last condition in Theorem \ref{ofdpofdpofdpofdpofd} concerning the Riemann integral is always fulfilled 
\begingroup
\setlength{\leftmargini}{12pt}
\begin{itemize}
\item
for $k\in \NN_{\geq 1}\cup\{\lip,\infty\}$ if $\mg$ is Mackey complete,
\item
for $k=0$ if $\mg$ integral complete for $k=0$. 
\end{itemize}
\endgroup
\noindent 
In particular, this implies the following statement (cf.\ Theorem 2 in \cite{MDM} and Corollary 3 in \cite{MDM}):
\begin{custompr}{D}\label{ableiti}
Assume that $G$ is $C^k$-semiregular for $k\in \NN\cup\{\lip, \infty\}$. Then, $\evol_\kk$ is differentiable \deff $\mg$ is Mackey complete for $k\in \NN_{\geq 1}\cup\{\lip,\infty\}$ as well as  integral complete for $k=0$.  In this case, $\evol^k_{[a,b]}$ is differentiable for each $a<b$, with 
\begin{align*}
	\textstyle\dd_\phi\he \evol^k_{[a,b]}(\psi)=\dd_e\LT_{\innt \phi}\big(\int \Ad_{[\innt_a^s \phi]^{-1}}(\psi(s))\:\dd s \big)\qquad\quad \forall\: \phi,\psi\in C^k([a,b],\mg).
\end{align*} 
Moreover, for  $a<b$ and $\phi,\psi\in C^k([a,b],\mg)$, we have	$\textstyle C^1(\RR,G)\ni \mu\colon \RR\ni t\mapsto \innt \phi+ t\cdot \psi\in G$.
\end{custompr}
\begin{proof}
Clear from Theorem 2 in \cite{MDM} and Corollary 3 in \cite{MDM}.
\end{proof}
\noindent
Proposition \ref{ableiti} motivates the following definition:\footnote{According to Proposition \ref{ableiti}, Definition \ref{tztztggfgf} is equivalent to the definition given in Sect.\ \ref{nmdsnmdsnmsdnmdsnmdsnmdsnmnmds}.}
\begin{definition}
\label{tztztggfgf}
$G$ is said to be weakly $C^k$-regular for $k\in \NN\cup\{\lip,\infty\}$ \defff $G$ is $C^k$-semiregular, and $\mg$ is Mackey complete for $k\in \NN_{\geq 1}\cup\{\lip,\infty\}$ as well as integral complete for $k=0$. \hspace*{\fill}\qed
\end{definition}
\begin{remark}
	Apart from $C^k$-semiregularity, the (standard) notion of 
 $C^k$-regularity involves smoothness (and continuity) of the evolution map w.r.t.\ the $C^k$-topology. 
These additional assumptions, however, are unnecessarily strong for our purposes. This is because, due to the results stated above, the usual differentiability properties of the evolution map are already available in the weakly $C^k$-regular context.
\hspace*{\fill}\qed
\end{remark}
\begin{remark}
\label{kjfdkjdkjfdkj}
	Theorem \ref{ofdpofdpofdpofdpofd} will in particular be applied to the situation in Example \ref{hjfdhjhjfd}, i.e., where $(G,\cdot)\equiv (F,+)$ equals the additive group of some given $F\in \HLCV$. For this  observe that $\innt=\int$ is $C^0$-continuous, hence Mackey {\rm 0}-continuous. Moreover, $G\equiv F$ is
\vspace{-3pt}
\begingroup
\setlength{\leftmargini}{12pt}{
\begin{itemize}
\item
	\hspace{5.2pt}$C^0$-semiregular if $F$ is integral complete,
\item
	$C^\lip$-semiregular if $F$ is Mackey complete.\hspace*{\fill}\qed
\end{itemize}
}
\endgroup
\end{remark}
\noindent
For instance, we obtain the following statements that we shall need in Sect.\ \ref{kjdskjsdkjdskjs}.
\begin{corollary}
\label{lkjjslkjdlkjdskjldskjs}
Let $F\in \HLCV$, $k\in \NN$, $a<b$, $I\subseteq \RR$ an open interval, and $\Theta^0\colon I\times [a,b] \rightarrow F$ a map with $\asymb_x:=\Theta^0(\cdot,x)\in C^k(I,F)$ for each $x\in [a,b]$. Assume that for $0\leq \ell\leq k$ the map
\begin{align*}
	\textstyle\Theta^\ell\colon I\times [a,b]\ni (t,x)\mapsto \asymb_x^{(\ell)}(t)\in F
\end{align*}
is continuous, such that
\begin{align*}
	\textstyle\bg^\ell\colon I\ni t \mapsto \int_a^b \Theta^\ell(t,x) \:\dd x\in F
\end{align*} 
is defined. Then, $\bg:=\bg^0\in C^k(I,F)$ holds with 
$\bg^{(\ell)}=\bg^\ell$ for $0\leq \ell\leq k$.
\end{corollary}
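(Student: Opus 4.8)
\textit{Proof proposal.} The plan is to prove, by downward induction on $\ell$, the statement $C(\ell)$: \emph{$\bg^{\ell}\in C^{k-\ell}(I,F)$ with $(\bg^{\ell})^{(j)}=\bg^{\ell+j}$ for $0\leq j\leq k-\ell$}; the corollary is $C(0)$. The induction rests on two facts, to be established for all $0\leq \ell\leq k$: continuity of $\bg^{\ell}$, and, for $\ell<k$, differentiability of $\bg^{\ell}$ with $(\bg^{\ell})'=\bg^{\ell+1}$. The key preliminary observation is a uniform continuity estimate: for each $0\leq \ell\leq k$, each compact $K\subseteq I$, each $\qq\in\Sem{F}$ and each $\varepsilon>0$ there is $\delta>0$ with
\[
	\qq\big(\Theta^{\ell}(t,x)-\Theta^{\ell}(t',x)\big)<\varepsilon\qquad\forall\: t,t'\in K\text{ with }|t-t'|<\delta,\ x\in[a,b].
\]
Indeed, $(t,t',x)\mapsto \qq\big(\Theta^{\ell}(t,x)-\Theta^{\ell}(t',x)\big)$ is continuous on the compact set $K\times K\times[a,b]$, hence uniformly continuous, and vanishes on $\{t=t'\}$.

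Granting this, I would first deduce continuity of each $\bg^{\ell}$ at a point $t_{0}\in I$: choosing a compact neighbourhood $K\subseteq I$ of $t_{0}$, for $t\in K$ the integral $\bg^{\ell}(t)-\bg^{\ell}(t_{0})=\int_{a}^{b}\big(\Theta^{\ell}(t,x)-\Theta^{\ell}(t_{0},x)\big)\,\dd x$ lies in $F$ (the $\bg^{\ell}$ are defined by hypothesis), so by \eqref{ffdlkfdlkfd} one may estimate $\qq\big(\bg^{\ell}(t)-\bg^{\ell}(t_{0})\big)\leq (b-a)\cdot\sup_{x\in[a,b]}\qq\big(\Theta^{\ell}(t,x)-\Theta^{\ell}(t_{0},x)\big)$, and the right side tends to $0$ as $t\to t_{0}$ by the preliminary estimate. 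Next, for $0\leq \ell<k$ I would show $\bg^{\ell}$ is differentiable with derivative $\bg^{\ell+1}$. Fix $t_{0}\in I$ and a compact neighbourhood $K\subseteq I$ of $t_{0}$. For $h\neq 0$ with $t_{0}+h\in K$, the curve $\asymb_{x}^{(\ell)}$ is of class $C^{1}$ with derivative $\Theta^{\ell+1}(\cdot,x)$, so \eqref{isdsdoisdiosd} gives $\asymb_{x}^{(\ell)}(t_{0}+h)-\asymb_{x}^{(\ell)}(t_{0})=\int_{t_{0}}^{t_{0}+h}\Theta^{\ell+1}(s,x)\,\dd s$ in $F$; subtracting $h\cdot\Theta^{\ell+1}(t_{0},x)$ and integrating over $x$ yields
\[
	\bg^{\ell}(t_{0}+h)-\bg^{\ell}(t_{0})-h\cdot\bg^{\ell+1}(t_{0})=\int_{a}^{b}\Big(\int_{t_{0}}^{t_{0}+h}\big(\Theta^{\ell+1}(s,x)-\Theta^{\ell+1}(t_{0},x)\big)\,\dd s\Big)\dd x,
\]
an identity in $F$ since each term on the left lies in $F$. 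Two applications of \eqref{ffdlkfdlkfd}, followed by the preliminary estimate applied to $\Theta^{\ell+1}$, bound the $\qq$-seminorm of the left side by $(b-a)\cdot|h|\cdot\omega_{\qq}(|h|)$, where $\omega_{\qq}(|h|):=\sup\{\qq(\Theta^{\ell+1}(s,x)-\Theta^{\ell+1}(t_{0},x)):x\in[a,b],\ |s-t_{0}|\leq|h|\}\to 0$ as $h\to 0$; dividing by $h$ gives $\tfrac1h\big(\bg^{\ell}(t_{0}+h)-\bg^{\ell}(t_{0})\big)\to\bg^{\ell+1}(t_{0})$ in $F$, i.e.\ $\bg^{\ell}$ is differentiable with $(\bg^{\ell})'=\bg^{\ell+1}$, and since $\bg^{\ell+1}$ is continuous this upgrades to $\bg^{\ell}\in C^{1}(I,F)$.

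The downward induction is then immediate: $C(k)$ is the continuity of $\bg^{k}$; and if $C(\ell+1)$ holds, then combining $\bg^{\ell}\in C^{1}(I,F)$, $(\bg^{\ell})'=\bg^{\ell+1}$ with $\bg^{\ell+1}\in C^{k-\ell-1}(I,F)$ and $(\bg^{\ell+1})^{(i)}=\bg^{\ell+1+i}$ gives $C(\ell)$. Taking $\ell=0$ proves the statement. The main obstacle is the uniform continuity estimate — passing from joint continuity of $\Theta^{\ell}$ to a modulus of continuity in the first variable that is uniform over $x\in[a,b]$ — which the compactness argument above handles; the only other point needing attention is the recurring bookkeeping that each Riemann integral occurring lands in $F$ rather than merely in $\comp{F}$, which is guaranteed by the hypothesis that the $\bg^{\ell}$ are defined and by the fact that \eqref{isdsdoisdiosd} is an identity in $F$.
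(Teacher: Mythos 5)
Your proof is correct, but it takes a more elementary and self-contained route than the paper. The paper does not prove the differentiation-under-the-integral step from scratch: it verifies the three hypotheses of Theorem \ref{ofdpofdpofdpofdpofd} for $\Phi\equiv\Theta^\ell$ (continuity of $\Phi(t,\cdot)$ and $\partial_1\Phi(t,\cdot)$, plus the local Lipschitz bound obtained from \eqref{isdsdoisdiosd1} exactly as in your $\omega_{\qq}$-estimate) and then invokes that theorem in the additive-group specialization of Remark \ref{kjfdkjdkjfdkj} to conclude $\frac{\dd}{\dd h}\big|_{h=0}\bg^\ell(t+h)=\bg^{\ell+1}(t)$, finishing by (upward) induction. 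You instead reprove the needed special case directly: the identity $\bg^{\ell}(t_{0}+h)-\bg^{\ell}(t_{0})-h\cdot\bg^{\ell+1}(t_{0})=\int_{a}^{b}\int_{t_{0}}^{t_{0}+h}(\Theta^{\ell+1}(s,x)-\Theta^{\ell+1}(t_{0},x))\,\dd s\,\dd x$ via \eqref{isdsdoisdiosd}, the double application of \eqref{ffdlkfdlkfd}, and the compactness argument turning joint continuity into a modulus of continuity uniform in $x$ (your diagonal-vanishing trick is sound, and the paper's own continuity claim for $\bg^\ell$ rests on the same compactness observation). What the paper's route buys is brevity and reuse of machinery already needed elsewhere; what yours buys is independence from Theorem \ref{ofdpofdpofdpofdpofd} and a transparent identification of exactly which completeness bookkeeping (each integral landing in $F$ rather than $\comp{F}$) is carried by the hypotheses. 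Both arguments are valid; your downward induction and the paper's upward induction are trivially equivalent packagings of the same step.
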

\begin{proof}
For each $0\leq \ell\leq k$, we have by \eqref{ffdlkfdlkfd}, compactness of $[a,b]$, and continuity of $\Theta^\ell$ that $\bg^\ell\in C^0(I,F)$ holds. Let now $0\leq \ell < k$ be given, and set $\Phi\equiv \Theta^\ell$. We observe the following:
\vspace{-2pt}  
\begingroup
\setlength{\leftmargini}{12pt}{
\begin{itemize}
\item
	For  $t\in I$, we have\hspace{10.5pt} $\Phi(t,\cdot)=\Theta^\ell(t,\cdot)\in C^0([a,b],F)$.
\item
	For $t\in I$, we have $\partial_1\Phi(t,\cdot)\in C^0([a,b],F)$, as 
	\begin{align*}
		\partial_1\Phi(t,x)=\partial_1\Theta^\ell(t,x)=\asymb_x^{(\ell+1)}(t)=\Theta^{\ell+1}(t,x)\qquad\quad\forall\: t\in I,\: x\in [a,b].
	\end{align*}
	\vspace{-17pt}
\item
	For $\qq\in \Sem{F}$, $x\in [a,b]$, $t\in I$, and $\tau>0$ with $t+[-\tau,\tau]\subseteq I$, we have by \eqref{isdsdoisdiosd1}
	\begin{align*}
		\textstyle 1/|h|\cdot \qq(\Phi(t+h,x)&-\Phi(t,x))\\
		&\textstyle= 1/|h|\cdot \qq\big(\alpha^{(\ell)}_x(t+h)-\asymb_x^{(\ell)}(t)\big)\\
		& \leq \sup\big\{-\tau\leq s\leq \tau\:|\: \qq\big(\asymb_x^{(\ell+1)}(t+s)\big)\big\}\\
		& \leq \sup\big\{-\tau\leq s\leq \tau,\:a\leq y\leq b\:|\: \qq\big(\Theta^{\ell+1}(t+s,y)\big)\big\}
	\end{align*}
	for each $-\tau\leq h\leq \tau$.		 
\end{itemize}
}
\endgroup
\noindent
Theorem \ref{ofdpofdpofdpofdpofd} (Remark \ref{kjfdkjdkjfdkj}) shows that for each $t\in I$, we have 
\begin{align*}
	\textstyle \frac{\dd}{\dd h}\big|_{h=0} \he\bg^\ell(t+h)
	&\textstyle=\frac{\dd}{\dd h}\big|_{h=0} \he \int_a^b \Theta^\ell(t+h,x)\:\dd x\\
	&\textstyle=\frac{\dd}{\dd h}\big|_{h=0} \he \int_a^b \Phi(t+h,x)\:\dd x\\
	&\textstyle=\int_a^b\partial_1 \Phi(t,x)\:\dd x\\
	&\textstyle=\int_a^b\partial_1 \Theta^\ell(t,x)\:\dd x\\
	&\textstyle= \int_a^b \asymb_x^{(\ell+1)}(t)\:\dd x\\
	&\textstyle= \int_a^b \Theta^{\ell+1}(t,x)\:\dd x\\
	&\textstyle= \bg^{\ell+1}(t).
\end{align*}
Since this holds for each $0\leq \ell<k$, the claim follows from $\bg^{(0)}=\bg=\bg^{0}$ by induction.
\end{proof}

\begin{lemma}
\label{lkjfdlkjfdkjf}
Let $F\in \HLCV$ be Mackey complete, $a<b$, $I\subseteq \RR$ an open interval,  $\Theta^0\colon I\times [a,b] \rightarrow F$ continuous, and $\Omega\colon F\times F\rightarrow F$ smooth. Assume that the following two conditions are fulfilled:
\begingroup
\setlength{\leftmargini}{20pt}
{
\renewcommand{\theenumi}{{(\alph{enumi})}} 
\renewcommand{\labelenumi}{\theenumi}
\begin{enumerate}
\item
\label{kjdkjkjdskjdskj1}
	\hspace{29.9pt}$\Theta^0(t,\cdot)\hspace{2.5pt}\in C^1([a,b],F)$ holds for each $t\in I$.
\item  
\label{kjdkjkjdskjdskj2}
$\asymb_x:=\Theta^0(\cdot,x)\in C^1(I,F)$\hspace{16pt} holds for each $x\in [a,b]$, with
\begin{align*}
	\textstyle\dot\asymb_x(t)=\Omega\big(\int_x^b\asymb_y(t)\:\dd y,\asymb_x(t)\big)\qquad\quad\forall\: t\in I.
\end{align*}
\vspace{-20pt}
\end{enumerate}
}
\endgroup
\noindent
Then, $\bg[z]\colon I\ni t\mapsto \int_z^b \Theta^0(t,x)\:\dd x\in F$ is smooth for each $z\in [a,b]$. 
\end{lemma}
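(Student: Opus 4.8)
The plan is to reduce the assertion, by a bootstrap, to a single application of Corollary~\ref{lkjjslkjdlkjdskjldskjs}. Throughout, write $\Theta^\ell\colon I\times[a,b]\ni(t,x)\mapsto\asymb_x^{(\ell)}(t)\in F$ whenever this is meaningful, observe that $\bg[z]$ is by definition the map $t\mapsto\int_z^b\Theta^0(t,x)\:\dd x$, and set $\Gamma_x(t):=\int_x^b\asymb_y(t)\:\dd y$, so that $\Gamma_x=\bg[x]$. By hypothesis~\ref{kjdkjkjdskjdskj1} we have $\Theta^0(t,\cdot)\in C^1([a,b],F)\subseteq C^\lip([a,b],F)$, hence Mackey completeness of $F$ yields $\int_z^b\Theta^0(t,x)\:\dd x\in F$ for all $z\in[a,b]$, $t\in I$ (in particular $\Gamma_x(t)\in F$, consistently with~\ref{kjdkjkjdskjdskj2}). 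The lemma will follow once we prove the assertion $(\star)$: for each $x\in[a,b]$ one has $\asymb_x\in C^\infty(I,F)$, and for every $\ell\in\NN$ the map $\Theta^\ell$ is well defined and continuous on $I\times[a,b]$ with $\Theta^\ell(t,\cdot)\in C^1([a,b],F)$ for all $t\in I$.

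To prove $(\star)$ I would induct on $m\geq 0$ over the statement $\mathcal{H}_m$: \emph{$\asymb_x\in C^m(I,F)$ for all $x\in[a,b]$, and $\Theta^0,\dots,\Theta^m$ are continuous on $I\times[a,b]$ and satisfy $\Theta^\ell(t,\cdot)\in C^1([a,b],F)$ for $0\leq\ell\leq m$ and all $t\in I$.} The base case $\mathcal{H}_0$ is just the continuity of $\Theta^0$ together with~\ref{kjdkjkjdskjdskj1} and~\ref{kjdkjkjdskjdskj2}. Assume now $\mathcal{H}_m$. As each $\Theta^\ell(t,\cdot)$ ($0\leq\ell\leq m$) is Lipschitz and $F$ is Mackey complete, the map $t\mapsto\int_z^b\Theta^\ell(t,x)\:\dd x\in F$ is defined for every $z\in[a,b]$; hence Corollary~\ref{lkjjslkjdlkjdskjldskjs}, applied on $[z,b]$ with $k=m$, gives $\bg[z]\in C^m(I,F)$ with $\bg[z]^{(\ell)}(t)=\int_z^b\Theta^\ell(t,x)\:\dd x$ for $0\leq\ell\leq m$. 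Taking $z=x$ shows $\Gamma_x=\bg[x]\in C^m(I,F)$ with $\Gamma_x^{(\ell)}(t)=\int_x^b\Theta^\ell(t,y)\:\dd y$. Since $\dot\asymb_x=\Omega\cp(\Gamma_x,\asymb_x)$ by~\ref{kjdkjkjdskjdskj2}, and $(\Gamma_x,\asymb_x)\colon I\to F\times F$ is of class $C^m$ while $\Omega$ is smooth, the chain rule gives $\dot\asymb_x\in C^m(I,F)$, i.e.\ $\asymb_x\in C^{m+1}(I,F)$; moreover the iterated chain rule expresses $\Theta^{m+1}(t,x)=\asymb_x^{(m+1)}(t)=\big(\Omega\cp(\Gamma_x,\asymb_x)\big)^{(m)}(t)$ as a continuous universal expression $P_m$, built from $\dd\Omega,\dots,\dd^m\Omega$, evaluated at $\big(\Gamma_x(t),\dots,\Gamma_x^{(m)}(t),\asymb_x(t),\dots,\asymb_x^{(m)}(t)\big)$.

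It remains to upgrade $\mathcal{H}_m$ to $\mathcal{H}_{m+1}$, that is, to see that $\Theta^{m+1}$ is continuous on $I\times[a,b]$ with $\Theta^{m+1}(t,\cdot)\in C^1([a,b],F)$. For $0\leq i\leq m$ the map $(t,x)\mapsto\Gamma_x^{(i)}(t)=\int_x^b\Theta^i(t,y)\:\dd y$ is jointly continuous: fixing $\qq\in\Sem{F}$ one splits the relevant difference into a term of integration length $|x-x'|$, controlled by $\sup_y\qq(\Theta^i(t,y))$ via~\eqref{ffdlkfdlkfd}, and a term controlled by $(b-a)\cdot\sup_y\qq(\Theta^i(t,y)-\Theta^i(t',y))$, the latter being small by continuity of $\qq\cp\Theta^i$ on a compact neighbourhood of the point in $I\times[a,b]$; and for fixed $t$ it is of class $C^1$ in $x$ with derivative $-\Theta^i(t,x)$ by~\eqref{oidsoidoisdoidsoioidsiods}. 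Combining this with the continuity and the $C^1$-in-$x$ property of $\Theta^0,\dots,\Theta^m$ from $\mathcal{H}_m$ and with the smoothness of $P_m$, the above formula for $\Theta^{m+1}$ yields $\mathcal{H}_{m+1}$, completing the induction and hence the proof of $(\star)$. Finally, fixing $z\in[a,b)$ and an arbitrary $k\in\NN$ and applying Corollary~\ref{lkjjslkjdlkjdskjldskjs} on $[z,b]$ gives $\bg[z]\in C^k(I,F)$; as $k$ was arbitrary, $\bg[z]\in C^\infty(I,F)$, while the case $z=b$ is trivial since then $\bg[b]\equiv 0$.

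The substantive difficulty here is not the differentiation but the completeness bookkeeping: every Riemann integral $\int_x^b\Theta^\ell(t,y)\:\dd y$ occurring in the argument must be shown to lie in $F$ and not merely in $\comp F$, which is precisely why the auxiliary property ``$\Theta^\ell(t,\cdot)\in C^1([a,b],F)$'' must be carried along the induction so that Mackey completeness of $F$ applies. The other routine-but-indispensable point is the propagation of joint continuity (and $C^1$-dependence on $x$) of the derivative maps $\Theta^\ell$ through the otherwise standard iterated-chain-rule expansion of $\big(\Omega\cp(\Gamma_x,\asymb_x)\big)^{(m)}$.
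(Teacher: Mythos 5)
Your proposal is correct and follows essentially the same route as the paper: both reduce the lemma to an inductive statement asserting $\asymb_x\in C^k(I,F)$ together with joint continuity and $C^1$-dependence on $x$ of the derivative maps $\Theta^\ell$, both drive the induction by applying Corollary \ref{lkjjslkjdlkjdskjldskjs} to $\bg[z]$ and then expanding $\big(\Omega\cp(\Gamma_x,\asymb_x)\big)^{(m)}$ via the iterated chain rule, and both invoke Mackey completeness to keep the Riemann integrals in $F$. The only cosmetic differences are that you verify the joint continuity of $(t,x)\mapsto\Gamma_x^{(i)}(t)$ by a direct estimate where the paper deduces it from continuity of $\Theta^i$, and the paper spells out $k=1$ as a separate base case.
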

\begin{proof}
It suffices to prove the following statement.
\begin{statement}
\label{iufdiufdiufdfd}
Let $k\in \NN$ be given. Then, $\asymb_x\in C^k(I,F)$ holds for each $x\in [a,b]$. Moreover,  
\begin{align*}
	\textstyle\Theta^\ell\colon I\times [a,b]\ni (t,x)\mapsto \asymb_x^{(\ell)}(t)\in F
\end{align*}
is continuous for $0\leq \ell\leq k$, with $\Theta^\ell(t,\cdot)\in C^1([a,b],F)$ for each $t\in I$.
\end{statement}
\noindent
In fact, let $k\in \NN$ be given.  
Since $F$ is Mackey complete, it follows from Statement \ref{iufdiufdiufdfd} that 
\begin{align*}
	\textstyle\bg[z]^\ell\colon I\ni t \mapsto \int_z^b \Theta^\ell(t,x)\:\dd x\in F
\end{align*}
 exists for each $z\in [a,b]$ and $0\leq \ell\leq k$. Moreover, Corollary \ref{lkjjslkjdlkjdskjldskjs} shows that $\bg[z]\in C^k(I,F)$ holds for each $z\in [a,b]$, with
\begin{align}
\label{hdsskjdsjkjdskjskjdsds}
	\textstyle\bg[z]^{(\ell)}= \bg[z]^\ell=\int_z^b \Theta^\ell(\cdot,x) \:\dd x\qquad\quad\forall\: 0\leq \ell\leq k.
\end{align}
Since $k\in \NN$ was arbitrary, Lemma \ref{lkjfdlkjfdkjf} follows.
 
It thus remains to prove Statement \ref{iufdiufdiufdfd}: 
\begin{proof}[Proof of Statement \ref{iufdiufdiufdfd}]
\renewcommand\qedsymbol{\scalebox{0.9}{$\blacksquare$}}
We first discuss the cases $k\in \{0,1\}$, and then argue by induction: 
\begingroup
\setlength{\leftmargini}{12pt}{
\begin{itemize}
\item
	$k=0$: Clear from the assumptions.
\item
	$k=1$: By \ref{kjdkjkjdskjdskj2}, we have $\asymb_x\in C^1(I,F)$ for each $x\in [a,b]$. Since $\Theta^0$ is continuous, the map 
\begin{align*}
	\textstyle\Gamma^0\colon I\times [a,b]\ni (t,z) \mapsto \int_z^b \Theta^0(t,x)\:\dd x\in F
\end{align*}
is continuous. It is clear that $\Gamma^0(t,\cdot)\in C^1([a,b],F)$ holds for each $t\in I$; and we have $\Theta^0(t,\cdot)\in C^1([a,b],F)$ for each $t\in I$ by \ref{kjdkjkjdskjdskj1}. Now,  \ref{kjdkjkjdskjdskj2} yields
\begin{align*}
	\textstyle\Theta^1(t,x)=\dot\asymb_x(t)=\Omega(\Gamma^0(t,x),\Theta^0(t,x))\qquad\quad\forall\: t\in I,\: x\in [a,b].
\end{align*}  
Since $\Omega$ is smooth, $\Theta^1$ is continuous with 
 $\Theta^1(t,\cdot)\in C^1([a,b],F)$ for each $t\in I$. 
\end{itemize}
}
\endgroup
\noindent
Assume now that Statement \ref{iufdiufdiufdfd} holds for some $k\geq 1$. We observe the following:
\begingroup
\setlength{\leftmargini}{19pt}{
\renewcommand{\theenumi}{{\roman{enumi}})} 
\renewcommand{\labelenumi}{\theenumi}
\begin{enumerate}
\item
\label{uidsiudsiu1}
	$\Theta^\ell$ is continuous for $0\leq \ell\leq k$, hence  
\begin{align*}
	\textstyle\Gamma^\ell\colon I\times [a,b]\ni (t,z) \mapsto \int_z^b \Theta^\ell(t,x)\:\dd x\in F
\end{align*}  
is continuous for $0\leq \ell\leq k$.
\item
\label{uidsiudsiu2}
	For $z\in [a,b]$, we have $\asymb_z,\bg[z]\in C^k(I,F)$, with
\begin{align}
\label{lkdslkdsklklds}
	\textstyle\asymb_z^{(\ell)}=\Theta^\ell(\cdot,z)\qquad\text{and}\qquad
	\bg[z]^{(\ell)}\textstyle\stackrel{\eqref{hdsskjdsjkjdskjskjdsds}}{=} \Gamma^\ell(\cdot,z)\qquad\text{for}\qquad 0\leq \ell\leq k.
\end{align}
Then, \ref{uidsiudsiu1} implies that the following maps are continuous:
\begin{align}
\label{ouidfoudfifdfdfdfdfd}
\begin{split}
	I\times [a,b]\ni (t,z)&\mapsto \hspace{5.2pt}\asymb^{(\ell)}_z(t)\in F\\
	 I\times [a,b]\ni (t,z)&\mapsto \bg[z]^\ell(t)\in F
\end{split}
\end{align}
for $0\leq \ell\leq k$
\item
\label{uidsiudsiu3}
	For  $t\in I$ and $0\leq \ell\leq k$, the maps 
	\begin{align}
	\label{iudiudsiudsoiosidds}
	\begin{split}
		[a,b]\ni z&\mapsto \hspace{11.8pt}\alpha_z^{(\ell)}(t)\in F\\
		[a,b]\ni z&\mapsto \bg[z]^{(\ell)}(t)\in F
	\end{split}
	\end{align} 
	are of class $C^1$ (use \eqref{lkdslkdsklklds}).
\end{enumerate}}
\endgroup
\noindent
Then, by \ref{kjdkjkjdskjdskj2} we have
\begin{align}
\label{oisdoisdoidsxccxcnnnbb}
	\dot\asymb_x(t)=\Omega(\bg[x](t),\asymb_x(t))\qquad\quad\forall\: t\in I,\: x\in [a,b].
\end{align}
Together with \ref{uidsiudsiu2}, this shows $\asymb_x\in C^{k+1}(I,F)$ for all $x\in [a,b]$. 
Moreover, set $\gamma[x]_1:=\alpha_x$ and $\gamma[x]_2:=\bg[x]$ for each $x\in [a,b]$. It follows from \eqref{oisdoisdoidsxccxcnnnbb} that\footnote{The statement is obtained by a straightforward induction involving the differentiation rules in Proposition \ref{iuiuiuiuuzuzuztztttrtrtr}, confer also the proof of Lemma 4 in \cite{RGM}.} 
\begin{align*}
	\Theta^{k+1}\colon I\times [a,b]\ni (t,x)=\asymb_x^{(k+1)}(t)\in F 
\end{align*} 
is a sum of maps of the form 
\begin{align*}
	I\times [a,b]\ni (t,x)\mapsto \Psi\big(\gamma[x]^{(\ell_1)}_{i_1}(t),\dots,\gamma[x]_{i_m}^{(\ell_m)}(t)\big)\in F
\end{align*}
for certain $0\leq \ell_1,\dots,\ell_m\leq k$, $1\leq i_1,\dots,i_m \leq 2$, $m\geq 2$, 
where $\Psi\colon F^{m}\rightarrow F$ is smooth. Then, \eqref{ouidfoudfifdfdfdfdfd} implies that $\Theta^{k+1}$ is continuous, and \eqref{iudiudsiudsoiosidds} implies that $\Theta^{k+1}(t,\cdot)\in C^1([a,b],F)$ holds for each $t\in I$. This establishes the claim for $k+1$, so that Statement \ref{iufdiufdiufdfd} follows by induction. 
\end{proof}
\noindent
This proves the Lemma \ref{lkjfdlkjfdkjf}.
\end{proof}

\subsubsection{The Exponential Map}
\label{kfdlkfdlkfdscvpdfpofdofd}
We let $\expal\colon \mg\ni X\rightarrow \mathcal{C}_X|_{[0,1]}\in C^\const([0,1],\mg)$, hence 
\begin{align}
\label{uisdiusduiuisduisd}
	\expalinv\colon C^\const([0,1],\mg) \rightarrow \mg,\qquad \chi\mapsto \chi(0). 
\end{align}
The exponential map is given by 
\begin{align*}
	\textstyle\exp\colon \dom[\exp]= \expal^{-1}(\DIDED_\const)\rightarrow G,\qquad X\mapsto \innt \mathcal{C}_X|_{[0,1]}= (\evol_\const\cp \expal)(X).
\end{align*}
\begingroup
\setlength{\leftmargini}{12pt}{
\begin{itemize}
\item
Instead of saying that $G$ is $C^\const$-semiregular, in the following we rather say  that $G$ admits an exponential map.
\item
The relation \textrm{\ref{subst}} implies $\RR\cdot \dom[\exp]\subseteq \dom[\exp]$, as well as that $t\mapsto \exp(t\cdot X)$ is a $1$-parameter group for each $X\in \dom[\exp]$ with
\begin{align}
\label{odaidaooipidadais}
	\textstyle\exp(t\cdot X)= \innt t\cdot \mathcal{C}_X|_{[0,1]}\stackrel{\textrm{\ref{subst}}}{=} \innt_0^t\mathcal{C}_X|_{[0,1]}\qquad\quad\forall\:  t\geq 0, 
\end{align}
confer, e.g., Remark 2.1) in \cite{RGM}. 
\end{itemize}}
\endgroup
\noindent
Finally, Theorem 1 in \cite{MDM} (cf.\ Theorem \ref{weakdiffdf}), Corollary 6 in \cite{MDM}, and Remark 9.1 in \cite{MDM} provide the following statement.
\begin{corollary}
\label{fdjkjfdjkjfjkfd}
Let $\MX\colon I\rightarrow \dom[\exp]\subseteq \mg$ ($I\subseteq \RR$ an open interval) be of class $C^1$, and set $\alpha:=\exp\cp\he\MX$. Assume that $G$ is weakly $C^\infty$-regular, or that $\exp\colon \mg\rightarrow G$ is defined and of class $C^1$. 
  Then, $\alpha$ is of class $C^1$, with
\begin{align*}
	\textstyle\dot\alpha(t)=\dd_e\LT_{\exp(\MX(t))}\big(\int_0^1 \Ad_{\exp(-s\cdot \MX(t))}(\dot\MX(t)) \:\dd s \he \big)\qquad\quad\forall\: t\in I.
\end{align*}   
\end{corollary}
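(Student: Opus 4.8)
The plan is to represent $\alpha$ by a product integral and then differentiate under that integral by means of Theorem~\ref{ofdpofdpofdpofdpofd}. Recall from \eqref{odaidaooipidadais} that $\exp(s\cdot X)=\innt_0^s\mathcal{C}_X|_{[0,1]}$ for every $X\in\dom[\exp]$ and $s\geq 0$; in particular $\alpha(t)=\exp(\MX(t))=\innt_0^1\mathcal{C}_{\MX(t)}|_{[0,1]}$. Accordingly, I would introduce
\[
	\Phi\colon I\times[0,1]\rightarrow\mg,\qquad \Phi(t,s):=\MX(t),
\]
so that $\Phi(t,\cdot)=\expal(\MX(t))\in\DIDED_\const$ for each $t\in I$. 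Since constant curves are smooth, $\Phi(t,\cdot)\in\DIDE^\infty_{[0,1]}$, and moreover $\innt_0^s\Phi(t,\cdot)=\exp(s\cdot\MX(t))$ for $s\in[0,1]$; hence $\innt_0^1\Phi(t,\cdot)=\alpha(t)$ and $[\innt_0^s\Phi(t,\cdot)]^{-1}=\exp(-s\cdot\MX(t))$, the latter because $s\mapsto\exp(s\cdot\MX(t))$ is a one-parameter group.

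Next I would check the hypotheses of Theorem~\ref{ofdpofdpofdpofdpofd} at a fixed $t_0\in I$. Condition~\ref{saasaassasasa2} is immediate, since $\partial_1\Phi(t_0,\cdot)$ is the constant curve $s\mapsto\dot\MX(t_0)$, hence of class $C^\infty$. Condition~\ref{saasaassasasa1} follows from local Lipschitz continuity of the $C^1$-map $\MX$: $\Phi(t_0+h,\cdot)-\Phi(t_0,\cdot)$ is the constant curve $\MX(t_0+h)-\MX(t_0)$, so $\pp^{\dind}_{\infty}(\Phi(t_0+h,\cdot)-\Phi(t_0,\cdot))=\pp(\MX(t_0+h)-\MX(t_0))\leq|h|\cdot L_{\pp}$ for $h$ near $0$, independently of $\dind$. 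The remaining hypothesis, Mackey $k$-continuity, is available when $G$ is weakly $C^\infty$-regular: then $G$ is $C^\infty$-semiregular, hence Mackey $\infty$-continuous by Theorem~\ref{weakdiffdf}, and Theorem~\ref{ofdpofdpofdpofdpofd} applies with $k=\infty$. In the alternative case that $\exp\colon\mg\rightarrow G$ is defined and of class $C^1$, the curve $\alpha=\exp\cp\he\MX$ is itself of class $C^1$ by the chain rule, and the differentiation-under-the-product-integral statement for constant curves — which is exactly what the cited Corollary~6 and Remark~9.1 in~\cite{MDM} supply — takes over the role of Theorem~\ref{ofdpofdpofdpofdpofd}.

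In either case, the ``in particular'' clause of Theorem~\ref{ofdpofdpofdpofdpofd} then yields, for each $t\in I$,
\begin{align*}
	\dot\alpha(t)&\textstyle=\frac{\dd}{\dd h}\big|_{h=0}\innt_0^1\Phi(t+h,\cdot)=\dd_e\LT_{\alpha(t)}\big(\int_0^1\Ad_{[\innt_0^s\Phi(t,\cdot)]^{-1}}(\partial_1\Phi(t,s))\:\dd s\big)\\
	&\textstyle=\dd_e\LT_{\exp(\MX(t))}\big(\int_0^1\Ad_{\exp(-s\cdot\MX(t))}(\dot\MX(t))\:\dd s\big),
\end{align*}
provided the displayed Riemann integral exists in $\mg$. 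To secure this I would observe that $s\mapsto\Ad_{\exp(-s\cdot\MX(t))}(\dot\MX(t))$ is smooth (because $s\mapsto\exp(-s\cdot\MX(t))$ is a one-parameter group of class $C^\infty$ and $\Ad$ is smooth), hence Lipschitz on $[0,1]$, so the integral exists in $\mg$ whenever $\mg$ is Mackey complete — which holds under weak $C^\infty$-regularity — while in the $C^1$-exp case the integral equals $(\dd_e\LT_{\exp(\MX(t))})^{-1}(\dot\alpha(t))\in\mg$. Finally, to upgrade pointwise differentiability to $\alpha\in C^1$ I would invoke the routine continuity of parametrized Riemann integrals over the fixed compact interval $[0,1]$, using continuity of $\dot\MX$ and of $\exp$ on $\dom[\exp]$, to conclude that $t\mapsto\int_0^1\Ad_{\exp(-s\cdot\MX(t))}(\dot\MX(t))\:\dd s$, and hence $\dot\alpha$, is continuous.

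The principal obstacle is the Mackey-continuity hypothesis of Theorem~\ref{ofdpofdpofdpofdpofd} in the case where $\exp$ is merely of class $C^1$ and no regularity of $G$ is presumed: this is precisely the point at which the auxiliary results of~\cite{MDM} (Corollary~6 and Remark~9.1) are indispensable, since they provide the constant-curve analogue of Theorem~\ref{ofdpofdpofdpofdpofd}; it is also where one must take care that the Riemann integral $\int_0^1\Ad_{\exp(-s\cdot\MX(t))}(\dot\MX(t))\:\dd s$ lands in $\mg$ and not merely in $\mgc$.
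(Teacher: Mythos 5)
Your argument is correct and in substance coincides with the paper's, which disposes of the weakly $C^\infty$-regular case by citing Theorem 1 and Corollary 6 of \cite{MDM} and of the second case by citing Remark 9.1 of \cite{MDM} together with the chain rule. What you have done is unfold the first citation into an explicit application of Theorem \ref{ofdpofdpofdpofdpofd} to the family of constant curves $\Phi(t,\cdot)=\mathcal{C}_{\MX(t)}|_{[0,1]}$ — which is precisely the content of the cited Corollary 6 — and your verifications (conditions \ref{saasaassasasa2} and \ref{saasaassasasa1}, Mackey $\infty$-continuity via Theorem \ref{weakdiffdf}, existence of the Riemann integral in $\mg$ from Lipschitz continuity of the integrand together with Mackey completeness) are all sound. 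Your treatment of the second case is identical to the paper's.

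The one step you should not label ``routine'' is the continuity of $\dot\alpha$ in the weakly $C^\infty$-regular case (in the other case it is automatic from the chain rule, since $\exp$ is assumed $C^1$). Your formula for $\dot\alpha(t)$ involves $\dd_e\LT_{\exp(\MX(t))}$ and $\Ad_{\exp(-s\cdot\MX(t))}$, so you need joint continuity of $(t,s)\mapsto\exp(s\cdot\MX(t))$; but continuity of $\exp$ on $\dom[\exp]$ is not among the hypotheses, as weak regularity imposes no continuity on the evolution map. It does hold here, but only via the Mackey-continuity device: since $\MX$ is $C^1$, $t_n\to t$ forces $\{\mathcal{C}_{\MX(t_n)}|_{[0,1]}\}_{n\in\NN}\mackarr{\infty}\mathcal{C}_{\MX(t)}|_{[0,1]}$, whence $\limin\innt_0^\bullet\mathcal{C}_{\MX(t_n)}|_{[0,1]}=\innt_0^\bullet\mathcal{C}_{\MX(t)}|_{[0,1]}$ by Theorem \ref{weakdiffdf} and Lemma \ref{fdhjfdkjj}; this is exactly the argument used for Lemma \ref{lkjflkjfdjkdflkjfd} in Appendix \ref{asassadsdsdsdsdsdcxcxsds}. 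With that point supplied, your proof is complete.
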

\begin{proof}
If $G$ is weakly $C^\infty$-regular, then the claim is clear from Theorem 1 in \cite{MDM} and Corollary 6 in \cite{MDM}. Assume now that $\exp$ is of class $C^1$. Then, Remark 9.1 in \cite{MDM} shows that 
\begin{align*}
	\textstyle\dd_X\exp(Y)= \dd_e\LT_{\exp(X)}\big(\int_0^1 \Ad_{\exp(-s\cdot X)}(Y) \:\dd s\big)
\end{align*}
holds, so that the claim is clear from Part \ref{chainrule} of Proposition \ref{iuiuiuiuuzuzuztztttrtrtr}.  
\end{proof}

\section{Preliminary Results}
\label{kllkdslkdlklkdslds}
In this section, we derive some elementary results from Theorem \ref{ofdpofdpofdpofdpofd} (Proposition \ref{fdfdfd} in Sect.\  \ref{kjdskjsdkjdskjs} and Proposition \ref{fdfddffddfsdsdssdsd} in Sect.\ \ref{dlkjlkjfdlkjfddofdoifdoi}), and provide an integral expansion for the adjoint action (Lemma \ref{hjfdhjfdhjfd} in Sect.\ \ref{kfdkjfdkjfdfd}). Proposition \ref{fdfdfd} and Lemma \ref{hjfdhjfdhjfd} will be used in Sect.\ \ref{dsdsdsddssddsds} to investigate the product integral of nilpotent curves. Also, Proposition \ref{fdfddffddfsdsdssdsd} was supposed to be applied in this paper, but eventually turned out not to be necessary for our argumentation. We kept this result for academic reasons, and because it   certainly will play a role in future applications. Our considerations in Sect.\ \ref{kjdskjsdkjdskjs} and Sect.\ \ref{dlkjlkjfdlkjfddofdoifdoi} furthermore serve as motivations for the constructions made in Sect.\ \ref{jkdkjdjkddsdsddds}.

\subsection{An Integral Transformation}
\label{kjdskjsdkjdskjs}
Let $\REGC:=\{\phi\in \DIDE\:|\: [0,1]\cdot \phi \subseteq \DIDE\}$. 
We define $\TMAP\colon \REGC\rightarrow \Map([0,1],\comp{\mg})$ by
\begin{align}
\label{dfdfdfdfdfd}
\begin{split}
	\textstyle\TMAP\colon \REGC\cap \DIDE_{[a,b]}&\rightarrow \Map([0,1],\comp{\mg})\\
	 \phi&\textstyle\mapsto \big[ [0,1]\ni t \mapsto  \int_a^b \Ad_{[\innt_s^{b}  t\cdot \phi]}(\phi(s)) \:\dd s\he\big]
\end{split}
\end{align}
for each $a<b$.  
In view of Sect.\ \ref{dsdsdsddssddsds} (the proof of Lemma \ref{mnsdmnsdmnsdmndsmnsd}), for $a<b$ and $\phi\in \REGC\cap \DIDE_{[a,b]}$, we set
\begin{align}
\label{dsdsdsdcxcxcxcxcxcxcxsds}
	\TMAP(\phi|_{[a,a]})\colon [0,1]\ni t \mapsto 0\in \mg.
\end{align}
In this section, we proof the following proposition.
\begin{proposition}
\label{fdfdfd}
Assume that $G$ is weakly $C^k$-regular for $k\in \NN\cup\{\lip,\infty\}$. Then, for $a<b$ and $\phi\in C^k([a,b],\mg)$, we have $\TMAP(\phi)\in C^\infty([0,1],\mg)$ with
\begin{align*}
	\textstyle\innt_a^b t\cdot \phi=\innt_0^t  \TMAP(\phi)\qquad\quad\forall\: t\in [0,1].
\end{align*}
In particular, $\innt_a^b\phi=\innt_0^1 \TMAP(\phi)$ holds, and $\mu\colon [0,1]\ni t\mapsto \innt_a^b t\cdot \phi$ is smooth by \eqref{kjdskjdsjkdskjdsjk}. 
\end{proposition}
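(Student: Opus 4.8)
The plan is to prove the two halves of the statement separately: first the identity $\innt_a^b t\cdot\phi=\innt_0^t\TMAP(\phi)$ \emph{granting} $\TMAP(\phi)\in C^\infty([0,1],\mg)$, and then that regularity, which carries all the technical weight.

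\emph{The identity.} Write $\mu\colon[0,1]\ni t\mapsto\innt_a^b t\cdot\phi\in G$. Since $G$ is weakly $C^k$-regular we have $t\cdot\phi\in C^k([a,b],\mg)=\DIDE^k_{[a,b]}$ for all $t$, and Proposition \ref{ableiti} applies: its last assertion (with the two curves $0$ and $\phi$) gives $\mu\in C^1([0,1],G)$, and its differential formula, read as the directional derivative of $\evol^k_{[a,b]}$ at $t\cdot\phi$ in direction $\phi$, gives $\dot\mu(t)=\dd_e\LT_{\mu(t)}\big(\int_a^b\Ad_{[\innt_a^s t\cdot\phi]^{-1}}(\phi(s))\,\dd s\big)$, the integral lying in $\mg$ by completeness of $\mg$ (Mackey if $k\geq1$, integral if $k=0$). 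Applying $\dd_{\mu(t)}\RT_{\mu(t)^{-1}}$ and using $\Ad_g=\dd_g\RT_{g^{-1}}\circ\dd_e\LT_g$ turns this into $\Der(\mu)(t)=\Ad_{\mu(t)}\big(\int_a^b\Ad_{[\innt_a^s t\cdot\phi]^{-1}}(\phi(s))\,\dd s\big)$; now pull $\Ad_{\mu(t)}$ inside the integral via \eqref{pofdpofdpofdsddsdsfd} and use the splitting property \ref{pogfpogf}, which yields $\mu(t)\cdot[\innt_a^s t\cdot\phi]^{-1}=\innt_a^b t\cdot\phi\cdot[\innt_a^s t\cdot\phi]^{-1}=\innt_s^b t\cdot\phi$, to obtain $\Der(\mu)=\TMAP(\phi)$ (compare \eqref{dfdfdfdfdfd}). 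On the other hand, once $\TMAP(\phi)\in C^\infty([0,1],\mg)\subseteq C^k([0,1],\mg)=\DIDED_k$, semiregularity gives $\TMAP(\phi)\in\DIDE_{[0,1]}$, so $\nu:=\EV(\TMAP(\phi))\in C_*^1([0,1],G)$ satisfies $\Der(\nu)=\TMAP(\phi)$ and $\nu(t)=\innt_0^t\TMAP(\phi)$. Since $\mu(0)=e=\nu(0)$ and $\Der(\mu)=\Der(\nu)$, injectivity of $\Der$ on $C_*^1([0,1],G)$ forces $\mu=\nu$, which is the asserted identity (at $t=1$ it reads $\innt_a^b\phi=\innt_0^1\TMAP(\phi)$).

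\emph{Regularity of $\TMAP(\phi)$.} Fix $s\in[a,b]$ and set $\asymb_s(t):=\Ad_{\innt_s^b t\cdot\phi}(\phi(s))$, which is defined for every $t\in\RR$ by semiregularity. The crucial point is a Lax-type equation in the parameter $t$: repeating the computation of the first paragraph on the interval $[s,b]$ (Proposition \ref{ableiti}, \eqref{pofdpofdpofdsddsdsfd}, splitting \ref{pogfpogf}) and then invoking \eqref{dsdsdsdsdsa}, one finds $\asymb_s\in C^1(\RR,\mg)$ with $\dot\asymb_s(t)=\bil{\int_s^b\asymb_r(t)\,\dd r}{\asymb_s(t)}$. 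When $k\geq1$ this is exactly the hypothesis of Lemma \ref{lkjfdlkjfdkjf} (with $F=\mg$, the smooth map $\Omega=\bil{\cdot}{\cdot}$, and $z=a$), whose conclusion is that $t\mapsto\int_a^b\asymb_s(t)\,\dd s=\TMAP(\phi)(t)$ is smooth on an open interval containing $[0,1]$; in general (in particular for $k=0$) one bootstraps directly, proving by induction on $j$ that each $\asymb_s$ is of class $C^j$ in $t$ with all derivatives $\partial_t^\ell\asymb_s$ jointly continuous in $(t,s)$, where the inductive step differentiates the Lax relation under the integral sign via Corollary \ref{lkjjslkjdlkjdskjldskjs}, and then a final application of Corollary \ref{lkjjslkjdlkjdskjldskjs} (integrating in $s$ over $[a,b]$, at arbitrary target order) gives $\TMAP(\phi)\in C^\infty([0,1],\mg)$. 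That all these iterated integrals take values in $\mg$ (not merely $\comp{\mg}$) rests on completeness of $\mg$, their integrands being continuous in $s$ (indeed Lipschitz when $k\geq1$). Two auxiliary facts feed this argument: joint continuity of $(t,s)\mapsto\innt_s^b t\cdot\phi=[\innt_a^b t\cdot\phi][\innt_a^s t\cdot\phi]^{-1}$, which holds because $t_n\to t$ renders $t_n\cdot\phi$ Mackey convergent to $t\cdot\phi$, so $\innt_a^\bullet t_n\cdot\phi\to\innt_a^\bullet t\cdot\phi$ uniformly by Mackey $k$-continuity (Theorem \ref{weakdiffdf}), combined with the $C^{k+1}$-dependence of $\innt_a^\bullet t\cdot\phi$ on its upper limit; and joint continuity of the parametrized integrals, which is standard from \eqref{ffdlkfdlkfd}.

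\emph{Conclusion and main obstacle.} With the identity and $\TMAP(\phi)\in C^\infty([0,1],\mg)$ in hand, $\TMAP(\phi)\in\DIDE^\infty_{[0,1]}$ and $\mu=\EV(\TMAP(\phi))$, so $\mu$ is smooth by \eqref{kjdskjdsjkdskjdsjk}. The main obstacle is the regularity step: carrying out the induction that simultaneously establishes smoothness in $t$ and joint continuity in $(t,s)$ of all $t$-derivatives of $(t,s)\mapsto\Ad_{\innt_s^b t\cdot\phi}(\phi(s))$, together with the repeated justification of differentiation under the integral sign and the accompanying completeness bookkeeping that keeps every iterated integral inside $\mg$.
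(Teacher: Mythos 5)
Your proposal is correct and follows essentially the same route as the paper: the identity is obtained exactly as in the paper's proof of Proposition \ref{fdfdfd} (Proposition \ref{ableiti}, pulling $\Ad_{\mu(t)}$ through the integral via \eqref{pofdpofdpofdsddsdsfd}, and injectivity of $\Der$ on $C^1_*$), while the smoothness of $\TMAP(\phi)$ is established, as in Lemma \ref{dkjfkjdjfdlkjlkjfdkjdfkdf}, by deriving the parameter Lax relation $\dot\asymb_s(t)=\bil{\int_s^b\asymb_r(t)\,\dd r}{\asymb_s(t)}$ and feeding it into Lemma \ref{lkjfdlkjfdkjf} (whose proof is precisely the bootstrap via Corollary \ref{lkjjslkjdlkjdskjldskjs} that you describe for the low-regularity cases), with the same auxiliary joint-continuity input (Lemma \ref{lkjflkjfdjkdflkjfd}).
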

\begin{example}
\label{kfdkjfdkjfkjda}
Assume that $G$ is abelian, as well as weakly $C^k$-regular for $k\in \NN\cup\{\lip,\infty\}$. Then, Proposition \ref{fdfdfd} recovers the well-known  formula \cite{TMICH}
\begin{align}
\label{ddsnmvcnmvcnmvcnm}
	\textstyle \innt_a^x \phi = \exp\big(\int_a^x \phi(s)\: \dd s\big)
	\qquad\quad\forall\: x\in [a,b],
\end{align}
for each $a<b$ and $\phi\in C^k([a,b],\mg)$. 
\hspace*{\fill}\qed
\end{example}
\begin{remark}
\label{oidsoidsoidsiods}
Assume that $G$ is weakly $C^k$-regular for $k\in \NN\cup\{\lip,\infty\}$; and recall the group structure discussed in Remark \ref{dsdsdsdsjkjkkjkjkjjkkjkj}, as well as the map \eqref{iudsoiudsiudsiudsods} introduced in Example \ref{fdpofdopdpof}.    It is straightforward from Proposition \ref{fdfdfd} and the properties of the product integral that 
\begin{align}
\label{pdpofdpofdpofd}
\begin{split}
\TMAP(\psi)^{-1}&=\TMAP(\inverse{\psi})\\[2pt]
	\TMAP(\psi)&=\TMAP(\psi|_{[c,b]})\star \TMAP(\psi|_{[a,c]})\\
	\TMAP(\psi)&=\TMAP(\dot\varrho\cdot (\psi\cp\varrho))
\end{split}
\end{align}
holds for $a<c<b$, $\psi\in C^k([a,b],\mg)$, as well as $\varrho\colon [a',b']\rightarrow [a,b]$ ($a'<b'$) of class $C^1$ with $\dot\varrho|_{(a',b')}>0$, $\varrho(a')=a$, $\varrho(b')=b$:
\begingroup
\setlength{\leftmargini}{12pt}
\begin{itemize}
\item
	By \ref{kdsasaasassaas} (first step), Proposition \ref{fdfdfd} (second step), and \eqref{pfifpfpofdpofd} (fourth step), we have
	\begin{align*}
		\textstyle\innt_0^t\TMAP(\psi)\star\TMAP(\inverse{\psi})&\textstyle=\hspace{4.7pt}\innt_0^t\TMAP(\psi)\cdot \innt_0^t\TMAP(\inverse{\psi})\\
		&\textstyle=(\innt_a^b t\cdot \psi) \cdot  (\innt_a^b t\cdot \inverse{\psi})\\
		&\textstyle = (\innt_a^b t\cdot \psi) \cdot  (\innt_a^b \inverse{t\cdot \psi})\\
		&\textstyle=(\innt_a^b t\cdot \psi) \cdot  (\innt_a^b t\cdot \psi)^{-1}\\
		&=e
	\end{align*}
	for each $t\in [0,1]$. Then, injectivity of $\Der|_{C^1_*([0,1],G)}$ implies  $\TMAP(\psi)\star\TMAP(\inverse{\psi})=0$, which proves the first line in \eqref{pdpofdpofdpofd}.
\item
	By Proposition \ref{fdfdfd} (first and third step),  \ref{pogfpogf} (second step), and \ref{kdsasaasassaas} (fourth step), we have
\begin{align*}
	\textstyle\innt_0^t\TMAP(\psi)=\innt_a^b t\cdot \psi = (\innt_c^b t\cdot \psi)\cdot (\innt_a^c t\cdot \psi) = \innt_0^t \TMAP(\psi|_{[c,b]})\cdot \innt_0^t \TMAP(\psi|_{[a,c]})=\innt_0^t \TMAP(\psi|_{[c,b]})\star \TMAP(\psi|_{[a,c]})
\end{align*}
for each $t\in [0,1]$. The second line in \eqref{pdpofdpofdpofd} now follows from injectivity of $\Der$.
\item
Since $\dot\varrho|_{(a',b')}>0$ holds, the substitution formula \ref{subst} applies to $\varrho|_{[x',y']}$ and $t\cdot \psi|_{[\varrho(x'),\varrho(y')]}$ for all $a'\leq x'< y'\leq b'$ and $t\in [0,1]$.  
	We obtain 
	\begin{align*}
		\TMAP(\dot\varrho\cdot (\psi\cp\varrho))&\textstyle=\int_{a'}^{b'} \Ad_{\innt_s^{b'} (\dot\varrho\he\cdot\he ((t\cdot \psi)\he\cp\he\varrho))|_{[s,b']}}(\dot\varrho(s)\cdot (\psi\cp\varrho)(s)) \:\dd s\\
		&\textstyle=\int_{a'}^{b'}\dot\varrho(s)\cdot\big( \Ad_{\innt_{\varrho(s)}^{b} t\cdot \psi|_{[\varrho(s),b]}}( \psi(\varrho(s))\big) \:\dd s\\
		&\textstyle=\int_{a}^{b}\Ad_{\innt_{s}^{b} t\cdot \psi|_{[s,b]}}(\psi(s)) \:\dd s\\
		&=\TMAP(\psi),
	\end{align*}
	where we have applied \eqref{substitRI} in the third step. 
	This proves the third line in \eqref{pdpofdpofdpofd}. 
\hspace*{\fill}\qed 
\end{itemize}
\endgroup
\end{remark}
\begin{remark}
\label{fdjfdklkjfdlkjfd}
The smoothness statement in Proposition \ref{fdfdfd}, in particular,  ensures that $\TMAP$ can be applied iteratively. We will use this fact in Sect.\ \ref{dsdsdsddssddsds} to prove an identity for the product integral in the nilpotent context.\hspace*{\fill}\qed
\end{remark}
\noindent
For the proof of Proposition \ref{fdfdfd}, we shall need the following  statements. 
\begin{lemma}
\label{Adlip}
Let $k\in \NN\cup\{\lip,\infty\}$, $a<b$, $\mu\in C^{k+1}([a,b],G)$, and $\phi\in C^k([a,b],\mg)$. Then, we have $\Ad_\mu(\phi)\in C^k([a,b],\mg)$.  
\end{lemma}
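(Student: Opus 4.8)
The plan is to reduce the statement to the two standard facts that the adjoint action $\Ad\colon G\times\mg\rightarrow\mg$ is smooth, and that composition, pointwise application of smooth maps, and products preserve the $C^k$-classes of curves (for $k\in\NN\sqcup\{\lip,\infty\}$). First I would observe that $\Ad$ is smooth as a map $G\times\mg\rightarrow\mg$, since it is the tangent map of the smooth conjugation action; hence the map
\[
	\Psi\colon G\times\mg\rightarrow\mg,\qquad (g,X)\mapsto\Ad_g(X)
\]
is smooth and linear in the second argument. Given $\mu\in C^{k+1}([a,b],G)$ and $\phi\in C^k([a,b],\mg)$, the curve $\Ad_\mu(\phi)$ is precisely $t\mapsto\Psi(\mu(t),\phi(t))$, i.e.\ the composition of $\Psi$ with the curve $t\mapsto(\mu(t),\phi(t))\in C^k([a,b],G\times\mg)$ (using $k+1\geq k$ so that $\mu\in C^k$ as well). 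So the claim amounts to: a smooth map between manifolds sends $C^k$-curves to $C^k$-curves, for $k\in\NN\sqcup\{\lip,\infty\}$.

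For $k\in\NN\sqcup\{\infty\}$ this is the standard chain-rule statement of the differential calculus being used (Appendix \ref{Diffcalc}): the composite of a $C^k$-curve with a smooth map is $C^k$, and its derivatives are computed by the usual chain rule applied iteratively. For the Lipschitz case $k=\lip$ one has to be slightly more careful, since $C^\lip$ is not simply $C^1$; here I would argue that $\Ad_\mu(\phi)$ is of class $C^0$ by continuity of $\Psi$, and that it is locally Lipschitz because near any point $\Psi$ is (jointly) Lipschitz on bounded sets — more precisely, writing things in the chart $\chart$, the coordinate expression of $\Psi$ is smooth, hence $C^1$, hence locally Lipschitz in both variables, and the composition of locally Lipschitz maps with Lipschitz curves on a compact interval is Lipschitz. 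Concretely one estimates, for a seminorm $\pp\in\Sem{E}$ and $t,t'\in[a,b]$,
\[
	\pp\bigl(\Ad_{\mu(t)}(\phi(t))-\Ad_{\mu(t')}(\phi(t'))\bigr)
	\leq \pp\bigl(\Ad_{\mu(t)}(\phi(t))-\Ad_{\mu(t)}(\phi(t'))\bigr)
	+ \pp\bigl(\Ad_{\mu(t)}(\phi(t'))-\Ad_{\mu(t')}(\phi(t'))\bigr),
\]
bounding the first summand by $\Lip(\cdot,\phi)\cdot|t-t'|$ times a seminorm coming from continuity of $\Ad$ in the first argument over the compact set $\mu([a,b])$ (Lemma \ref{alalskkskaskaskas} applied to $\Psi$), and the second summand by the Lipschitz constant of $\mu$ times a bound obtained from the $C^1$-coordinate expression of $g\mapsto\Ad_g(\phi(t'))$ over the compact path $\mu([a,b])$, uniformly in $t'$.

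The main obstacle is therefore purely the $k=\lip$ bookkeeping: one must produce, uniformly over the compact interval, a single seminorm estimate that witnesses the Lipschitz property, which is where Lemma \ref{alalskkskaskaskas} (uniform multilinear estimates over compacta) and the fact that a $C^1$-map is locally Lipschitz with locally bounded ``derivative seminorms'' do the work. For $k\in\NN\sqcup\{\infty\}$ there is essentially no obstacle: one simply cites the chain rule of the differential calculus in use, noting that each derivative $(\Ad_\mu(\phi))^{(\ell)}$ for $\ell\leq k$ is a continuous expression in $\mu,\dot\mu,\dots,\mu^{(\ell)}$ and $\phi,\dot\phi,\dots,\phi^{(\ell)}$ via the partial differentials of the smooth map $\Psi$, all of which exist and are continuous by hypothesis. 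I would present the proof by first disposing of the $\NN\sqcup\{\infty\}$ case in one sentence, then carrying out the displayed Lipschitz estimate for the remaining case.
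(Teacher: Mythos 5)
Your argument is correct, but it is worth noting that the paper itself does not prove Lemma \ref{Adlip} at all: the ``proof'' is a one-line citation to Lemma 13 in \cite{RGM}. So you have supplied a self-contained argument where the paper defers to an external reference. Your route --- joint smoothness of $\Ad\colon G\times\mg\rightarrow\mg$ (which the paper also takes for granted, e.g.\ in the proof of Corollary \ref{opopsopsdopdsas}), the chain rule of Proposition \ref{iuiuiuiuuzuzuztztttrtrtr} for $k\in\NN\sqcup\{\infty\}$, and a separate two-term estimate for $k=\lip$ --- is the natural one, and the only genuine content is indeed the Lipschitz bookkeeping you isolate. Two small points where your sketch should be tightened if written out: first, for the term $\pp\bigl(\Ad_{\mu(t)}(\phi(t'))-\Ad_{\mu(t')}(\phi(t'))\bigr)$ the cleanest way to get a bound that is uniform in $t'$ is to apply \eqref{isdsdoisdiosd1} to $s\mapsto\Ad_{\mu(s)}(\phi(t'))$ and then trivialize the tangent vector $\dot\mu(s)\in T_{\mu(s)}G$ via $\dot\mu(s)=\dd_e\RT_{\mu(s)}\bigl(\Der(\mu)(s)\bigr)$, so that Lemma \ref{alalskkskaskaskas} can be applied to the continuous map $(g,X,Y)\mapsto \dd\Ad\bigl((g,X),(\dd_e\RT_g(Y),0)\bigr)$, which is bilinear in $(X,Y)$, over the compact set $\mu([a,b])$; the phrase ``Lipschitz constant of $\mu$'' is otherwise chart-dependent for a manifold-valued curve. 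Second, for $k\in\NN\sqcup\{\infty\}$ one should recall that $C^k$ on $[a,b]$ means ``restriction of a $C^k$ curve on an open interval'', so the composition is performed on a common open extension of $\mu$ and $\phi$; this is routine but is the reason the reduction to the open-interval chain rule is legitimate.
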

\begin{proof}
Confer, e.g., Lemma 13 in \cite{RGM}.
\end{proof}
\begin{lemma}
\label{lkjflkjfdjkdflkjfd}
Assume that $G$ is weakly $C^k$-regular for $k\in \NN\cup\{\lip,\infty\}$, and let $\phi\in C^k([a,b])$ ($a<b$) be given. Then, 
$\kappa\colon \RR\times [a,b]\ni (t,x)\mapsto \innt_x^bt\cdot \phi\in G$ is continuous. 
\end{lemma}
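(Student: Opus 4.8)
The plan is to fix $(t_0,x_0)\in\RR\times[a,b]$ and, since $\RR\times[a,b]$ is metrizable, to check sequential continuity: for $(t_n,x_n)_{n\in\NN}\subseteq\RR\times[a,b]$ with $(t_n,x_n)\to(t_0,x_0)$ I must show $\kappa(t_n,x_n)\to\kappa(t_0,x_0)$ in $G$. Two facts about $G$ are used. First, being weakly $C^k$-regular it is $C^k$-semiregular, hence Mackey $k$-continuous by Theorem \ref{weakdiffdf}: if $\{\psi_n\}_{n\in\NN}\subseteq C^k([0,1],\mg)$ converges Mackey to $\psi\in C^k([0,1],\mg)$, then $\innt_0^\bullet\psi_n\to\innt_0^\bullet\psi$ uniformly on $[0,1]$, in particular $\innt_0^1\psi_n\to\innt_0^1\psi$. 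Second, by Proposition \ref{ableiti} (applied to the zero curve and the curve $\phi|_{[x_0,b]}$) the map $\RR\ni t\mapsto\innt_{x_0}^b t\cdot\phi\in G$ is of class $C^1$, hence continuous, whenever $x_0<b$. The point of the argument is to route the $x$-dependence so that only Mackey-convergent sequences of curves on the fixed interval $[0,1]$ occur.

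The key estimate I would establish first is: \emph{if $y_m<z_m$ in $[a,b]$ with $z_m-y_m\to0$ and $(s_m)_{m\in\NN}\subseteq\RR$ is bounded, then $\innt_{y_m}^{z_m}s_m\cdot\phi\to e$.} Rescaling $[y_m,z_m]$ to $[0,1]$ by $\varrho_m(u):=y_m+u(z_m-y_m)$, the substitution formula \ref{subst} (its boundary term $\innt_{y_m}^{\varrho_m(0)}s_m\phi=\innt_{y_m}^{y_m}s_m\phi=e$ disappears) yields $\innt_{y_m}^{z_m}s_m\phi=\innt_0^1(z_m-y_m)s_m\,(\phi\cp\varrho_m)$. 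By the chain rule $(\phi\cp\varrho_m)^{(\ell)}=(z_m-y_m)^\ell\cdot(\phi^{(\ell)}\cp\varrho_m)$, and since $z_m-y_m\le b-a$ and each $\phi^{(\ell)}$ ($\ell\le k$) is bounded on $[a,b]$, the curves $\phi\cp\varrho_m$ are bounded in every seminorm $\qq^\dind_\infty$ of $C^k([0,1],\mg)$, uniformly in $m$. With $M:=\sup_m|s_m|<\infty$ this gives $(z_m-y_m)s_m(\phi\cp\varrho_m)\mackarr{\kk}0$, the rate being $\lambda_m:=M(z_m-y_m)\to0$ and the constants $\mathfrak{c}_{\qq,\dind}:=\sup_m\qq^\dind_\infty(\phi\cp\varrho_m)<\infty$, so by Mackey $k$-continuity $\innt_0^1(z_m-y_m)s_m(\phi\cp\varrho_m)\to\innt_0^1 0=e$. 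In particular this handles any $x_n\to b$, hence the case $x_0=b$, where $\kappa(t_0,b)=e$ and also $\kappa(t_n,b)=e$ for indices with $x_n=b$.

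For $x_0\in[a,b)$ I may assume $x_n<b$ for all $n$ (eventually $|x_n-x_0|<(b-x_0)/2$). The splitting property \ref{pogfpogf}, applied to the decomposition of $[\min(x_n,x_0),b]$ through $\max(x_n,x_0)$, gives $\innt_{x_n}^b t_n\phi=(\innt_{x_0}^b t_n\phi)\cdot c_n$ with $c_n:=\innt_{x_n}^{x_0}t_n\phi$ when $x_n\le x_0$, $c_n:=(\innt_{x_0}^{x_n}t_n\phi)^{-1}$ when $x_n>x_0$, and $c_n:=e$ when $x_n=x_0$. In each case $c_n$ is (the inverse of) a product integral of the bounded-scalar curve $t_n\phi$ over an interval shrinking to $\{x_0\}$, so $c_n\to e$ by the previous paragraph and continuity of inversion. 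Moreover $\innt_{x_0}^b t_n\phi\to\innt_{x_0}^b t_0\phi=\kappa(t_0,x_0)$ by continuity of $t\mapsto\innt_{x_0}^b t\phi$ noted above. As multiplication in $G$ is continuous, $\kappa(t_n,x_n)=(\innt_{x_0}^b t_n\phi)\cdot c_n\to\kappa(t_0,x_0)\cdot e=\kappa(t_0,x_0)$, which finishes the proof.

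The step I expect to be the main obstacle is exactly why the argument cannot proceed by a single rescaling. One would like to rescale $[x_n,b]$ to $[0,1]$ and conclude $(b-x_n)t_n(\phi\cp\varrho_{x_n})\mackarr{\kk}(b-x_0)t_0(\phi\cp\varrho_{x_0})$, but this would require the differences $\phi^{(\ell)}\cp\varrho_{x_n}-\phi^{(\ell)}\cp\varrho_{x_0}$ to be dominated by a \emph{single} rate $\lambda_n$ simultaneously for all seminorms $\qq$ and all orders $\ell\le k$, whereas the modulus of continuity of $\phi^{(\ell)}$ genuinely depends on both $\qq$ and $\ell$. Peeling off the moving part of the integral and sending it to $0$ — where Mackey convergence to a constant only needs $\qq^\dind_\infty$-boundedness of the rescaled curves, not any equicontinuity — while treating the non-degenerate piece on the fixed interval $[x_0,b]$ as a $C^1$ function of the scalar $t$ alone (Proposition \ref{ableiti}), is what keeps all curve-sequences in the proof Mackey convergent, so that Theorem \ref{weakdiffdf} applies.
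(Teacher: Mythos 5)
Your proof is correct, but it follows a genuinely different route from the paper's. The paper argues directly with neighbourhoods: it writes $\innt_x^b(t+h)\phi=[\innt_a^b(t+h)\phi]\cdot[\innt_a^x(t+h)\phi]^{-1}$, invokes Mackey $k$-continuity (Theorem \ref{weakdiffdf}, together with Lemma 15 in \cite{MDM}) to get $\innt_a^\bullet(t+h)\phi\in V\cdot\innt_a^\bullet t\cdot\phi$ \emph{uniformly in} $x\in[a,b]$ for small $|h|$, and then needs a compactness argument to control the conjugation $(\innt_x^b t\cdot\phi)^{-1}\,V\,(\innt_x^b t\cdot\phi)$ uniformly in $x$ before combining with continuity of $\kappa(t,\cdot)$. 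You instead localize at the limit point: the splitting $\innt_{x_n}^b t_n\phi=(\innt_{x_0}^b t_n\phi)\cdot c_n$ reduces everything to (i) the scalar direction at the \emph{fixed} interval $[x_0,b]$, handled by the $C^1$-statement in Proposition \ref{ableiti}, and (ii) a product integral $c_n$ over a shrinking interval, which you kill by rescaling to $[0,1]$ and observing that the rescaled curves are Mackey convergent to the zero curve — a cheap form of Mackey convergence since only $\qq^\dind_\infty$-boundedness of $\phi\cp\varrho_m$ is needed, no equicontinuity. This buys you a proof that avoids both the conjugation/compactness step and the external uniform-convergence lemma from \cite{MDM}; the price is the case analysis at $x_0=b$ versus $x_0<b$ and the splitting of the index set according to the sign of $x_n-x_0$ (which you handle correctly). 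Two small points worth making explicit in a write-up: the Mackey rate $\lambda_m=M(z_m-y_m)$ must be strictly positive, so the degenerate case $\sup_m|s_m|=0$ should be disposed of separately (it is trivial); and for $k=\lip$ one should check that the Lipschitz seminorm of the rescaled curve also picks up a factor $(z_m-y_m)$, which it does.
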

\begin{proof}
Confer Appendix \ref{asassadsdsdsdsdsdcxcxsds}.
\end{proof}
\begin{lemma}
\label{dkjfkjdjfdlkjlkjfdkjdfkdf}
Assume that $G$ is weakly $C^k$-regular for $k\in \NN\cup\{\lip,\infty\}$, and let $a<b$ as well as $\phi\in C^k([a,b])$ be given. Then, $\TMAP(\phi)\colon \RR\ni t \mapsto  \int_a^b \Ad_{[\innt_s^{b}  t\cdot \phi]}(\phi(s)) \:\dd s\in \mg$ is smooth.
\end{lemma}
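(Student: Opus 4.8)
The plan is to invoke Lemma~\ref{lkjfdlkjfdkjf} with $F:=\mg$, $I:=\RR$, the continuous bilinear (hence smooth) map $\Omega:=\bil{\cdot}{\cdot}\colon\mg\times\mg\rightarrow\mg$, and
\[
	\Theta^0\colon\RR\times[a,b]\ni(t,s)\longmapsto\Ad_{[\innt_s^b t\cdot\phi]}(\phi(s))\in\mg,
\]
so that $\TMAP(\phi)(t)=\int_a^b\Theta^0(t,s)\,\dd s=\bg[a](t)$ in the notation of that lemma. Note that $\mg$ is Mackey complete, since $G$ is weakly $C^k$-regular ($\mg$ integral complete for $k=0$, Mackey complete otherwise). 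Joint continuity of $\Theta^0$ on $\RR\times[a,b]$ is immediate from Lemma~\ref{lkjflkjfdjkdflkjfd} (continuity of $\kappa\colon(t,s)\mapsto\innt_s^b t\cdot\phi$), continuity of $\phi$, and smoothness of $\Ad\colon G\times\mg\rightarrow\mg$.

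The heart of the argument is the verification of hypothesis~\ref{kjdkjkjdskjdskj2} of Lemma~\ref{lkjfdlkjfdkjf}, i.e.\ that the $t$-derivative of $\asymb_s(t):=\Theta^0(t,s)$ is governed precisely by $\Omega=\bil{\cdot}{\cdot}$. Fix $s\in[a,b]$ and set $\mu_s(t):=\innt_s^b t\cdot\phi=\evol^k_{[s,b]}(t\cdot\phi|_{[s,b]})$, which is defined for all $t$ by $C^k$-semiregularity (Remark~\ref{sdffdffdfd}). By Proposition~\ref{ableiti} (applicable on $[s,b]$ by weak $C^k$-regularity), $t\mapsto\mu_s(t)$ is of class $C^1$ with
\[
	\textstyle\dot\mu_s(t)=\dd_e\LT_{\mu_s(t)}\Big(\int_s^b\Ad_{[\innt_s^r t\cdot\phi]^{-1}}(\phi(r))\,\dd r\Big).
\]
I would then rewrite the integrand: by the splitting property~\ref{pogfpogf}, $[\innt_s^r t\cdot\phi]^{-1}=(\innt_s^b t\cdot\phi)^{-1}\cdot(\innt_r^b t\cdot\phi)$, so since $\Ad$ is a homomorphism $G\rightarrow\Aut(\mg)$ we get $\Ad_{[\innt_s^r t\cdot\phi]^{-1}}(\phi(r))=\Ad_{\mu_s(t)^{-1}}(\asymb_r(t))$; pulling the continuous linear map $\Ad_{\mu_s(t)^{-1}}$ through the Riemann integral via~\eqref{pofdpofdpofdsddsdsfd} (legitimate, as $r\mapsto\asymb_r(t)$ is continuous and $\mg$ Mackey/integral complete) and using $\dd_e\LT_g\cp\Ad_{g^{-1}}=\dd_e\RT_g$ (because $\RT_g=\LT_g\cp\conj_{g^{-1}}$) yields $\dot\mu_s(t)=\dd_e\RT_{\mu_s(t)}\big(\int_s^b\asymb_r(t)\,\dd r\big)$. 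Since $\asymb_s=\Ad(\phi(s))\cp\mu_s$ is the composition of the $C^1$-curve $\mu_s$ with the smooth map $\Ad(\phi(s))\colon G\ni g\mapsto\Ad_g(\phi(s))$, the chain rule (Proposition~\ref{iuiuiuiuuzuzuztztttrtrtr}) together with $\dd_g[\Ad(Y)](\dd_e\RT_g(Z))=\bil{Z}{\Ad_g(Y)}$ — a direct consequence of the definition~\eqref{iufiugfiugfiugfiugfgf} of the bracket — gives
\[
	\textstyle\dot\asymb_s(t)=\bil{\int_s^b\asymb_r(t)\,\dd r}{\asymb_s(t)}=\Omega\big(\int_s^b\asymb_r(t)\,\dd r,\,\asymb_s(t)\big)\qquad\forall\,t\in\RR,
\]
which is exactly hypothesis~\ref{kjdkjkjdskjdskj2}.

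It remains to check hypothesis~\ref{kjdkjkjdskjdskj1}, i.e.\ $\Theta^0(t,\cdot)\in C^1([a,b],\mg)$ for each $t$. Here I would use that $\EV(t\cdot\phi|_{[a,b]})\in C^{k+1}([a,b],G)$ by $C^k$-semiregularity and~\eqref{kjdskjdsjkdskjdsjk}; combined with $\innt_s^b t\cdot\phi=(\innt_a^b t\cdot\phi)\cdot(\innt_a^s t\cdot\phi)^{-1}$ (splitting property~\ref{pogfpogf}) and smoothness of inversion and left translation, this shows $s\mapsto\innt_s^b t\cdot\phi$ is of class $C^{k+1}$, whence $\Theta^0(t,\cdot)=\Ad_{\innt_\bullet^b t\cdot\phi}(\phi)\in C^k([a,b],\mg)$ by Lemma~\ref{Adlip}. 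For $k\in\NN_{\geq1}\sqcup\{\infty\}$ this is in particular of class $C^1$, so hypothesis~\ref{kjdkjkjdskjdskj1} holds outright; for $k\in\{0,\lip\}$ the map $\Theta^0(t,\cdot)$ is only of class $C^k$, and one has to observe that hypothesis~\ref{kjdkjkjdskjdskj1} enters the proof of Lemma~\ref{lkjfdlkjfdkjf} (Statement~\ref{iufdiufdiufdfd}) solely to propagate $C^1$-regularity of the iterated maps $\Theta^\ell(t,\cdot)$ in the space variable, which is inessential: the bootstrap there is carried by Corollary~\ref{lkjjslkjdlkjdskjldskjs}, which needs only joint continuity of the $\Theta^\ell$, so the conclusion persists (equivalently, one re-runs that inductive scheme directly in the present situation). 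With all hypotheses in place, Lemma~\ref{lkjfdlkjfdkjf} gives that $\TMAP(\phi)=\bg[a]\colon\RR\rightarrow\mg$ is smooth.

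I expect the main obstacle to be the derivative computation of the second paragraph: obtaining $\Omega$ equal to the Lie bracket on the nose requires careful bookkeeping of left/right translations and adjoint maps in the expression for $\dot\mu_s(t)$ coming out of Proposition~\ref{ableiti}, verifying that every Riemann integral involved converges in $\mg$ (this is where weak $C^k$-regularity enters), and then the chain-rule/bracket identity. The secondary delicate point is hypothesis~\ref{kjdkjkjdskjdskj1} of Lemma~\ref{lkjfdlkjfdkjf} for $k\in\{0,\lip\}$, handled as indicated above.
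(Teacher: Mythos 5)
Your proposal is correct and follows essentially the same route as the paper: the same $\Theta^0(t,x)=\Ad_{[\innt_x^b t\cdot\phi]}(\phi(x))$, joint continuity via Lemma \ref{lkjflkjfdjkdflkjfd}, the identity $\dot\asymb_x(t)=\bil{\int_x^b\asymb_y(t)\,\dd y}{\asymb_x(t)}$ extracted from Proposition \ref{ableiti}, and then Lemma \ref{lkjfdlkjfdkjf} with $\Omega=\bil{\cdot}{\cdot}$. The only organizational difference is in the derivative computation: you apply the chain rule to $\Ad(\phi(x))\cp\mu_x$ directly, using $\dd_{g}\Ad(Y)(\dd_e\RT_g(Z))=\bil{Z}{\Ad_g(Y)}$, whereas the paper first rewrites $\mu_x(t+h)=[\innt_x^b h\cdot\Theta^0(t,\cdot)]\cdot\mu_x(t)$ via \ref{kdsasaasassaas} and \ref{homtausch} and then differentiates at the identity using \eqref{iufiugfiugfiugfiugfgf}; the two computations are equivalent. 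One clarification on your ``delicate point'' for $k\in\{0,\lip\}$: the paper itself simply asserts $\Theta^0(t,\cdot)\in C^k([a,b],\mg)\subseteq C^1([a,b],\mg)$, so you are right to flag the gap, but hypothesis \ref{kjdkjkjdskjdskj1} of Lemma \ref{lkjfdlkjfdkjf} is not there merely to ``propagate regularity'' through the bootstrap --- it is what guarantees, via Mackey completeness, that the Riemann integrals $\int_z^b\Theta^\ell(t,x)\,\dd x$ exist in $\mg$ rather than only in $\mgc$. Your conclusion nevertheless stands: for $k=0$ integral completeness of $\mg$ makes mere continuity of $\Theta^\ell(t,\cdot)$ sufficient, and for $k=\lip$ the maps $\Theta^\ell(t,\cdot)$ stay Lipschitz through the induction (bilinearity of $\Omega$ on the compact images), which suffices in the Mackey-complete setting.
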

\begin{proof}
We consider the map
\begin{align*}
	\Theta^0\colon \RR\times [a,b]\rightarrow \mg,\qquad (t,x)\mapsto  \Ad_{[\innt_x^{b}  t\cdot \phi]}(\phi(x)),	
\end{align*}
and observe the following:
\begingroup
\setlength{\leftmargini}{12pt}{
\begin{itemize}
\item
	Lemma \ref{lkjflkjfdjkdflkjfd} shows that $\Theta^0$ is continuous, as we have (with $\kappa$ as in Lemma \ref{lkjflkjfdjkdflkjfd})
\begin{align*}
	\Theta^0(t,x)= \Ad_{\kappa(t,x)}(\phi(x))\qquad\quad\forall\: t\in \RR, \: x\in [a,b].
\end{align*}	
\vspace{-18pt}
\item
Given $t\in \RR$, we have $\Theta^0(t,\cdot)\in C^k([a,b],\mg)\subseteq C^1([a,b],\mg)$ by Lemma \ref{Adlip}, \eqref{kjdskjdsjkdskjdsjk}, and
\begin{align*}
	\Theta^0(t,x)=\Ad_{[\innt_a^{b}  t\cdot \phi]}(\Ad_{[\innt_a^{x}  t\cdot \phi]^{-1}}(\phi(x)))\qquad\quad\forall\: t\in \RR, \: x\in [a,b].
\end{align*}
\vspace{-12pt}
\item
Given $x\in [a,b]$, Proposition \ref{ableiti} yields
\begin{align*}
	\textstyle C^1(\RR,G)\ni\mu_x\colon \RR\ni t\mapsto \innt_x^b t\cdot \phi\in G.
\end{align*} 
\vspace{-22pt}

\noindent
Then, Lemma \ref{Adlip} shows
\begin{align}
\label{jfdkjfdkjfd}
	\textstyle C^1(\RR,\mg)\ni \alpha_x:= \Theta^0(\cdot,x) \colon \RR\ni t \mapsto \Ad_{\mu_x(t)}(\phi(x))\in \mg.
\end{align}
Moreover, for $t,h\in \RR$ we have
\begin{align}
\label{kjdkjdkjfdkjfdkjfd}
\begin{split}
	\textstyle\mu_x(t+h)&\textstyle\stackrel{\ref{kdsasaasassaas}}{=}\mu_x(t)\cdot \innt_x^b  h\cdot\Ad_{[\innt_x^\bullet t\cdot \phi]^{-1}}(\phi)\\
	&\textstyle\stackrel{\phantom{\ref{kdsasaasassaas}}}{=}  \conj_{\mu_x(t)}\big(\innt_x^b  h\cdot\Ad_{[\innt_x^\bullet t\cdot \phi]^{-1}}(\phi)\big)\cdot \mu_x(t)\\  
	&\textstyle\stackrel{\ref{homtausch}}{=} [\innt_x^b  h\cdot\Ad_{\innt_\bullet^b t\cdot \phi}(\phi)]\cdot\mu_x(t)
	\\  
	&\textstyle\stackrel{\phantom{\ref{kdsasaasassaas}}}{=}  [\innt_x^b  h\cdot\Theta^0(t,\cdot)]\cdot\mu_x(t).
\end{split}
\end{align}
By Proposition \ref{ableiti}, the map $\RR\ni h\mapsto \innt _x^b h\cdot \Theta^0(t,\cdot)\in G$ is of class $C^1$ for $t\in \RR$ (recall that $\Theta^0(t,\cdot)\in C^k([a,b],\mg)$ holds by the second point above), with
\begin{align}
\label{hjdshjdshjds}
	\textstyle\frac{\dd}{\dd h}\big|_{h=0}\: \innt _x^b h\cdot \Theta^0(t,\cdot) = \dd_0\he\evol^k_{[x,b]}(\Theta^0(t,\cdot))= \int_x^b \Theta^0(t,y)\: \dd y\in \mg.
\end{align}
We obtain for $t\in \RR$ that
\begin{align}
\label{kjfdjkkjfdkjfd}
\begin{split}
	\textstyle\dot\alpha_{x}(t)&\textstyle
	\stackrel{\phantom{\eqref{iufdiufiudiufd}}}{=} \frac{\dd}{\dd h}\big|_{h=0}\: \Ad_{\mu_{x}(t+h)}(\phi(x))\\
&	\textstyle\stackrel{\eqref{kjdkjdkjfdkjfdkjfd}}{=} \frac{\dd}{\dd h}\big|_{h=0}\: \Ad_{\innt_{x}^b  h\cdot\Theta^0(t,\cdot)}\big(\Ad_{\mu_{x}(t)}(\phi(x))\big)
	\\
	&\textstyle\stackrel{\eqref{hjdshjdshjds}}{=}\bil{\int_{x}^b \Theta^0(t,y)\:\dd y}{\alpha_{x}(t)}\\
&\textstyle \stackrel{\eqref{jfdkjfdkjfd}}{=}	
	\textstyle\bil{\int_{x}^b\alpha_{y}(t)\:\dd y}{\alpha_{x}(t)}.
\end{split}
\end{align}
\end{itemize}
}
\endgroup
\noindent
The claim is now clear from Lemma \ref{lkjfdlkjfdkjf} (with $I=\RR$, $\Omega=\bil{\cdot}{\cdot}$, and $z=b$ there).
\end{proof}
\noindent
We are ready for the proof of Proposition \ref{fdfdfd}.
\begin{proof}[Proof of Proposition \ref{fdfdfd}]
By Lemma \ref{dkjfkjdjfdlkjlkjfdkjdfkdf}, we have $\TMAP(\phi)\in C^\infty([0,1],\mg)$.  
By Proposition \ref{ableiti}, we have $C^1([0,1],\mg)\ni \mu\colon [0,1]\ni t\mapsto \innt_a^b t\cdot \phi$ with
\begin{align*}
	\textstyle\dot\mu(t)&= \dd_{t\cdot\phi}\he \evol^k_{[a,b]}(\phi)
	=\textstyle \dd_e\LT_{\mu(t)}\big(\int_a^b\Ad_{[\innt_a^s t\cdot \phi]^{-1}}(\phi(s))\:\dd s\big)
	\textstyle=\dd_e\RT_{\mu(t)}\big(\int_a^b \Ad_{\innt_s^b t\cdot \phi}(\phi(s))\:\dd s\big).
\end{align*}
In the third step, we have applied \eqref{pofdpofdpofdsddsdsfd} with $\mathcal{L}\equiv \Ad_{\mu(t)}$. This shows that $\Der(\mu)=\TMAP(\phi)$ holds, with $\mu(0)=e$. We obtain
\begin{align*}
	\textstyle\innt_a^b t\cdot \phi = \mu(t)=\innt_0^t \Der(\mu)=\innt_0^t \TMAP(\phi),
\end{align*}
which proves the claim.
\end{proof}
\noindent
The following corollary verifies the guess made in Remark 8.2) in \cite{MDM}.
\begin{corollary}
\label{hjfdhjfdhjfdhjdsaiuoiurexcccx}
Assume that $G$ is weakly $C^k$-regular for $k\in \NN\cup\{\lip,\infty\}$, and let $\phi,\psi \in C^k([a,b],\mg)$ be given. Then, $\mu\colon \RR\ni t\mapsto \innt_a^b \phi + t\cdot \psi\in G$ is smooth.
\end{corollary}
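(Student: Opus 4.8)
The plan is to reduce the statement to the scaling situation already handled by Proposition \ref{fdfdfd}. Since $G$ is $C^k$-semiregular, Remark \ref{sdffdffdfd} gives $\DIDE^k_{[a,b]}=C^k([a,b],\mg)$, so $\phi\in\DIDE^k_{[a,b]}$ and hence $\innt_a^\bullet\phi=\EV(\phi)$ is of class $C^{k+1}$ by \eqref{kjdskjdsjkdskjdsjk}; as inversion in $G$ is smooth, $[\innt_a^\bullet\phi]^{-1}$ is of class $C^{k+1}$ as well, and Lemma \ref{Adlip} then yields $\xi:=\Ad_{[\innt_a^\bullet\phi]^{-1}}(\psi)\in C^k([a,b],\mg)$. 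The point is that $\Ad_{\innt_a^\bullet\phi}(\xi)=\psi$ (because $\Ad_{\innt_a^r\phi}\cp\Ad_{[\innt_a^r\phi]^{-1}}=\id_\mg$ for each $r$), so that applying property \ref{kdsasaasassaas} to $\phi$ and $t\cdot\xi\in C^k([a,b],\mg)=\DIDE^k_{[a,b]}$ gives
\begin{align*}
	\textstyle\innt_a^r\phi\cdot\innt_a^r t\cdot\xi=\innt_a^r\big(\phi+\Ad_{\innt_a^\bullet\phi}(t\cdot\xi)\big)=\innt_a^r(\phi+t\cdot\psi)\qquad\quad\forall\: r\in[a,b],\: t\in\RR.
\end{align*}
Evaluating at $r=b$ shows $\mu(t)=g\cdot\nu(t)$ with $g:=\innt_a^b\phi$ fixed and $\nu\colon\RR\ni t\mapsto\innt_a^b t\cdot\xi\in G$, so that $\mu=\LT_g\cp\nu$ and it suffices to prove that $\nu$ is smooth.

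For this I would invoke Proposition \ref{fdfdfd}, whose final assertion is precisely that $[0,1]\ni s\mapsto\innt_a^b s\cdot\eta$ is smooth for any $\eta\in C^k([a,b],\mg)$; applying it to $c\cdot\xi$ for $c>0$ and reparametrizing $s=t/c$ shows $[0,c]\ni t\mapsto\innt_a^b t\cdot\xi$ is smooth, and applying it to $-\xi$ shows $[-c,0]\ni t\mapsto\innt_a^b t\cdot\xi=\innt_a^b(-t)\cdot(-\xi)$ is smooth; since $c>0$ is arbitrary, $\nu$ is smooth on all of $\RR$. This completes the proof.

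I do not expect a serious obstacle here: the only slightly delicate points are the $C^k$-regularity bookkeeping for $\xi$ (which is routine once $\EV(\phi)\in C^{k+1}$ and Lemma \ref{Adlip} are in hand) and the elementary rescaling needed to pass from the interval $[0,1]$ in Proposition \ref{fdfdfd} to all of $\RR$. An alternative, more computational route would be to start from the $C^1$ statement of Proposition \ref{ableiti} (the ``moreover'' clause) and bootstrap: one computes $\Der(\mu)(t)=\int_a^b\Ad_{\innt_s^b(\phi+t\psi)}(\psi(s))\,\dd s$, recognizes the integrand $\Theta^0(t,x):=\Ad_{\innt_x^b(\phi+t\psi)}(\psi(x))$ as satisfying $\dot\alpha_x(t)=\bil{\int_x^b\alpha_y(t)\,\dd y}{\alpha_x(t)}$, and applies Lemma \ref{lkjfdlkjfdkjf} (with $\Omega=\bil{\cdot}{\cdot}$) to get smoothness of $\Der(\mu)$, whence smoothness of $\mu$ by induction on the order of differentiability. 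The reduction via property \ref{kdsasaasassaas} above is shorter and is the one I would present; the harder part of that second route would be establishing joint continuity of $(t,x)\mapsto\innt_x^b(\phi+t\psi)$, which the first route avoids entirely.
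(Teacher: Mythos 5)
Your reduction is the same as the paper's: both factor $\innt_a^b(\phi+t\cdot\psi)=[\innt_a^b\phi]\cdot[\innt_a^b t\cdot\Ad_{[\innt_a^\bullet\phi]^{-1}}(\psi)]$ via property \ref{kdsasaasassaas} and Lemma \ref{Adlip}, and both feed the result into the last statement of Proposition \ref{fdfdfd}. However, your final step has a gap. Applying Proposition \ref{fdfdfd} to $c\cdot\xi$ and to $-c\cdot\xi$ gives that $\nu|_{[0,c]}$ and $\nu|_{[-c,0]}$ are smooth for every $c>0$; with the paper's convention (smoothness on a compact interval means extendability to a smooth curve on some open neighbourhood of it), this yields smoothness of $\nu$ on $\RR\setminus\{0\}$ together with one-sided derivatives of all orders at $0$ from either side, but it does \emph{not} yield smoothness at $t=0$: the two one-sided jets at $0$ would have to be shown to agree, and ``since $c>0$ is arbitrary'' does not accomplish this (compare $t\mapsto|t|$, which is smooth on $[0,c]$ and on $[-c,0]$ for every $c>0$ yet not differentiable at $0$). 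The repair is precisely the paper's second step: for each fixed $t_0\in\RR$ apply property \ref{kdsasaasassaas} once more to write $\nu(t_0+h)=\nu(t_0)\cdot\innt_a^b h\cdot\Ad_{[\innt_a^\bullet t_0\cdot\xi]^{-1}}(\xi)$, use Lemma \ref{Adlip} and Proposition \ref{fdfdfd} to conclude that $\nu$ is smooth on $[t_0,t_0+1]$, and let $t_0$ range over $\RR$, so that every point is interior to one of these intervals. With that recentering inserted, your argument coincides with the paper's proof; the rest of your bookkeeping (that $\xi\in C^k([a,b],\mg)$, that $t\cdot\xi\in\DIDE^k_{[a,b]}$ by semiregularity, and the left-translation by $g=\innt_a^b\phi$) is correct.
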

\begin{proof}
We have  
\begin{align*}
	\textstyle\innt_a^b (\phi + t\cdot \psi)\stackrel{\ref{kdsasaasassaas}}{=} [\innt_a^b \phi] \cdot[\innt_a^b t\cdot \Ad_{[\innt_a^\bullet\phi]^{-1}}(\psi)]\qquad\quad\forall\:\phi,\psi \in C^k([a,b],\mg).
\end{align*} 
By  Lemma \ref{Adlip}, it thus suffices to prove the claim for $\phi=0$  and arbitrary $\psi\in C^k([a,b],\mg)$. Let thus $\psi \in C^k([a,b],\mg)$ be given. Then, we have
\begin{align*}
	 \textstyle\innt_a^b (t+h)\cdot \psi \stackrel{\ref{kdsasaasassaas}}{=} [\innt_a^b t\cdot \psi] \cdot[\innt_a^b h\cdot \Ad_{[\innt_a^\bullet t\cdot \psi]^{-1}}(\psi)]\qquad\quad\forall\: t,h\in \RR.
\end{align*} 
By  Lemma \ref{Adlip}, it thus suffices to prove smoothness of the map $[0,1]\ni h\mapsto \innt_a^b h\cdot \chi\in G$ for each $\chi\in C^k([a,b],\mg)$, so that the claim is clear from (the last statement in) Proposition \ref{fdfdfd}.
\end{proof}

\subsection{Differentiation of Parameter-dependent Integrals}
\label{dlkjlkjfdlkjfddofdoifdoi}
In this section, we prove the following differentiation result.
\begin{proposition}
\label{fdfddffddfsdsdssdsd}
Assume that $G$ is Mackey k-continuous 
for $k\in \NN\cup\{\lip,\infty\}$. Let $a<b$, $\sigma>0$, $I\subseteq \RR$ an open interval with $[a-\sigma,b+\sigma]\subseteq I$, as well as $\Phi\colon I\times [a-\sigma,b+\sigma]\rightarrow \mg$ a map with $\Phi(z,\cdot)\in \DIDE^k_{[a-\sigma,b+\sigma]}$ for each $z\in I$. Let $x\in [a,b]$ be given, such the following conditions hold: 
\begingroup
\setlength{\leftmargini}{18pt}{
\renewcommand{\theenumi}{{\roman{enumi}})} 
\renewcommand{\labelenumi}{\theenumi}
\begin{enumerate}
\item
\label{saasaassasasas2}
We have $(\partial_1 \Phi)(x,\cdot)\in C^k([a,b],\mg)$ with $\int_a^x \Ad_{[\innt_a^s\Phi(x,\cdot)]^{-1}}(\partial_1\Phi(x,s))\:\dd s\in \mg$.
\item
\label{saasaassasasas1}
To $\pp\in \Sem{E}$ and $\dind\llleq k$, there exists $L_{\pp,\dind}\geq 0$ as well as $I_{\pp,\dind}\subseteq I$ open with $x\in I_{\pp,\dind}$, such that
\begin{align*}
	\pp^\dind_\infty(\Phi(x+h,\cdot)-\Phi(x,\cdot))\leq |h|\cdot L_{\pp,\dind}\qquad\quad \forall\: h\in \RR_{\neq 0}\:\text{ with }\: x+h\in I_{\pp,\dind}.
\end{align*}  
\end{enumerate}}
\endgroup
\noindent
Then, we have
\begin{align*}
	\textstyle\frac{\dd}{\dd h}\big|_{h=0}\he\innt_a^{x+h} \Phi(x+h,\cdot)=\dd_e\RT_{\innt_a^x\Phi(x,\cdot)}(\Phi(x,x)) + \dd_e\LT_{\innt_a^x\Phi(x,\cdot)}\big(\int_a^x \Ad_{[\innt_a^s\Phi(x,\cdot)]^{-1}}(\partial_1\Phi(x,s))\:\dd s\he\big).
\end{align*}  
\end{proposition}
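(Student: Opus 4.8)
The plan is to reduce the statement to the case $x=a$ (no varying lower limit) plus a boundary-term contribution, by splitting the product integral at the fixed point $x$. Concretely, for $h$ small we write $\innt_a^{x+h}\Phi(x+h,\cdot) = \bigl[\innt_x^{x+h}\Phi(x+h,\cdot)\bigr]\cdot\bigl[\innt_a^{x}\Phi(x+h,\cdot)\bigr]$ using the splitting property \ref{pogfpogf}. The product rule \eqref{LGPR} then expresses $\frac{\dd}{\dd h}\big|_{h=0}$ of this product as $\dd_e\RT_{\innt_a^x\Phi(x,\cdot)}$ applied to the derivative of the short factor, plus $\dd_e\LT_{\innt_a^x\Phi(x,\cdot)}$ applied (after translating by the value at $h=0$, which is $e$ for the first factor and $\innt_a^x\Phi(x,\cdot)$ for the second) to the derivative of the second factor. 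So I would establish two sublemmas: (1) $\frac{\dd}{\dd h}\big|_{h=0}\innt_x^{x+h}\Phi(x+h,\cdot) = \Phi(x,x)\in\mg$ (as a tangent vector at $e$); and (2) $\frac{\dd}{\dd h}\big|_{h=0}\innt_a^{x}\Phi(x+h,\cdot) = \dd_e\LT_{\innt_a^x\Phi(x,\cdot)}\bigl(\int_a^x \Ad_{[\innt_a^s\Phi(x,\cdot)]^{-1}}(\partial_1\Phi(x,s))\:\dd s\bigr)$.

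Sublemma (2) is essentially Theorem \ref{ofdpofdpofdpofdpofd} (or Proposition \ref{ableiti}) applied to the fixed interval $[a,x]$: set $\wt\Phi(z,\cdot):=\Phi(z,\cdot)|_{[a,x]}$, note $\wt\Phi(z,\cdot)\in\DIDE^k_{[a,x]}$ by the restriction property of $\DIDE^k$, and verify the hypotheses \ref{saasaassasasa2}, \ref{saasaassasasa1} of Theorem \ref{ofdpofdpofdpofdpofd} — these are exactly the hypotheses \ref{saasaassasasas2}, \ref{saasaassasasas1} restricted to $[a,x]$ (the Lipschitz estimate on $[a-\sigma,b+\sigma]$ restricts, and the integrability condition in \ref{saasaassasasas2} is for the interval $[a,x]$ already). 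The conclusion of Theorem \ref{ofdpofdpofdpofdpofd}, in the form ``$\frac{\dd}{\dd h}\big|_{h=0}\innt_a^{x}\Phi(x+h,\cdot)=\dd_e\LT_{\innt_a^x\Phi(x,\cdot)}(\dots)$ provided the Riemann integral exists in $\mg$'', gives precisely (2), since existence in $\mg$ is hypothesis \ref{saasaassasasas2}. The role of $\sigma>0$ is just to guarantee that $\Phi(x+h,\cdot)$ is defined on $[a,x]$ for all sufficiently small $h$ and that the extensions used in the differentiation theorem are available; I would remark this is why the domain of $\Phi$ is slightly larger than $[a,b]$.

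Sublemma (1) is the main obstacle and needs a genuine estimate. The short product integral $\innt_x^{x+h}\Phi(x+h,\cdot)$ over the shrinking interval $[x,x+h]$ must be shown to have derivative $\Phi(x,x)$ at $h=0$. Here I would use that $\EV$ restricted to $\DIDE^k$ lands in $C^{k+1}$, so the curve $t\mapsto\innt_x^{t}\Phi(x+h,\cdot)$ is $C^1$ with right logarithmic derivative $\Phi(x+h,\cdot)$; hence, in the chart $\chart$, $\chart(\innt_x^{x+h}\Phi(x+h,\cdot)) = \int_x^{x+h}(\dd_e\chart\cp\dd_{\mu(s)}\RT_{\mu(s)^{-1}}^{-1})(\dots)$-type expression — more cleanly, one writes $\frac{\dd}{\dd t}\chart(\mu_h(t)) = \dd_{\mu_h(t)}\chart(\dd_e\RT_{\mu_h(t)}(\Phi(x+h,t)))$ for $\mu_h(t):=\innt_x^t\Phi(x+h,\cdot)$, with $\mu_h(x)=e$, $\dd_e\RT_e=\id$, $\dd_e\chart=\id$. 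Then $\frac1h(\chart(\mu_h(x+h))-0) = \frac1h\int_x^{x+h} F(h,s)\,\dd s$ where $F(h,s)\to\Phi(x,x)$ as $(h,s)\to(0,x)$ by the Lipschitz hypothesis \ref{saasaassasasas1} (for continuity in $z$) together with continuity of $\Phi(x,\cdot)$ and joint continuity of the relevant chart/translation maps near $e$. A standard uniform-continuity/mean-value estimate on the seminorms of $F(h,s)-\Phi(x,x)$ then gives $\frac1h\int_x^{x+h}F(h,s)\,\dd s\to\Phi(x,x)$. I expect the bookkeeping — controlling $\dd_{\mu_h(t)}\RT_{\mu_h(t)^{-1}}$ uniformly for $t\in[x,x+h]$ and $h$ small, which needs that $\mu_h(t)$ stays in a fixed chart neighbourhood of $e$ (itself a consequence of $C^0$-continuity of $\evol$ on small intervals, available from Mackey $k$-continuity via Lemma \ref{fdhjfdkjj} / the local $\mu$-convexity-type bounds, or more elementarily from the $C^{k+1}$-regularity of $\EV$ on a fixed slightly larger interval) — to be the technical heart. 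Once sublemmas (1) and (2) are in hand, combining them via \eqref{LGPR} as described yields the stated formula.
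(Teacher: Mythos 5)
Your overall skeleton coincides with the paper's: split the product integral at the fixed point $x$, obtain the $\dd_e\LT$-term from Theorem \ref{ofdpofdpofdpofdpofd} applied to $\mu(h):=\innt_a^{x}\Phi(x+h,\cdot)$ (your sublemma (2) is exactly the paper's computation of $\dot\mu(0)$, and your reading of hypothesis \ref{saasaassasasas2} as supplying the required Riemann integrability is correct), and obtain the $\dd_e\RT$-term from the short factor over $[x,x+h]$. The genuine divergence is in the short factor. The paper does not differentiate $\innt_x^{x+h}\Phi(x+h,\cdot)$ directly; it inserts the frozen-parameter curve $\eta_+(h):=\innt_x^{x+h}\Phi(x,\cdot)$, whose derivative at $0$ is trivially $\Phi(x,x)$, and reduces everything to showing that the correction $\nu_+(h):=[\innt_{x}^{x+h}\Phi(x,\cdot)]^{-1}[\innt_x^{x+h}\Phi(x+h,\cdot)]$ is $o(h)$ in the chart. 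This is where the hypotheses earn their keep: by identity \ref{kdskdsdkdslkds}, $\nu_+(h)$ is itself a product integral of $\Ad_{\alpha_+(\cdot)^{-1}}(\Phi(x+h,\cdot)-\Phi(x,\cdot))$ over an interval of length $h$, so Lemma \ref{lkdslklkdslkdsldsds} plus the Lipschitz condition \ref{saasaassasasas1} (with $\dind=0$ only) gives $\pp(\chart(\nu_+(h)))\leq |h|^2\cdot L_{\mm,0}$ at once. Your sublemma (1) replaces this with a direct chart-level computation of $\frac{1}{h}\chart(\innt_x^{x+h}\Phi(x+h,\cdot))$, which is workable — the uniform convergence $\mu_h(t)\to e$ you need is indeed available from Mackey $k$-continuity via Lemma \ref{fdhjfdkjj}, exactly as in the paper's step \textbf{1)} — but the "technical heart" you defer (uniform control of $\dd_{\mu_h(t)}\chart\cp\dd_e\RT_{\mu_h(t)}$ near $e$ and the averaging estimate) is at least as heavy as the paper's $o(h)$ bound, and you have not actually carried it out. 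So your route buys nothing over the paper's and leaves its decisive estimate as a sketch.

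Two further points you must address before this is a proof. First, the case $h<0$: your splitting $\innt_a^{x+h}\Phi(x+h,\cdot)=[\innt_x^{x+h}\Phi(x+h,\cdot)]\cdot[\innt_a^{x}\Phi(x+h,\cdot)]$ is meaningless for $h<0$ under the paper's conventions ($\innt_s^t$ requires $s\leq t$); one has to write $\innt_a^{x+h}\Phi(x+h,\cdot)=[\innt_{x+h}^x\Phi(x+h,\cdot)]^{-1}\cdot[\innt_a^{x}\Phi(x+h,\cdot)]$ and redo the short-factor analysis for the inverted integral, which is why the paper introduces the separate decomposition $\eta_-(h)\cdot\nu_-(h)\cdot\mu(h)$ and computes $\dot\nu_-(0)=\Phi(x,x)$ via the reversal trick of Example \ref{fdpofdopdpof}. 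Second, the product-rule step: the factors are only known to be (one-sidedly) differentiable curves into $G$ in the sense of Convention \ref{lkcxlkcxlkxlkcxpoidsoids}, so combining the two derivatives via \eqref{LGPR} requires passing through a chart and invoking Lemma \ref{sdsdds} (as the paper does with the map $f(X,Y,Z)$ in step \textbf{4)}); this is routine but should be said.
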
 
\vspace{6pt}

\noindent
For the proof of Proposition \ref{fdfddffddfsdsdssdsd}, we shall need the following statements.
\begin{lemma}
\label{lkdslklkdslkdsldsds}
To $\pp\in \Sem{E}$, there exist $\pp\leq \qq\in \Sem{E}$ and $U\subseteq G$ open with $e\in U$, such that for each $a<b$ and $\chi\in \DIDE_{[a,b]}$ with $\innt_a^\bullet \chi \in U$ we have
\begin{align*}
	\textstyle(\pp\cp\chart)(\innt_a^\bullet \chi)\leq \int_s^\bullet \qq(\chi(s))\: \dd s.
\end{align*} 
\end{lemma}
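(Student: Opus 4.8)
The plan is to read the defining relation $\Der(\mu)=\chi$ of the curve $\mu:=\innt_a^\bullet\chi$ in the fixed chart $\chart$, and to control the velocity field that arises there by comparing it with the ``flat'' one via Lemma \ref{jfdkjfdkjfd}. First I would introduce the auxiliary map
\[
	\rho\colon \V\times\mg\rightarrow E,\qquad (y,X)\mapsto \dd_{\chartinv(y)}\chart\big(\dd_e\RT_{\chartinv(y)}(X)\big),
\]
which is well defined since $\chartinv(y)\in\U$ for $y\in\V$, is smooth (it is built from the smooth structure maps of $G$ together with $\chart$ and $\chartinv$), and is linear in $X$. Using $\chartinv(0)=e$ and $\RT_e=\id_G$ one has $\rho(0,\cdot)=\dd_e\chart$, i.e.\ $\rho(0,\cdot)$ is the identity under the identification $\mg\cong E$.

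Next I would apply Lemma \ref{jfdkjfdkjfd} to $\Psi\equiv\rho$ (with $V\equiv\V$ and both ``$F$'' and ``$E$'' there equal to the modeling space $E$, via $\mg\cong E$): to a given $\pp\in\Sem{E}$ this provides $\qq_0\in\Sem{E}$ and $\ww\in\Sem{E}$ with $\B_{\qq_0,1}\subseteq\V$ and
\[
	\pp(\rho(y,X)-X)\leq\qq_0(y)\cdot\ww(X)\qquad\quad\forall\:y\in\B_{\qq_0,1},\:X\in\mg.
\]
From this, $\pp(\rho(y,X))\leq\pp(X)+\qq_0(y)\ww(X)\leq\pp(X)+\ww(X)$ for $y\in\B_{\qq_0,1}$. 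I would then set $\qq:=\pp+\ww\in\Sem{E}$ (so $\pp\leq\qq$ and $\pp(X)+\ww(X)=\qq(X)$) and $U:=\chartinv(\B_{\qq_0,1})$, which is open in $G$ and contains $e=\chartinv(0)$.

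Finally, for the estimate itself, take $\chi\in\DIDE_{[a,b]}$ with $\innt_a^\bullet\chi\in U$ and set $\mu:=\EV(\chi)\in C^1_*([a,b],G)$, so $\mu(a)=e$ and, by the definition of $\Der$, $\dot\mu(t)=\dd_e\RT_{\mu(t)}(\chi(t))$ for all $t$. By hypothesis $\mu(t)\in U\subseteq\U$ throughout, so $\nu:=\chart\cp\mu$ is a $C^1$-curve in $\V$ with $\nu(a)=\chart(e)=0$ and $\nu(t)\in\B_{\qq_0,1}$, and the chain rule together with $\chartinv(\nu(t))=\mu(t)$ gives $\dot\nu(t)=\dd_{\mu(t)}\chart(\dot\mu(t))=\rho(\nu(t),\chi(t))$. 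Applying \eqref{isdsdoisdiosd1} to $\gamma\equiv\nu$ and then the bound on $\rho$ yields, for each $t\in[a,b]$,
\[
	(\pp\cp\chart)\big(\textstyle\innt_a^t\chi\big)=\pp(\nu(t)-\nu(a))\leq\int_a^t\pp(\dot\nu(s))\,\dd s=\int_a^t\pp(\rho(\nu(s),\chi(s)))\,\dd s\leq\int_a^t\qq(\chi(s))\,\dd s,
\]
which is the claim.

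The only substantive ingredient is Lemma \ref{jfdkjfdkjfd}; the rest is the coordinate translation of $\Der$. The point requiring the most care is arranging that $U$ is small enough to keep the whole curve $\mu$ inside the chart domain and, in coordinates, inside $\B_{\qq_0,1}$ along all of $[a,b]$, so that $\nu$, its derivative and the estimate on $\rho$ are simultaneously available there; this is exactly what dictates the choice $U=\chartinv(\B_{\qq_0,1})$.
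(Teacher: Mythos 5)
Your proof is correct. The paper does not prove this lemma itself but merely cites Lemma 4 of \cite{MDM}; your argument --- reading $\Der(\mu)=\chi$ in the chart $\chart$ as $\dot\nu(t)=\rho(\nu(t),\chi(t))$, controlling $\rho$ near $y=0$ via Lemma \ref{jfdkjfdkjfd}, and integrating with \eqref{isdsdoisdiosd1} --- is the standard route to such local estimates and is essentially the argument behind the cited result. All the details check out, including the choices $\qq=\pp+\ww$ and $U=\chartinv(\B_{\qq_0,1})$, which correctly guarantee that the curve stays where both the chart and the bound on $\rho$ are available.
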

\begin{proof}
	Confer Lemma 4 in \cite{MDM}.
\end{proof}
\begin{corollary}
\label{opopsopsdopdsas}
	For each compact $\compact\subseteq G$ and each $\qq\in \Sem{E}$, there exists some $\qq\leq \mm\in \Sem{E}$, such that  
	$\qq\cp \Ad_g\leq \mm$ holds for each  
	$g\in \compact$.
\end{corollary}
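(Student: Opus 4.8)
The plan is to obtain this as a direct consequence of Lemma \ref{alalskkskaskaskas}. The key steps are as follows. First I would observe that the adjoint action $\Ad\colon G\times\mg\rightarrow\mg$ is continuous, and that $\Ad_g=\Ad(g,\cdot)\colon\mg\rightarrow\mg$ is linear for every $g\in G$, so that $\Phi:=\Ad$ satisfies the hypotheses of Lemma \ref{alalskkskaskaskas} with the topological space $X\equiv G$, with $n=1$, and with $F_1=F\equiv\mg$. Throughout, a seminorm $\qq\in\Sem{E}$ is tacitly identified with the corresponding element of $\Sem{\mg}$ via $\dd_e\chart$ (recall $\Sem{\mg}=\{\pp\cp\dd_e\chart\:|\:\pp\in\Sem{E}\}$), so that $\qq\cp\Ad_g$ and the seminorm $\mm$ constructed below are again read as elements of $\Sem{E}$.

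Second, given a compact $\compact\subseteq G$ and $\qq\in\Sem{E}$, I would apply Lemma \ref{alalskkskaskaskas} with $\compacto\equiv\compact$ and the seminorm $\qq\in\Sem{\mg}$; this produces a seminorm $\qq'\in\Sem{\mg}$ together with an open set $O\subseteq G$ with $\compact\subseteq O$, such that $\qq(\Ad_g(X))=(\qq\cp\Ad)(g,X)\leq\qq'(X)$ for all $g\in O$ and all $X\in\mg$. Restricting $g$ to $\compact\subseteq O$, this estimate holds for every $g\in\compact$. Third, set $\mm\colon\mg\ni X\mapsto\max\{\qq(X),\qq'(X)\}$; since the pointwise maximum of two continuous seminorms is again a continuous seminorm, $\mm\in\Sem{\mg}\cong\Sem{E}$, with $\qq\leq\mm$ by construction. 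Finally, for each $g\in\compact$ and each $X\in\mg$ one gets $\qq(\Ad_g(X))\leq\qq'(X)\leq\mm(X)$, i.e.\ $\qq\cp\Ad_g\leq\mm$, which is the assertion.

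I do not expect a genuine obstacle here: the whole content is the uniform multilinear estimate furnished by Lemma \ref{alalskkskaskaskas}, applied to the map $\Ad$, which is linear in its second slot; the passage from the open neighbourhood $O$ to $\compact$ itself is immediate, and replacing $\qq'$ by $\max\{\qq,\qq'\}$ merely records the (a posteriori harmless) normalisation $\qq\leq\mm$ that is built into the statement. If one wished to avoid citing Lemma \ref{alalskkskaskaskas}, the same conclusion would follow directly from continuity of $\Ad$ near $\compact\times\{0\}$ together with a compactness argument and the linearity of $\Ad_g$ in the second variable — but this is precisely how Lemma \ref{alalskkskaskaskas} is proved, so nothing is gained.
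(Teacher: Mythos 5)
Your proposal is correct and is exactly the paper's argument: the paper proves this corollary in one line by citing Lemma \ref{alalskkskaskaskas}, noting that $\Ad$ is smooth (hence continuous) and linear in the second argument. You have merely spelled out the details (restriction from $O$ to $\compact$, and replacing $\qq'$ by $\max\{\qq,\qq'\}$ to secure $\qq\leq\mm$), all of which are fine.
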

\begin{proof}
	This is clear from Lemma \ref{alalskkskaskaskas}, because 
 $\Ad\colon G\times \mg\ni (g,X)\mapsto \Ad_g(X)\in \mg$ is smooth  as well as linear in the second argument. 
\end{proof}
\begin{proof}[Proof of Proposition \ref{fdfddffddfsdsdssdsd}]
We consider the maps
\begin{align*}
	\mu\colon [-\sigma,\sigma]&\textstyle\ni h\mapsto \hspace{3pt}\innt_a^{x} \Phi(x+h,\cdot)
	\\[2pt]
	\nu_+\colon\hspace{8.15pt} [0,\sigma]&\textstyle\ni h\mapsto [\innt_{x}^{x+h} \Phi(x,\cdot)]^{-1}[\innt_x^{x+h}\Phi(x+h,\cdot) ]
	\\	
	\eta_+\colon\hspace{8.15pt} [0,\sigma]&\textstyle\ni h\mapsto \hspace{3pt}\innt_x^{x+h} \Phi(x,\cdot)
	\\[4pt]
	\nu_-\colon [-\sigma,0]&\textstyle\ni h\mapsto [\innt_{x+h}^{x} \Phi(x,\cdot)]^{-1}	\\
	\eta_-\colon [-\sigma,0]&\textstyle\ni h\mapsto [\innt_{x+h}^x \Phi(x+h,\cdot)]^{-1}
	[\innt_{x+h}^x \Phi(x,\cdot)],
\end{align*}
and obtain
\begin{align}
\label{kjsdjkkjdsdstzztuz}
\begin{split}
	\textstyle\innt_a^{x+h} \Phi(x+h,\cdot)&\textstyle= \eta_+(h)\cdot\nu_+(h)\cdot \mu(h)\qquad\quad\forall\: h\in [0,\sigma]\\
	\textstyle\innt_a^{x+h} \Phi(x+h,\cdot)&\textstyle= \eta_-(h)\cdot\nu_-(h)\cdot \mu(h)\qquad\quad\forall\: h\in [-\sigma,0].
\end{split}
\end{align}
We furthermore observe the following:
\begingroup
\setlength{\leftmargini}{15pt}{
\renewcommand{\theenumi}{{\bf\small\arabic{enumi}})} 
\renewcommand{\labelenumi}{\theenumi}
\begin{enumerate}
\item
\label{iudskjdsiuiudsiuds2}
The maps $\nu_+,\eta_-$ are continuous at $h=0$, with $\nu_+(0)=e=\eta_-(0)$. To show this, it suffices to prove the same property for the maps
\begin{align*}
	\alpha_+\colon\hspace{7.8pt} 
	[0,\sigma]\ni h&\textstyle\mapsto \innt_x^{x+h}\Phi(x,\cdot)
\\
	\beta_+\colon\hspace{7.8pt} 
	[0,\sigma]\ni h&\textstyle\mapsto \innt_x^{x+h}\Phi(x+h,\cdot)\\
	\alpha_-\colon 
	\textstyle[-\sigma,0]\ni h&\textstyle\mapsto\innt_{x+h}^x\Phi(x,\cdot) 
\\
	\beta_-\colon 
	\textstyle[-\sigma,0]\ni h&\textstyle\mapsto\innt_{x+h}^x\Phi(x+h,\cdot).
\end{align*}
First, it is clear that $\alpha_\pm$ is continuous (in particular at $h=0$) with $\alpha_\pm(0)=e$, as we have
\begin{align*}
	\alpha_-(h)\textstyle= [\innt_{a-\sigma}^x\Phi(x,\cdot)]\cdot [\innt_{a-\sigma}^{x+h}\Phi(x,\cdot)]^{-1}\qquad\quad\forall\: h\in [-\sigma,0].
\end{align*}
Second, Condition \ref{saasaassasasas1} implies that for each sequence $[-\sigma,0)\cup (0,\sigma]\supseteq \{h_n\}_{n\in \NN}\rightarrow 0$,   we have 
	$\{\Phi(x+h_n,\cdot)\}_{n\in \NN}\mackarr{\kk}\Phi(x,\cdot)$.  
Since $G$ is Mackey k-continuous, Lemma \ref{fdhjfdkjj} yields the following:
\begingroup
\setlength{\leftmarginii}{12pt}{
\begin{itemize}
\item
	For each sequence $(0,\sigma]\supseteq \{h_n\}_{n\in \NN}\rightarrow 0$, we have 
	\begin{align*}
	\textstyle\limin \innt_x^{\bullet} \Phi(x+h_n,\cdot)=\innt_x^\bullet \Phi(x,\cdot)\qquad\Longrightarrow\qquad \lim_n \innt_x^{x+h_n}\Phi(x+h_n,\cdot)=e.\hspace{28.2pt}
\end{align*} 
This implies $\lim_{h\rightarrow 0}\beta_+(h) = e$.
\item
	For each sequence $[-\sigma,0)\supseteq \{h_n\}_{n\in \NN}\rightarrow 0$, we have 
	\begin{align*}
	\textstyle\limin \innt_{a-\sigma}^{\bullet} \Phi(x+h_n,\cdot)=\innt_{a-\sigma}^\bullet \Phi(x,\cdot)\qquad\Longrightarrow\qquad \lim_n \innt_{a-\sigma}^{x+h_n}\Phi(x+h_n,\cdot)=\innt_{a-\sigma}^{x}\Phi(x,\cdot).
\end{align*} 
This implies $\lim_{h\rightarrow 0}\beta_-(h) = e$, as we have
\begin{align*}
	\beta_-(h)\textstyle=[\innt_{a-\sigma}^x\Phi(x+h,\cdot)]\cdot[\innt_{a-\sigma}^{x+h}\Phi(x+h,\cdot)]^{-1}\qquad\quad\forall\: h\in [-\sigma,0].
\end{align*}
\end{itemize}}
\endgroup
\item
\label{iudskjdsiuiudsiuds3}
We have
\begin{align}
\label{fdfdfd2}
	\dot\mu(0)&\textstyle=\dd_e\LT_{\innt_a^x\Phi(x,\cdot)}\big(\int_a^x \Ad_{[\innt_a^s\Phi(x,\cdot)]^{-1}}(\partial_1\Phi(x,s))\:\dd s\he\big)\\
\label{fdfdfd3}
	\dot\nu_-(0)&=\Phi(x,x)\\
\label{fdfdfd1}
	\dot\eta_+(0)&\textstyle=\Phi(x,x).
\end{align}
In fact, \eqref{fdfdfd1} is evident, \eqref{fdfdfd2} is clear from 
Theorem \ref{ofdpofdpofdpofdpofd}, and \eqref{fdfdfd3} is obtained as follows:
\vspace{6pt}
 
\noindent   
\emph{Proof of Equation \eqref{fdfdfd3}.}
Given $-\sigma\leq h<0$, we define
	$\varrho_h\colon [x,x-h]\ni t\mapsto t+h \in [x + h,x]$, 
and obtain
\begin{align}
\label{iudsiudsdsooidsoidsoidsods}
	 \textstyle\innt_{x+h}^x\psi=\innt_{x+h}^{\varrho_h(x-h)}\psi \stackrel{\ref{subst}}{=}\innt_{x}^{x-h}\psi(\cdot+h)\qquad\quad\forall\:\psi\in \DIDE_{[x-\sigma,x]}.	
\end{align} 
According to Example \ref{fdpofdopdpof} (with 
$\varrho\colon [x+h,x]\ni t\mapsto 2x+h -t\in [x+h,x]$ there), we have
\vspace{-4pt}
\begin{align*}
	 \textstyle\nu_-(h)=[\innt_{x+h}^x\Phi(x,\cdot)]^{-1} 
	 =\innt_{x+h}^{x}-\Phi(x, 2x+h-\cdot)	\stackrel{\eqref{iudsiudsdsooidsoidsoidsods}}{=}\innt_x^{x-h}-\Phi(x,2x-\cdot).
\end{align*} 
We obtain
\begin{align*}
\textstyle\lim_{0>h\rightarrow 0}\frac{1}{h}\cdot (\chart(\nu_-(h))\textstyle-\chart(\nu_-(0)))
&=\textstyle -\lim_{0<h\rightarrow 0}\frac{1}{h}\cdot \chart(\nu_-(-h)^{-1})\\
&=\textstyle -\lim_{0<h\rightarrow 0}\frac{1}{h}\cdot \chart(\innt_x^{x+h}-\Phi(x,2x-\cdot))\\[1pt]
&=\dd_e\chart(\Phi(x,x)),
\end{align*}
which proves \eqref{fdfdfd3}.\hspace*{\fill}$\small\qed$
\item
\label{iudskjdsiuiudsiuds4}
By \ref{iudskjdsiuiudsiuds2}, there exists $0<\delta\leq \sigma$, such that  the following maps are defined: 
\begin{align*}
	\Delta_+\colon (0,\delta]\ni h\mapsto \textstyle \frac{1}{h}\cdot \chart(\nu_+(h))\qquad\quad\text{and}\qquad\quad \Delta_-\colon [-\delta,0)\ni h\mapsto \textstyle \frac{1}{h}\cdot \chart(\eta_-(h)).
\end{align*}
Then, to establish the proof, it suffices to show  
\begin{align}
	\label{vldsdlkskdvls}
	\textstyle\lim_{0<h\rightarrow 0}\Delta_+(h)=0\qquad\quad \text{as well as}\qquad\quad \lim_{0>h\rightarrow 0}\Delta_-(h)=0.
\end{align}
In fact, let $(\chart',\U')$ be a chart around $\mu(0)$. Let furthermore $O\subseteq \U$ and $O'\subseteq \U'$ be open with $e\in O$ and $\mu(0)\in O'$, such that $O\cdot O\cdot O'\subseteq \U'$ holds. We set $W:=\chart(O)$, $W':=\chart'(O')$, $\V':=\chart'(\U')$. We furthermore define  
\begin{align*}
	f\colon W\times W\times W'&\rightarrow \V'\\
	(X,Y,Z)&\mapsto \chart'(\chartinv(X)\cdot \chartinv(Y)\cdot \chart'^{-1}(Z)), 
\end{align*}
as well as (shrink $\delta>0$ if necessary, to ensure $\im[\eta_\pm]\subseteq O$, $\im[\nu_\pm]\subseteq O$, $\im[\mu]\subseteq O'$)
\begin{align*}
	&\gamma_+\colon \hspace{8pt}[0,\delta]\ni h\mapsto (\chart(\eta_+(h)),\chart(\nu_+(h)) ,\chart'(\mu(h)))\in E\times E\times E\\ 
	&\gamma_-\colon [-\delta,0]\ni h\mapsto (\chart(\eta_-(h)),\chart(\nu_-(h)) ,\chart'(\mu(h)))\in E\times E\times E.
\end{align*}
Then, Lemma \ref{sdsdds} (with $F_1\equiv E\times E\times E$, $F_2\equiv E$, and $\gamma\equiv \gamma_\pm$ there) together with Part \ref{productrule} of Proposition \ref{iuiuiuiuuzuzuztztttrtrtr} (cf.\ also \eqref{LGPR}), \eqref{kjsdjkkjdsdstzztuz},  \eqref{fdfdfd2}, \eqref{fdfdfd3}, \eqref{fdfdfd1},  \eqref{vldsdlkskdvls} implies the claim. 
\end{enumerate}}
\endgroup
\noindent
Finally, to prove \eqref{vldsdlkskdvls}, let $\pp\in \Sem{E}$ be fixed, and choose $\pp\leq \qq\in \Sem{E}$ as well as $U\subseteq G$ as in Lemma \ref{lkdslklkdslkdsldsds}. Since both $\nu_+,\eta_-$ are continuous by \ref{iudskjdsiuiudsiuds2}, there exists $0<\delta_U\leq\delta$ ($\delta$ as in \ref{iudskjdsiuiudsiuds4}) with $\nu_+([0,\delta_U])\subseteq U$ and $\eta_-([-\delta_U,0]))\subseteq U$. We observe that
\begin{align*}
	\nu_+(h)
	&=\textstyle\innt_x^{x+h}\Ad_{\alpha_+(\cdot)^{-1}}(\Phi(x+h,\cdot)-\Phi(x,\cdot))\qquad\quad\forall\: h\in (0,\sigma]\\
	\eta_-(h)&=\textstyle\innt_{x+h}^{x}\Ad_{\beta_-(\cdot)^{-1}}(\Phi(x,\cdot)-\Phi(x+h,\cdot))\qquad\quad\forall\: h\in [-\sigma,0)
\end{align*}
holds by \ref{kdskdsdkdslkds}, and obtain from Lemma \ref{lkdslklkdslkdsldsds} that
\begin{align*}
	\pp(\Delta_+(h))&\textstyle\leq \frac{1}{|h|}\cdot \int_x^{x+h} \qq\big(\Ad_{\alpha_+(\cdot)^{-1}}(\Phi(x+h,\cdot)-\Phi(x,\cdot)\big)\:\dd s\qquad\quad\forall\: h\in (0,\delta_U]\\
	\pp(\Delta_-(h))&\textstyle\leq \frac{1}{|h|}\cdot \int_x^{x+h} \qq\big(\Ad_{\beta_-(\cdot)^{-1}}(\Phi(x,\cdot)-\Phi(x+h,\cdot)\big)\:\dd s\qquad\quad\forall\: h\in [-\delta_U,0).
\end{align*} 
Let $\qq\leq \mm\in \Sem{E}$ be as in Corollary \ref{opopsopsdopdsas}, for $\compact\equiv \inv(\im[\alpha_+])\cup\inv(\im[\beta_-])$ there. We shrink $0<\delta_U\leq \delta$ such that $x+ (-\delta_U,\delta_U)\subseteq I_{\mm,0}$ holds, for $I_{\mm,0}$ as in Condition \ref{saasaassasasas1}  (with $\dind=0$ there). Then, Condition \ref{saasaassasasas1} yields the following: 
\begingroup
\setlength{\leftmargini}{12pt}{
\begin{itemize}
\item
For $0< h<\delta_U$, we have
\begin{align*}
	\textstyle\pp(\Delta_+(h))&\textstyle\leq \frac{1}{|h|}\cdot \int_x^{x+h}\qq\big(\Ad_{\alpha_+(\cdot)^{-1}}(\Phi(x+h,s)-\Phi(x,s)\big)\:\dd s
	\\
	&\textstyle\leq  \sup\{x\leq s\leq x+h\:|\:\mm(\Phi(x+h,s)-\Phi(x,s))\}
	\\
	&\textstyle\leq |h|\cdot L_{\mm,0}.
\end{align*}
\item
For $-\delta_U< h<0$, we have
\begin{align*}
	\textstyle\pp(\Delta_-(h))&\textstyle\leq \frac{1}{|h|}\cdot \int_{x+h}^{x}\qq\big(\Ad_{\beta_-(\cdot)^{-1}}(\Phi(x,s)-\Phi(x+h,s)\big)\:\dd s
	\\
	&\textstyle\leq  \sup\{x+h\leq s\leq x\:|\: \mm(\Phi(x,s)-\Phi(x+h,s))\}
	\\
	&\textstyle\leq |h|\cdot L_{\mm,0}.
\end{align*}
\end{itemize}
}
\endgroup
\noindent
This proves \eqref{vldsdlkskdvls}, hence the claim.
\end{proof}

\subsection{An Integral Expansion for the Adjoint Action}
\label{kfdkjfdkjfdfd}
Let $a<b$ and  
 $\psi \in \DIDE_{[a,b]}$ be given. For $X\in \mg$, we define
\begin{align}
\label{kldslkdslkdslcxcxcxcx}
	\Add^\pm_{\psi}[X]\colon [a,b]\rightarrow \mg,\qquad  t \mapsto \Ad_{[\innt_a^t\psi]^{\pm 1}}(X).
\end{align}
We furthermore define
\begin{align*}
	\Add_{\psi}^\pm[t]\colon \mg\ni X&\mapsto \Add_\psi^\pm[X](t)\in \mg
	\qquad\quad\forall\: t\in [a,b]\\
	\Add_\psi^\pm&:= \Add_{\psi}^\pm[b].
\end{align*}
For $\chi\in C^0([a,b],\mg)$, we set
\begin{align*}
	\etamm^\pm(\psi,\chi)\colon [a,b]\ni t \mapsto \Add_\psi^\pm[\chi(t)](t)\in \mg.
\end{align*}
The following assertions are immediate from the definitions:
\begingroup
\setlength{\leftmargini}{12pt}{
\begin{itemize}
\item
	We have $\etamm^\pm(\psi,\chi)\in C^{k+1}([a,b],\mg)$ for $k\in \NN\cup \{\infty\}$, for each $\psi\in \DIDE^k_{[a,b]}$ and $\chi\in C^{k+1}([a,b],\mg)$ (by \eqref{kjdskjdsjkdskjdsjk} and smoothness of the group operations).  
\item
	We have $\etamm^\pm(\psi,\etamm^\mp(\psi,\chi))=\chi$ for each $\psi\in \DIDE_{[a,b]}$ and $\chi\in C^0([a,b],\mg)$.
\end{itemize}
}
\endgroup
\noindent
We furthermore observe the following:
\begin{lemma}
\label{hjhjfdhjfdhjfd}
Let $a<b$, $\psi\in \DIDE_{[a,b]}$, and $X\in \mg$ be given. Then, $\Add^\pm_\psi[X]\in C^1([a,b],\mg)$ holds, with $\Add^\pm_\psi[X](a)=X$  as well as
\begin{align}
\label{iureiureiueriureiure}
	\partial_t \Add^+_\psi[X](t) \textstyle=\bil{\psi(t)}{\Add^+_\psi[X](t)}\qquad\:\:\text{and}\qquad\:\:
	\partial_t \Add^-_\psi[X](t) \textstyle=-\Add^-_\psi[\bil{\psi(t)}{X}](t)
\end{align}
for each $t\in [a,b]$. 
In particular, for $\chi\in C^1([a,b],\mg)$, we have
\begin{align}
\label{nbbndsjhdsuhdsjhsdksd}
\begin{split}
	\dot\etamm^+(\psi,\chi)&= \etamm^+(\psi,\dot\chi)+\bil{\psi}{\etamm^+(\psi,\chi)}\\
	\dot\etamm^-(\psi,\chi)&= \etamm^-(\psi,\dot\chi)-\etamm^-(\psi,\bil{\psi}{\chi}).
\end{split} 
\end{align}
\end{lemma}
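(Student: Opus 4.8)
The plan is to reduce the whole statement to the chain rule for $C^1$-maps and to the defining identity \eqref{iufiugfiugfiugfiugfgf} of the Lie bracket. First I would set $\mu:=\innt_a^\bullet\psi$; since $\psi\in\DIDE_{[a,b]}$ we have $\mu\in C^1([a,b],G)$ with $\mu(a)=e$ and $\Der(\mu)=\psi$, hence $\dot\mu(t)=\dd_e\RT_{\mu(t)}(\psi(t))$ for all $t\in[a,b]$. Writing $F_Y\colon G\ni g\mapsto\Ad_g(Y)\in\mg$ --- which is smooth, and coincides with the map ``$\Ad(Y)$'' of \eqref{iufiugfiugfiugfiugfgf} --- one has
\begin{align*}
	\Add^+_\psi[X]=F_X\cp\mu\qquad\text{and}\qquad \Add^-_\psi[X]=F_X\cp\inv\cp\mu .
\end{align*}
Since the outer maps are smooth, Part \ref{chainrule} of Proposition \ref{iuiuiuiuuzuzuztztttrtrtr} gives $\Add^\pm_\psi[X]\in C^1([a,b],\mg)$, and $\mu(a)=e$ together with $\Ad_e=\id_\mg$ gives the initial values $\Add^\pm_\psi[X](a)=X$.

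For the derivatives I would apply the chain rule once more:
\begin{align*}
	\partial_t\Add^+_\psi[X](t)=\dd_{\mu(t)}F_X(\dot\mu(t))=\dd_e\big(F_X\cp\RT_{\mu(t)}\big)(\psi(t)).
\end{align*}
Because $\Ad_{hg}=\Ad_h\cp\Ad_g$, the map $F_X\cp\RT_g$ is just $h\mapsto\Ad_h(\Ad_g(X))$, i.e.\ the map ``$\Ad(\Ad_g(X))$'' of \eqref{iufiugfiugfiugfiugfgf}, so $\dd_e(F_X\cp\RT_{\mu(t)})(\psi(t))=\bil{\psi(t)}{\Ad_{\mu(t)}(X)}=\bil{\psi(t)}{\Add^+_\psi[X](t)}$, which is the first identity in \eqref{iureiureiueriureiure}. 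For the minus case the analogous computation gives $\partial_t\Add^-_\psi[X](t)=\dd_e\big(F_X\cp\inv\cp\RT_{\mu(t)}\big)(\psi(t))$; here $F_X\cp\inv\cp\RT_g=\Ad_{g^{-1}}\cp(F_X\cp\inv)$ because $\Ad_{(hg)^{-1}}(X)=\Ad_{g^{-1}}(\Ad_{h^{-1}}(X))$, and differentiating at $e$, using linearity of $\Ad_{g^{-1}}$, the relation $\dd_e\inv=-\id_\mg$ (a consequence of the product rule \eqref{LGPR}), and \eqref{iufiugfiugfiugfiugfgf}, yields $-\Ad_{\mu(t)^{-1}}(\bil{\psi(t)}{X})=-\Add^-_\psi[\bil{\psi(t)}{X}](t)$, which is the second identity in \eqref{iureiureiueriureiure}.

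For \eqref{nbbndsjhdsuhdsjhsdksd} I would observe that $\etamm^+(\psi,\chi)(t)=\Ad(\mu(t),\chi(t))$ and $\etamm^-(\psi,\chi)(t)=\Ad(\inv(\mu(t)),\chi(t))$ are compositions of the $C^1$-curve $t\mapsto(\mu(t),\chi(t))$, resp.\ $t\mapsto(\inv(\mu(t)),\chi(t))$, with the smooth map $\Ad\colon G\times\mg\to\mg$, which is linear in its second argument. The product rule (Part \ref{productrule} of Proposition \ref{iuiuiuiuuzuzuztztttrtrtr}; cf.\ also Lemma \ref{sdsdds}) then splits $\dot\etamm^\pm(\psi,\chi)(t)$ into a second-slot contribution, which by linearity of $\Ad_{\mu(t)^{\pm 1}}$ equals $\Ad_{\mu(t)^{\pm1}}(\dot\chi(t))=\etamm^\pm(\psi,\dot\chi)(t)$, and a first-slot contribution, which is exactly $\partial_s\big|_{s=t}\Add^\pm_\psi[\chi(t)](s)$, i.e.\ the right-hand side of \eqref{iureiureiueriureiure} at time $t$ with $X=\chi(t)$. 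Reading this off gives $+\bil{\psi(t)}{\etamm^+(\psi,\chi)(t)}$ in the plus case and $-\etamm^-(\psi,\bil{\psi}{\chi})(t)$ in the minus case, which is \eqref{nbbndsjhdsuhdsjhsdksd}.

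There is no genuine obstacle here; the step that needs the most care is the identification of the composed maps $F_X\cp\RT_g$ and $F_X\cp\inv\cp\RT_g$ via $\Ad_{hg}=\Ad_h\cp\Ad_g$ in the correct order, since a slip there would misplace the bracket or a sign. The remainder is bookkeeping with the chain and product rules, and the second half of the lemma is an immediate consequence of the first, because the first-slot derivative of $\Ad(g,Y)$ with $Y$ held fixed is literally the curve-derivative computed in \eqref{iureiureiueriureiure}.
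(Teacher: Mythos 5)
Your proof is correct and follows essentially the same route as the paper's: both arguments reduce \eqref{iureiureiueriureiure} to the defining identity \eqref{iufiugfiugfiugfiugfgf} by factoring the increment of $t\mapsto\Ad_{\mu(t)^{\pm 1}}(X)$ through right translation by $\mu(t)$ and the homomorphism property $\Ad_{hg}=\Ad_h\cp\Ad_g$, and both obtain \eqref{nbbndsjhdsuhdsjhsdksd} from \eqref{iureiureiueriureiure} via the product rule and linearity of $\Ad_g$. The only cosmetic difference is that the paper expresses the localized increment through the product-integral identities (splitting and inversion of $\innt$) and differentiates a one-sided increment, whereas you work directly with the chain rule and $\dd_e\inv=-\id_\mg$.
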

\begin{proof}
Equation \eqref{iureiureiueriureiure} is verified  
in Appendix \ref{asassadsdsdsdsdsdcxcxsdscvvcvcvcvcvcc}; and  
\eqref{nbbndsjhdsuhdsjhsdksd} is clear from \eqref{iureiureiueriureiure} as well as the parts \ref{linear}, \ref{chainrule}, \ref{productrule} of Proposition \ref{iuiuiuiuuzuzuztztttrtrtr}.
\end{proof}
\noindent
For the sake of completeness, we want to mention the following well-known result:
\begin{corollary}
\label{lksdlkdslkdslk}
Let $a<b$, $\psi\in \DIDE_{[a,b]}$, $X\in \mg$. Then, $\Add^+_\psi[X]$ is the unique solution $\alpha\in C^1([a,b],\mg)$ to the differential equation (Lax equation) $\dot\alpha=\bil{\psi}{\alpha}$, with the initial condition $\alpha(a)=X$.
\end{corollary}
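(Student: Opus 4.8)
The plan is to prove Corollary \ref{lksdlkdslkdslk} by combining the existence statement already contained in Lemma \ref{hjhjfdhjfdhjfd} with a uniqueness argument built from the invertibility of the maps $\Add^\pm_\psi[t]$. First I would note that Lemma \ref{hjhjfdhjfdhjfd} already does the existence half: $\alpha:=\Add^+_\psi[X]$ is of class $C^1$, satisfies $\alpha(a)=\Add^+_\psi[X](a)=X$, and by the first equation in \eqref{iureiureiueriureiure} fulfills $\dot\alpha(t)=\bil{\psi(t)}{\Add^+_\psi[X](t)}=\bil{\psi(t)}{\alpha(t)}$ for each $t\in[a,b]$. So the only thing left is uniqueness.

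For uniqueness I would argue as follows. Suppose $\alpha\in C^1([a,b],\mg)$ solves $\dot\alpha=\bil{\psi}{\alpha}$ with $\alpha(a)=X$. The key observation is that the pointwise map $\Add^-_\psi[t]=\Ad_{[\innt_a^t\psi]^{-1}}$ is a linear automorphism of $\mg$ with inverse $\Add^+_\psi[t]=\Ad_{\innt_a^t\psi}$, and that $t\mapsto\Add^-_\psi[t]$ is, in the appropriate sense, a $C^1$ one-parameter family with derivative governed by \eqref{iureiureiueriureiure}. Concretely, define $\beta:=\etamm^-(\psi,\alpha)$, i.e.\ $\beta(t)=\Ad_{[\innt_a^t\psi]^{-1}}(\alpha(t))$. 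Then $\beta\in C^1([a,b],\mg)$ (it is $\Ad$ of a $C^1$-curve applied to a $C^1$-curve, cf.\ Lemma \ref{Adlip} and smoothness of the relevant operations), $\beta(a)=\Ad_e(X)=X$, and by the second line of \eqref{nbbndsjhdsuhdsjhsdksd} together with the differential equation for $\alpha$,
\begin{align*}
	\dot\beta(t)=\etamm^-(\psi,\dot\alpha)(t)-\etamm^-(\psi,\bil{\psi}{\alpha})(t)=\etamm^-(\psi,\bil{\psi}{\alpha})(t)-\etamm^-(\psi,\bil{\psi}{\alpha})(t)=0.
\end{align*}
Hence $\beta$ is constant, so $\beta\equiv X$, which means $\Ad_{[\innt_a^t\psi]^{-1}}(\alpha(t))=X$ for all $t$; applying the inverse automorphism $\Ad_{\innt_a^t\psi}$ gives $\alpha(t)=\Ad_{\innt_a^t\psi}(X)=\Add^+_\psi[X](t)$ for all $t\in[a,b]$. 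This establishes uniqueness.

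The main obstacle — really the only nontrivial point — is justifying the product-rule computation of $\dot\beta$, i.e.\ that the identity \eqref{nbbndsjhdsuhdsjhsdksd} from Lemma \ref{hjhjfdhjfdhjfd} can be applied here with $\chi=\alpha$ an arbitrary $C^1$-solution rather than a fixed reference curve; but since \eqref{nbbndsjhdsuhdsjhsdksd} is stated for all $\chi\in C^1([a,b],\mg)$, this is immediate, and the rest reduces to the elementary fact that a $C^1$-curve with vanishing derivative on an interval is constant (which follows from \eqref{isdsdoisdiosd}). Everything else — the $C^1$-regularity of $\beta$ and the relations $\Add^+_\psi[t]\cp\Add^-_\psi[t]=\id_\mg=\Add^-_\psi[t]\cp\Add^+_\psi[t]$, which are just $\Ad_{\innt_a^t\psi}\cp\Ad_{[\innt_a^t\psi]^{-1}}=\id_\mg$ — is routine. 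I would present the existence part in one sentence (citing Lemma \ref{hjhjfdhjfdhjfd}) and devote the body of the proof to the uniqueness computation above.
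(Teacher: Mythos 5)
Your proposal is correct and follows essentially the same route as the paper: existence from Lemma \ref{hjhjfdhjfdhjfd}, and uniqueness by showing that $\etamm^-(\psi,\alpha)$ has vanishing derivative via the second line of \eqref{nbbndsjhdsuhdsjhsdksd}, hence equals the constant $X$, and then inverting with $\etamm^+(\psi,\etamm^-(\psi,\alpha))=\alpha$ (equivalently, applying $\Ad_{\innt_a^t\psi}$). The only cosmetic difference is that the paper phrases the last step through the identity $\etamm^\pm(\psi,\etamm^\mp(\psi,\chi))=\chi$ rather than through the pointwise automorphism $\Ad_{\innt_a^t\psi}$, which is the same fact.
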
 
\begin{proof}
By Lemma \ref{hjhjfdhjfdhjfd}, it remains to show uniqueness. Let thus $\alpha\in C^1([a,b],\mg)$ be given, with $\dot\alpha=\bil{\psi}{\alpha}$  and $\alpha(a)=X$.  
Then, $\dot\etamm^-(\psi,\alpha)=0$ holds by \eqref{nbbndsjhdsuhdsjhsdksd}, with $\etamm^-(\psi,\alpha)(a)=X$. Then, \eqref{isdsdoisdiosd} yields $\etamm^-(\psi,\alpha)=X$, hence
\begin{align*}
		\Add^+_\psi[X]= \etamm^+(\psi,\mathcal{C}_X|_{[a,b]})=\etamm^+(\psi,\etamm^-(\psi,\alpha))=\alpha. 
\end{align*}
This proves the claim.
\end{proof}
\noindent
Let $\comp{\mg}$ denote the completion of $\mg$. Let $\psi \in C^0([a,b],\mg)$, $X\in \mg$, $n\in \NN$ be given. We set
\begin{align*}
 \TPOL^\pm_{0,\psi}[X]\colon [a,b]\rightarrow \mgc,\qquad t\mapsto X,	
\end{align*}
\vspace{-18pt}

\noindent
and define for $\ell\geq 1$
\begin{align*}
	\textstyle \TPOL^+_{\ell,\psi}[X]\colon [a,b]\rightarrow \mgc,\qquad &\textstyle t\mapsto \hspace{34.45pt} \int_a^{t}\dd s_1\int_a^{s_1}\dd s_2 \: {\dots} \int_a^{s_{\ell-1}}\dd s_\ell \: (\bilbr{\psi(s_1)}\cp \dots \cp \bilbr{\psi(s_{\ell})})(X)\\
	\textstyle \TPOL^-_{\ell,\psi}[X]\colon [a,b]\rightarrow \mgc,\qquad &\textstyle t\mapsto  (-1)^\ell\cdot\int_a^{t}\dd s_1\int_a^{s_1}\dd s_2 \: {\dots} \int_a^{s_{\ell-1}}\dd s_\ell \: (\bilbr{\psi(s_\ell)}\cp \dots \cp \bilbr{\psi(s_{1})})(X).
\end{align*}
For $\ell\geq 1$, we set
\begin{align*}
	\textstyle \Rest^+_{\ell,\psi}[X]\colon [a,b]\rightarrow \mgc,\quad\: &\textstyle t\mapsto \hspace{34.45pt} \int_a^{t}\dd s_1\int_a^{s_1}\dd s_2 \: {\dots} \int_a^{s_{\ell-1}}\dd s_\ell \: (\bilbr{\psi(s_1)}\cp \dots \cp \bilbr{\psi(s_{\ell})})(\Add^+_\psi[X](s_\ell))\\
	\textstyle \Rest^-_{\ell,\psi}[X]\colon [a,b]\rightarrow \mgc,\quad\: &\textstyle t\mapsto  (-1)^\ell\cdot\int_a^{t}\dd s_1\int_a^{s_1}\dd s_2 \: {\dots} \int_a^{s_{\ell-1}}\dd s_\ell \: \Add_\psi^-[(\bilbr{\psi(s_\ell)}\cp \dots \cp \bilbr{\psi(s_{1})})(X)](s_\ell).
\end{align*}
To simplify the notations,   
we define
\begin{align*}
	\APOL_{\ell,\psi}^\pm[t]\colon \mg\ni X&\mapsto \APOL_{\ell,\psi}^\pm[X](t)\in \mgc
	\qquad\quad\hspace{4pt}\forall\: t\in [a,b],\: \ell\in \NN\\
	\APOL_{\ell,\psi}^\pm&:=\APOL_{\ell,\psi}^\pm[b]\qquad\quad\hspace{40pt}\forall\: \ell\in \NN.
\end{align*}
We obtain the following statement.
\begin{lemma}
\label{hjfdhjfdhjfd}
	Let $\psi\in \DIDE_{[a,b]}$, $X\in \mg$, and $n\in \NN$ be given. Then, for $t\in [a,b]$ we have
\begin{align*}
	\textstyle\Add^+_\psi[X](t)&\textstyle= \sum_{\ell=0}^n  \TPOL^+_{\ell,\psi}[X](t) + \Rest^{+}_{n+1,\psi}[X](t)\\
	\textstyle\Add^-_\psi[X](t)&\textstyle= \sum_{\ell=0}^n  \TPOL^-_{\ell,\psi}[X](t) + \Rest^{-}_{n+1,\psi}[X](t).	
\end{align*}	
\end{lemma}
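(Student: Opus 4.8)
The plan is to prove both identities simultaneously by induction on $n\in\NN$, using the differential characterization of $\Add^\pm_\psi[X]$ provided by Lemma \ref{hjhjfdhjfdhjfd} (specifically \eqref{iureiureiueriureiure}) together with the fundamental theorem of calculus \eqref{isdsdoisdiosd}. I will write out the argument for the ``$+$'' case; the ``$-$'' case is entirely parallel, carrying the extra sign $(-1)^\ell$ through and using the second equation in \eqref{iureiureiueriureiure} in place of the first.

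\emph{Base case $n=0$.} Here the claimed identity reads $\Add^+_\psi[X](t)=\TPOL^+_{0,\psi}[X](t)+\Rest^+_{1,\psi}[X](t)$. By definition $\TPOL^+_{0,\psi}[X](t)=X=\Add^+_\psi[X](a)$ (Lemma \ref{hjhjfdhjfdhjfd}), and
\begin{align*}
	\textstyle\Rest^+_{1,\psi}[X](t)=\int_a^t \bilbr{\psi(s_1)}(\Add^+_\psi[X](s_1))\:\dd s_1=\int_a^t \bil{\psi(s_1)}{\Add^+_\psi[X](s_1)}\:\dd s_1.
\end{align*}
By Lemma \ref{hjhjfdhjfdhjfd}, the integrand is exactly $\partial_{s_1}\Add^+_\psi[X](s_1)$, so \eqref{isdsdoisdiosd} gives $\Rest^+_{1,\psi}[X](t)=\Add^+_\psi[X](t)-\Add^+_\psi[X](a)=\Add^+_\psi[X](t)-X$, and the base case follows. (Strictly, \eqref{isdsdoisdiosd} is applied in $\mg$ if $\psi\in\DIDE_{[a,b]}$ makes the integrand lie in $\mg$, which it does since $\Add^+_\psi[X]$ is $\mg$-valued and $\bil{\cdot}{\cdot}$ maps into $\mg$; in general one argues in $\mgc$ using \eqref{oidsoidoisdoidsoioidsiods}, which is harmless.)

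\emph{Inductive step.} Assume the identity holds for some $n\in\NN$; I must produce it for $n+1$, i.e.\ show
\begin{align*}
	\textstyle\Rest^+_{n+1,\psi}[X](t)=\TPOL^+_{n+1,\psi}[X](t)+\Rest^+_{n+2,\psi}[X](t)\qquad\forall\:t\in[a,b].
\end{align*}
Write $\Rest^+_{n+1,\psi}[X](t)=\int_a^t \dd s_1\,\big(\bilbr{\psi(s_1)}\cp\Rest^{+}_{n,\psi|_{[a,s_1]}}\text{-type inner integral}\big)$; more precisely, peel off the outermost integral so that the inner $n$-fold integral, evaluated along the nested variable, has the form $(\bilbr{\psi(s_2)}\cp\dots\cp\bilbr{\psi(s_{n+1})})$ applied to $\Add^+_\psi[X](s_{n+1})$. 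Applying the induction hypothesis \emph{to $X$ held fixed but shifting the base of the iterated integrals} — equivalently, observing that the inner $n$-fold nested integral against $\Add^+_\psi[X]$ decomposes via the inductive identity into the corresponding $\TPOL^+$ term of order $n$ plus the order-$(n+1)$ remainder term — and then integrating once more in $s_1$ and using linearity of $\bilbr{\psi(s_1)}\cp\int(\cdot)=\int\bilbr{\psi(s_1)}(\cdot)$ (continuity of the bracket, \eqref{pofdpofdpofdsddsdsfd}), one obtains precisely $\TPOL^+_{n+1,\psi}[X](t)+\Rest^+_{n+2,\psi}[X](t)$. The only genuinely delicate bookkeeping point — and the step I expect to be the main obstacle — is matching the nesting of the integration variables and justifying the interchange ``$\bilbr{\psi(s_1)}\cp\int = \int\bilbr{\psi(s_1)}$'' inside each layer, which rests on the fact that $\bilbr{\psi(s_1)}=\bil{\psi(s_1)}{\cdot}$ is a continuous linear endomorphism of $\mgc$ (continuity extension of the bracket, \eqref{pofdpofdpofdsddsdsfd} with $\mathcal{L}\equiv\bilbr{\psi(s_1)}$) and that all the iterated Riemann integrals exist in $\mgc$ by \eqref{oidsoidoisdoidsoioidsiods} applied repeatedly. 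Once the indexing is set up cleanly this is a routine (if slightly tedious) induction, with no further conceptual content; the ``$-$'' case runs identically, using $\partial_t\Add^-_\psi[X](t)=-\Add^-_\psi[\bil{\psi(t)}{X}](t)$ and tracking the factor $(-1)^\ell$.
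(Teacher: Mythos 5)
Your proof is correct and follows essentially the same route as the paper: the base case comes from Lemma \ref{hjhjfdhjfdhjfd} together with \eqref{isdsdoisdiosd}, and the inductive step reduces to the identity $\Rest^+_{n+1,\psi}[X]=\TPOL^+_{n+1,\psi}[X]+\Rest^+_{n+2,\psi}[X]$, obtained by re-inserting the order-zero decomposition into the nested integral and commuting the continuous linear operators $\bilbr{\psi(s_i)}$ with the Riemann integral via \eqref{pofdpofdpofdsddsdsfd}. The paper performs this substitution at the innermost slot rather than peeling the outermost integral, but this is only a cosmetic reorganization of the same computation.
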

\begin{proof}
	By Lemma \ref{hjhjfdhjfdhjfd} and \eqref{isdsdoisdiosd}, we have
	\begin{align}
	\label{oidsoisoidsoidsoidsoi}
		\Add^+_\psi[X](t)\textstyle&\textstyle=X+\int_a^t\he \bil{\psi(s)}{\Add^+_\psi[X](s)} \:\dd s\textstyle= \TPOL^+_{0,\psi}[X](t) +   \Rest^+_{1,\psi}[X](t)\\
	\label{oidsoisoidsoidsoidsoii}
		\Add^-_\psi[X](t)\textstyle&\textstyle=X-\int_a^t\he \Add^-_\psi[\bil{\psi(s)}{X}](s) \:\dd s\textstyle= \TPOL^-_{0,\psi}[X](t) +   \Rest^-_{1,\psi}[X](t).
	\end{align}
	We thus can assume that the claim holds for some $n\in \NN$. We obtain from \eqref{pofdpofdpofdsddsdsfd} (third steps) that
	\begin{align*}
		\Add^+_\psi[X](t)&\textstyle\stackrel{\phantom{\eqref{oidsoisoidsoidsoidsoi}}}{=} \sum_{\ell=0}^n  \TPOL^+_{\ell,\psi}[X](t) + \Rest^+_{n+1,\psi}[X](t)\\[-3pt]
		&\textstyle\stackrel{\phantom{\eqref{oidsoisoidsoidsoidsoi}}}{=} \sum_{\ell=0}^n  \TPOL^+_{\ell,\psi}[X](t)\\
		&\quad\:\:\:\he\textstyle + \int_a^{t}\dd s_1\int_a^{s_1}\dd s_2 \: {\dots} \int_a^{s_{n}}\dd s_{n+1} \: (\bilbr{\psi(s_1)}\cp \dots \cp \bilbr{\psi(s_{n+1})})(\Add^+_\psi[X](s_{n+1}))\\[-3pt]
		&\textstyle\stackrel{\eqref{oidsoisoidsoidsoidsoi}}{=}
		\sum_{\ell=0}^n  \TPOL^+_{\ell,\psi}[X](t) + \big(\TPOL^{+}_{n+1,\psi}[X](t) + \Rest^+_{n+2,\psi}[X](t)\big)
		\\[-3pt]
		&\textstyle\stackrel{\phantom{\eqref{oidsoisoidsoidsoidsoi}}}{=}
		\sum_{\ell=0}^{n+1}  \TPOL^+_{\ell,\psi}[X](t) + \Rest^+_{n+2,\psi}[X](t)
	\end{align*}
	as well as
	\begin{align*}
		\Add^-_\psi[X](t)&\textstyle\stackrel{\phantom{\eqref{oidsoisoidsoidsoidsoi}}}{=} \sum_{\ell=0}^n  \TPOL^-_{\ell,\psi}[X](t) + \Rest^-_{n+1,\psi}[X](t)\\[-3pt]
		&\textstyle\stackrel{\phantom{\eqref{oidsoisoidsoidsoidsoi}}}{=} \sum_{\ell=0}^n  \TPOL^-_{\ell,\psi}[X](t)\\
		&\quad\:\:\:\he\textstyle + (-1)^{n+1}\cdot \int_a^{t}\dd s_1\int_a^{s_1}\dd s_2 \: {\dots} \int_a^{s_{n}}\dd s_{n+1} \: \Add^-_\psi[(\bilbr{\psi(s_{n+1})}\cp \dots \cp \bilbr{\psi(s_{1})})(X)](s_{n+1})\\[-3pt]
		&\textstyle\stackrel{\eqref{oidsoisoidsoidsoidsoii}}{=}
		\sum_{\ell=0}^n  \TPOL^-_{\ell,\psi}[X](t) + \big( \TPOL^{-}_{n+1,\psi}[X](t) + \Rest^-_{n+2,\psi}[X](t)\big)
		\\[-3pt]
		&\textstyle\stackrel{\phantom{\eqref{oidsoisoidsoidsoidsoi}}}{=}
		\sum_{\ell=0}^{n+1}  \TPOL^-_{\ell,\psi}[X](t) + \Rest^-_{n+2,\psi}[X](t).
	\end{align*}
The claim now follows by induction.
\end{proof}
\noindent
We recall the definition of an \AAE in \eqref{assaaas1}, and obtain the following corollary.
\begin{corollary}
\label{ljkdskjdslkjsda}
Let $\AES\subseteq\mg$ be an \AAE, $a<b$, and $\psi\in \DIDE_{[a,b]}$ with $\im[\psi]\subseteq \AES$ be given. 
Then, 
\begin{align}
\label{sdssddsds}
	\textstyle\Add^\pm_\psi[X]= \lim_n \sum_{\ell=0}^n \TPOL^\pm_{\ell,\psi}[X]= \sum_{\ell=0}^\infty \TPOL^\pm_{\ell,\psi}[X]
\end{align}
converges uniformly for each fixed $X\in \AES$. In particular, the following assertions hold:
\begingroup
\setlength{\leftmargini}{17pt}
{
\renewcommand{\theenumi}{\emph{\arabic{enumi})}} 
\renewcommand{\labelenumi}{\theenumi}
\begin{enumerate}
\item
\label{ljkdskjdslkjsda1}
	For $\vv\leq \ww$ as in \eqref{assaaas1}, we have
	\begin{align*}
		\vv\cp\Add^\pm_\psi[X]\leq \ww(X)\cdot \e^{\int_a^\bullet \ww(\psi(s))\:\dd s}\qquad\quad\forall\: X\in \AES.
	\end{align*}
\vspace{-18pt}
\item
\label{ljkdskjdslkjsda2}
	By \eqref{odaidaooipidadais}, for $Z\in \dom[\exp]\cap \AES$ we have 
 \begin{align*}
 	\textstyle \Ad_{\exp(t\cdot Z)^{\pm 1}}(X)=\Add^\pm_{\mathcal{C}_Z|_{[0,1]}}[X](t)= \sum_{\ell=0}^\infty \frac{t^\ell}{\ell!}\cdot \com{\pm Z}^\ell(X)\qquad\quad\forall\: X\in \AES,\: t\in [0,1].
 \end{align*}	
 \vspace{-22pt}
\end{enumerate}}
\endgroup	
\end{corollary}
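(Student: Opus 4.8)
The plan is to deduce Corollary \ref{ljkdskjdslkjsda} directly from the integral expansion in Lemma \ref{hjfdhjfdhjfd} together with the asymptotic estimate property \eqref{assaaas1}. First I would fix an \AAE $\AES\subseteq \mg$, some $\psi\in \DIDE_{[a,b]}$ with $\im[\psi]\subseteq \AES$, and some $X\in \AES$. Given $\vv\in \Sem{\mg}$, pick $\vv\leq \ww\in \Sem{\mg}$ as in \eqref{assaaas1}, so that for all $\ell\geq 1$ and all $s_1,\dots,s_\ell\in [a,b]$ the estimate
\begin{align*}
	\vv\big((\bilbr{\psi(s_1)}\cp\dots\cp\bilbr{\psi(s_\ell)})(X)\big)\leq \ww(\psi(s_1))\cdot{\dots}\cdot\ww(\psi(s_\ell))\cdot\ww(X)
\end{align*}
holds (and trivially $\vv(\TPOL^\pm_{0,\psi}[X](t))=\vv(X)\leq\ww(X)$). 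Using \eqref{ffdlkfdlkfd} to push $\comp\vv$ through each of the $\ell$ nested Riemann integrals, together with the fact that the simplex $\{a\leq s_\ell\leq\dots\leq s_1\leq t\}$ has volume $(t-a)^\ell/\ell!\leq(b-a)^\ell/\ell!$, I get
\begin{align*}
	\comp\vv\big(\TPOL^\pm_{\ell,\psi}[X](t)\big)\leq \textstyle\frac{1}{\ell!}\cdot\big(\int_a^t\ww(\psi(s))\:\dd s\big)^\ell\cdot\ww(X)\qquad\quad\forall\: t\in[a,b],\: \ell\in\NN,
\end{align*}
where for $\TPOL^-$ one first replaces $\bilbr{\psi(s_1)}\cp\dots\cp\bilbr{\psi(s_\ell)}$ by $\bilbr{\psi(s_\ell)}\cp\dots\cp\bilbr{\psi(s_1)}$, which the estimate \eqref{assaaas1} bounds in exactly the same way (it is symmetric in the $X_i$).

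Summing this geometric-type bound over $\ell$ gives $\sum_{\ell=0}^\infty\comp\vv(\TPOL^\pm_{\ell,\psi}[X])\leq \ww(X)\cdot\e^{\int_a^\bullet\ww(\psi(s))\:\dd s}<\infty$ uniformly on $[a,b]$, so the partial sums $\sum_{\ell=0}^n\TPOL^\pm_{\ell,\psi}[X]$ form a uniformly Cauchy sequence of curves $[a,b]\to\comp\mg$; since $\comp\mg$ is complete, they converge uniformly to a continuous $\comp\mg$-valued curve, and passing to the limit $n\to\infty$ in this estimate yields Point \ref{ljkdskjdslkjsda1} for the limit. It remains to identify the limit with $\Add^\pm_\psi[X]$, and this is where Lemma \ref{hjfdhjfdhjfd} does the work: that lemma gives $\Add^\pm_\psi[X](t)=\sum_{\ell=0}^n\TPOL^\pm_{\ell,\psi}[X](t)+\Rest^\pm_{n+1,\psi}[X](t)$ for every $n$, so I only need the remainder term to vanish uniformly as $n\to\infty$. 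Applying exactly the same nested-integral estimate to $\Rest^\pm_{n+1,\psi}[X]$ — now using Point \ref{ljkdskjdslkjsda1} (or directly the crude bound $\vv(\Add^+_\psi[X](s))=\vv(\Ad_{\innt_a^s\psi}(X))$, controlled by Corollary \ref{opopsopsdopdsas} applied to the compact set $\innt_a^{[a,b]}\psi$, together with \eqref{assaaas1} for the inner $n{+}1$ brackets) — gives
\begin{align*}
	\comp\vv\big(\Rest^\pm_{n+1,\psi}[X](t)\big)\leq \textstyle\frac{1}{(n+1)!}\cdot\big(\int_a^b\ww(\psi(s))\:\dd s\big)^{n+1}\cdot C_X\xrightarrow[n\to\infty]{}0
\end{align*}
uniformly in $t$, for a suitable constant $C_X$. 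This establishes \eqref{sdssddsds}.

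For the two remaining assertions: Point \ref{ljkdskjdslkjsda1} is already obtained above by taking $n\to\infty$ in the estimate and using that $\vv\leq\comp\vv|_\mg$, noting that $\Add^\pm_\psi[X]$ is genuinely $\mg$-valued by \eqref{kldslkdslkdslcxcxcxcx}. Point \ref{ljkdskjdslkjsda2} follows by specializing $\psi:=\mathcal{C}_Z|_{[0,1]}$ for $Z\in\dom[\exp]\cap\AES$: then $\innt_0^t\psi=\exp(t\cdot Z)$ by \eqref{odaidaooipidadais}, so $\Add^\pm_\psi[X](t)=\Ad_{\exp(t\cdot Z)^{\pm1}}(X)$ by definition; and since $\psi$ is constant, the $\ell$-fold nested integral in $\TPOL^+_{\ell,\psi}[X](t)$ is just $\frac{t^\ell}{\ell!}\cdot\com{Z}^\ell(X)$, while the sign and reversal in $\TPOL^-$ produce $\frac{t^\ell}{\ell!}\cdot(-1)^\ell\com{Z}^\ell(X)=\frac{t^\ell}{\ell!}\cdot\com{-Z}^\ell(X)$; plugging into \eqref{sdssddsds} gives the stated series. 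The main obstacle I anticipate is purely bookkeeping: keeping the order-reversal and the $(-1)^\ell$ in the $\TPOL^-$/$\Rest^-$ definitions straight so that the asymptotic estimate bound \eqref{assaaas1} applies verbatim (it does, since that bound does not see the ordering of the arguments), and making sure the remainder estimate uses a bound on $\Add^+_\psi[X]$ that is already available — either bootstrapping from Point \ref{ljkdskjdslkjsda1} or, to avoid circularity, from the compactness argument via Corollary \ref{opopsopsdopdsas}.
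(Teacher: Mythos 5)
Your argument is correct and follows essentially the same route as the paper: both identify the limit via the finite expansion with remainder from Lemma \ref{hjfdhjfdhjfd}, kill the remainder using the asymptotic estimate \eqref{assaaas1} for the nested brackets together with Corollary \ref{opopsopsdopdsas} applied to the compact image of $\innt_a^\bullet\psi$ (resp.\ its inverses) to control the $\Add^\pm_\psi$-factor non-circularly, and obtain Point \emph{1)} from the simplex-volume bound $\frac{1}{\ell!}\big(\int_a^t\ww(\psi(s))\,\dd s\big)^\ell\cdot\ww(X)$ and Point \emph{2)} by specializing to the constant curve. The only difference is your (harmless, logically redundant) intermediate Cauchy-sequence step in $\comp{\mg}$, which the paper skips since the remainder estimate already identifies the limit directly.
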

\begin{proof}
	Equation \eqref{sdssddsds} is clear from the following two estimates:
\begingroup
\setlength{\leftmargini}{12pt}
\begin{itemize}
\item
	Let $\vv\leq \ww$  be as in \eqref{assaaas1}, as well as $\ww\leq \mm$ be as in Corollary \ref{opopsopsdopdsas} for $\qq\equiv \ww$ and $\compact\equiv \im[\innt_a^\bullet \psi]$ there. We obtain from Lemma \ref{hjfdhjfdhjfd} that
\begin{align*}
	\textstyle\vv\big(\Add^+_\psi[X](t)\textstyle-\sum_{\ell=0}^n \TPOL^+_{\ell,\psi}[X](t)\big)
	&\textstyle=\vv\big(\Rest^+_{n+1,\psi}[X](t)\big)\\
	&\textstyle\leq 
	\frac{(b-a)^{n+1}}{(n+1)!}\cdot \ww_\infty(\psi)^{n+1}\cdot \ww_\infty(\Add^+_\psi[X])\\
	&\textstyle\leq \frac{(b-a)^{n+1}}{(n+1)!}\cdot \ww_\infty(\psi)^{n+1}\cdot \mm(X)
\end{align*}
holds for each $t\in [a,b]$, $X\in \mg$, and $n\in \NN$.
\item
 	Let $\qq\leq \mm$ be as in Corollary \ref{opopsopsdopdsas} for  $\compact\equiv \im[\inv\cp \innt_a^\bullet \psi]$ there. Let furthermore $\mm\leq \ww$ be as in \eqref{assaaas1} for $\vv\equiv \mm$ there. We obtain from Lemma \ref{hjfdhjfdhjfd} that
\begin{align*}
	\textstyle\qq\big(\Add^-_\psi[X](t)\textstyle-\sum_{\ell=0}^n \TPOL^-_{\ell,\psi}[X](t)\big)
	\textstyle=\qq\big( \Rest^-_{n+1,\psi}[X](t) \big)\leq 
	\frac{(b-a)^{n+1}}{(n+1)!}\cdot \ww_\infty(\psi)^{n+1}\cdot \ww(X)
\end{align*}
holds for each $t\in [a,b]$, $X\in \mg$, and $n\in \NN$.
\end{itemize}
\endgroup
\noindent 
Now, Part \ref{ljkdskjdslkjsda2} is just clear from \eqref{sdssddsds}, and  
Part \ref{ljkdskjdslkjsda1} is  clear from \eqref{sdssddsds}  as well as
\begin{align*}
	\textstyle\vv\big(\TPOL^\pm_{\ell,\psi}[X](t)\big) &\textstyle\leq \int_a^{t}\dd s_1\int_a^{s_1}\dd s_2 \: {\dots} \int_a^{s_{\ell-1}}\dd s_\ell \: \ww(\psi(s_1))\cdot {\dots} \cdot \ww(\psi(s_{\ell}))\cdot \ww(X)
	\\
	&
	\textstyle =  \frac{1}{\ell !}\cdot (\int_a^t \ww(\psi(s))\:\dd s)^\ell\cdot \ww(X)
\end{align*}
for each $\ell\geq 1$. (The equality in the second step just follows by induction over $\ell\geq 1$, via taking the derivative of both expressions.)
\end{proof}
\begin{example}
\label{dsddsdsds}
Let $\NILS\subseteq \mg$ be a \NIL{q} for some $q\geq 2$.  
 Then, $\NILS$ is an \AAE,   
and Corollary \ref{ljkdskjdslkjsda} 
shows  
\begin{align*}
	\textstyle\Add^\pm_\psi[X]=\sum_{\ell=0}^{q-2}  \TPOL^\pm_{\ell,\psi}[X]
\end{align*} 
for each $X\in \NILS$ and $\psi\in \DIDE$ with $\im[\psi]\subseteq \NILS$. In particular, we have  
\begin{align*}
 	\textstyle\Ad_{\exp(t\cdot Z)^{\pm}}(X)= \sum_{\ell=0}^{q-2} \frac{t^\ell}{\ell!}\cdot \com{\pm Z}^\ell(X)\qquad\quad\forall\:  t\in [0,1]
 \end{align*}
 for each $X\in \NILS$ and $Z\in \dom[\exp]\cap \NILS$. 
\hspace*{\fill}\qed
\end{example}

\section{A Generalized BCDH Formula}
\label{kjdskjdskjdssd}
In the first part of this section, we generalize the Baker-Campbell-Dynkin-Hausdorff formula (for the exponential map) to the product integral (cf.\ Proposition \ref{kjfdkjfdkjfdjkfd}). In the second part, we apply this formula (together with the integral transformation introduced in Sect.\ \ref{kjdskjsdkjdskjs}) to express the product integral of nilpotent curves\footnote{Specifically, such $\mg$-valued curves whose image is a \NIL{q} for some $q\geq 2$, recall \eqref{assaaas2}.} (cf.\ Theorem \ref{kfdhjfdhjfdfdfdfd}) via  the exponential map. Various applications of the derived formula are discussed in Sect.\ \ref{kjdkjsjksddxccxocoioicxoiiooicxiocx}.

\subsection{A BCDH Formula for the Product Integral}
\label{sdsssdhsdhcxuzcxuzcxcx}
Let $\MX\in C^1([a,b],\mg)$ for $a<b$ be given, with 
$\im[\MX]\subseteq \dom[\exp]$. 
In this section, we consider the following two situations: 
\begingroup
\setlength{\leftmargini}{18pt}
{
\renewcommand{\theenumi}{{\Alph{enumi})}} 
\renewcommand{\labelenumi}{\theenumi}
\begin{enumerate}   
\item
\label{jhsdjhsdjhsd1}
	 $G$ is a Banach Lie group with $\norm{[X,Y]}\leq \norm{X}\cdot \norm{Y}$ for all $X,Y\in \mg$, and we have $\norm{\MX}_\infty<\ln(2)$. We set $V:=\mg$ as well as $\ind,\indj:=\infty$, and observe the following:
\vspace{-6pt}
\begingroup
\setlength{\leftmarginii}{14pt}{
\begin{itemize}
\item[$-$]
	$(\mg,\bil{\cdot}{\cdot})$ is asymptotic estimate.
\item[$-$]
	 $\exp\colon \mg\rightarrow \mg$ is defined and smooth, and a local diffeomorphism. In particular, there exists an identity neighbourhood $\OO\subseteq G$ on which $\exp$ is a diffeomorphism, such that $\norm{\exp^{-1}(g)}<\ln(2)$ holds for each $g\in \OO$.
\item[$-$]
	$G$ is $C^0$-regular (even $L^1$-regular by Theorem C in \cite{HGM}). 
In particular, there exists some $\BanC>0$, such that for all $\phi\in C^0([a,b],\mg)$ ($a<b$) and $\psi\in C^0([a',b'],\mg)$ ($a'<b'$) with $\int_a^b \norm{\phi(s)}\: \dd s + \int_{a'}^{b'} \norm{\psi(s)}\: \dd s< \BanC$, we have\footnote{Alternatively, combine Proposition 14.6 in \cite{HGGG} (or Example 2 in \cite{RGM}) with Proposition 2 in \cite{RGM}.}
\begin{align}
\label{aassasasasa}
	 \textstyle \innt_a^t\phi\cdot \innt_{a'}^\tau\psi \in \OO\qquad\quad\forall\: t \in[a,b],\: \tau\in [a',b']. 	
\end{align}	  
\end{itemize}
}
\endgroup 
\item
\label{jhsdjhsdjhsd2}
	$G$ is weakly $C^\infty$-regular, or $\exp\colon \mg\rightarrow G$ is defined ($\dom[\exp]=\mg$) and of class $C^1$. Moreover, $\NILS:=\im[\MX]\cup \im[\dot\MX]$ is a \NIL{q} for some $q\geq 2$. We set $V:=\CGen_1(\NILS)$ as well as $\ind:=q-1$ and $\indj:=q-2$, and recall that $V$ is a \NIL{q} by Remark \ref{dsdsdsdsdsv}.\ref{dsdsdsdsdsv2}.
\end{enumerate}}
\endgroup
\noindent
In both situations, the presumptions in Corollary \ref{fdjkjfdjkjfjkfd} are fulfilled. Moreover, by Example \ref{iureiuriureiuiure}, for $t\in [a,b]$ and $Y\in V$, we have
\begin{align}
\label{lkjfdjlfdkjfdlk1}
	\textstyle\Psi\big(\sum_{n=0}^\infty \frac{1}{n!}\cdot\com{\MX(t)}^n\big)(\Phi(\com{\MX(t)})(Y))&\textstyle=Y
	\\[2pt]
\label{lkjfdjlfdkjfdlk2}
	\textstyle\wt{\Psi}\big(\sum_{n=0}^\infty \frac{1}{n!}\cdot\com{-\MX(t)}^n\big)(Y)&\textstyle=\Psi\big(\sum_{n=0}^\infty \frac{1}{n!}\cdot\com{\MX(t)}^n\big)(Y)
\end{align}
with $\Phi(\com{\MX(t)})(Y)\in V$ (by Remark \ref{dsdsdsdsdsv} in the situation of \ref{jhsdjhsdjhsd2}). Here, for maps $\xi,\zeta\colon \mg\rightarrow \mg$ as well as $X\in \mg$, we set (convergence presumed)
\begin{align*}
	\textstyle \Psi(\xi)(X)&\textstyle:= \sum_{n=1}^\infty \frac{(-1)^{n-1}}{n}\cdot(\xi-\id_\mg)^{n-1}(X)\\
	\textstyle \wt{\Psi}(\xi)(X)&\textstyle:= \sum_{n=1}^\infty \frac{(-1)^{n-1}}{n}\cdot\big(\xi\cp (\xi-\id_\mg)^{n-1}\big)(X)\\[3pt]
	\textstyle \Phi(\zeta)(X)&\textstyle:= \sum_{n=0}^\infty\frac{1}{(n+1)!}\cdot \zeta^{n}(X).
\end{align*}
We obtain the following proposition:
\begin{proposition}
\label{kjfdkjfdkjfdjkfd}
Let $\phi\in \DIDE_{[a,b]}$, $g\in G$, $\MX\in C^1([a,b],\mg)$ 
be given, with 
$\im[\MX]\subseteq \dom[\exp]$ as well as 
\begin{align}
\label{vnbvcnbvcbnvcvcvcvc}
	\textstyle(\innt_a^t \phi)\cdot g =\exp(\MX(t))\qquad\quad\forall\: t\in [a,b].
\end{align}
Assume furthermore that we are in the situation of \ref{jhsdjhsdjhsd1} or \ref{jhsdjhsdjhsd2}. 
Then, we have $\im[\phi]\subseteq V$, and the following identities hold for each $t\in [a,b]$:
\begin{align}
\label{kfdkjfjkfdkj1}
	\textstyle \MX(t)-\MX(a)&\textstyle=  \int_a^t  \Psi\big(\Ad_{\innt_a^s\phi}\cp \Ad_{g}\big)(\phi(s))\:\dd s\\[2pt]
\label{kfdkjfjkfdkj3}
		\MX(t)-\MX(a)&\textstyle=  \int_a^t  \wt{\Psi}\big(\Ad_{g^{-1}}\cp \Ad_{[\innt_a^s\phi]^{-1}} \big)(\phi(s))\:\dd s\\
\label{kfdkjfjkfdkj2}
	\textstyle \MX(t)-\MX(a)&=\textstyle   \sum_{n=1}^{\ind} \frac{(-1)^{n-1}}{n}\cdot \int_a^t \big(\Ad_{\innt_a^s\phi}\cp \Ad_{g}-\id_\mg \big)^{n-1}(\phi(s))\: \dd s.
\end{align}
\end{proposition}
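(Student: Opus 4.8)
The plan is to reduce the product-integral statement to the classical BCDH situation pointwise in $t$, by differentiating both sides of \eqref{vnbvcnbvcbnvcvcvcvc} with respect to the upper limit and then applying the known derivative formula for $\exp\cp\MX$ from Corollary \ref{fdjkjfdjkjfjkfd}.

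First I would establish $\im[\phi]\subseteq V$. In case \ref{jhsdjhsdjhsd1} this is trivial since $V=\mg$. In case \ref{jhsdjhsdjhsd2}, note that $\MX$ is $C^1$ with $\im[\MX]\cup\im[\dot\MX]\subseteq\NILS$, so $\dot\MX(t)\in\NILS\subseteq V=\CGen_1(\NILS)$ for all $t$; differentiating \eqref{vnbvcnbvcbnvcvcvcvc} via Corollary \ref{fdjkjfdjkjfjkfd} and \eqref{LGPR} one gets, writing $\mu(t):=\innt_a^t\phi$,
\begin{align*}
	\dd_e\RT_{\mu(t)\cdot g}(\phi(t)) = \dd_e\LT_{\exp(\MX(t))}\Big(\textstyle\int_0^1\Ad_{\exp(-s\cdot\MX(t))}(\dot\MX(t))\:\dd s\Big),
\end{align*}
and since $\exp(\MX(t))=\mu(t)\cdot g$, cancelling the left translation and rearranging yields
\begin{align*}
	\phi(t) = \Ad_{\mu(t)\cdot g}\Big(\textstyle\int_0^1 \Ad_{\exp(-s\cdot\MX(t))}(\dot\MX(t))\:\dd s\Big) = \big(\Ad_{\mu(t)}\cp\Ad_g\big)\big(\Phi(\com{\MX(t)})(\dot\MX(t))\big),
\end{align*}
where I would use that $\Ad_{\exp(Z)}=\sum_n\tfrac1{n!}\com{Z}^n$ on $V$ (Corollary \ref{ljkdskjdslkjsda}, Part \ref{ljkdskjdslkjsda2}, or its nilpotent specialization Example \ref{dsddsdsds}) together with $\int_0^1\tfrac{(-1)^n}{n!}s^n\,\dd s=\tfrac{(-1)^n}{(n+1)!}$ and $\Ad_g$ acting as a bracket-homomorphism \eqref{dsdsdsdsdsa}. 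Since $\Phi(\com{\MX(t)})(\dot\MX(t))\in V$ (Remark \ref{dsdsdsdsdsv}) and $\Ad_{\mu(t)}$, $\Ad_g$ preserve $V$ (as bracket-automorphisms fixing $\NILS$, hence $\CGen_1(\NILS)$), we conclude $\phi(t)\in V$.

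Next, invert the displayed relation for $\phi(t)$. Applying $\Psi\big(\Ad_{\mu(t)}\cp\Ad_g\big)$ to $\phi(t)$ and using the algebraic identity \eqref{lkjfdjlfdkjfdlk1} — which in the Banach case \ref{jhsdjhsdjhsd1} holds because $\norm{\MX(t)}<\ln(2)$ forces $\norm{\com{\MX(t)}}_\op<\ln(2)$, so the power series composition is governed by Example \ref{iureiuriureiuiure}\ref{oidiudiudiudiu1}, and in the nilpotent case \ref{jhsdjhsdjhsd2} holds because $\com{\MX(t)}\in\Module_{q-1}$ so everything is a finite sum, Example \ref{iureiuriureiuiure}\ref{oidiudiudiudiu2} — one obtains, after identifying $\xi:=\Ad_{\mu(t)}\cp\Ad_g=\sum_n\tfrac1{n!}\com{\MX(t)}^n$ on $V$,
\begin{align*}
	\Psi\big(\Ad_{\mu(t)}\cp\Ad_g\big)(\phi(t)) = \dot\MX(t)\qquad\text{for all }t\in[a,b].
\end{align*}
Integrating from $a$ to $t$ and applying \eqref{isdsdoisdiosd} gives \eqref{kfdkjfjkfdkj1}; the finite-sum rewriting $\Psi(\xi)(X)=\sum_{n=1}^{\ind}\tfrac{(-1)^{n-1}}{n}(\xi-\id_\mg)^{n-1}(X)$ (the series truncates at $\ind$ because $(\xi-\id_\mg)$ raises $\CGen$-degree and kills $\CGen_{q}$, resp.\ by absolute convergence in the Banach case) gives \eqref{kfdkjfjkfdkj2}. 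For \eqref{kfdkjfjkfdkj3}, I would rewrite $\exp(\MX(t))=\mu(t)g$ as $\exp(-\MX(t))=g^{-1}\mu(t)^{-1}$, differentiate this instead (using $\Der([\innt_a^\bullet\phi]^{-1})=-\Ad_{[\innt_a^\bullet\phi]^{-1}}(\phi)$ from property \ref{oitoioiztoiztoiztoizt}), and apply the second algebraic identity \eqref{lkjfdjlfdkjfdlk2} to convert the resulting $\wt\Psi$-expression; alternatively one checks directly that the integrands in \eqref{kfdkjfjkfdkj1} and \eqref{kfdkjfjkfdkj3} agree pointwise via \eqref{lkjfdjlfdkjfdlk2} with $\xi\leftrightarrow\xi^{-1}$.

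The main obstacle is the justification of term-by-term differentiation and of the power-series manipulations in the \emph{non-Banach} case \ref{jhsdjhsdjhsd2}: one must be careful that all the infinite sums defining $\Ad_{\exp(\cdot)}$, $\Psi$, $\wt\Psi$, $\Phi$ are actually finite when restricted to $V=\CGen_1(\NILS)$, which requires invoking Remark \ref{dsdsdsdsdsv} (parts \ref{dsdsdsdsdsv1}, \ref{dsdsdsdsdsv4}, \ref{dsdsdsdsdsv2}) to control $\CGen$-degrees, and that the maps $\Ad_{\mu(t)}$ and $\Ad_g$ genuinely restrict to endomorphisms of the finite-dimensional-in-degree filtered space so that the unital $\RR$-algebra formalism of Example \ref{iureiuriureiuiure}\ref{oidiudiudiudiu2} applies with $\model=\com{\MX(t)}$. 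In the Banach case the analogous point — continuity and the estimate $\norm{\com{\MX(t)}}_\op\le\norm{\MX(t)}$ via submultiplicativity, plus convergence of all series for $\norm{\cdot}<\ln 2$ — is routine but should be stated. Once the pointwise identity $\Psi(\xi)(\phi(t))=\dot\MX(t)$ is secured, the rest is a one-line integration.
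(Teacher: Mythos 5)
Your proposal follows essentially the same route as the paper: differentiate both sides of \eqref{vnbvcnbvcbnvcvcvcvc} via Corollary \ref{fdjkjfdjkjfjkfd}, extract the pointwise relation between $\phi(t)$ and $\dot\MX(t)$, invert it with the power-series identities of Example \ref{iureiuriureiuiure}, and integrate; the treatment of \eqref{kfdkjfjkfdkj3} via \eqref{lkjfdjlfdkjfdlk2} and of \eqref{kfdkjfjkfdkj2} via truncation resp.\ absolute convergence also matches the paper.

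One bookkeeping slip should be fixed: since $\int_0^1\Ad_{\exp(-s\cdot\MX(t))}\:\dd s=\sum_n\frac{(-1)^n}{(n+1)!}\com{\MX(t)}^n=\Phi(\com{-\MX(t)})$ on $V$, your chain should read $\phi(t)=\Ad_{\exp(\MX(t))}\big(\Phi(\com{-\MX(t)})(\dot\MX(t))\big)=\Phi(\com{\MX(t)})(\dot\MX(t))$ (the scalar identity $\e^{z}\cdot\frac{1-\e^{-z}}{z}=\frac{\e^{z}-1}{z}$), \emph{not} $\big(\Ad_{\mu(t)}\cp\Ad_g\big)\big(\Phi(\com{\MX(t)})(\dot\MX(t))\big)$. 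Taken literally, your displayed version is off by a factor $\Ad_{\exp(\MX(t))}$ and would make the subsequent appeal to \eqref{lkjfdjlfdkjfdlk1} (which needs $\phi(t)=\Phi(\com{\MX(t)})(\dot\MX(t))$ exactly) fail; it also forces you into the unnecessary and unjustified side-claim that $\Ad_g$ preserves $V=\CGen_1(\NILS)$ for an arbitrary $g\in G$. With the corrected identity, $\im[\phi]\subseteq V$ is immediate from Remark \ref{dsdsdsdsdsv}, and the rest of your argument goes through as in the paper.
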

\begin{proof}
We observe the following: 
\begingroup
\setlength{\leftmargini}{12pt}{
\begin{itemize}
\item
	For $t\in [a,b]$, we have by Corollary \ref{ljkdskjdslkjsda}.\ref{ljkdskjdslkjsda2} (and Example \ref{dsddsdsds}) 
\begin{align}
\label{mncxmqwopopqwwopqwopqwopq}
	\textstyle \Ad_{\exp(\lambda\cdot \MX(t))^{\pm1}}|_V
\textstyle= \sum_{n=0}^\indj \frac{\lambda^n}{n!}\cdot\com{\pm\MX(t)}^n|_V.
\end{align}
\item
For $t\in [a,b]$, we have 
\begin{align}
\label{jdsjdskjdskjkjdskjds}
	\Phi(\com{\MX(t)})(\dot{\MX}(t))&\textstyle 
	= \sum_{n=0}^{\indj} \frac{1}{(n+1)!} \cdot \com{\MX(t)}^n(\dot{\MX}(t))\in V.
\end{align}
\item
For $t\in [a,b]$, we have by \eqref{lkjfdjlfdkjfdlk1},  \eqref{lkjfdjlfdkjfdlk2}, and \eqref{jdsjdskjdskjkjdskjds} that
\begin{align}
\label{lkdslkdslklkdsdsdsds}
\begin{split}
	\dot{\MX}(t)&= \textstyle\Psi\big(\sum_{n=0}^\infty \frac{1}{n!}\cdot\com{\MX(t)}^n\big)(\Phi(\com{\MX(t)})(\dot{\MX}(t)))\\
	&= \textstyle\wt{\Psi}\big(\sum_{n=0}^\infty\frac{1}{n!}\cdot\com{-\MX(t)}^n\big)(\Phi(\com{\MX(t)})(\dot{\MX}(t))).
\end{split}
\end{align} 
\end{itemize}}
\endgroup
\noindent
Set $\alpha:=\exp\cp \MX \in C^1([a,b],\mg)$. Then, given $t\in [a,b]$, we obtain from \eqref{vnbvcnbvcbnvcvcvcvc} (first step), Corollary \ref{fdjkjfdjkjfjkfd} (second step), \eqref{pofdpofdpofdsddsdsfd} (third step),  
\eqref{mncxmqwopopqwwopqwopqwopq} (fourth step), $C^0$-continuity of the Riemann integral in the situation of \ref{jhsdjhsdjhsd1} (fifth step), as well as \eqref{jdsjdskjdskjkjdskjds} (fifth step) that 
\begin{align*}
	\textstyle\dd_e\RT_{\alpha(t)}(\phi(t))&=\dot\alpha(t)\\
	&\textstyle= \dd_e\LT_{\alpha(t)}\big(\int_0^1 \Ad_{\exp(-s\cdot \MX(t))}(\dot{\MX}(t)) \:\dd s\big)\\
						&\textstyle= \dd_e\RT_{\alpha(t)}\big(\int_0^1 \Ad_{\exp((1-s)\cdot \MX(t))}(\dot{\MX}(t)) \:\dd s\big)\\
						&\textstyle= \dd_e\RT_{\alpha(t)}\big(\int_0^1 \sum_{n=0}^\indj \frac{(1-s)^n}{n!} \cdot \com{\MX(t)}^n(\dot{\MX}(t))\:\dd s\big)\\
						&\textstyle= \dd_e\RT_{\alpha(t)}\big(\sum_{n=0}^\indj \frac{1}{(n+1)!} \cdot \com{\MX(t)}^n(\dot{\MX}(t))\big).
\end{align*}
Together with \eqref{jdsjdskjdskjkjdskjds}, this shows 
\begin{align}
\label{oioidsoidsds}
	\phi(t)=\Phi(\com{\MX(t)})(\dot{\MX}(t))\in V\qquad\quad \forall\: t\in [a,b].
\end{align} 
We obtain from \eqref{isdsdoisdiosd}, \eqref{lkdslkdslklkdsdsdsds}, \eqref{oioidsoidsds}, and \eqref{mncxmqwopopqwwopqwopqwopq} the following identities for each $t\in [a,b]$:
\begin{align}
\label{oidsoidsoidsisxccxc}
\begin{split}
	\MX(t)-\MX(a)&\textstyle=\int_a^t  \Psi(\Ad_{\exp(\MX(s))})(\phi(s))\:\dd s\\[3pt]
	\MX(t)-\MX(a)&\textstyle= \int_a^t  \wt{\Psi}(\Ad_{\exp(-\MX(s))})(\phi(s))\:\dd s\\[1pt]
	&\textstyle =	\int_a^t  \wt{\Psi}(\Ad_{\exp(\MX(s))^{-1}})(\phi(s))\:\dd s.
\end{split}
\end{align} 
Together with \eqref{vnbvcnbvcbnvcvcvcvc}, this implies \eqref{kfdkjfjkfdkj1} and \eqref{kfdkjfjkfdkj3}. We now finally have to prove \eqref{kfdkjfjkfdkj2}: 
\begingroup
\setlength{\leftmargini}{12pt}{
\begin{itemize}
\item
	Assume that we are in the situation of \ref{jhsdjhsdjhsd2}. Since $\im[\phi]\subseteq V$ holds by \eqref{oioidsoidsds}, we obtain from \eqref{isdsdoisdiosd}, \eqref{lkdslkdslklkdsdsdsds}, \eqref{oioidsoidsds} (first step), $\im[\phi]\subseteq V$ (second step),  linearity of the Riemann integral (third step), \eqref{mncxmqwopopqwwopqwopqwopq} (fourth step), and \eqref{vnbvcnbvcbnvcvcvcvc} (fifth step) that
	\begin{align*}
		\MX(t)-\MX(a)&\textstyle=\int_a^t  \Psi\big(\sum_{\ell=0}^\infty \frac{1}{\ell!}\cdot\com{\MX(s)}^\ell\big)(\phi(s))\:\dd s\\[2pt]
		&\textstyle =\int_a^t  \sum_{n=1}^\ind \frac{(-1)^{n-1}}{n}\cdot\big(\sum_{\ell=1}^\indj \frac{1}{\ell!}\cdot\com{\MX(s)}^\ell\big)^{n-1}(\phi(s))\:\dd s\\[2pt]
		&\textstyle =\sum_{n=1}^\ind \frac{(-1)^{n-1}}{n}\cdot\int_a^t  \big(\sum_{\ell=1}^\indj \frac{1}{\ell!}\cdot\com{\MX(s)}^\ell\big)^{n-1}(\phi(s))\:\dd s\\[2pt]
		&\textstyle =\sum_{n=1}^\ind \frac{(-1)^{n-1}}{n}\cdot\int_a^t  \big(\Ad_{\exp(\MX(s))}-\id_\mg\big)^{n-1}(\phi(s))\:\dd s\\[2pt]
		&=\textstyle   \sum_{n=1}^{\ind} \frac{(-1)^{n-1}}{n}\cdot \int_a^t \big(\Ad_{\innt_a^s\phi}\cp \Ad_{g}-\id_\mg \big)^{n-1}(\phi(s))\: \dd s.
	\end{align*}
\item
 Assume that we are in the situation of \ref{jhsdjhsdjhsd1}.   
Then, \eqref{mncxmqwopopqwwopqwopqwopq} (first step) together with $\norm{\MX}_\infty<\ln(2)$ (last step) implies
\begin{align}
\label{kdsoidsoioidsdsdssd}
\begin{split}
	\textstyle\norm{(\Ad_{\exp(\MX(t))}-\id_\mg)^k(\phi(t))}&\textstyle\leq (\norm{\sum_{\ell=1}^\infty \frac{1}{\ell!}\cdot\com{\MX(t)}^\ell}_\op)^k\cdot \norm{\phi}_\infty\\
&\textstyle\leq (\sum_{\ell=1}^\infty \frac{1}{\ell!}\cdot\norm{\com{\MX(t)}^\ell}_\op)^k \cdot \norm{\phi}_\infty\\
&	\textstyle\leq \big(\e^{\norm{\MX}_\infty}-1\big)^k \cdot\norm{\phi}_\infty\\
	&\textstyle<\norm{\phi}_\infty
\end{split}
\end{align}
for each $t\in [a,b]$ and $k \geq 1$. We obtain from \eqref{oidsoidsoidsisxccxc} (first step), $C^0$-continuity of the Riemann integral and \eqref{kdsoidsoioidsdsdssd} (third step), as well as \eqref{vnbvcnbvcbnvcvcvcvc} (fourth step) that
\begin{align*}
	\MX(t)-\MX(a)&\textstyle=\int_a^t  \Psi(\Ad_{\exp(\MX(s))})(\phi(s))\:\dd s\\
	&\textstyle=\int_a^t  \sum_{n=1}^\infty \frac{(-1)^{n-1}}{n}\cdot(\Ad_{\exp(\MX(s))})(\phi(s)-\id_\mg)^{n-1}(\phi(s)) \:\dd s\\
	&\textstyle= \sum_{n=1}^\infty \frac{(-1)^{n-1}}{n}\cdot \int_a^t \big(\Ad_{\exp(\MX(s))}-\id_\mg \big)^{n-1}(\phi(s))\: \dd s\\
	&\textstyle= \sum_{n=1}^\infty \frac{(-1)^{n-1}}{n}\cdot \int_a^t \big(\big(\Ad_{\innt_a^s\phi}\cp \Ad_{g}\big)-\id_\mg \big)^{n-1}(\phi(s))\: \dd s
\end{align*}
holds for each $t\in [a,b]$.\qedhere
\end{itemize}}
\endgroup
\end{proof}
\begin{proposition}
\label{lkjdfkfdjkdfaskkk}
Assume that $G$ is a Banach Lie group, and let $\BanC>0$ be as in \ref{jhsdjhsdjhsd1}. Let $\phi\in C^0([a,b],\mg)$ ($a<b$) and $\psi\in C^0([a',b'])$ ($a'<b'$) be given, with $\int_a^b \norm{\phi(s)}\: \dd s+ \int_{a'}^{b'} \norm{\psi(s)}\: \dd s< \BanC$. Then, we have 
\begin{align*}
	\textstyle\innt_a^t \phi\cdot \innt_{a'}^{b'}\psi
	=\textstyle \exp\big(\MX_\psi(b') + \MX_{\phi,\psi}(t)\big) \qquad\quad\forall\: t\in [a,b],
\end{align*} 
provided that for $t'\in [a',b']$ and $t\in [a,b]$ we set
\begin{align*}
	\MX_\psi(t')&:=
	\textstyle\sum_{n=1}^{\infty} \frac{(-1)^{n-1}}{n}\cdot\int_{a'}^{t'} \big(\sum_{\ell=1}^{\infty} \TPOL^+_{\ell,\psi}[s]\big)^{n-1} (\psi(s))\: \dd s\\
	\MX_{\phi,\psi}(t)
	&:=\textstyle\sum_{n=1}^{\infty} \frac{(-1)^{n-1}}{n}\cdot \int_a^t \big(\big(\sum_{\ell=0}^{\infty} \TPOL^+_{\ell,\phi}[s]\big)\cp \big(\sum_{\ell=0}^{\infty} \TPOL^+_{\ell,\psi}[b']\big)-\id_\mg\big)^{n-1}(\phi(s))\: \dd s.
\end{align*}	
\end{proposition}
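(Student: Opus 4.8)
The plan is to invoke the generalized Baker--Campbell--Dynkin--Hausdorff identity of Proposition~\ref{kjfdkjfdkjfdjkfd} twice: once on $[a,b]$, for the curve $\phi$ together with the fixed element $g:=\innt_{a'}^{b'}\psi$, and once on $[a',b']$, for the curve $\psi$ together with $g:=e$. To begin with the first application, note that $G$ is $C^0$-regular (being a Banach Lie group), so $\innt_a^\bullet\phi=\EV(\phi)\in C^1([a,b],G)$ and $\innt_{a'}^\bullet\psi=\EV(\psi)\in C^1([a',b'],G)$; in particular $g:=\innt_{a'}^{b'}\psi$ is a well-defined element of $G$. The hypothesis $\int_a^b\norm{\phi(s)}\:\dd s+\int_{a'}^{b'}\norm{\psi(s)}\:\dd s<\BanC$ together with~\eqref{aassasasasa} gives $(\innt_a^t\phi)\cdot g\in\OO$ for all $t\in[a,b]$, where $\OO$ is the identity neighbourhood from~\ref{jhsdjhsdjhsd1} on which $\exp$ is a diffeomorphism with $\norm{\exp^{-1}(h)}<\ln(2)$ for all $h\in\OO$. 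Hence
\begin{align*}
	\MX\colon[a,b]\ni t\mapsto\exp^{-1}\big((\innt_a^t\phi)\cdot g\big)\in\mg
\end{align*}
is of class $C^1$ (a composition of the $C^1$-curve $t\mapsto(\innt_a^t\phi)\cdot g$ with the smooth map $\exp^{-1}|_\OO$), satisfies $\norm{\MX}_\infty<\ln(2)$, and fulfils $(\innt_a^t\phi)\cdot g=\exp(\MX(t))$ for all $t\in[a,b]$. We are thus in the situation of~\ref{jhsdjhsdjhsd1} (so $\ind=\infty$), and the third identity~\eqref{kfdkjfjkfdkj2} of Proposition~\ref{kjfdkjfdkjfdjkfd} yields
\begin{align*}
	\MX(t)-\MX(a)=\sum_{n=1}^{\infty}\frac{(-1)^{n-1}}{n}\cdot\int_a^t\big(\Ad_{\innt_a^s\phi}\cp\Ad_{g}-\id_\mg\big)^{n-1}(\phi(s))\:\dd s\qquad\forall\:t\in[a,b].
\end{align*}

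I would then rewrite the operators occurring here via Corollary~\ref{ljkdskjdslkjsda}. Since submultiplicativity of $\norm{\cdot}$ makes $\mg$ asymptotic estimate (as recorded in~\ref{jhsdjhsdjhsd1}), Equation~\eqref{sdssddsds}, together with the operator-norm bound $\norm{\TPOL^+_{\ell,\chi}[s]}_{\op}\leq\frac{1}{\ell!}\big(\int_a^s\norm{\chi(\sigma)}\:\dd\sigma\big)^\ell$ supplied by its proof (here $\chi$ stands for $\phi$ on $[a,b]$, resp.\ $\psi$ on $[a',b']$, with the left endpoint of that interval in place of $a$), shows that for every $s$ the series $\Ad_{\innt_a^s\phi}=\sum_{\ell=0}^\infty\TPOL^+_{\ell,\phi}[s]$ and $\Ad_{g}=\Ad_{\innt_{a'}^{b'}\psi}=\sum_{\ell=0}^\infty\TPOL^+_{\ell,\psi}[b']$ converge in the Banach algebra $\End^{\mathrm{c}}(\mg)$. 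As composition in $\End^{\mathrm{c}}(\mg)$ is continuous, this gives $\Ad_{\innt_a^s\phi}\cp\Ad_{g}=\big(\sum_{\ell=0}^\infty\TPOL^+_{\ell,\phi}[s]\big)\cp\big(\sum_{\ell=0}^\infty\TPOL^+_{\ell,\psi}[b']\big)$ for each $s$, and inserting this into the previous display — the sum over $n$ being left untouched — shows $\MX(t)-\MX(a)=\MX_{\phi,\psi}(t)$.

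It remains to identify $\MX(a)$. Because $\innt_a^a\phi=e$, we have $\MX(a)=\exp^{-1}(g)=\exp^{-1}(\innt_{a'}^{b'}\psi)$. Re-running the previous two steps, now on $[a',b']$ with the curve $\psi$ and with $e$ in place of $g$: by~\eqref{aassasasasa} and $\int_{a'}^{t'}\norm{\psi(s)}\:\dd s\leq\int_{a'}^{b'}\norm{\psi(s)}\:\dd s<\BanC$ one has $\innt_{a'}^{t'}\psi\in\OO$, so $\MX_\psi\colon[a',b']\ni t'\mapsto\exp^{-1}(\innt_{a'}^{t'}\psi)$ is of class $C^1$, satisfies $\norm{\MX_\psi}_\infty<\ln(2)$ and $\innt_{a'}^{t'}\psi=\exp(\MX_\psi(t'))$, and fulfils $\MX_\psi(a')=\exp^{-1}(e)=0$. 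Identity~\eqref{kfdkjfjkfdkj2} (with $\Ad_e=\id_\mg$) then gives $\MX_\psi(t')=\sum_{n\geq1}\frac{(-1)^{n-1}}{n}\int_{a'}^{t'}\big(\Ad_{\innt_{a'}^s\psi}-\id_\mg\big)^{n-1}(\psi(s))\:\dd s$, and rewriting $\Ad_{\innt_{a'}^s\psi}-\id_\mg=\sum_{\ell=1}^\infty\TPOL^+_{\ell,\psi}[s]$ (the $\ell=0$ term being $\TPOL^+_{0,\psi}[s]=\id_\mg$) identifies this with the series defining $\MX_\psi(t')$ in the statement. In particular $\MX(a)=\MX_\psi(b')$, whence $\innt_a^t\phi\cdot\innt_{a'}^{b'}\psi=\exp(\MX(t))=\exp\big(\MX_\psi(b')+\MX_{\phi,\psi}(t)\big)$ for all $t\in[a,b]$, which is the asserted identity.

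The step I expect to demand the most care is the operator-norm convergence used when passing from $\Ad_{\innt_a^s\phi},\Ad_{g}$ to the $\TPOL$-series: Corollary~\ref{ljkdskjdslkjsda} only provides pointwise convergence (uniform in $t$) of $\sum_\ell\TPOL^+_{\ell,\chi}[\cdot](X)$ for fixed $X$, whereas composing the two series requires convergence in $\End^{\mathrm{c}}(\mg)$. This is precisely where submultiplicativity enters: the estimate $\norm{\TPOL^+_{\ell,\chi}[s]}_{\op}\leq\frac{1}{\ell!}\big(\int_a^s\norm{\chi(\sigma)}\:\dd\sigma\big)^\ell$ yields $\sum_\ell\norm{\TPOL^+_{\ell,\chi}[s]}_{\op}\leq\e^{\int_a^s\norm{\chi(\sigma)}\:\dd\sigma}<\infty$, so both series converge absolutely in $\End^{\mathrm{c}}(\mg)$ and their Cauchy product represents $\Ad_{\innt_a^s\phi}\cp\Ad_{g}$. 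Convergence of the outer $\Psi$-expansion, by contrast, needs no separate argument: it is already part of Proposition~\ref{kjfdkjfdkjfdjkfd} in the situation~\ref{jhsdjhsdjhsd1}, and the hypothesis $\int_a^b\norm{\phi(s)}\:\dd s+\int_{a'}^{b'}\norm{\psi(s)}\:\dd s<\BanC$ is exactly what forces $(\innt_a^t\phi)\cdot g\in\OO$ and hence $\norm{\MX}_\infty<\ln(2)$, the convergence radius governing that expansion. Since~\eqref{kfdkjfjkfdkj2} already delivers the result as a series over $n$ of integrals, no further interchange of $\sum_n$ with $\int_a^t(\cdot)\:\dd s$ is required.
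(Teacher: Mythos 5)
Your proposal is correct and follows essentially the same route as the paper: two applications of Proposition \ref{kjfdkjfdkjfdjkfd} (Equation \eqref{kfdkjfjkfdkj2}), once on $[a',b']$ with $g=e$ to identify $\exp^{-1}(\innt_{a'}^{\bullet}\psi)$ with $\MX_\psi$, and once on $[a,b]$ with $g=\innt_{a'}^{b'}\psi$ to identify $\MX(\cdot)-\MX(a)$ with $\MX_{\phi,\psi}$, followed by rewriting the adjoint operators via Corollary \ref{ljkdskjdslkjsda}. Your extra remark on operator-norm (rather than merely pointwise) convergence of the $\TPOL$-series, needed to justify composing the two series, is a welcome point of care that the paper leaves implicit.
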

\begin{proof}
By definition, there exist open intervals $I,I'\subseteq \RR$ with $[a,b]\subseteq I$ and $[a',b']\subseteq I'$, as well as $\mu\in C^1(I,G)$ and  $\nu\in C^1(I',G)$, with $\Der(\mu)|_{[a,b]}=\phi$ and $\Der(\nu)|_{[a',b']}=\psi$. We make the following modifications to these curves:
\begingroup
\setlength{\leftmargini}{12pt}{
\begin{itemize}
\item
	We replace $\mu$ by $\mu\cdot \mu(a)^{-1}$ to ensure $\mu|_{[a,b]}=\innt_{a}^\bullet\phi$.
\item
	We replace $\nu$ by $\nu\cdot \nu(a')^{-1}$ to ensure $\nu|_{[a',b']}=\innt_{a'}^\bullet\psi$.
\item
	We define $\alpha:=\mu\cdot \nu(b')$, and observe $\alpha|_{[a,b]}=\innt_a^\bullet\phi\cdot \innt_{a'}^{b'}\psi$.
		\vspace{2pt} 
\item
	We shrink $I$ around $[a,b]$ and $I'$ around $[a',b']$, to ensure $\mu(I)\cdot \nu(I')\subseteq \OO$ (recall \eqref{aassasasasa}).
\end{itemize}}
\endgroup
\noindent
Then, the maps 
\begin{align*}
	\MX\colon [a,b]\ni t\mapsto \exp^{-1}(\alpha(t))\in \mg \qquad\quad\text{and}\qquad\quad \MX'\colon [a',b']\ni t'\mapsto \exp^{-1}(\nu(t'))\in \mg
\end{align*}
are defined and of class $C^1$, with 
\begin{align*}
	\textstyle\MX(a)=\MX'(b'),\qquad \MX'(a')=0,\qquad\innt_{a'}^{\bullet}\psi=\exp\cp\he\MX',\qquad\innt_a^\bullet\phi\cdot \innt_{a'}^{b'}\psi=\exp\cp\he \MX.
\end{align*}
We conclude the following:
\begingroup
\setlength{\leftmargini}{12pt}{
\begin{itemize}
\item
	Equation \eqref{kfdkjfjkfdkj2} in 
Proposition \ref{kjfdkjfdkjfdjkfd} for $g\equiv e$ there, together with Corollary \ref{ljkdskjdslkjsda} shows $\MX'=\MX_\psi$. 
\item
	Equation \eqref{kfdkjfjkfdkj2} in 
Proposition \ref{kjfdkjfdkjfdjkfd} for $g\equiv \innt_{a'}^{b'}\psi$ there, together with Corollary \ref{ljkdskjdslkjsda} and the previous point shows 
\begin{align*}
	\MX-\MX_\psi(b')=\MX-\MX'(b')=\MX-\MX(a)=\MX_{\phi,\psi},
\end{align*}
which proves the claim.\qedhere
\end{itemize}}
\endgroup
\end{proof}
\begin{corollary}
\label{lkjdfkfdjkdfas}
Assume that $G$ is a Banach Lie group, and let $\BanC>0$ be as in \ref{jhsdjhsdjhsd1}. Then, for  $\phi\in C^0([a,b],\mg)$ ($a<b$) with $\int_a^b \norm{\phi(s)}\: \dd s< \BanC$, we have 
\begin{align*}
	\textstyle\exp^{-1}\!\big(\innt_a^t \phi\big)
	=\textstyle  \sum_{n=1}^\infty \frac{(-1)^{n-1}}{n}\cdot\int_a^t \big(\sum_{\ell=1}^\infty \TPOL^+_{\ell,\phi}[s]\big)^{n-1} (\phi(s))\: \dd s \qquad\quad\forall\: t\in [a,b].
\end{align*}
\end{corollary}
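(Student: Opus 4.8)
The plan is to specialize Proposition \ref{lkjdfkfdjkdfaskkk} to the trivial right factor, i.e., to the degenerate case where $\psi$ is the zero curve on a trivial interval (or, more cleanly, to run the same argument directly with $g\equiv e$). First I would set up, exactly as in the proof of Proposition \ref{lkjdfkfdjkdfaskkk}, an open interval $I\supseteq[a,b]$ and a $C^1$-curve $\mu\in C^1(I,G)$ with $\Der(\mu)|_{[a,b]}=\phi$; replacing $\mu$ by $\mu\cdot\mu(a)^{-1}$ we may assume $\mu|_{[a,b]}=\innt_a^\bullet\phi$. Using the hypothesis $\int_a^b\norm{\phi(s)}\:\dd s<\BanC$ together with \eqref{aassasasasa} in situation \ref{jhsdjhsdjhsd1} (applied with the second, trivial integral equal to $0$), we get $\mu(I)\subseteq\OO$ after shrinking $I$ around $[a,b]$; hence
\begin{align*}
	\MX\colon [a,b]\ni t\mapsto \exp^{-1}\!\big(\innt_a^t\phi\big)\in\mg
\end{align*}
is well defined and of class $C^1$, and $\norm{\MX}_\infty<\ln(2)$ by the defining property of $\OO$. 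Thus we are in the situation of \ref{jhsdjhsdjhsd1}, and $(\innt_a^t\phi)\cdot e=\exp(\MX(t))$ for all $t\in[a,b]$.

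Next I would invoke Proposition \ref{kjfdkjfdkjfdjkfd} with $g\equiv e$ (so $\Ad_g=\id_\mg$) and $\ind=\infty$. Equation \eqref{kfdkjfjkfdkj2} then reads
\begin{align*}
	\textstyle\MX(t)-\MX(a)=\sum_{n=1}^\infty\frac{(-1)^{n-1}}{n}\cdot\int_a^t\big(\Ad_{\innt_a^s\phi}-\id_\mg\big)^{n-1}(\phi(s))\:\dd s,
\end{align*}
and since $\innt_a^a\phi=e$ we have $\MX(a)=\exp^{-1}(e)=0$. It remains only to rewrite $\Ad_{\innt_a^s\phi}$ in terms of the iterated-integral operators $\TPOL^+_{\ell,\phi}[s]$. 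For this I would apply Corollary \ref{ljkdskjdslkjsda} (with the \AAE\ $\AES=\mg$, which is an \AAE\ in the Banach submultiplicative setting \ref{jhsdjhsdjhsd1}) to $\psi\equiv\phi|_{[a,s]}\in\DIDE_{[a,s]}$, obtaining $\Add^+_{\phi}[X](s)=\Ad_{\innt_a^s\phi}(X)=\sum_{\ell=0}^\infty\TPOL^+_{\ell,\phi}[X](s)$; equivalently $\Ad_{\innt_a^s\phi}=\sum_{\ell=0}^\infty\TPOL^+_{\ell,\phi}[s]$ as maps $\mg\to\mg$, so that $\Ad_{\innt_a^s\phi}-\id_\mg=\sum_{\ell=1}^\infty\TPOL^+_{\ell,\phi}[s]$. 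Substituting this into the displayed formula yields exactly the asserted identity.

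The only genuinely non-routine point is making sure the operator-series manipulation is legitimate — i.e., that one may write $\big(\Ad_{\innt_a^s\phi}-\id_\mg\big)^{n-1}=\big(\sum_{\ell=1}^\infty\TPOL^+_{\ell,\phi}[s]\big)^{n-1}$ under the integral and that the resulting double series still converges and integrates termwise. In the Banach submultiplicative setting this is exactly the content of the bookkeeping already carried out in Proposition \ref{kjfdkjfdkjfdjkfd}, situation \ref{jhsdjhsdjhsd1}: the norm estimate \eqref{kdsoidsoioidsdsdssd} (with $g\equiv e$, so $\MX(s)$ in place of a general exponent) shows $\norm{(\Ad_{\innt_a^s\phi}-\id_\mg)^k(\phi(s))}\le(\e^{\norm{\MX}_\infty}-1)^k\norm{\phi}_\infty<\norm{\phi}_\infty$, which gives absolute and uniform convergence of the $n$-series and justifies interchanging $\sum_n$ with $\int_a^t$; the inner identification of $\Ad_{\innt_a^s\phi}$ with $\sum_{\ell\ge0}\TPOL^+_{\ell,\phi}[s]$ is the uniformly convergent expansion from Corollary \ref{ljkdskjdslkjsda}. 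So the corollary follows by combining these two previously established facts, and essentially no new estimate is needed.
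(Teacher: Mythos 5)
Your argument is correct and is essentially the paper's own: the paper proves the corollary by the one-line reduction ``apply Proposition \ref{lkjdfkfdjkdfaskkk} with $\psi=0$'', and your proof simply unfolds the proof of that proposition in the special case $g\equiv e$ (same use of \eqref{aassasasasa}, of Equation \eqref{kfdkjfjkfdkj2} in Proposition \ref{kjfdkjfdkjfdjkfd}, and of Corollary \ref{ljkdskjdslkjsda} to identify $\Ad_{\innt_a^s\phi}-\id_\mg$ with $\sum_{\ell\geq 1}\TPOL^+_{\ell,\phi}[s]$). No gap; the only difference is that you inline the reduction rather than citing it.
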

\begin{proof}
	Apply Proposition \ref{lkjdfkfdjkdfaskkk} with $\psi=0$.
\end{proof}
\begin{example}[The BCDH Formula]
\label{lkfdjhdfkdffdhdf}
Assume that we are in the situation of Proposition \ref{kjfdkjfdkjfdjkfd}, with 
\begin{align}
\label{fdhjfdhjfdhjfdfdfd}
	[a,b]=[0,1],\qquad\phi=-\mathcal{C}_Y|_{[0,1]},\qquad
	g=\exp(-X),\qquad
	\MX(0)=-X
\end{align}
for certain $X,Y\in \mg$ and $\MX\in C^1([0,1],\mg)$. We obtain the well-known\footnote{For the finite-dimensional case, confer, e.g., Proposition 3.4.4 in \cite{HIGN},  with $\Psi\equiv \wt{\Psi}$ there.}  integral representation of the Baker-Campbell-Dynkin-Hausdorff formula:
\begin{align}
\label{fdhjfdkjfdkjfdkjfdkjfdjkfd}
	\textstyle\exp(X)\cdot \exp(t\cdot Y)= \exp\big(X + \int_0^t  \wt{\Psi}(\Ad_{\exp(X)}\cp \Ad_{\exp(s\cdot Y)})(Y)\:\dd s\big)\qquad\quad\forall\: t\in [0,1].
\end{align}
\begin{proof}[Proof of Equation \eqref{fdhjfdkjfdkjfdkjfdkjfdjkfd}]
By \eqref{odaidaooipidadais}, we have $\textstyle\innt_0^t \phi=\exp(-t\cdot Y)$ for each $t\in [0,1]$, hence 
\begin{align*}
	\textstyle\exp(X)\cdot \exp(t\cdot Y)=([\innt_0^t\phi]\cdot g)^{-1}\stackrel{\eqref{vnbvcnbvcbnvcvcvcvc}}{=}\exp(\MX(t))^{-1}=\exp(-\MX(t)).
\end{align*}
Then, \eqref{kfdkjfjkfdkj3} in Proposition \ref{kjfdkjfdkjfdjkfd} shows 
\begin{align*}
	-(\MX(t)- \MX(0))\textstyle=  - \int_0^t  \wt{\Psi}\big(\Ad_{g^{-1}}\cp \Ad_{[\innt_0^s\phi]^{-1}}\big)(-Y)\:\dd s
	= \int_0^t  \wt{\Psi}\big(\Ad_{\exp(X)}\cp \Ad_{\exp(s\cdot Y)}\big)(Y)\:\dd s
\end{align*}
for each $t\in [0,1]$, from which the claim follows. 
\end{proof}
\noindent
For instance, the assumptions in \eqref{fdhjfdhjfdhjfdfdfd} are fulfilled in the following situations:
\begingroup
\setlength{\leftmargini}{12pt}
\begin{itemize}
\item
$G$ is a Banach Lie group, with $\norm{X}+\norm{Y}<\BanC$. Indeed, by \ref{jhsdjhsdjhsd1}, then there exists $\MX\in C^1([0,1],\mg)$ with $\MX(0)=-X$, such that \eqref{vnbvcnbvcbnvcvcvcvc} holds for $\phi$ and $g$ as in \eqref{fdhjfdhjfdhjfdfdfd}.
\item
$G$ is weakly $C^k$-regular for $k\in \NN\cup\{\lip,\infty\}$, and $\{X,Y\}$ is a \NIL{q} for some $q\geq 2$. Indeed, we will reconsider this situation in 
Corollary \ref{kjdskjdsjkjdskjdsiuewiuiuweiuewuiew} in Sect.\ \ref{dsdsdsddssddsds}. There, we construct some $\MX\in C^1([0,1],\mg)$  with $\MX(0)=-X$, such that \eqref{vnbvcnbvcbnvcvcvcvc} holds for $\phi$ and $g$ as in \eqref{fdhjfdhjfdhjfdfdfd}. \hspace*{\fill}\qed
\end{itemize}
\endgroup
\end{example}

\subsection{The Product Integral of Nilpotent Curves}
\label{dsdsdsddssddsds}
Assume that $G$ is weakly $C^k$-regular for $k\in \NN\cup \{\lip,\infty\}$, and let $\phi\in C^k([a,b],\mg)$ for $a<b$ as well as $\psi\in C^k([a',b'],\mg)$ for $a'<b'$ be given.
\begingroup
\setlength{\leftmargini}{12pt}
\begin{itemize}
\item
	If $\im[\psi]$ is a \NIL{q} for some $q\geq 2$, then for each $t\in [a',b']$ we set
\begin{align}
\label{hjfdhjfdhjfdd1}
\begin{split}
	\MX_\psi(t):=&\: \textstyle\sum_{n=1}^{q-1} \frac{(-1)^{n-1}}{n}\cdot\int_{a'}^t \big(\Ad_{\innt_{a'}^{s}\psi}-\id_\mg\big)^{n-1} (\psi(s))\: \dd s\\
	=&\:\textstyle\sum_{n=1}^{q-1} \frac{(-1)^{n-1}}{n}\cdot\int_{a'}^t \big(\sum_{\ell=1}^{q-2} \TPOL^+_{\ell,\psi}[s]\big)^{n-1} (\psi(s))\: \dd s.\hspace{102pt}
\end{split}
\end{align}	
The second line is due to \eqref{pofdpofdpofdsddsdsfd} and Example \ref{dsddsdsds}.
\item
	If $\im[\phi]\cup \im[\psi]$ is a \NIL{q} for some $q\geq 2$, then for each $t\in [a,b]$ we set
\begin{align}
\label{hjfdhjfdhjfdd2}
\begin{split}
	\textstyle\MX_{\phi,\psi}(t)
	:=&\:\textstyle\sum_{n=1}^{q-1} \frac{(-1)^{n-1}}{n}\cdot \int_a^t \big(\Ad_{\innt_a^s\phi}\cp \Ad_{\innt_{a'}^{b'}\psi}-\id_\mg \big)^{n-1}(\phi(s))\: \dd s\\
	=&\:\textstyle\sum_{n=1}^{q-1} \frac{(-1)^{n-1}}{n}\cdot \int_a^t \big(\big(\sum_{\ell=0}^{q-2} \TPOL^+_{\ell,\phi}[s]\big)\cp \big(\sum_{\ell=0}^{q-2} \TPOL^+_{\ell,\psi}[b']\big)-\id_\mg\big)^{n-1}(\phi(s))\: \dd s.
\end{split}
\end{align}	
The second line is due to \eqref{pofdpofdpofdsddsdsfd} and Example \ref{dsddsdsds}.
\end{itemize}
\endgroup
\noindent
Notably, both maps take values in $\mg$ by Lemma \ref{Adlip}, and are thus of class $C^1$ by \eqref{oidsoidoisdoidsoioidsiods}. 
In this section, we combine Proposition \ref{kjfdkjfdkjfdjkfd} with  the integral transformation introduced in Sect.\ \ref{kjdskjsdkjdskjs} to prove the following statement.
\begin{theorem}
\label{kfdhjfdhjfdfdfdfd}
	Assume that $G$ is weakly $C^k$-regular for $k\in \NN\cup\{\lip,\infty\}$, and let $\phi\in C^k([a,b],\mg)$, $\psi\in C^k([a',b'],\mg)$ be given, such that $\NILS:=\im[\phi]\cup\im[\psi]$ is a \NIL{q} for some $q\geq 2$. Then, 
\begin{align*}
	\textstyle\innt_a^t \phi\cdot \innt_{a'}^{b'}\psi
	=\textstyle \exp\big(\MX_\psi(b') + \MX_{\phi,\psi}(t)\big) \qquad\quad\forall\: t\in [a,b].
\end{align*} 
\end{theorem}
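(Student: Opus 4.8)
The plan is to reduce the statement to the existence of a single $C^1$-curve $\MX$ and then read off the formula from the generalized BCDH formula, Proposition~\ref{kjfdkjfdkjfdjkfd}. Write $g:=\innt_{a'}^{b'}\psi$ and $V:=\CGen_1(\NILS)$; by Remark~\ref{dsdsdsdsdsv}.\ref{dsdsdsdsdsv2}, $V$ is again a \NIL{q}, and it is a closed linear subspace of $\mg$. Note first that $C^k$-semiregularity forces $C^\infty$-semiregularity (because $C^\infty([a,b],\mg)\subseteq C^k([a,b],\mg)$), so weak $C^k$-regularity implies weak $C^\infty$-regularity; in particular the first alternative of situation~\ref{jhsdjhsdjhsd2} is automatically available. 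I claim it suffices to construct $\MX\in C^1([a,b],\mg)$ with $\im[\MX]\subseteq\dom[\exp]$, with $\im[\MX]\cup\im[\dot\MX]$ a \NIL{q}, with $\MX(a)=\MX_\psi(b')$, and with $(\innt_a^t\phi)\cdot g=\exp(\MX(t))$ for all $t\in[a,b]$. Indeed, granting this, Proposition~\ref{kjfdkjfdkjfdjkfd} applies in situation~\ref{jhsdjhsdjhsd2} with this $\MX$, with $\phi$, and with this $g$, and its third identity~\eqref{kfdkjfjkfdkj2} together with the definition~\eqref{hjfdhjfdhjfdd2} and $\MX_{\phi,\psi}(a)=0$ gives $\MX(t)-\MX(a)=\MX_{\phi,\psi}(t)$; combined with $\MX(a)=\MX_\psi(b')$ this is exactly the asserted formula.

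\textbf{Step 1 (the factor $g$).} Apply Lemma~\ref{mnsdmnsdmnsdmndsmnsd} to $\psi$ on $[a',b']$ (its image lies in the \NIL{q} $\NILS$): $\TMAP^{q-1}(\psi|_{[a',\tau]})$ is a constant curve $\mathcal{C}_{\MX_\psi(\tau)}|_{[0,1]}$, the map $\tau\mapsto\MX_\psi(\tau)$ is of class $C^1$ with $\im[\MX_\psi]\cup\im[\dot\MX_\psi]$ a \NIL{q}, and iterating the transformation identity~\eqref{kjdskjdskjkjds} gives $\innt_{a'}^\tau\psi=\innt_0^1\TMAP^{q-1}(\psi|_{[a',\tau]})=\exp(\MX_\psi(\tau))$; in particular $g=\exp(\MX_\psi(b'))$ and $\MX_\psi(b')\in\dom[\exp]\cap V$. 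To see that this constant $\MX_\psi$ is the one in~\eqref{hjfdhjfdhjfdd1}: at $\tau=a'$ the product integral is trivial, so $\TMAP(\psi|_{[a',a']})=0$ by~\eqref{dsdsdsdcxcxcxcxcxcxcxsds} and $\MX_\psi(a')=0$, which matches the right-hand side of~\eqref{hjfdhjfdhjfdd1} at $a'$; and applying Proposition~\ref{kjfdkjfdkjfdjkfd} to $\psi$ with $g\equiv e$ in situation~\ref{jhsdjhsdjhsd2}, its identity~\eqref{kfdkjfjkfdkj2} identifies the increment $\MX_\psi(\tau)-\MX_\psi(a')$ with the sum in~\eqref{hjfdhjfdhjfdd1}.

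\textbf{Step 2 (the curve $\MX$).} For $t\in[a,b]$ set $\rho_t:=\TMAP(\phi|_{[a,t]})\star\mathcal{C}_{\MX_\psi(b')}|_{[0,1]}$, where $\star$ is the group operation on $\DIDE^\infty_{[0,1]}$ from Remark~\ref{dsdsdsdsjkjkkjkjkjjkkjkj}, i.e.\ $\rho_t=\TMAP(\phi|_{[a,t]})+\Ad_{\innt_0^\bullet\TMAP(\phi|_{[a,t]})}(\mathcal{C}_{\MX_\psi(b')}|_{[0,1]})$. By Proposition~\ref{fdfdfd} one has $\TMAP(\phi|_{[a,t]})\in C^\infty([0,1],\mg)$ and $u\mapsto\innt_0^u\TMAP(\phi|_{[a,t]})=\innt_a^t u\cdot\phi$ is smooth, so $\rho_t\in C^\infty([0,1],\mg)$ by Lemma~\ref{Adlip}; and by Example~\ref{dsddsdsds} every value $\Ad_{\innt_s^t u\cdot\phi}(\phi(s))$ resp.\ $\Ad_{\innt_a^t u\cdot\phi}(\MX_\psi(b'))$ is a finite sum of nested brackets of elements of $\NILS\cup\{\MX_\psi(b')\}\subseteq V$, hence lies in $V$, so $\im[\rho_t]\subseteq V$ using that the Riemann integral preserves $V$ (Remark~\ref{dsdsdsdsdsv}.\ref{dsdsdsdsdsv3}). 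Multiplicativity of the product integral (Property~\ref{kdsasaasassaas}), Proposition~\ref{fdfdfd}, and $g=\exp(\MX_\psi(b'))$ give $\innt_0^1\rho_t=(\innt_0^1\TMAP(\phi|_{[a,t]}))\cdot(\innt_0^1\mathcal{C}_{\MX_\psi(b')}|_{[0,1]})=(\innt_a^t\phi)\cdot g$. Now apply Lemma~\ref{mnsdmnsdmnsdmndsmnsd} (to $\rho_t$, an $\im\subseteq V$ curve, $V$ a \NIL{q}): $\TMAP^{q-1}(\rho_t)$ is a constant curve $\mathcal{C}_{\MX(t)}|_{[0,1]}$ with $\MX(t)\in V$; iterating~\eqref{kjdskjdskjkjds} yields $(\innt_a^t\phi)\cdot g=\innt_0^1\rho_t=\innt_0^1\TMAP^{q-1}(\rho_t)=\exp(\MX(t))$ and $\MX(t)\in\dom[\exp]$ (since $\mathcal{C}_{\MX(t)}|_{[0,1]}=\TMAP^{q-1}(\rho_t)\in\DIDE_{[0,1]}$). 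That $t\mapsto\MX(t)$ is of class $C^1$ follows from the parameter-dependent differentiation results used to prove Lemma~\ref{mnsdmnsdmnsdmndsmnsd} (Theorem~\ref{ofdpofdpofdpofdpofd}, Corollary~\ref{lkjjslkjdlkjdskjldskjs}, Lemma~\ref{lkjfdlkjfdkjf}), $\rho_t$ depending on $t$ only through the integration limit $t$ and the smooth family $u\mapsto\innt_a^t u\cdot\phi$; and $\dot\MX(t)\in V$ because $V$ is closed and $\MX(t)\in V$, so $\im[\MX]\cup\im[\dot\MX]$ is a \NIL{q}. Finally, at $t=a$ we have $\TMAP(\phi|_{[a,a]})=0$ by~\eqref{dsdsdsdcxcxcxcxcxcxcxsds}, hence $\rho_a=\mathcal{C}_{\MX_\psi(b')}|_{[0,1]}$; since $\Ad_{\exp(r\cdot\MX_\psi(b'))}(\MX_\psi(b'))=\MX_\psi(b')$ (only the $\ell=0$ term survives in the expansion of Example~\ref{dsddsdsds}), $\TMAP$ fixes this constant curve and $\MX(a)=\MX_\psi(b')$. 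This produces the curve required by the Claim, completing the proof.

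The technical heart is Step 2, namely the content imported from Lemma~\ref{mnsdmnsdmnsdmndsmnsd}: applying $\TMAP$ to a curve with image in a \NIL{q} yields a polynomial curve whose coefficients are graded by the filtration $\CGen_\bullet$, so that — using $\CGen_q(\NILS)=\{0\}$ (Remark~\ref{dsdsdsdsdsv}.\ref{dsdsdsdsdsv1}) and the bracket-grading $\bil{\CGen_m}{\CGen_n}\subseteq\CGen_{m+n}$ (Remark~\ref{dsdsdsdsdsv}.\ref{dsdsdsdsdsv4}) — after $q-1$ iterations the curve is constant, and that this constant varies $C^1$-differentiably with the parameter $t$. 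The latter differentiability is precisely the input that Proposition~\ref{kjfdkjfdkjfdjkfd} demands, and it is the delicate step; everything else (the group-theoretic rearrangement realising $(\innt_a^t\phi)\cdot g$ as a single product integral $\innt_0^1\rho_t$, and the tracking of images inside $\CGen_1(\NILS)$) is bookkeeping with the already-established properties of the product integral, of $\TMAP$, and of the adjoint expansion of Example~\ref{dsddsdsds}.
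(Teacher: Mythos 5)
Your overall architecture is sound and genuinely different from the paper's: the paper reparametrizes $\psi$ and $\phi$ by bump functions onto $[0,1]$ and $[1,2]$, concatenates them into a single $C^k$-curve $\chi$ on $[0,2]$ with $\innt_0^t\chi=[\innt_a^{\varrho_2(t)}\phi]\cdot[\innt_{a'}^{b'}\psi]$, applies Lemma~\ref{mnsdmnsdmnsdmndsmnsd} \emph{once} to $\chi$ to obtain a single $C^1$-curve $\tilde\MX$ with $\innt_0^\bullet\chi=\exp\circ\tilde\MX$, and then reads off both constants from Proposition~\ref{kjfdkjfdkjfdjkfd} applied separately on $[0,1]$ (with $g=e$) and on $[1,2]$ (with $g=\innt_0^1\chi$), using the substitution formula to undo the reparametrization. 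You instead keep $\phi$ and $\psi$ separate and realise $(\innt_a^t\phi)\cdot g$ as $\innt_0^1\rho_t$ for $\rho_t=\TMAP(\phi|_{[a,t]})\star\mathcal{C}_{\MX_\psi(b')}|_{[0,1]}$. Your Step~1, the reduction to Proposition~\ref{kjfdkjfdkjfdjkfd}, and the image bookkeeping in Step~2 are all correct.

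The genuine gap is the $C^1$-dependence of $t\mapsto\MX(t)=\expalinv(\TMAP^{q-1}(\rho_t))$. Lemma~\ref{mnsdmnsdmnsdmndsmnsd} establishes $C^1$-dependence \emph{only} on the restriction endpoint $\tau$ of a fixed curve, i.e.\ for families of the form $\TMAP^{q-1}(\phi|_{[a,\tau]})$; its proof is an induction over the classes $\mathcal{H}_\ell(\NILS)$, $\mathcal{K}_\ell(\NILS)$ whose defining conditions~\ref{cond1} and~\ref{cond2} encode precisely how the iterated integrals react to moving that endpoint. Your family $\rho_t$ is not of this form: $t$ enters through the upper limit of the integral defining $\TMAP(\phi|_{[a,t]})$, through the inner product integrals $\innt_s^t u\cdot\phi$, \emph{and} through the extra adjoint term $\Ad_{\innt_a^t u\cdot\phi}(\MX_\psi(b'))$ coming from the $\star$-operation. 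To justify your claim you would have to redo the entire induction of Sect.~\ref{llkdsljdsdsuidsiudsoiudsods} for this modified parameter dependence (verifying the analogue of Condition~\ref{cond2}.b) at every stage of the $q-1$ iterations of $\TMAP$), which the cited results (Theorem~\ref{ofdpofdpofdpofdpofd}, Corollary~\ref{lkjjslkjdlkjdskjldskjs}, Lemma~\ref{lkjfdlkjfdkjf}) do not do for you as stated. Note also that you cannot recover $C^1$-ness of $\MX$ from $\exp\circ\MX=(\innt_a^\bullet\phi)\cdot g$ being $C^1$, since $\exp$ need not be a local diffeomorphism here. The paper's concatenation trick exists exactly to place the whole problem into the one parameter format that Lemma~\ref{mnsdmnsdmnsdmndsmnsd} handles; the cleanest repair of your argument is to replace $\mathcal{C}_{\MX_\psi(b')}$ by $\TMAP$ of a reparametrized copy of $\psi$ and concatenate — at which point you have reproduced the paper's proof.
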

\noindent
The proof of Theorem \ref{kfdhjfdhjfdfdfdfd} is given in Sect.\ \ref{dszudszidsudsiuiudsidsd}. We now first want to discuss some consequences.

\subsubsection{Some Consequences}
\label{kjdkjsjksddxccxocoioicxoiiooicxiocx}
Setting $\psi=0$ in Theorem \ref{kfdhjfdhjfdfdfdfd}, we immediately obtain the following statement.
\begin{corollary}
\label{dchdfhdfjhdf}
	Assume that $G$ is weakly $C^k$-regular for $k\in \NN\cup\{\lip,\infty\}$, and let $\phi\in C^k([a,b],\mg)$ be given such that $\im[\phi]$ is a \NIL{q} for some $q\geq 2$. Then, we have 
\begin{align*}
	\textstyle\innt_a^t \phi
	=\textstyle \exp\big(\sum_{n=1}^{q-1} \frac{(-1)^{n-1}}{n}\cdot\int_a^t \big(\sum_{\ell=1}^{q-2} \TPOL^+_{\ell,\phi}[s]\big)^{n-1} (\phi(s))\: \dd s\big) \qquad\quad\forall\: t\in [a,b].
\end{align*}
\end{corollary}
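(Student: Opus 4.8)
The corollary is an immediate specialization of Theorem \ref{kfdhjfdhjfdfdfdfd}, so the plan is simply to check that the theorem applies and that the extra term drops out. First I would pick any $a'<b'$ and set $\psi:=0\in C^k([a',b'],\mg)$, so that $\im[\psi]=\{0\}$. Then $\NILS:=\im[\phi]\cup\im[\psi]=\im[\phi]$, which is a \NIL{q} by hypothesis; hence the assumptions of Theorem \ref{kfdhjfdhjfdfdfdfd} are satisfied, and we obtain
\begin{align*}
	\textstyle\innt_a^t \phi\cdot \innt_{a'}^{b'}\psi
	=\textstyle \exp\big(\MX_\psi(b') + \MX_{\phi,\psi}(t)\big) \qquad\quad\forall\: t\in [a,b].
\end{align*}

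\textbf{Simplifying the left-hand side and the correction terms.} Since $\psi=0$, the constant curve $0$ is the identity of the group $\DIDE^k_{[a',b']}$ (recall \eqref{kdsasaasassaasa0}), so $\innt_{a'}^{b'}\psi=e$ and the left-hand side is just $\innt_a^t\phi$. For the right-hand side I would inspect the definitions \eqref{hjfdhjfdhjfdd1} and \eqref{hjfdhjfdhjfdd2}. In \eqref{hjfdhjfdhjfdd1} the integrand contains the factor $\psi(s)=0$, so $\MX_\psi(t)\equiv 0$; in particular $\MX_\psi(b')=0$. In \eqref{hjfdhjfdhjfdd2}, again $\phi(s)$ is hit from the left by $0$ only through the outermost factor; more precisely, looking at the second line of \eqref{hjfdhjfdhjfdd2}, the map $\sum_{\ell=0}^{q-2}\TPOL^+_{\ell,\psi}[b']$ equals $\TPOL^+_{0,\psi}[b']=\id_\mg$ because $\TPOL^+_{\ell,\psi}[b']$ for $\ell\geq 1$ involves $\bilbr{\psi(\cdot)}=\bilbr{0}=0$. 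Hence $\big(\sum_{\ell=0}^{q-2}\TPOL^+_{\ell,\phi}[s]\big)\cp\big(\sum_{\ell=0}^{q-2}\TPOL^+_{\ell,\psi}[b']\big)=\sum_{\ell=0}^{q-2}\TPOL^+_{\ell,\phi}[s]$, and therefore
\begin{align*}
	\textstyle\MX_{\phi,\psi}(t)=\sum_{n=1}^{q-1}\frac{(-1)^{n-1}}{n}\cdot\int_a^t\big(\sum_{\ell=1}^{q-2}\TPOL^+_{\ell,\phi}[s]\big)^{n-1}(\phi(s))\:\dd s,
\end{align*}
where I have split off the $\ell=0$ term $\id_\mg$ inside the $n$-th summand exactly as in passing from the first to the second line of \eqref{hjfdhjfdhjfdd2}. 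Equivalently one can use the first line of \eqref{hjfdhjfdhjfdd2} together with Example \ref{dsddsdsds} to see $\Ad_{\innt_a^s\phi}\cp\Ad_{\innt_{a'}^{b'}\psi}=\Ad_{\innt_a^s\phi}\cp\Ad_e=\Ad_{\innt_a^s\phi}$.

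\textbf{Conclusion.} Substituting $\MX_\psi(b')=0$ and the simplified $\MX_{\phi,\psi}$ into the displayed identity from the theorem yields
\begin{align*}
	\textstyle\innt_a^t \phi=\exp\big(\sum_{n=1}^{q-1}\frac{(-1)^{n-1}}{n}\cdot\int_a^t\big(\sum_{\ell=1}^{q-2}\TPOL^+_{\ell,\phi}[s]\big)^{n-1}(\phi(s))\:\dd s\big)\qquad\quad\forall\: t\in[a,b],
\end{align*}
which is the claim. There is no real obstacle here; the only point requiring (minor) care is the bookkeeping that the composition of the two truncated series collapses to the single series in $\phi$ when $\psi=0$, and that the $\ell=0$ term of $\sum_{\ell=0}^{q-2}\TPOL^+_{\ell,\phi}[s]$ is precisely $\id_\mg$, so that subtracting $\id_\mg$ before raising to the $(n-1)$-st power leaves exactly $\sum_{\ell=1}^{q-2}\TPOL^+_{\ell,\phi}[s]$.
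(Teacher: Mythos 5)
Your proposal is correct and is exactly the paper's argument: the paper derives Corollary \ref{dchdfhdfjhdf} by "setting $\psi=0$ in Theorem \ref{kfdhjfdhjfdfdfdfd}", and your verification that $\innt_{a'}^{b'}0=e$, $\MX_\psi\equiv 0$, and $\MX_{\phi,0}$ collapses to the stated series (since $\sum_{\ell=0}^{q-2}\TPOL^+_{\ell,0}[b']=\id_\mg$) is precisely the routine bookkeeping the paper leaves implicit. (The only cosmetic point: $\im[\phi]\cup\im[0]=\im[\phi]\cup\{0\}$, which is still a \NIL{q}, so the hypothesis of the theorem indeed holds.)
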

\begin{example}
\label{kfdkjfdkjfkjd}
Let $G$ be abelian, as well as weakly $C^k$-regular for some $k\in \NN\cup\{\lip,\infty\}$. Then, $\mg$ is a \NIL{2}, and Corollary \ref{dchdfhdfjhdf} recovers formula \eqref{ddsnmvcnmvcnmvcnm} in Example \ref{kfdkjfdkjfkjda}.  
\hspace*{\fill}\qed
\end{example}
\noindent
Moreover, combining Theorem \ref{kfdhjfdhjfdfdfdfd} with Example \ref{lkfdjhdfkdffdhdf}, we obtain the BCDH formula:
\begin{corollary}
\label{kjdskjdsjkjdskjdsiuewiuiuweiuewuiew}
	Assume that $G$ is weakly $C^k$-regular for $k\in \NN\cup\{\lip,\infty\}$. Let furthermore $X,Y\in\mg$ be given, such that $\{X,Y\}$ is a \NIL{q} for some $q\geq 2$. Then, we have
\begin{align*}
	\textstyle\exp(X)\cdot \exp(t\cdot Y)= \exp\big(X + \int_0^t  \wt{\Psi}\big(\Ad_{\exp(X)}\cp \Ad_{\exp(s\cdot Y)}\big)(Y)\:\dd s\big)\qquad\quad\forall\: t\in [0,1]. 
\end{align*}
\end{corollary}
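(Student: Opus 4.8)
The plan is to derive Corollary \ref{kjdskjdsjkjdskjdsiuewiuiuweiuewuiew} from Theorem \ref{kfdhjfdhjfdfdfdfd} by specializing the two curves $\phi$ and $\psi$ to constant curves on $[0,1]$, exactly in the spirit of the reduction carried out in Example \ref{lkfdjhdfkdffdhdf}. First I would set $\phi:=-\mathcal{C}_Y|_{[0,1]}$ and $\psi:=-\mathcal{C}_X|_{[0,1]}$ on $[a,b]=[a',b']=[0,1]$ (the minus signs are chosen so that the product integrals produce $\exp(-tY)$ and $\exp(-X)$, as in \eqref{odaidaooipidadais}). Since $\{X,Y\}$ is a \NIL{q}, the set $\NILS:=\im[\phi]\cup\im[\psi]=\{-X,-Y\}$ is a \NIL{q} as well (evident from \eqref{assaaas2}, as the bracket is antisymmetric and multilinear), so the hypotheses of Theorem \ref{kfdhjfdhjfdfdfdfd} are met. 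Note $G$ is $C^k$-semiregular by weak $C^k$-regularity, so $\dom[\exp]=\mg$ and $\innt_0^t\mathcal{C}_X|_{[0,1]}=\exp(tX)$ holds for all $t\in[0,1]$ by \eqref{odaidaooipidadais} and Remark \ref{sdffdffdfd}.

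Next I would evaluate the right-hand side of Theorem \ref{kfdhjfdhjfdfdfdfd}. By \ref{subst} applied to the reflection $\varrho\colon[0,1]\ni t\mapsto 1-t$ (cf.\ Example \ref{fdpofdopdpof}, \eqref{pfifpfpofdpofd}) one has $\innt_0^{1}\psi=\innt_0^1\inverse{(\mathcal{C}_X|_{[0,1]})}=[\innt_0^1\mathcal{C}_X|_{[0,1]}]^{-1}=\exp(-X)^{-1}=\exp(X)$; more carefully, I would track signs directly: $\psi=-\mathcal{C}_X|_{[0,1]}$ gives $\innt_0^t\psi=\exp(-tX)$ by \eqref{odaidaooipidadais}, hence $\innt_0^{b'}\psi=\innt_0^1\psi=\exp(-X)$, and $g:=\innt_{a'}^{b'}\psi=\exp(-X)$. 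Similarly $\innt_0^t\phi=\exp(-tY)$. Since Example \ref{lkfdjhdfkdffdhdf} has already performed precisely this computation — identifying, via \eqref{kfdkjfjkfdkj3} in Proposition \ref{kjfdkjfdkjfdjkfd} together with the setup \eqref{fdhjfdhjfdhjfdfdfd}, the exponent in $\exp(X)\exp(tY)=\exp(-\MX(t))$ as $X+\int_0^t\wt{\Psi}(\Ad_{\exp(X)}\cp\Ad_{\exp(sY)})(Y)\,\dd s$ — the real content is to observe that Theorem \ref{kfdhjfdhjfdfdfdfd} supplies exactly the missing ingredient flagged in the second bullet at the end of Example \ref{lkfdjhdfkdffdhdf}: it produces a $C^1$-curve $\MX\colon[0,1]\to\mg$ with $\MX(0)=-X$ (namely $\MX=\MX_\psi(b')+\MX_{\phi,\psi}(\cdot)$, which at $t=0$ equals $\MX_\psi(1)=\exp^{-1}(\exp(-X))=-X$ since $\MX_{\phi,\psi}(0)=0$ by the vanishing integral in \eqref{hjfdhjfdhjfdd2}) satisfying $(\innt_0^t\phi)\cdot g=\exp(\MX(t))$ for all $t\in[0,1]$, so that all hypotheses of Proposition \ref{kjfdkjfdkjfdjkfd} in situation \ref{jhsdjhsdjhsd2} hold (with $q$ the given nilpotency index, noting $\im[\MX]\cup\im[\dot\MX]$ is a \NIL{q} by Remark \ref{dsdsdsdsdsv} since $\MX$ takes values in $\CGen_1(\NILS)$, which is a \NIL{q}).

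Concretely, the steps in order: (1) fix $X,Y$ and set $\phi,\psi$ and $g$ as above, verify $\NILS$ is a \NIL{q}; (2) invoke Theorem \ref{kfdhjfdhjfdfdfdfd} to get $\innt_0^t\phi\cdot g=\exp(\MX_\psi(1)+\MX_{\phi,\psi}(t))$ and record that $\MX:=\MX_\psi(1)+\MX_{\phi,\psi}(\cdot)$ is of class $C^1$ with $\MX(0)=-X$ and image in $\CGen_1(\NILS)$; (3) rewrite the left side: $\innt_0^t\phi\cdot g=\exp(-tY)\exp(-X)=(\exp(X)\exp(tY))^{-1}$; (4) apply Proposition \ref{kjfdkjfdkjfdjkfd}, identity \eqref{kfdkjfjkfdkj3}, with this $\MX$ and $g=\exp(-X)$ (so $g^{-1}=\exp(X)$, $[\innt_0^s\phi]^{-1}=\exp(sY)$, $\phi(s)=-Y$), which yields $\MX(t)-\MX(0)=\int_0^t\wt{\Psi}(\Ad_{\exp(X)}\cp\Ad_{\exp(sY)})(-Y)\,\dd s=-\int_0^t\wt{\Psi}(\Ad_{\exp(X)}\cp\Ad_{\exp(sY)})(Y)\,\dd s$ (using linearity of $\wt{\Psi}(\xi)$ in its vector argument); (5) combine: $\exp(X)\exp(tY)=(\exp(\MX(t)))^{-1}=\exp(-\MX(t))=\exp\bigl(X+\int_0^t\wt{\Psi}(\Ad_{\exp(X)}\cp\Ad_{\exp(sY)})(Y)\,\dd s\bigr)$, which is the claim. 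The only mild subtlety — the ``main obstacle,'' such as it is — is bookkeeping the sign conventions and the reflection/inversion identities so that the exponent comes out with the correct sign; everything else is a direct quotation of Theorem \ref{kfdhjfdhjfdfdfdfd}, Proposition \ref{kjfdkjfdkjfdjkfd}, and the computation already laid out in Example \ref{lkfdjhdfkdffdhdf}, together with linearity of $\wt\Psi$ and the elementary product-integral identities \ref{pogfpogf}, \ref{subst}.
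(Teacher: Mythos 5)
Your proposal is correct and follows essentially the same route as the paper: the paper's proof likewise sets $\phi:=-\mathcal{C}_Y|_{[0,1]}$, $\psi:=-\mathcal{C}_X|_{[0,1]}$, obtains from Theorem \ref{kfdhjfdhjfdfdfdfd} the $C^1$-curve $\MX=\MX_\psi(1)+\MX_{\phi,\psi}(\cdot)$ with $\MX(0)=-X$ and $\innt_0^t\phi\cdot\innt_0^1\psi=\exp(\MX(t))$, and then cites Example \ref{lkfdjhdfkdffdhdf} for the final sign-tracking computation via \eqref{kfdkjfjkfdkj3}, which you simply carry out explicitly.
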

\begin{proof}
	Set $\phi:=-\mathcal{C}_Y|_{[0,1]}$ as well as $\psi:= -\mathcal{C}_X|_{[0,1]}$, and let $\MX_\psi,\MX_{\phi,\psi}$ be as in \eqref{hjfdhjfdhjfdd1},\eqref{hjfdhjfdhjfdd2}. Then, $\MX\colon [0,1]\ni t\mapsto  \MX_\psi(1) + \MX_{\phi,\psi}(t) \in \mg$ is of class $C^1$, with $\MX(0)=\MX_\psi(1)=-X$, as well as
	\begin{align*}
		\textstyle\innt_0^t \phi \cdot \innt_0^1\psi = \exp(\MX(t))\qquad\quad \forall\: t\in [0,1]
\end{align*}
by Theorem \ref{kfdhjfdhjfdfdfdfd}. 
The claim is thus clear from Example \ref{lkfdjhdfkdffdhdf}.	 
\end{proof}
\noindent 
Next, we define 
\begin{align*}
	\Gamma^\infty_{\mathrm{Nil}}:=\{\phi\in \DIDE^\infty_{[0,1]}\:|\: \im[\phi] \:\text{is a \NIL{q} for some}\: q\geq 2\}\qquad\text{and set}\qquad G^\infty_{\mathrm{Nil}}:=\evol_\infty(\Gamma^\infty_{\mathrm{Nil}}).
\end{align*}
We observe the following:
\begin{lemma}
\label{kjkjdskjdsjkdsdssd}
If $G$ is weakly $C^\infty$-regular, then $G^\infty_{\mathrm{Nil}}=\im[\exp]$ holds.
\end{lemma}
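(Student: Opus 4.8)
The plan is to prove the two inclusions $G^\infty_{\mathrm{Nil}}\subseteq \im[\exp]$ and $\im[\exp]\subseteq G^\infty_{\mathrm{Nil}}$ separately.

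For the inclusion $G^\infty_{\mathrm{Nil}}\subseteq \im[\exp]$, let $\phi\in \Gamma^\infty_{\mathrm{Nil}}$, so $\phi\in \DIDE^\infty_{[0,1]}$ and $\im[\phi]$ is a \NIL{q} for some $q\geq 2$. Since $G$ is weakly $C^\infty$-regular, it is in particular $C^\infty$-semiregular, so $C^\infty([0,1],\mg)\subseteq \DIDE$, and we may apply Corollary \ref{dchdfhdfjhdf} with $[a,b]=[0,1]$. Evaluating the resulting formula at $t=1$ gives
\begin{align*}
	\textstyle\evol_\infty(\phi)=\innt_0^1 \phi
	=\textstyle \exp\big(\sum_{n=1}^{q-1} \frac{(-1)^{n-1}}{n}\cdot\int_0^1 \big(\sum_{\ell=1}^{q-2} \TPOL^+_{\ell,\phi}[s]\big)^{n-1} (\phi(s))\: \dd s\big),
\end{align*}
which exhibits $\evol_\infty(\phi)$ as $\exp$ of an element of $\mg$; here one should note that the right-hand side argument of $\exp$ lies in $\mg$ (not merely in $\comp{\mg}$) by Lemma \ref{Adlip} and \eqref{oidsoidoisdoidsoioidsiods}, as already observed for $\MX_\phi$ in Sect.\ \ref{dsdsdsddssddsds}. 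Hence $\evol_\infty(\phi)\in \im[\exp]$, and since $\phi\in \Gamma^\infty_{\mathrm{Nil}}$ was arbitrary, $G^\infty_{\mathrm{Nil}}\subseteq \im[\exp]$.

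For the reverse inclusion $\im[\exp]\subseteq G^\infty_{\mathrm{Nil}}$, let $g=\exp(X)$ for some $X\in \dom[\exp]$. Consider the constant curve $\mathcal{C}_X|_{[0,1]}\in C^\infty([0,1],\mg)=C^\const([0,1],\mg)\cap C^\infty$; since $G$ is weakly $C^\infty$-regular hence $C^\infty$-semiregular, $\mathcal{C}_X|_{[0,1]}\in \DIDED_\infty=\DIDE^\infty_{[0,1]}$, and by the discussion of the exponential map (the relation $\exp(X)=\innt \mathcal{C}_X|_{[0,1]}$ in the second point after \eqref{uisdiusduiuisduisd}, together with Remark \ref{iudsouidsuiodsiuduoisd} / the fact that $X\in \dom[\exp]$) we have $\evol_\infty(\mathcal{C}_X|_{[0,1]})=\innt_0^1 \mathcal{C}_X|_{[0,1]}=\exp(X)=g$. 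It remains only to check that $\im[\mathcal{C}_X|_{[0,1]}]=\{X\}$ is a \NIL{q} for some $q\geq 2$: indeed $\bl X,X\br=[X,X]=0$ by antisymmetry of the bracket, so $\{X\}$ is a \NIL{2}. Thus $\mathcal{C}_X|_{[0,1]}\in \Gamma^\infty_{\mathrm{Nil}}$ and $g=\evol_\infty(\mathcal{C}_X|_{[0,1]})\in G^\infty_{\mathrm{Nil}}$.

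Combining the two inclusions yields $G^\infty_{\mathrm{Nil}}=\im[\exp]$. Neither direction presents a genuine obstacle: the first is a direct application of Corollary \ref{dchdfhdfjhdf} and the only point requiring a moment's care is that the exponent lies in $\mg$ rather than its completion (which is precisely the content of Lemma \ref{Adlip}, already exploited in Sect.\ \ref{dsdsdsddssddsds}); the second is essentially the observation that a singleton is always nilpotent of class $2$, so that $\exp$ factors through $\Gamma^\infty_{\mathrm{Nil}}$ via constant curves. The mild subtlety is the bookkeeping about which curve classes are subsumed in $\DIDE^\infty_{[0,1]}$ under $C^\infty$-semiregularity, but this is already settled by Remark \ref{sdffdffdfd} and the definitions in Sect.\ \ref{kfdlkfdlkfdscvpdfpofdofd}.
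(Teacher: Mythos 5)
Your proof is correct and follows essentially the same route as the paper: the inclusion $\im[\exp]\subseteq G^\infty_{\mathrm{Nil}}$ via the observation that constant curves $\mathcal{C}_X|_{[0,1]}$ with $X\in\dom[\exp]$ lie in $\Gamma^\infty_{\mathrm{Nil}}$ (a singleton being a $\mathrm{Nil}_2$-set), and the reverse inclusion as a direct application of Corollary \ref{dchdfhdfjhdf}. Your additional remarks (the exponent lying in $\mg$ rather than $\comp{\mg}$, the bookkeeping on $\DIDE^\infty_{[0,1]}$) are accurate elaborations of points the paper leaves implicit.
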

\begin{proof}
	Obviously, have $\expal(\dom[\exp])\subseteq \Gamma^\infty_{\mathrm{Nil}}$, hence $\im[\exp]\subseteq G^\infty_{\mathrm{Nil}}$. Moreover, Corollary \ref{dchdfhdfjhdf} shows $G^\infty_{\mathrm{Nil}}\subseteq\im[\exp]$, which proves the claim.	
\end{proof}
\begin{lemma}
\label{kjfdkkjfdkjfjfdjk}
Assume that $G$ is connected. Then, $\evol_\infty$ is surjective.  
\end{lemma}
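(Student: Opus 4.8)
The plan is to show that $H:=\im[\evol_\infty]$ is a \emph{subgroup} of $G$ which contains the chart neighbourhood $\U$ of $e$; connectedness of $G$ then immediately forces $H=G$. The subgroup property comes from the group structure on $\DIDE^\infty_{[0,1]}$ recorded in Remark \ref{dsdsdsdsjkjkkjkjkjjkkjkj}, while $\U\subseteq H$ is witnessed by the obvious ``straight-line'' curves in the chart, exactly as in Remark \ref{iudsouidsuiodsiuduoisd}.

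First I would observe that $\evol_\infty$ is a group homomorphism from $(\DIDE^\infty_{[0,1]},\star,\cdot^{-1},0)$ to $G$. Recall from Remark \ref{dsdsdsdsjkjkkjkjkjjkkjkj} that $(\DIDE^\infty_{[0,1]},\star,\cdot^{-1},0)$ is a group (here Lemma \ref{Adlip} is what keeps $\star$ and $\cdot^{-1}$ inside $\DIDE^\infty_{[0,1]}$). Evaluating \ref{kdsasaasassaas} and \ref{oitoioiztoiztoiztoizt} at $a=0$, $t=1$, together with the trivial identity $\innt_0^1 0=\EV(0)(1)=e$, yields
\begin{align*}
	\textstyle\innt_0^1(\phi\star\psi)=\innt_0^1\phi\cdot \innt_0^1\psi,\qquad \innt_0^1\psi^{-1}=[\innt_0^1\psi]^{-1},\qquad \innt_0^1 0=e
\end{align*}
for all $\phi,\psi\in \DIDE^\infty_{[0,1]}$, so that $H$ is a subgroup of $G$. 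Next I would check $\U\subseteq H$: given $g\in \U$, the curve $\mu_g\colon [0,1]\ni t\mapsto \chartinv(t\cdot\chart(g))$ is well defined (since $t\cdot\chart(g)\in \V$ by convexity of $\V$ and $\chart(e)=0$), of class $C^\infty$, with $\mu_g(0)=e$ and $\mu_g(1)=g$; then, exactly as in Remark \ref{iudsouidsuiodsiuduoisd}, $\phi_g:=\Der(\mu_g)\in \DIDE^\infty_{[0,1]}$ and $\evol_\infty(\phi_g)=\innt_0^1\phi_g=\EV(\phi_g)(1)=\mu_g(1)=g$. Hence $g\in H$, i.e.\ $\U\subseteq H$.

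Finally, since $\U$ is open with $e\in \U\subseteq H$ and $H$ is a subgroup, $H=\bigcup_{h\in H}h\cdot\U$ is open in $G$; being an open subgroup, $H$ is also closed (its complement is a union of cosets of $H$, hence open), so connectedness of $G$ together with $H\neq\emptyset$ gives $H=G$, which is the claim. I do not anticipate any genuine obstacle here: the only mildly delicate inputs — that $\DIDE^\infty_{[0,1]}$ is closed under $\star$ and $\cdot^{-1}$, and that $\Der(\mu_g)$ is smooth — are already supplied by Remark \ref{dsdsdsdsjkjkkjkjkjjkkjkj} and Remark \ref{iudsouidsuiodsiuduoisd}, and everything else is the classical fact that an open subgroup of a connected topological group is the whole group.
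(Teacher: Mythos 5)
Your proof is correct and uses exactly the same two ingredients as the paper's: the straight-line chart curves $t\mapsto\chartinv(t\cdot\chart(g))$ to get an identity neighbourhood into the image, and the group structure $(\DIDE^\infty_{[0,1]},\star,\cdot^{-1},0)$ of Remark \ref{dsdsdsdsjkjkkjkjkjjkkjkj} to close the image under products and inverses. The only difference is cosmetic packaging — you invoke "an open subgroup of a connected group is everything," while the paper writes $G=\bigcup_n\OO^n$ and then collapses $\innt_0^1\phi_n\cdots\innt_0^1\phi_1$ into a single iterated $\star$-product — so this is essentially the paper's argument.
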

\begin{proof}
Let $\mathcal{A}\subseteq G$ denote subgroup of all $g\in G$, such that there exist $\mu_1,\dots,\mu_n\in C^\infty([0,1],G)$ ($n\in \NN$), with $g=\mu_n(1)\cdot {\dots}\cdot \mu_1(1)$ as well as $\mu_\ell(0)=e$ for $\ell=1,\dots,n$.  
Set furthermore $\OO:= \U\cap\inv(\U)$. Since $G$ is connected, we have $G=\bigcup_{n\geq 1} \OO^n$. Since $\OO\subseteq \U\subseteq \mathcal{A}$ holds,\footnote{For $g \in \U$, consider $C^\infty([0,1],G)\ni\mu_1\colon [0,1]\ni t\mapsto \chartinv(t\cdot \chart(g))\in G$.} we obtain $G=\mathcal{A}$. Let now $g\in G$ be fixed, and choose $\mu_1,\dots,\mu_n\in C^\infty([0,1],\mg)$ with $\mu_1(0),\dots,\mu_n(0)=e$ and $g=\mu_n(1)\cdot {\dots}\cdot \mu_1(1)$. We set $\phi_\ell:=\Der(\mu_\ell)\in \DIDE^\infty_{[0,1]}$ for $\ell=1,\dots,n$, and observe
\begin{align*}
	\textstyle g=\mu_n(1)\cdot {\dots}\cdot \mu_1(1)= \innt_0^1 \phi_n \cdot {\dots}\cdot \innt_0^1 \phi_1. 
\end{align*}	
Applying \ref{kdsasaasassaas} inductively, we obtain (recall Remark \ref{dsdsdsdsjkjkkjkjkjjkkjkj}) 
\begin{align*}
	\textstyle g\textstyle\stackrel{\phantom{\ref{kdsasaasassaas}}}{=}\innt_0^1 \phi_n\cdot{\dots}\cdot \innt_0^1 \phi_1 = \innt_0^1\he \underbrace{\textstyle\phi_n\star(\phi_{n-1}\star(\dots (\phi_2\star \phi_1))\dots)}_{\in \: \DIDE^\infty_{[0,1]}}
\end{align*} 
\vspace{-17pt}

\noindent 
which proves the claim.
\end{proof}
\noindent
Combining Lemma \ref{kjkjdskjdsjkdsdssd} with Lemma \ref{kjfdkkjfdkjfjfdjk}, we obtain the following proposition.
\begin{proposition}
\label{kjkjdskjdsjkdsds}
Assume that $G$ is weakly $C^\infty$-regular and connected. If $\Gamma^\infty_{\mathrm{Nil}}= C^\infty([0,1],\mg)$ holds,  
then $\exp\colon \mg\rightarrow G$ is surjective.
\end{proposition}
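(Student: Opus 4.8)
The plan is to chain the two lemmas that precede the statement, exactly as the sentence ``Combining Lemma \ref{kjkjdskjdsjkdsdssd} with Lemma \ref{kjfdkkjfdkjfjfdjk}'' suggests. First I would recall that $G$ is assumed weakly $C^\infty$-regular and connected, and that by hypothesis $\Gamma^\infty_{\mathrm{Nil}}=C^\infty([0,1],\mg)$; in particular $G$ is $C^\infty$-semiregular, so $\DIDED_\infty=C^\infty([0,1],\mg)=\Gamma^\infty_{\mathrm{Nil}}$. Then $G^\infty_{\mathrm{Nil}}=\evol_\infty(\Gamma^\infty_{\mathrm{Nil}})=\evol_\infty(\DIDED_\infty)=\im[\evol_\infty]$.

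Next I would invoke Lemma \ref{kjfdkkjfdkjfjfdjk}: since $G$ is connected, $\evol_\infty$ is surjective, i.e. $\im[\evol_\infty]=G$. Combining the two displays gives $G^\infty_{\mathrm{Nil}}=G$. Finally, Lemma \ref{kjkjdskjdsjkdsdssd} (valid since $G$ is weakly $C^\infty$-regular) identifies $G^\infty_{\mathrm{Nil}}=\im[\exp]$. Hence $\im[\exp]=G$, which is precisely surjectivity of $\exp\colon\mg\rightarrow G$. One should note for cleanliness that weak $C^\infty$-regularity already guarantees $\mg$ is Mackey complete and that $\DIDED_\infty=C^\infty([0,1],\mg)$, so $\exp$ is defined on all of $\mg$ (its domain $\expal^{-1}(\DIDED_\const)$ contains all constant curves), making the phrase ``$\exp\colon\mg\rightarrow G$ is surjective'' meaningful.

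There is essentially no obstacle here: the proposition is a two-line corollary of the preceding results, and the only thing to be careful about is the bookkeeping equality $\Gamma^\infty_{\mathrm{Nil}}=C^\infty([0,1],\mg)=\DIDED_\infty$, which relies on the hypothesis together with $C^\infty$-semiregularity (a consequence of weak $C^\infty$-regularity via Definition \ref{tztztggfgf}). I would write the proof as: ``By hypothesis and $C^\infty$-semiregularity, $\Gamma^\infty_{\mathrm{Nil}}=C^\infty([0,1],\mg)=\DIDED_\infty$, so $G^\infty_{\mathrm{Nil}}=\im[\evol_\infty]$. Lemma \ref{kjfdkkjfdkjfjfdjk} gives $\im[\evol_\infty]=G$, and Lemma \ref{kjkjdskjdsjkdsdssd} gives $G^\infty_{\mathrm{Nil}}=\im[\exp]$. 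Hence $\im[\exp]=G$.'' The real mathematical content has already been expended in Lemma \ref{kjfdkkjfdkjfjfdjk} (the connectedness-plus-group-structure argument) and in Corollary \ref{dchdfhdfjhdf} (which underlies Lemma \ref{kjkjdskjdsjkdsdssd}), so this final step is purely organizational.
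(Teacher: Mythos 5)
Your proposal is correct and follows exactly the paper's own argument: chain Lemma \ref{kjkjdskjdsjkdsdssd} ($\im[\exp]=G^\infty_{\mathrm{Nil}}=\evol_\infty(\Gamma^\infty_{\mathrm{Nil}})$), the hypothesis $\Gamma^\infty_{\mathrm{Nil}}=C^\infty([0,1],\mg)$, and the surjectivity of $\evol_\infty$ from Lemma \ref{kjfdkkjfdkjfjfdjk}. Your extra remark that $\dom[\exp]=\mg$ under weak $C^\infty$-regularity is a harmless bookkeeping addition that the paper leaves implicit.
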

\begin{proof}
Lemma \ref{kjkjdskjdsjkdsdssd} shows $\im[\exp]=G^\infty_{\mathrm{Nil}}=\evol_\infty(\Gamma^\infty_{\mathrm{Nil}})$, with $\evol_\infty(\Gamma^\infty_{\mathrm{Nil}})=\evol_\infty(C^\infty([0,1],\mg))$ by assumption. 
Since  $\evol_\infty$ is surjective by Lemma \ref{kjfdkkjfdkjfjfdjk}, the claim follows.
\end{proof}
\noindent
Proposition \ref{kjkjdskjdsjkdsds} immediately implies the following statement.
\begin{corollary}
\label{iufdiufdiuiufd}
Assume that $G$ is weakly $C^\infty$-regular and connected. If $(\mg,\bil{\cdot}{\cdot})$ is nilpotent, then $\exp$ is surjective.
\end{corollary}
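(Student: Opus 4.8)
The plan is to deduce the statement directly from Proposition \ref{kjkjdskjdsjkdsds}, whose hypotheses are weak $C^\infty$-regularity, connectedness, and the identity $\Gamma^\infty_{\mathrm{Nil}} = C^\infty([0,1],\mg)$. The first two are given by assumption, so the only point requiring attention is the last identity; once it is in hand, Proposition \ref{kjkjdskjdsjkdsds} delivers surjectivity of $\exp\colon \mg\rightarrow G$ immediately.

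To verify $\Gamma^\infty_{\mathrm{Nil}} = C^\infty([0,1],\mg)$, I would first recall that weak $C^\infty$-regularity entails $C^\infty$-semiregularity, hence $\DIDE^\infty_{[0,1]} = \DIDED_\infty = C^\infty([0,1],\mg)$; in particular every smooth curve $\phi\colon [0,1]\rightarrow\mg$ already lies in $\DIDE^\infty_{[0,1]}$. Now assume $(\mg,\bil{\cdot}{\cdot})$ is nilpotent, say $\mg$ is a \NIL{q} for some $q\geq 2$. Then for any $\phi\in C^\infty([0,1],\mg)$ we trivially have $\im[\phi]\subseteq\mg$, so $\im[\phi]$ is a \NIL{q}; combined with the previous sentence this gives $\phi\in\Gamma^\infty_{\mathrm{Nil}}$. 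The reverse inclusion $\Gamma^\infty_{\mathrm{Nil}}\subseteq\DIDE^\infty_{[0,1]}\subseteq C^\infty([0,1],\mg)$ holds by definition, so equality follows. Applying Proposition \ref{kjkjdskjdsjkdsds} then finishes the argument.

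There is essentially no obstacle at the level of this corollary: the substantive content is already packaged into Proposition \ref{kjkjdskjdsjkdsds}, via Lemma \ref{kjkjdskjdsjkdsdssd} (which uses Corollary \ref{dchdfhdfjhdf} to exhibit every element of $G^\infty_{\mathrm{Nil}}$ as $\exp$ of an explicit Lie-algebra element) and Lemma \ref{kjfdkkjfdkjfjfdjk} (which uses connectedness together with the group structure on $\DIDE^\infty_{[0,1]}$ from Remark \ref{dsdsdsdsjkjkkjkjkjjkkjkj} to show $\evol_\infty$ is surjective). The one small point worth stating explicitly in the write-up is precisely the reduction above, namely that nilpotency of the full Lie algebra forces every admissible smooth curve to be nilpotent-valued, so that $\Gamma^\infty_{\mathrm{Nil}}$ exhausts $C^\infty([0,1],\mg)$.
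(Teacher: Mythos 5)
Your proposal is correct and follows exactly the route the paper takes: the paper's proof of this corollary is the single remark that Proposition \ref{kjkjdskjdsjkdsds} ``immediately implies'' it, and the content you spell out (nilpotency of $\mg$ makes every smooth curve nilpotent-valued, while $C^\infty$-semiregularity gives $\DIDE^\infty_{[0,1]}=C^\infty([0,1],\mg)$, so $\Gamma^\infty_{\mathrm{Nil}}=C^\infty([0,1],\mg)$) is precisely the implicit reduction. No gaps.
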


\begin{remark}
\label{dfkljdfljkdfkljdfjkl}
\begingroup
\setlength{\leftmargini}{17pt}
{
\renewcommand{\theenumi}{\emph{\arabic{enumi})}} 
\renewcommand{\labelenumi}{\theenumi}
\begin{enumerate}
\item
\label{dfkljdfljkdfkljdfjkl1}
	The statement in Corollary \ref{iufdiufdiuiufd} follows from Theorem IV.2.6 in \cite{KHN2} provided we replace ``weakly $C^\infty$-regular'' by ``$\mg$ is Mackey complete and $\exp$ is smooth''.
\item
\label{dfkljdfljkdfkljdfjkl2}
Let $\mg$ be finite-dimensional. Then, the hypothesis $\Gamma^\infty_{\mathrm{Nil}}= C^\infty([0,1],\mg)$  
in Proposition \ref{kjkjdskjdsjkdsds} 
is equivalent to nilpotency of $\mg$.

In fact, the one implication is evident. Assume thus that $\Gamma^\infty_{\mathrm{Nil}}= C^\infty([0,1],\mg)$ holds, and let $\{X_0,\dots,X_{n-1}\}\subseteq  \mg$ be a base of $\mg$. Let furthermore $\rho\colon [0,1/n]\rightarrow [0,\infty)$ be a bump function, i.e., $\rho$ is smooth with 
\begin{align*}
	\rho|_{(0,1/n)}>0\qquad\quad\text{as well as}\qquad\quad \rho^{(\ell)}(0)=0=\rho^{(\ell)}(1/n)\qquad \forall\:\ell\in \NN. 
\end{align*}
We define $\phi\colon [0,1]\rightarrow \mg$ by $\phi(0):=0$ and
\begin{align*}
	\phi|_{(\ell/n,(\ell+1)/n]}\colon (\ell/n,(\ell+1)/n]\ni t \mapsto \rho(t-\ell/n)\cdot X_\ell\in \mg\qquad\quad\forall\:  \ell=0,\dots,n-1.
\end{align*}
Then, $\phi$ is easily seen to be smooth (for details confer, e.g., Lemma 24 and Appendix B.2 in \cite{RGM}). By assumption, there exists some $q\geq 2$, such that $\NILS:=\im[\phi]$ is a \NIL{q}. Then, $\SP_1(\NILS)$ is a \NIL{q} by Remark \ref{dsdsdsdsdsv}, with $\{X_0,\dots,X_{n-1}\}\subseteq \SP_1(\NILS)$, hence $\mg=\SP_1(\NILS)$.
	\hspace*{\fill}\qed
\end{enumerate}}
\endgroup		
\end{remark}
\noindent
We finally want to mention that Proposition \ref{kjkjdskjdsjkdsds} also provides an easy   
proof of the following well-known result in the finite-dimensional context.
\begin{corollary}
\label{fdfddfdfdfd}
If $G$ is compact and connected, then $\exp$ is surjective.   
\end{corollary}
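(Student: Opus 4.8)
The plan is to deduce Corollary \ref{fdfddfdfdfd} from Proposition \ref{kjkjdskjdsjkdsds} together with the structure theory of compact connected Lie groups. First I would observe that a compact connected Lie group $G$ is automatically a Banach (indeed finite-dimensional) Lie group, hence $C^0$-regular, and therefore in particular weakly $C^\infty$-regular. Moreover $\exp\colon\mg\to G$ is globally defined and smooth. Thus the hypotheses ``weakly $C^\infty$-regular and connected'' in Proposition \ref{kjkjdskjdsjkdsds} are met, and it remains to verify $\Gamma^\infty_{\mathrm{Nil}}=C^\infty([0,1],\mg)$; equivalently, by the argument in Remark \ref{dfkljdfljkdfkljdfjkl}.\ref{dfkljdfljkdfkljdfjkl2}, it suffices to produce, for each $X\in\mg$, some $\phi\in\DIDE^\infty_{[0,1]}$ with $\im[\phi]$ a \NIL{q} for some $q\geq 2$ and with $X$ lying in the linear span of $\im[\phi]$ — but this is not literally enough, since we need \emph{every} smooth curve to be nilpotent-imaged, not just to hit a spanning set.

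Since that naive approach fails, the real plan is different: I would use the standard fact that in a compact Lie algebra $\mg$ every element lies in a Cartan subalgebra $\mt$, that $\mt$ is abelian, and that $\exp(\mt)$ is the maximal torus $T$; since $G$ is compact and connected, the conjugates of a fixed maximal torus cover $G$, i.e.\ $G=\bigcup_{g\in G} gTg^{-1}$. Now given $h\in G$, write $h=gtg^{-1}$ with $t=\exp(Y)\in T$, $Y\in\mt$. Then $h=\exp(\Ad_g(Y))$ because $\exp$ intertwines $\Ad_g$ and $\conj_g$ (Equation \eqref{dsdssddsdsdsdxcxcxgfggfgfs}, or directly $g\exp(Y)g^{-1}=\exp(\Ad_g(Y))$ from \ref{homtausch}). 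Hence $h\in\im[\exp]$, so $\exp$ is surjective. This is the classical proof, and it only needs compactness–connectedness plus the conjugacy of maximal tori; it does not actually invoke Proposition \ref{kjkjdskjdsjkdsds}.

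To honor the remark in the excerpt that Proposition \ref{kjkjdskjdsjkdsds} ``provides an easy proof,'' I would instead argue as follows. Since $G$ is compact and connected, its Lie algebra $\mg$ decomposes as $\mg=\mathfrak{z}\oplus[\mg,\mg]$ with $\mathfrak{z}$ the (abelian) center and $[\mg,\mg]$ semisimple of compact type; the corresponding analytic subgroups are a torus and a compact semisimple group. On the torus factor $\exp$ is visibly surjective, and $[\mg,\mg]$ is \emph{not} nilpotent, so Proposition \ref{kjkjdskjdsjkdsds} cannot be applied to $\mg$ as a whole. Therefore the honest statement is that one should split: $G\cong (Z\times G_{\mathrm{ss}})/\Delta$ for a finite central subgroup $\Delta$, with $Z$ a torus and $G_{\mathrm{ss}}$ compact semisimple simply connected; surjectivity of $\exp$ for $G$ follows once we know it for $Z$ (trivial) and for $G_{\mathrm{ss}}$ (classical, via maximal tori), because the central quotient map is a surjective homomorphism and $\exp$ commutes with homomorphisms by \ref{homtausch}.

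\textbf{Main obstacle.} The essential difficulty is that nilpotency of $\mg$ is simply \emph{false} for most compact groups (e.g.\ $SU(2)$), so Proposition \ref{kjkjdskjdsjkdsds} is \emph{not} the right tool for $\exp$-surjectivity here; the surjectivity of $\exp$ on compact connected groups genuinely rests on the conjugacy theorem for maximal tori, which is a nontrivial input from compact Lie theory not contained in this paper. The cleanest resolution I would adopt is: cite the maximal-torus conjugacy theorem (or equivalently the surjectivity of $\exp$ for compact semisimple groups), and then observe that the only ``new'' ingredient needed to pass to general compact connected $G$ — namely that $\exp$ commutes with the covering homomorphism $Z\times G_{\mathrm{ss}}\to G$ and with the conjugations $\conj_g$ — is supplied by \ref{homtausch} and \eqref{dsdssddsdsdsdxcxcxgfggfgfs}. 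So the ``easy proof'' referenced should be read as: \emph{compact connected $\Rightarrow$ covered by conjugates of a torus $\Rightarrow$ $\exp$ surjective}, with the Lie-group-homomorphism compatibility of $\exp$ being the elementary fact that the preceding sections make transparent.
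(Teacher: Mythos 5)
Your final argument is correct and is essentially the paper's proof: fix a maximal torus $T$, note $\exp$ is surjective onto $T$, and invoke the classical fact that a compact connected group is covered by (conjugates of) its maximal tori. The only cosmetic difference is that the paper obtains surjectivity of $\exp|_\mt\colon\mt\to T$ by applying Corollary \ref{iufdiufdiuiufd} to the abelian group $T$ (so the ``easy proof via Proposition \ref{kjkjdskjdsjkdsds}'' refers to the torus, not to $G$ itself — which resolves the obstacle you raise about $\mg$ not being nilpotent), and it does not need your detour through the decomposition $G\cong(Z\times G_{\mathrm{ss}})/\Delta$.
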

\begin{proof}
	Let $T\subseteq G$ be a maximal torus with Lie algebra $\mt$. Since $T$ is abelian,  Corollary \ref{iufdiufdiuiufd} shows that $\exp|_\mt\colon \mt\rightarrow T$ is surjective. Since $G$ is covered by maximal tori (confer, e.g., Theorem 12.2.2.(iii) in \cite{HIGN}), the claim follows.
\end{proof}
\begin{remark}
The proof of Corollary \ref{fdfddfdfdfd} can be simplified. Specifically, surjectivity of 
$\exp|_\mt\colon \mt\rightarrow T$ also follows from Lemma \ref{kjfdkkjfdkjfjfdjk} and formula \eqref{ddsnmvcnmvcnmvcnm} (as $T$ is abelian). Notably, in finite dimensions, \eqref{ddsnmvcnmvcnmvcnm} follows easily from smoothness of the exponential map: 
\begin{proof}[Proof of \eqref{ddsnmvcnmvcnmvcnm} in the finite-dimensional case]
Let $G$ be abelian and finite-dimensional, and let $\phi\in C^0([a,b],\mg)$ be given. Let $\psi\in C^0([a,b+\varepsilon),\mg)$ for $\varepsilon>0$ be a continuous extension of $\phi$. Then, 
\begin{align*}
	\textstyle\mu\colon [a,b+\varepsilon)\ni t \mapsto \exp\big(\!\int_a^t\psi(s)\:\dd s\big)\in G
\end{align*}
is of class $C^1$ as $\exp$ is smooth. Since $G$ is abelian, we have 
\begin{align*}
	\textstyle(\mu(t+h)\cdot \mu(t)^{-1})=\exp\big(\int_t^{t+h}\psi(s) \:\dd s\big)\qquad\quad\forall\: t\in[a,b], \: 0<h<\varepsilon.
\end{align*} 
Since $\dd_e\exp=\id_\mg$ holds, we obtain $\Der(\mu|_{[a,b]})=\phi$, hence 
\begin{align*}
	\textstyle\innt_a^\bullet \phi=\mu|_{[a,b]}=\exp(\int_a^\bullet\phi(s)\:\dd s)
\end{align*}
by injectivity of $\Der|_{C^1_*([a,b],G)}$.
\end{proof}
\end{remark}

\subsubsection{The Proof of Theorem \ref{kfdhjfdhjfdfdfdfd}}
\label{dszudszidsudsiuiudsidsd}
In this subsection, we prove Theorem \ref{kfdhjfdhjfdfdfdfd}. Our argumentation is based on the following (straightforward but technical) lemma that is proven in Sect.\ \ref{llkdsljdsdsuidsiudsoiudsods}.  
\begin{lemma}
\label{mnsdmnsdmnsdmndsmnsd}
Let $G$ be weakly $C^k$-regular for $k\in \NN\cup\{\lip,\infty\}$, and let $\phi\in C^k([a,b],\mg)$ be given such that $\im[\phi]$ is a \NIL{q} for some $q\geq 2$. Then, the following assertions hold:
\begingroup
\setlength{\leftmargini}{12pt}{
\begin{itemize}
\item
	We have $\TMAP^{q-1}(\phi|_{[a,\tau]})\in C^\const([0,1],\mg)$ for each $\tau\in [a,b]$, hence the map (recall \eqref{uisdiusduiuisduisd}) 
	\begin{align*}
		\MX\colon [a,b]\ni \tau\mapsto  \expalinv(\TMAP^q(\phi|_{[a,\tau]}))\in \mg	
	\end{align*}
	is defined.
\item
	We have $\MX\in C^1([a,b],\mg)$, and $\im[\MX]\cup \im[\dot\MX]$ is a \NIL{q}. 
\end{itemize}}
\endgroup
\noindent
In particular, iterated application of Proposition \ref{fdfdfd} yields
\begin{align*}
 	\textstyle\innt_a^\tau \phi = \innt_0^1 \TMAP^{q-1}(\phi|_{[a,\tau]})=\exp(\MX(\tau))\qquad\quad\forall\: \tau\in [a,b].
\end{align*}
\end{lemma}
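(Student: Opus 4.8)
The plan is to reduce the whole lemma to a single structural claim: writing $\NILS:=\im[\phi]$, for every $a<\tau\le b$ the curve $\TMAP^{q-1}(\phi|_{[a,\tau]})$ is constant. Granting this, call its value $X_\tau$; since $\mathcal{C}_{X_\tau}|_{[0,1]}=\TMAP^{q-1}(\phi|_{[a,\tau]})\in C^\infty([0,1],\mg)\subseteq\DIDE$ (by $C^k$-semiregularity), we get $X_\tau\in\dom[\exp]$, and because $\bil{X_\tau}{X_\tau}=0$ the constant $X_\tau$ solves the Lax equation with data $(\mathcal{C}_{X_\tau},X_\tau)$, so by Lemma~\ref{hjhjfdhjfdhjfd} (and $\RR\cdot\dom[\exp]\subseteq\dom[\exp]$) one checks $\TMAP^q(\phi|_{[a,\tau]})=\TMAP(\mathcal{C}_{X_\tau}|_{[0,1]})=\mathcal{C}_{X_\tau}|_{[0,1]}$; hence $\MX(\tau)=\expalinv(\TMAP^q(\phi|_{[a,\tau]}))=X_\tau$ is well defined, with $\MX(a)=0$ by the convention \eqref{dsdsdsdsdcxcxcxcxcxcxcxsds}. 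The ``in particular'' then follows by iterating Proposition~\ref{fdfdfd}: $\innt_0^1\TMAP^j(\phi|_{[a,\tau]})=\innt_a^\tau\phi$ for every $j\ge1$ (each further application of $\TMAP$ leaves $\innt_0^1$ invariant), so at $j=q-1$ the left side is $\innt_0^1\mathcal{C}_{\MX(\tau)}|_{[0,1]}=\exp(\MX(\tau))$. Throughout I abbreviate $\CGen_n:=\CGen_n(\NILS)$ and use that each $\CGen_n$ ($n\ge1$) is a closed subspace of $\mg$ containing $\NILS$ that is itself a \NIL{q} (Remark~\ref{dsdsdsdsdsv}.\ref{dsdsdsdsdsv2}), that $\bil{\CGen_m}{\CGen_n}\subseteq\CGen_{m+n}$ (Remark~\ref{dsdsdsdsdsv}.\ref{dsdsdsdsdsv4}), that $\CGen_n=\{0\}$ for $n\ge q$ (Remark~\ref{dsdsdsdsdsv}.\ref{dsdsdsdsdsv1}) and $\CGen_{n+\ell}\subseteq\CGen_n$, and that Riemann integration preserves each $\CGen_n$ (Remark~\ref{dsdsdsdsdsv}.\ref{dsdsdsdsdsv3}).

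First I would expand $\TMAP$ on nilpotently-valued curves. All values of $t\cdot\phi|_{[s,b]}$ lie in the \NIL{q} $\CGen_1$, so Corollary~\ref{ljkdskjdslkjsda} (in the nilpotent form of Example~\ref{dsddsdsds}) gives $\Ad_{[\innt_s^b t\cdot\phi]}(\phi(s))=\sum_{\ell=0}^{q-2}t^\ell\int_s^b\dd s_1\cdots\int_s^{s_{\ell-1}}\dd s_\ell\,(\com{\phi(s_1)}\cp\dots\cp\com{\phi(s_\ell)})(\phi(s))$; integrating in $s$ yields $\TMAP(\phi)(t)=\sum_{\ell=0}^{q-2}t^\ell W^{(1)}_\ell$ with $W^{(1)}_0=\int_a^b\phi$ and, for $\ell\ge1$, $W^{(1)}_\ell$ an $(\ell+1)$-fold simplex integral of $(\com{\phi(s_1)}\cp\dots\cp\com{\phi(s_\ell)})(\phi(s))$; that integrand is an $(\ell+1)$-fold nested bracket of $\NILS$-values, hence lies in $\CGen_{\ell+1}$, so $W^{(1)}_\ell\in\CGen_{1+\ell}$. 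By Proposition~\ref{fdfdfd}, $\TMAP(\phi)\in C^\infty([0,1],\mg)\subseteq\REGC$, so $\TMAP$ may be iterated; and the identical computation applied to any \emph{polynomial} curve $w=\sum_\ell u^\ell W_\ell$ on $[0,1]$ with values in $\CGen_1$ shows $\TMAP(w)(t)=\sum_m t^m W'_m$ with $W'_0=\int_0^1 w=\sum_\ell\tfrac1{\ell+1}W_\ell$ and, for $m\ge1$, $W'_m$ a finite $\RR$-linear combination of nested brackets $(\com{W_{\ell_1}}\cp\dots\cp\com{W_{\ell_m}})(W_{\ell_0})$ over multi-indices $(\ell_0,\dots,\ell_m)$, the coefficients being integrals of monomials over simplices.

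The heart is the induction: $\TMAP^j(\phi)(t)=\sum_{\ell\ge0}t^\ell W^{(j)}_\ell$ (a polynomial) with $W^{(j)}_0\in\CGen_1$ and $W^{(j)}_\ell\in\CGen_{j+\ell}$ for all $\ell\ge1$. The case $j=1$ is the paragraph above. For $j\to j+1$, apply the polynomial formula to $w=\TMAP^j(\phi)$ (allowed since $\im[w]\subseteq\CGen_1$ by the hypothesis): then $W^{(j+1)}_0=W^{(j)}_0+\sum_{\ell\ge1}\tfrac1{\ell+1}W^{(j)}_\ell\in\CGen_1$, and for $m\ge1$ I partition the sum defining $W^{(j+1)}_m$ by the number $p\ge0$ of indices among $\ell_0,\dots,\ell_m$ that are $\ge1$. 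If $p=0$ the term is $\com{W^{(j)}_0}^{m}(W^{(j)}_0)=\com{W^{(j)}_0}^{m-1}(\bil{W^{(j)}_0}{W^{(j)}_0})=0$. If $p\ge1$, summing the generation contributions ($\ge1$ for each factor, and $\ge j+\ell_i\ge j+1$ for each factor with $\ell_i\ge1$) the nested bracket lies in $\CGen_g$ with $g\ge(m+1-p)+p(j+1)=m+1+pj\ge m+1+j$; integrating over the simplex keeps it in $\CGen_{(j+1)+m}$, proving the claim. Since $\CGen_n=\{0\}$ for $n\ge q$, at $j=q-1$ every $W^{(q-1)}_\ell$ with $\ell\ge1$ vanishes, so $\TMAP^{q-1}(\phi|_{[a,\tau]})=\mathcal{C}_{X_\tau}|_{[0,1]}$ with $X_\tau=W^{(q-1)}_0\in\CGen_1$ — the structural claim. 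I expect this step to be the main obstacle: the reduction to polynomial curves makes it tractable, but the generation estimate $g\ge m+1+pj$ must be verified for all multi-indices and the vanishing all-$W_0$ term correctly isolated, and this is the genuinely technical part.

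Finally, $\MX\in C^1([a,b],\mg)$ and the nilpotency statement: unrolling $W^{(j+1)}_0=W^{(j)}_0+\sum_{\ell\ge1}\tfrac1{\ell+1}W^{(j)}_\ell$ over $j=1,\dots,q-2$ exhibits $\MX(\tau)=W^{(q-1)}_0(\phi|_{[a,\tau]})$ as $\int_a^\tau\phi(s)\,\dd s$ plus a finite sum of iterated Riemann integrals over $[a,\tau]$ of nested brackets of $\phi$-values; as functions of the upper endpoint $\tau$, each such summand is of class $C^1$ (differentiate the outermost integral, cf.\ \eqref{oidsoidoisdoidsoioidsiods} and Corollary~\ref{lkjjslkjdlkjdskjldskjs}), so $\MX\in C^1([a,b],\mg)$; moreover every summand beyond $\int_a^\tau\phi$ takes values in $\CGen_2$, and so does its $\tau$-derivative, whence $\MX(\tau)\in\CGen_1$ and $\dot\MX(\tau)\in\phi(\tau)+\CGen_2\subseteq\CGen_1$. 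Thus $\im[\MX]\cup\im[\dot\MX]\subseteq\CGen_1$, which is a \NIL{q}, and any subset of a \NIL{q} is again one. Combined with the reduction of the first paragraph this establishes Lemma~\ref{mnsdmnsdmnsdmndsmnsd}.
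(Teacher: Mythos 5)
Your proposal is correct and rests on the same two pillars as the paper's own proof: the terminating expansion of $\Ad_{[\innt_s^b t\cdot\phi]}$ along \NIL{q}-valued curves (the paper's \eqref{kjfdfdkjfdkjlkjfdkjfdfd}), and the observation that when $\TMAP^{j+1}$ is expanded multilinearly in the pieces of $\TMAP^{j}$, the all-constant bracket $\com{W_0}^{m}(W_0)$ vanishes by antisymmetry while every remaining term gains at least one generation, so that everything non-constant dies in $\CGen_q(\NILS)=\{0\}$ at step $q-1$. The difference lies in the bookkeeping. The paper splits $\TMAP^{\ell}(\phi|_{[a,\tau]})(t)=\MXP_\ell(\tau)+\alpha_\ell(\tau,t)$ and records only $\im[\alpha_\ell]\subseteq\CGen_{\ell+1}(\NILS)$ (Condition \ref{cond1}), but it carries the $\tau$-regularity \emph{inside} the induction: Condition \ref{cond2} demands a continuous map $L(\tau,t)$ giving a uniform-in-$t$ first-order expansion of $\tilde\alpha_\ell$ in $\tau$, which is exactly what makes $\MXP_{\ell+1}=\MXP_\ell+\int_0^1\alpha_\ell(\cdot,s)\,\dd s$ of class $C^1$ with $\im[\dot\MXP_{\ell+1}]\subseteq\CGen_1(\NILS)$ drop out of \eqref{kjfdkjfdkjfdkjfdfd}. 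You instead prove a finer, purely algebraic grading ($W^{(j)}_m\in\CGen_{j+m}$ for $m\ge 1$, sharper than the paper's $\CGen_{j+1}$ and equally sufficient) and defer all $\tau$-analysis to the final unrolling. That last step is where your write-up is loosest: $W^{(q-1)}_0(\tau)$ is not a sum of iterated integrals of nested brackets of $\phi$-values, but of \emph{brackets of} such integrals, nested up to $q-2$ deep; the $C^1$-claim therefore needs a structural induction (brackets of $C^1$ curves are $C^1$ by continuity and bilinearity), and its base case $W^{(1)}_\ell(\tau)=\int_a^\tau\dd s\int_s^\tau\dd s_1\cdots$ has $\tau$ sitting in two integral limits simultaneously, so one must pass through a two-parameter map $G(\tau_1,\tau_2)$ and a chain rule --- this is precisely the role of the paper's $\tilde\beta(\tau_1,\tau_2,t)$ and its difference-quotient estimates. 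The conclusion and the asserted derivative gradings do follow, but this is the part that has to be written out with the same care the paper invests in Conditions \ref{cond2}.a)--b).
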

\begin{proof}
Confer Sect.\ \ref{llkdsljdsdsuidsiudsoiudsods}.
\end{proof}
\begin{remark}
\label{iudsiudsiudsiuiudsnmewnmew}
	Let $\psi\in C^0([x,y],\mg)$ with $x<y$ be given. Let $\varrho\colon [x',y']\rightarrow [x,y]$ ($x'<y'$) be of class $C^1$, with $\varrho(x')=x$ and $\varrho(y')=y$. We set $\phi:= \dot\varrho\cdot (\psi\cp\varrho)$, and obtain inductively from \eqref{substitRI} that
\begin{align*}
	 \textstyle\int_{x'}^{y'}\TPOL^+_{\ell,\phi}[s](\phi(s))\: \dd s=\int_{x}^{y}\TPOL^+_{\ell,\psi}[s](\psi(s))\: \dd s 
\end{align*}	
	  holds for each $\ell\in \NN$.
\hspace*{\fill}\qed
\end{remark}
\noindent
We are ready for the proof of Theorem \ref{kfdhjfdhjfdfdfdfd}.
\begin{proof}[Proof of Theorem \ref{kfdhjfdhjfdfdfdfd}]
Let $\varrho_1\colon [0,1]\rightarrow [a',b']$ and $\varrho_2\colon [1,2]\rightarrow [a,b]$ be smooth, such that $\rho_{j}:=\dot\varrho_j$ is a bump function for $j=1,2$, i.e., we have\footnote{Confer, e.g., Sect.\ 4.3 in \cite{RGM} for explicit constructions of such bump functions.} 
\begin{align*}
	\rho_{1}|_{(0,1)},\rho_2|_{(1,2)}>0\qquad\quad \text{as well as}\qquad\quad  
	\rho_1^{(\ell)}(0),\rho_1^{(\ell)}(1),\rho_2^{(\ell)}(1),\rho_2^{(\ell)}(2)=0\qquad\forall\: \ell\in \NN.
\end{align*}	
	 Define $\chi\colon [0,1]\rightarrow \mg$ by
	\begin{align*}
		\chi|_{[0,1]}:= \rho_1\cdot (\psi\cp \varrho_1)\qquad\quad\text{as well as}\qquad\quad \chi|_{(1,2]}:= (\rho_2\cdot (\phi\cp \varrho_2))|_{(1,2]}.  	
	\end{align*}
	Then, $\chi$ is easily seen to be of class $C^k$ (confer, e.g., Lemma 24 and Appendix B.2 in \cite{RGM}), with $\im[\chi]\subseteq \SP_1(\NILS)\subseteq \CGen_1(\NILS)$ (a \NIL{q} by Remark \ref{dsdsdsdsdsv}). Then, for $\tau\in [0,1]$ and $t\in [1,2]$, we have  
\begin{align}
\label{nmcnmnmnmvcvcllklkdlkdskls2}
\begin{split}
\textstyle \innt_0^\tau\chi&\textstyle \stackrel{\phantom{\ref{pogfpogf}}}{=} \innt_0^\tau \rho_1\cdot (\psi\cp \varrho_1)\stackrel{\ref{subst}}{=}\innt_{a'}^{\varrho_1(\tau)}\psi,\\
\textstyle \innt_1^t\chi&\textstyle \stackrel{\phantom{\ref{pogfpogf}}}{=} \innt_1^t \hspace{0.4pt}\rho_2\cdot (\hspace{1pt}\phi\cp \varrho_2)\stackrel{\ref{subst}}{=}\innt_{a}^{\varrho_2(t)}\hspace{1pt}\phi,\\
	\textstyle\innt_0^t\chi&\textstyle\stackrel{\ref{pogfpogf}}{=} [\innt_1^t\chi|_{[1,2]}]\cdot [\innt_0^1 \chi|_{[0,1]}]
	=[\innt_1^t\rho_2\cdot (\phi\cp \varrho_2)]\cdot [\innt_0^1 \rho_1\cdot (\psi\cp \varrho_1)]
	=[\innt_a^{\varrho_2(t)} \phi]\cdot [\innt_{a'}^{b'}\psi].\hspace{15pt}
\end{split}
\end{align}	
Now, Lemma \ref{mnsdmnsdmnsdmndsmnsd} provides some $\tilde{\MX}\in C^1([0,2],\mg)$ with $\innt_0^\bullet \chi =\exp\cp\he\tilde{\MX}$, such that $\im[\tilde{\MX}]\cup \im[\dot{\tilde{\MX}}]$ is a \NIL{q}. In particular, we have
\begin{align}
\label{dspodsodspodspodsopdspodsds}
	\textstyle\innt_0^t\chi= \exp(\tilde{\MX}|_{[0,1]}(t))\qquad\quad\forall\: t\in [0,1];
\end{align}  
and obtain from Proposition \ref{kjfdkjfdkjfdjkfd} (specifically from  \eqref{kfdkjfjkfdkj2} with $g\equiv e$ and $\MX\equiv \tilde{\MX}|_{[0,1]}$ there) and \eqref{sdssddsds} in Corollary \ref{ljkdskjdslkjsda} (first step), as well as Remark \ref{iudsiudsiudsiuiudsnmewnmew} (second step) that
\begin{align}
\label{oifdoifdoifdoifdoifdoifdoif}
\begin{split}
	\textstyle\tilde{\MX}|_{[0,1]}(1)&\textstyle\stackrel{\phantom{\eqref{hjfdhjfdhjfdd1}}}{=} \sum_{n=1}^{q-1} \frac{(-1)^{n-1}}{n}\cdot\int_{0}^1 \big(\sum_{\ell=1}^{q-2} \TPOL^+_{\ell,\chi}[s]\big)^{n-1} (\chi(s))\: \dd s\\
	&\stackrel{\phantom{\eqref{hjfdhjfdhjfdd1}}}{=}\textstyle\sum_{n=1}^{q-1} \frac{(-1)^{n-1}}{n}\cdot\int_{a'}^{b'} \big(\sum_{\ell=1}^{q-2} \TPOL^+_{\ell,\psi}[s]\big)^{n-1} (\psi(s))\: \dd s\\
	&\stackrel{\eqref{hjfdhjfdhjfdd1}}{=}\MX_\psi(b') 
\end{split}
\end{align}
holds. Moreover, \eqref{kfdkjfjkfdkj2} in Proposition \ref{kjfdkjfdkjfdjkfd} applied with $g\equiv \innt_0^1 \chi$, $\phi\equiv \chi|_{[1,2]}$, $\MX\equiv \tilde{\MX}|_{[1,2]}$ there (first step) yields for $t\in [1,2]$ that 
\begin{align*}
	\textstyle \tilde{\MX}(t)&\stackrel{\phantom{\eqref{oifdoifdoifdoifdoifdoifdoif}}}{=}\hspace{8.5pt}\textstyle \tilde{\MX}(1)+   \sum_{n=1}^{q-1} \frac{(-1)^{n-1}}{n}\cdot \int_1^t \big(\Ad_{\innt_1^s\chi}\cp \Ad_{\innt_0^1 \chi}-\id_\mg \big)^{n-1}(\chi(s))\: \dd s\\
	&\stackrel{\eqref{oifdoifdoifdoifdoifdoifdoif}}{=}\textstyle \MX_\psi(b')+   \sum_{n=1}^{q-1} \frac{(-1)^{n-1}}{n}\cdot \int_1^t \big(\Ad_{\innt_1^s\chi}\cp \Ad_{\innt_0^1 \chi}-\id_\mg \big)^{n-1}(\chi(s))\: \dd s\\
	&\stackrel{\eqref{nmcnmnmnmvcvcllklkdlkdskls2}}{=}\textstyle \MX_\psi(b')+   \sum_{n=1}^{q-1} \frac{(-1)^{n-1}}{n}\cdot \int_1^t \rho_2(s)\cdot\big(\Ad_{\innt_a^{\varrho_2(s)}\phi}\cp \Ad_{\innt_{a'}^{b'} \psi}-\id_\mg \big)^{n-1}(\phi(\varrho_2(s)))\: \dd s\\
	&\stackrel{\eqref{substitRI}}{=}\textstyle \MX_\psi(b')+   \sum_{n=1}^{q-1} \frac{(-1)^{n-1}}{n}\cdot \int_1^{\varrho_2(t)} \big(\Ad_{\innt_a^{s}\phi}\cp \Ad_{\innt_{a'}^{b'} \psi}-\id_\mg \big)^{n-1}(\phi(s))\: \dd s\\
	&\textstyle\stackrel{\eqref{hjfdhjfdhjfdd2}}{=}\MX_\psi(b')+  \MX_{\phi,\psi}(\varrho_2(t)).
\end{align*}
We obtain 
\begin{align*}
	\textstyle\innt_a^{\varrho_2(t)} \phi\cdot \innt_{a'}^{b'}\psi\stackrel{\eqref{nmcnmnmnmvcvcllklkdlkdskls2}}{=}\innt_0^t\chi \stackrel{\eqref{dspodsodspodspodsopdspodsds}}{=} \exp(\tilde{\MX}(t))=\exp(\MX_\psi(b')+  \MX_{\phi,\psi}(\varrho_2(t)))\qquad\quad\forall\: t\in [1,2].
\end{align*}
From this, the claim follows, because 
 $\varrho_2$ is strictly increasing on $(1,2)$, thus strictly increasing on $[1,2]$, hence bijective.
\end{proof}

\subsubsection{The Proof of Lemma \ref{mnsdmnsdmnsdmndsmnsd}}
\label{llkdsljdsdsuidsiudsoiudsods}
Let $\NILS\subseteq \mg$ and $\ell \geq 1$ be given: 
\vspace{6pt}

\noindent
We let   
$\mathcal{H}_\ell(\NILS)$ denote the set of all maps $\alpha\colon [a,b]\times[0,1]\rightarrow \mg$ that admit the following properties:
\begingroup
\setlength{\leftmargini}{17pt}
{
\renewcommand{\theenumi}{\textbf{\small\arabic{enumi})}} 
\renewcommand{\labelenumi}{\theenumi}
\begin{enumerate}
\item
\label{cond1}
	We have $\im[\alpha]\subseteq \CGen_{\ell+1}(\NILS)$.  
\item
\label{cond2}
	There exists a continuous map $L\colon [a,b]\times [0,1]\rightarrow  \CGen_1(\NILS)\subseteq \mg$, as well as a map $\tilde{\alpha}\colon I\times [0,1]\rightarrow \mg$ for $I\subseteq \RR$ an open interval with $[a,b]\subseteq I$,  
	 such that the following conditions hold:  
\vspace{-3pt}	
\begingroup
\setlength{\leftmarginii}{16pt}
{
\renewcommand{\theenumii}{\alph{enumii})} 
\renewcommand{\labelenumii}{\theenumii}
\begin{enumerate}
\item
\label{dsdsdds0}
$\tilde{\alpha}$ is continuous, with   
$\tilde{\alpha}|_{[a,b]\times[0,1]}=\alpha$. 
\item
\label{dsdsdds1}
To each $\tau\in [a,b]$, there exist $\delta_{\tau}>0$ as well as $\epsilon_\tau\colon (-\delta_\tau,\delta_\tau)\times [0,1]\rightarrow \mg$ continuous with $\lim_{h\rightarrow 0}\pp_\infty(\epsilon_\tau(h,\cdot))=0$ for each $\pp\in \Sem{E}$, such that
	\begin{align*}
		\tilde{\alpha}(\tau+h,t)= \tilde{\alpha}(\tau,t) + h\cdot L(\tau,t) + h\cdot \epsilon_\tau(h,t)\qquad\quad (h,t)\in (-\delta_\tau,\delta_\tau)\times [0,1].
	\end{align*}
 In particular, $\Phi\equiv \tilde{\alpha}$ fulfills the assumptions in Theorem \ref{ofdpofdpofdpofdpofd} for $(G,\cdot)\equiv (\mg,+)$ there (recall Remark \ref{kjfdkjdkjfdkj}), i.e., we have
\begin{align}
\label{kjfdkjfdkjfdkjfdfd}
 	\textstyle\frac{\dd}{\dd h}\big|_{h=0}\int_0^1\alpha(\tau+h,s)\:\dd s= \int_0^1 L(\tau,s)\:\dd s\qquad\quad\forall\: \tau\in [a,b].
\end{align}  
\end{enumerate}}	
\endgroup
\end{enumerate}}
\endgroup
\noindent
We let $\mathcal{K}_{\ell}(\NILS)$ denote the set of all (continuous) maps of the form 
\begin{align*}
	\textstyle \eta\colon [a,b]\times[0,1]\ni (\tau,t)\mapsto \MXP(\tau) + \alpha(\tau,t)\in \mg, 
\end{align*}
for $\MXP\in C^1([a,b],\mg)$ with $\im[\MXP]\cup\im[\dot\MXP]\subseteq \CGen_1(\NILS)$, and $\alpha\in \mathcal{H}_\ell(\NILS)$ (in particular, $\im[\eta]\subseteq \CGen_1(\NILS)$). 
\vspace{6pt}

\noindent
We recall Convention \eqref{dsdsdsdcxcxcxcxcxcxcxsds}, and are ready for the proof of Lemma \ref{mnsdmnsdmnsdmndsmnsd}.

\begin{proof}[Proof of Lemma \ref{mnsdmnsdmnsdmndsmnsd}]
Set $\NILS:=\im[\phi]$. 
We prove by induction that 
\begin{align}
\label{lkjfdjlkfdlkjfdfdfd}
	\mathcal{K}_{\ell}(\NILS)\ni \eta_\ell\colon [a,b]\times[0,1]\ni(\tau,t)\mapsto \TMAP^\ell\big(\phi|_{[a,\tau]}\big)(t)\in \mg\qquad\quad\forall\:   \ell=1,\dots,q-1
\end{align} 
holds. Evaluating this for $\ell=q-1$, the claim follows because: 
\begingroup
\setlength{\leftmargini}{12pt}{
\begin{itemize}
\item
	 For $\alpha\in \mathcal{H}_{q-1}(\NILS)$, we have $\im[\alpha]\subseteq \CGen_q(\NILS)=\{0\}$ by Condition \ref{cond1} and Remark \ref{dsdsdsdsdsv}.\ref{dsdsdsdsdsv1}. 
\item
	$\CGen_1(\NILS)$ is a \NIL{q} by Remark \ref{dsdsdsdsdsv}.\ref{dsdsdsdsdsv2}.
\end{itemize}}
\endgroup
\noindent
To perform the induction, we fix $\varepsilon>0$ and $\tilde{\phi}\in C^k([a-\varepsilon,b+\varepsilon],\mg)$ with $\tilde{\phi}|_{[a,b]}=\phi$. We observe that for each $\psi\in \DIDE_{[a,b]}$ with $\im[\psi]$ a \NIL{q}, and $\lambda,t\in [a,b]$ we have (confer Example \ref{dsddsdsds})
\begin{align}
\label{kjfdfdkjfdkjlkjfdkjfdfd}
\begin{split}
	\textstyle\TMAP\big(\psi|_{[a,\lambda]}\big)(t)
	&=\textstyle\int_a^\lambda \Ad_{\innt_s^{\lambda}  t\cdot \psi}(\psi(s)) \:\dd s\\
	&\textstyle = \int_a^\lambda \Add^+_{t\cdot\psi|_{[s,\lambda]}}[\psi(s)](\lambda) \:\dd s 
	\\
	&\textstyle  = \int_a^\lambda \psi(s) \:\dd s + 
	\sum_{p=1}^{q-2}\int_a^\lambda \TPOL^+_{p,t\cdot\psi|_{[s,\lambda]}}[\psi(s)](\lambda) \:\dd s
		\\
	&\textstyle  = \int_a^\lambda \psi(s) \:\dd s\\
	&\quad\he\textstyle + \sum_{p=1}^{q-2}\he t^p\cdot \int_a^\lambda \dd s 	
	\int_s^{\lambda}\dd s_1\int_s^{s_1}\dd s_2 \: {\dots} \int_s^{s_{p-1}}\dd s_p \: (\com{\psi(s_1)}\cp \dots \cp \com{\psi(s_{p})})(\psi(s)).
	\end{split}
\end{align}
{\bf Induction Basis:} 
Let $\ell=1$. By \eqref{kjfdfdkjfdkjlkjfdkjfdfd}, for $\tau\in [a,b]$ and $t\in [0,1]$ we have
\begin{align*}
	\textstyle\eta_1(\tau,t)= \underbracket{\textstyle\int_a^\tau \phi(s)\:\dd s}_{=:\:\MXP(\tau)}\he +\he \underbracket{\textstyle\sum_{p=1}^{q-2}\he t^p\cdot \int_a^\tau \dd s 	
	\int_s^{\tau}\dd s_1\int_s^{s_1}\dd s_2 \: {\dots} \int_s^{s_{p-1}}\dd s_p \: (\com{\phi(s_1)}\cp \dots \cp \com{\phi(s_{p})})(\phi(s))}_{=:\: \alpha(\tau,t)},
\end{align*}
with 
\begingroup
\setlength{\leftmargini}{12pt}{
\begin{itemize}
\item
	$\MXP\in C^1([a,b],\mg)$ by  \eqref{oidsoidoisdoidsoioidsiods}, and $\im[\MXP]\cup\im[\dot\MXP]\subseteq \CGen_1(\NILS)$\hspace{7.2pt} by Remark \ref{dsdsdsdsdsv}.\ref{dsdsdsdsdsv3}. 
\item
	$\im[\alpha]\subseteq \CGen_2(\NILS)$ (hence Condition \ref{cond1} for $\ell\equiv 1$ there) by Remark \ref{dsdsdsdsdsv}.\ref{dsdsdsdsdsv3}.
\end{itemize}}
\endgroup
\noindent 
To verify Condition \ref{cond2}, for $\tau_1,\tau_2\in I:= (a-\varepsilon,b+\varepsilon)$ and $t\in [0,1]$, we set
\begin{align*}
	\tilde{\beta}(\tau_1,\tau_2,t):= \textstyle\sum_{p=1}^{q-2}\he t^p\cdot \int_{a-\varepsilon}^{\tau_1} \dd s 	
	\int_s^{\tau_2}\dd s_1\int_s^{s_1}\dd s_2 \: {\dots} \int_s^{s_{p-1}}\dd s_p \: (\com{\tilde{\phi}(s_1)}\cp \dots \cp \com{\tilde{\phi}(s_{p})})(\tilde{\phi}(s)),
\end{align*}
and define $\tilde{\alpha}(\tau,t):= \tilde{\beta}(\tau,\tau,t) - \tilde{\beta}(a,\tau,t)$ for $\tau\in I$ and $t\in [0,1]$. Then, $\tilde{\alpha}$ is continuous, with $\tilde{\alpha}|_{[a,b]\times [0,1]}=\alpha$, hence fulfills Condition \ref{cond2}.a). 
Moreover, for $\tau\in I$, $t\in [0,1]$, and $|h|>0$ suitably small, we have
	\begin{align*}
		\tilde{\alpha}(\tau+h,t)&\textstyle-\tilde{\alpha}(\tau,t)\\[1pt]
		&\textstyle=\sum_{p=1}^{q-2}\he t^p\cdot \int_{a}^{\tau} \dd s 	
	\int_\tau^{\tau+h}\dd s_1\int_s^{s_1}\dd s_2 \: {\dots} \int_s^{s_{p-1}}\dd s_p \: (\com{\tilde{\phi}(s_1)}\cp \dots \cp \com{\tilde{\phi}(s_{p})})(\tilde{\phi}(s))\\
	&\quad\he\textstyle+\sum_{p=1}^{q-2}\he t^p\cdot \int_{\tau}^{\tau+h} \dd s 	
	\int_s^{\tau+h}\dd s_1\int_s^{s_1}\dd s_2 \: {\dots} \int_s^{s_{p-1}}\dd s_p \: (\com{\tilde{\phi}(s_1)}\cp \dots \cp \com{\tilde{\phi}(s_{p})})(\tilde{\phi}(s)),\\[6pt]
		\tilde{\alpha}(\tau,t)-\tilde{\alpha}&\textstyle(\tau-h,t)\\[1pt]
		&\textstyle=\sum_{p=1}^{q-2}\he t^p\cdot \int_{a}^{\tau-h} \dd s 	
	\int_{\tau-h}^{\tau}\dd s_1\int_s^{s_1}\dd s_2 \: {\dots} \int_s^{s_{p-1}}\dd s_p \: (\com{\tilde{\phi}(s_1)}\cp \dots \cp \com{\tilde{\phi}(s_{p})})(\tilde{\phi}(s))\\
	&\quad\he\textstyle+\sum_{p=1}^{q-2}\he t^p\cdot \int_{\tau-h}^{\tau} \dd s 	
	\int_s^{\tau}\dd s_1\int_s^{s_1}\dd s_2 \: {\dots} \int_s^{s_{p-1}}\dd s_p \: (\com{\tilde{\phi}(s_1)}\cp \dots \cp \com{\tilde{\phi}(s_{p})})(\tilde{\phi}(s)).
	\end{align*}
	We define $L\colon [a,b]\times [0,1]\rightarrow \CGen_1(\NILS)\subseteq \mg$ (recall Remark \ref{dsdsdsdsdsv}.\ref{dsdsdsdsdsv3}) by 
	\begin{align*}
	L(\tau, t):= \textstyle\sum_{p=1}^{q-2}\he t^p\cdot \int_{a}^{\tau} \dd s 	
	\int_s^{\tau}\dd s_2 \: {\dots} \int_s^{s_{p-1}}\dd s_p \: (\com{\phi(\tau)}\cp \com{\phi(s_2)} \cp \dots \cp \com{\phi(s_{p})})(\phi(s))
\end{align*}
for all $\tau\in [a,b]$ and $t\in [0,1]$. Clearly, $L$ is continuous; and it follows from \eqref{ffdlkfdlkfd} as well as continuity of the involved maps that 
\begin{align*}
	\textstyle\lim _{h\rightarrow 0} 1/|h|\cdot \pp_\infty\textstyle(\tilde{\alpha}(\tau+h,\cdot)-\tilde{\alpha}(\tau,\cdot)- h\cdot L(\tau,\cdot))=0\qquad\quad\forall\: \pp\in \Sem{E},\: \tau\in [a,b]
\end{align*}
holds, which verifies Condition \ref{cond2}.b). 	
We thus have shown \eqref{lkjfdjlkfdlkjfdfdfd} for $\ell= 1$. In particular, this proves the claim for $q=2$.
\vspace{6pt}

\noindent
{\bf Induction Step:} 
 Assume that \eqref{lkjfdjlkfdlkjfdfdfd} holds 
for some $1\leq \ell\leq q-1$ (with $q\geq 3$), hence we have
\begin{align}
\label{nmvcnmvcxnmvcvcvc}
	\textstyle \mathcal{K}_{\ell}(\NILS)\ni \eta_\ell\colon [a,b]\times[0,1]\ni (\tau,t)\mapsto \MXP_\ell(\tau) + \alpha_\ell(\tau,t)\in \CGen_1(\NILS)\subseteq \mg 
\end{align}
for some $\MXP_\ell\in C^1([a,b],\mg)$ with $\im[\MXP_\ell]\cup\im[\dot\MXP_\ell]\subseteq \CGen_1(\NILS)$, as well as $\alpha_\ell\in \mathcal{H}_{\ell}(\NILS)$. Let $I\subseteq \RR$ be as in Condition \ref{cond2} for $\alpha\equiv \alpha_\ell$ there. 
We can shrink $I$ around $[a,b]$ in such a way that $\MXP_\ell$ extends to a $C^1$-curve $\tilde{\MXP}_\ell\in C^1(I,\mg)$, and set
\begin{align}
\label{jjksakjskjsakjkjsa}
	\tilde{\eta}_\ell\colon I\times [0,1]\ni (\tau,t)\mapsto \tilde{\MXP}_\ell(\tau) + \tilde{\alpha}_\ell(\tau,t)\in \mg. 
\end{align}
Now, \eqref{kjfdfdkjfdkjlkjfdkjfdfd} for $\psi\equiv \eta_\ell(\tau,\cdot)$ and $\lambda\equiv 1$ yields for $\tau\in [a,b]$ and $t\in [0,1]$ that
\begin{align*}
	\eta^{\ell+1}(\tau,t)&\textstyle=\TMAP^{\ell+1}\big(\phi|_{[a,\tau]}\big)(t)\\[2pt]
	&\textstyle =\TMAP(\eta_\ell(\tau,\cdot))(t)\\
	&\textstyle = \underbracket{\textstyle\int_0^1 \eta_\ell(\tau,s) \:\dd s}_{=:\: \MXP_{\ell+1}(\tau)} \\[2pt]
	&\quad\he\textstyle +\underbracket{\textstyle\sum_{p=1}^{q-2}\he t^p\cdot \int_0^1 \dd s 	
	\int_s^{1}\dd s_1\int_s^{s_1}\dd s_2 \: {\dots} \int_s^{s_{p-1}}\dd s_p \: (\com{\eta_\ell(\tau,s_1)}\cp \dots \cp \com{\eta_\ell(\tau,s_{p})})(\eta_\ell(\tau,s))}_{=:\: \alpha_{\ell+1}(\tau,t)}.
\end{align*}
We obtain from \eqref{nmvcnmvcxnmvcvcvc} that $\MXP_{\ell+1}=\MXP_{\ell}+ \int_0^1 \alpha_\ell(\cdot,s) \: \dd s$ holds, and conclude the following:
\begingroup
\setlength{\leftmargini}{12pt}{
\begin{itemize}
\item
Let $L\colon [a,b]\times [0,1]\rightarrow  \CGen_1(\NILS)\subseteq \mg$  be as in Condition \ref{cond2}.b) for $\alpha\equiv \alpha_\ell$ there (induction hypothesis). Then, we have
\vspace{-5pt}
\begin{align*}
	\textstyle\dot\MXP_{\ell+1}(\tau)\stackrel{\eqref{kjfdkjfdkjfdkjfdfd}}{=}\dot\MXP_{\ell}(\tau) + \int_0^1 L(\tau,s)\: \dd s\in \CGen_1(\NILS) \qquad\quad\forall\: \tau\in [a,b].
\end{align*}
Since $L$ is continuous, this implies $\MXP_{\ell+1}\in C^1([a,b],\mg)$ with $\im[\MXP_{\ell+1}]\cup\im[\dot\MXP_{\ell+1}]\subseteq \CGen_1(\NILS)$. 
\item
We have $\im[\alpha_{\ell+1}]\subseteq \CGen_{\ell+2}(\NILS)$ by Remark \ref{dsdsdsdsdsv}.\ref{dsdsdsdsdsv4},  Remark \ref{dsdsdsdsdsv}.\ref{dsdsdsdsdsv3}, bilinearity of $\bil{\cdot}{\cdot}$, as well as
\vspace{-3pt}
\begingroup
\setlength{\leftmarginii}{13pt}{
\begin{itemize}
\item
	$(\com{\MXP_\ell(\tau)}\cp \dots \cp \com{\MXP_\ell(\tau)})(\MXP_\ell(\tau))=0$ for all $\tau\in [a,b]$,
	\vspace{4pt}
\item
\hspace{2.8pt}$\im[\MXP_\ell]\subseteq \CGen_1(\NILS)$, 
	\vspace{4pt}
\item
$\im[\alpha_\ell]\subseteq \CGen_{\ell+1}(\NILS)$,
\end{itemize}}
\endgroup
\noindent 
which verifies Condition \ref{cond1} (for $\alpha\equiv \alpha_{\ell+1}$ there).
\end{itemize}}
\endgroup
\noindent
It remains to verify Condition \ref{cond2} (for $\alpha\equiv \alpha_{\ell+1}$ there). For this, we set (recall \eqref{jjksakjskjsakjkjsa})
\begin{align*}
	\tilde{\alpha}_{\ell+1}(\tau,t):= \textstyle\sum_{p=1}^{q-2}\he t^p\cdot \int_0^1 \dd s 	
	\int_s^{1}\dd s_1\int_s^{s_1}\dd s_2 \: {\dots} \int_s^{s_{p-1}}\dd s_p \: (\com{\tilde{\eta}_\ell(\tau,s_1)}\cp \dots \cp \com{\tilde{\eta}_\ell(\tau,s_{p})})(\tilde{\eta}_\ell(\tau,s))
\end{align*}
for $\tau\in I$ and $t\in [0,1]$. Clearly, $\tilde{\alpha}_{\ell+1}$ fulfills Condition \ref{cond2}.a) (for $\alpha\equiv \alpha_{\ell+1}$ there). Moreover, since $\MXP_\ell$ is of class $C^1$ with $\im[\MXP]\cup\im[\dot\MXP]\subseteq \CGen_1(\NILS)$, by  \eqref{jjksakjskjsakjkjsa}, and 
Condition \ref{cond2}.b) for $\alpha\equiv \alpha_\ell$ (induction hypothesis), there exists a continuous map $L\colon [a,b]\times [0,1]\rightarrow  \CGen_1(\NILS)\subseteq \mg$, as well as to each fixed $\tau\in [a,b]$ some  
$\delta_{\tau}>0$ and $\epsilon_\tau\colon (-\delta_\tau,\delta_\tau)\times [0,1]\rightarrow \mg$ continuous with $\lim_{h\rightarrow 0}\pp_\infty(\epsilon_\tau(h,\cdot))=0$ for each $\pp\in \Sem{E}$, such that 
\begin{align}
\label{lkjfdlkjdffdlknbcxnbnbcxnbcxxc}
		\tilde{\eta}_\ell(\tau+h,t)= \tilde{\eta}_\ell(\tau,t) + h\cdot L(\tau,t) + h\cdot \epsilon_\tau(h,t)\qquad\quad (h,t)\in (-\delta_\tau,\delta_\tau)\times [0,1]
\end{align} 
holds. 
We define for $\tau\in I$ and $t\in [0,1]$
\begin{align*}
	\tilde{L}(\tau,t):= &\textstyle\sum_{p=1}^{q-2}\he t^p\cdot \int_0^1 \dd s 	
	\int_s^{1}\dd s_1\int_s^{s_1}\dd s_2 \: {\dots} \int_s^{s_{p-1}}\dd s_p \sum_{k=1}^{p+1}\:   \gamma_k(s_1,\dots,s_p,s),
\end{align*}
where for $1\leq p\leq q-2$ and $0\leq s_1,\dots,s_p,s\leq 1$, we set 
\begin{align*}
	\textstyle\gamma_{p+1}(s_1,\dots,s_p,s):= (\com{\tilde{\eta}_\ell(\tau,s_1)}\cp \dots\cp \com{\tilde{\eta}_\ell(\tau,s_p)})(L(\tau,s))
\end{align*}
 as well as for $1\leq k\leq p$
\begin{align*}
	\textstyle\gamma_k(s_1,\dots,s_p,s):= (\com{\tilde{\eta}_\ell(\tau,s_1)}\cp \dots&\textstyle\cp \com{\tilde{\eta}_\ell(\tau,s_{k-1})} \cp \com{L_\tau(s_k)}\\
	&\textstyle\cp \com{\tilde{\eta}_\ell(\tau,s_{k+1})} \cp \dots \cp \com{\tilde{\eta}_\ell(\tau,s_p)})( \tilde{\eta}_\ell(\tau,s)).
\end{align*}
 It follows from continuity and bilinearity of $\bil{\cdot}{\cdot}$, \eqref{lkjfdlkjdffdlknbcxnbnbcxnbcxxc}, and \eqref{ffdlkfdlkfd} that
\begin{align*}
	\textstyle\lim_{h\rightarrow 0} 1/|h|\cdot \pp_\infty(\tilde{\alpha}_{\ell+1}(\tau+h,\cdot) - \tilde{\alpha}_{\ell+1}(\tau,\cdot) - h \cdot \tilde{L}(\tau,\cdot))=0\qquad\quad\forall\: \pp\in \Sem{E}
\end{align*} 
holds, which verifies Condition \ref{cond2}.b) for $\alpha\equiv \alpha_{\ell+1}$ there. 
\end{proof}

\section{Asymptotic Estimate Lie Algebras} 
\label{jkdkjdjkddsdsddds}
Throughout this section, $(\mq,\bil{\cdot}{\cdot})$ denotes a fixed sequentially complete and asymptotic estimate Lie algebra, i.e., 
 the following conditions are fulfilled: 
\begingroup
\setlength{\leftmargini}{12pt}{
\begin{itemize}
\item
$\mq\in \HLCV$ is sequentially complete.
\item
$\bil{\cdot}{\cdot}\colon \mq\times \mq\rightarrow \mq$ is bilinear, antisymmetric, continuous, and fulfills the Jacobi identity  \eqref{nmvcnmvcnmnkjsakjsakjsaa}. Moreover, to each $\vv\in \Sem{\mq}$, there exist $\vv\leq \ww\in \Sem{\mq}$ with 
\begin{align}
\label{assaaass}
	\vv(\bl X_1,\bl X_2,\bl \dots,\bl X_n,Y\br{\dots}\br\br\br)\leq \ww(X_1)\cdot{\dots}\cdot \ww(X_n)\cdot \ww(Y)
\end{align}
for all $X_1,\dots,X_n,Y\in \mq$ and $n\geq 1$. 
\end{itemize}}
\endgroup
\noindent
The first part of this section (Sect.\ \ref{kdsjlkdslkjsdjkds}) is dedicated to a comprehensive analysis of the Lax equation 
\begin{align*}
	\dot\alpha=\bil{\psi}{\alpha}\qquad\text{with}\qquad \alpha(a)=X,
\end{align*}
for $X\in \mq$ and $\psi\in C^0([a,b],\mq)$ ($a<b$) fixed, and $\alpha\in C^1([a,b],\mq)$. We first show the existence and uniqueness of solutions, and discuss their elementary properties (Sect.\ \ref{lkdslkdlkdslk} and Sect.\ \ref{ofdoifdoifdojkdnmcvnmc}). More specifically, let $\Add^\pm_\psi[X]$ be defined by the right hand side of \eqref{sdssddsds}. We show that the unique solution to the above equation is given by $\alpha=\Add_\psi^+[X]$. We also show\footnote{Although it might be obvious, we explicitly mention at this point that in contrast to the finite-dimensional case, in the context considered it not suffices to show the existence of a left- or right inverse of $\Add_\psi^\pm[t]$ but both. Basically, this is the reason for the necessity of the investigations made in Sect.\ \ref{ofdoifdoifdojkdnmcvnmc}.}
\begin{align*}	
	\Aut(\mq)\ni \Add_\psi^\pm[t]\colon \mq\in X\mapsto \Add^+_\psi[X](t)\in \mq\qquad\quad\forall\: t\in [a,b],
\end{align*}
 with $\Add^\pm_\psi[t]^{-1}=\Add^\mp_\psi[t]$ (both statements are proven in Proposition \ref{lkjfdldlkkjfdYYX}). This allows to define a group structure on $C^k([a,b],\mq)$ for $k\in \NN\cup \{\infty\}$ (in analogy to Remark \ref{dsdsdsdsjkjkkjkjkjjkkjkj} and Sect.\ 1.14 in \cite{DUIS}) by  
\begin{align*}
	\psi^{-1}&\colon [a,b]\ni t \mapsto \hspace{22.3pt}-\Add_\psi^-[t](\psi(t))\in\mq\\
	\phi\starm\psi&\colon [a,b]\ni t\mapsto \phi(t) + \Add^+_\phi[t](\psi(t))\in \mq
\end{align*}
for $\phi,\psi\in C^k([a,b],\mq)$.
The group axioms are verified in Sect.\ \ref{sdklsdklsdklsd}. 
\begin{remark}[A Lie Group Construction]
\label{nmdnmfnmfdnmfdnmfdfd}
Let $k\in \NN\cup\{\infty\}$ and $a<b$ be given; and set 
\begin{align*}
	G_k=E_k=\mg_k:=C^k([a,b],\mq),\qquad e_k:=0\in C^k([a,b],\mq),\qquad \chart_k:=\id_{C^k([a,b],\mq)}.
\end{align*}
As already mentioned, we will show that the maps
\begin{align}
\label{oioidsoidsdsds}
\begin{split}
	\inv_k\colon& \hspace{27pt}G_k\rightarrow G_k,\qquad\hspace{20.1pt} \psi\mapsto \psi^{-1}\\
	\mult_k\colon&  G_k\times G_k\rightarrow G_k,\qquad (\phi,\psi)\mapsto \phi\star\psi
\end{split}
\end{align}
are defined (confer Lemma \ref{knkjfdsjkfasdkjdsjkdsa}), and that $(G_k,\mult_k,\inv_k,e_k)$ is a group (confer Lemma  \ref{lkjkjsjlksdlkjdskj}). Expectably, the group operations \eqref{oioidsoidsdsds} are smooth w.r.t.\ the $C^k$-topology; hence give rise to a Lie group $G_k$ with global chart $\chart_k$, and sequentially complete Lie algebra $\mg_k$ (recall Lemma \ref{kjdskjskjds}). It is furthermore to be expected that the corresponding Lie bracket $\bil{\cdot}{\cdot}_k\colon \mg_k\times \mg_k\rightarrow \mg_k$ is given by 
\begin{align*}
	 \textstyle\bil{\phi}{\psi}_k(t) = \bil{\int_a^t\phi(s)\:\dd s}{\psi(t)} + \bil{\phi(t)}{\int_a^t \psi(s)\:\dd s}\qquad\quad\forall\: t\in [a,b]
\end{align*}
for $\phi,\psi\in \mg_k$, 
just as in Sect.\ 1.14 in \cite{DUIS} (confer Proposition 1.14.1 in \cite{DUIS}). Presumably, $(\mg_k,\bil{\cdot}{\cdot}_k)$ is asymptotic estimate, and $G_k$ is $C^0$-regular.  
The technical details will be worked out in a separate paper. This serves as a preparation for a possible extension of Lie's third theorem to the infinite-dimensional asymptotic estimate case -- just by performing the same (a similar) construction made in Sect.\ 1.14 in \cite{DUIS} to prove this theorem (Theorem 1.14.3 in \cite{DUIS}) in the finite-dimensional context. Notably, this construction had already been used in \cite{MCRLF}, to prove Lie's third theorem in the context of Lie algebroids.  
\hspace*{\fill}\qed
\end{remark}
\noindent
In the last part of Sect.\ \ref{kdsjlkdslkjsdjkds}, we use the proven statements to investigate the properties of the following integral transformation that mimics \eqref{dfdfdfdfdfd} in Sect.\ \ref{kjdskjsdkjdskjs}:
\begin{align*}
	\textstyle\TMAP\colon C^0([a,b],\mq)&\rightarrow C^\infty([0,1],\mq)\\
	 \phi&\textstyle\mapsto \big[ [0,1]\ni t \mapsto  \int_a^b \Add^+_{ t\cdot \phi|_{[s,b]}}[b](\phi(s)) \:\dd s\he\big].
\end{align*} 
The investigations serve as a preparation for the discussions in Sect.\ \ref{mncxnmnxcmnxmnmnxciooioiweioewwe}. There, the above transformation is applied to the situation where $(\mq,\bil{\cdot}{\cdot})\equiv (\mg,[\cdot,\cdot])$ equals the Lie algebra of a given Lie group $G$. Specifically, we prove the following regularity result (cf.\ Theorem \ref{oisiosdidsoiososdids}):
\begin{theorem}
	\label{dkjkjfdkjfdjfdkfdfdfd}
	Assume that $(\mg,\bil{\cdot}{\cdot})$ is asymptotic estimate and sequentially complete. 
	If $G$ is weakly $C^\infty$-regular, then $G$ is weakly $C^k$-regular for each $k\in \NN\cup\{\lip,\infty\}$.
\end{theorem}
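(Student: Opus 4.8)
The statement to prove is Theorem \ref{dkjkjfdkjfdjfdkfdfdfd} (equivalently Theorem \ref{oisiosdidsoiososdids}): assuming $(\mg,\bil{\cdot}{\cdot})$ is asymptotic estimate and sequentially complete, weak $C^\infty$-regularity of $G$ propagates to weak $C^k$-regularity for every $k\in\NN\sqcup\{\lip,\infty\}$. Since weak $C^k$-regularity means $C^k$-semiregularity together with Mackey-completeness of $\mg$ (integral completeness for $k=0$), and $\mg$ being sequentially complete is already integral complete, the completeness half is automatic. So the entire content is: \emph{weak $C^\infty$-regularity $\Rightarrow$ $C^k$-semiregularity for all $k$}, i.e., every $\phi\in C^k([a,b],\mg)$ lies in $\DIDE_{[a,b]}$. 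The plan is to reduce this to $k=0$ (the hardest case, since $C^0\supseteq C^k$ for $k\geq 1$, so $C^0$-semiregularity implies all the others by restriction; conversely the hypothesis only gives us $\DIDED_\infty=C^\infty$, so we must manufacture solutions to $\Der(\mu)=\phi$ for merely continuous $\phi$).

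First I would set up the integral transformation $\TMAP$ from \eqref{ksoididsooidsoidsdsdsdsas}, defined on $C^0([a,b],\mq)$ for the abstract sequentially complete asymptotic estimate Lie algebra $\mq$, using the series $\Add_\psi[X]=\sum_{\ell=0}^\infty\TPOL_{\ell,\psi}[X]$ which converges by the asymptotic estimate condition (Corollary \ref{ljkdskjdslkjsda}-type bounds, using Corollary \ref{dsdsdsdsdsdsdsdsasas} on sequential completeness to get the sum in $C^\infty$). The key technical input — and the main obstacle — is verifying $\im[\TMAP]\subseteq C^\infty([0,1],\mq)$: that the curve $t\mapsto\int_a^b\Add_{t\cdot\phi|_{[s,b]}}[\phi(s)]\,\dd s$ is genuinely smooth in $t$, not just continuous. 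This is the analogue of Lemma \ref{dkjfkjdjfdlkjlkjfdkjdfkdf}, but now without a Lie group backing it up, so one works directly with the power-series expansion in $t$ (each $\TPOL_{\ell,t\cdot\psi}[X]$ is a monomial $t^\ell\cdot(\text{iterated integral})$), applies Corollary \ref{dsdsdsdsdsdsdsds} to differentiate term by term, and uses the asymptotic estimate seminorm bounds to control the resulting series uniformly. The group-compatibility relations \eqref{oifdoifdoifdioifdiofd}, namely $\TMAP(\psi)=\TMAP(\psi|_{[c,b]})\star\TMAP(\psi|_{[a,c]})$ (proven via Remark \ref{oidsoidsoidsiods}/Proposition \ref{oidsoidsoidsiodsdffdd} in the abstract setting, mimicking the Lie-group calculation using the group operations $\starm$ from \eqref{oifdoifdoifdiofdiofdiofdiod}), are the second essential ingredient.

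With $\TMAP$ in hand, the argument concludes as sketched at the end of the introduction. Given $\phi\in C^0([a,b],\mg)$, since $G$ is weakly $C^\infty$-regular we have $C^\infty([0,1],\mg)\subseteq\DIDE$, so for each $z\in[a,b]$ the element $\innt_0^1\TMAP(\phi|_{[a,z]})\in G$ is defined; set
\begin{align*}
	\textstyle\mu\colon[a,b]\ni z\mapsto\innt_0^1\TMAP(\phi|_{[a,z]})\in G.
\end{align*}
I would show $\mu$ is of class $C^1$ (this is Lemma \ref{kjdskjdskjdskjsdxyxyyxy}: the $z$-dependence of $\TMAP(\phi|_{[a,z]})$ enters through the upper integration limit $z$ and through the curves $t\cdot\phi|_{[s,z]}$, and one differentiates under the product integral using the differentiation results of Proposition \ref{ableiti}/Theorem \ref{ofdpofdpofdpofdpofd}, plus the additivity $\TMAP(\phi|_{[a,z+h]})=\TMAP(\phi|_{[z,z+h]})\star\TMAP(\phi|_{[a,z]})$ from the splitting relation). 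Then one computes $\Der(\mu)$: by the splitting relation and $C^1$-dependence, the right logarithmic derivative of $z\mapsto\innt_0^1\TMAP(\phi|_{[a,z]})$ works out to $\phi(z)$ — essentially because $\TMAP(\phi|_{[z,z+h]})$ is, to first order in $h$, the constant curve $h\cdot\phi(z)$, whose product integral over $[0,1]$ contributes $\exp(h\cdot\phi(z))$ and hence derivative $\phi(z)$. Therefore $\phi=\Der(\mu)\in\DIDE_{[a,b]}$, giving $C^0$-semiregularity; combined with integral completeness of $\mg$ (from sequential completeness) this is weak $C^0$-regularity, and restricting to $C^k$-curves yields weak $C^k$-regularity for all $k\in\NN\sqcup\{\lip,\infty\}$.

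\textbf{Main obstacle.} The crux is the smoothness claim $\im[\TMAP]\subseteq C^\infty$, and, intertwined with it, establishing that $\Add_\psi[t]\in\Aut(\mq)$ (Proposition \ref{lkjfdldlkkjfdYYX}) so that the abstract Lax equation $\dot\alpha=\bil{\psi}{\alpha}$ has a \emph{unique} solution — uniqueness is what makes $\Der(\mu)=\phi$ pin down $\mu$, and it requires the surjectivity of $\Add_\psi[t]$ (a right inverse), which as the paper notes is the genuinely non-trivial point in infinite dimensions (a left inverse being easy from $\etamm^\pm$-type identities). Everything downstream — the group structure on $C^k([a,b],\mq)$, the compatibility \eqref{oifdoifdoifdioifdiofd}, and the final $C^1$-regularity of $\mu$ — then follows by systematic but routine bookkeeping with the asymptotic estimate seminorm estimates and the completeness corollaries of Section \ref{dsdssd}.
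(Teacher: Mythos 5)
Your proposal follows essentially the same route as the paper: reduce to $C^0$-semiregularity (completeness being automatic from sequential completeness of $\mg$), build the transformation $\TMAP$ on $C^0$-curves with $\im[\TMAP]\subseteq C^\infty([0,1],\mg)$ via the asymptotic-estimate series bounds, establish the splitting relation $\TMAP(\psi|_{[a,z+h]})=\TMAP(\psi|_{[z,z+h]})\star\TMAP(\psi|_{[a,z]})$ through the group structure backed by $\Add^\pm_\psi[t]\in\Aut(\mq)$, and then show $\mu\colon z\mapsto\innt_0^1\TMAP(\phi|_{[a,z]})$ is $C^1$ with $\Der(\mu)=\phi$. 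You also correctly locate the genuine technical obstacles (smoothness of the image of $\TMAP$ and the two-sided invertibility of $\Add_\psi[t]$), so the plan is sound and matches the paper's argument.
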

\begin{remark}
\label{lksdlkdslkdslkds98ds09ds09ds09ds09ds09dsdsdsdsdsdscx}
Note that in the situation of Theorem \ref{dkjkjfdkjfdjfdkfdfdfd}, $G$ is  weakly $C^k$-regular for $k\in \NN\cup\{\lip,\infty\}$ if and only if $G$ is $C^k$-semiregular, just because $\mg$ is assumed to be sequentially complete.
\hspace*{\fill}\qed
\end{remark}
\noindent
This result complements Theorem 2 in \cite{AEH} that essentially states that $C^\infty$-regularity\footnote{We recall that in contrast to the definition of weak $C^k$-regularity for $k\in \NN\cup\{\lip,\infty\}$, the definition of $C^k$-regularity additionally involves continuity of the product integral w.r.t.\ the $C^k$-topology.} implies $C^k$-regularity for each $k\in \NN\cup \{\lip,\infty\}$, where for $k=0$ additionally sequentially completeness of the Lie group has to be assumed.\footnote{Sequentially completeness of a Lie group, and sequentially completeness of its Lie algebra are ad hoc different properties, details can be found in \cite{RGM}.} Hence, in a certain sense, the assumption in Theorem 2 in \cite{AEH} that $G$ is sequentially complete, had been replaced in Theorem \ref{dkjkjfdkjfdjfdkfdfdfd} by the assumption that $\mg$ is sequentially complete.  The key result proven in \cite{AEH} (Theorem 1 in \cite{AEH}) states that in the asymptotic estimate context, $C^\infty$-continuity of the product integral is equivalent to its $C^0$-continuity (local $\mu$-convexity of $G$), which makes the semiregularity results obtained in \cite{RGM} applicable -- Basically, in analogy to the definition of the Riemann integral as a limit over Riemann sums, in \cite{RGM} the product integral is obtained as a limit over product integrals of piecewise integrable curves (under certain assumptions on the Lie group that are automatically fulfilled in the asymptotic estimate context). The proof of Theorem \ref{dkjkjfdkjfdjfdkfdfdfd} works differently: For some given $\phi\in C^0([a,b],\mg)$, we show that (this is defined by $C^\infty$-semiregularity of $G$, as well as $\im[\TMAP]\subseteq C^\infty([0,1],\mg)$) 
\begin{align*}
	\textstyle\mu\colon [a,b]\ni z\mapsto \innt_0^1 \TMAP(\phi|_{[a,z]})\in G	
\end{align*}	
	is of class $C^1$, with $\Der(\mu)=\phi$.

\begin{remark}
We will tacitly use throughout this section that sequentially completeness of $\mq$ implies (recall Remark \ref{seqcomp})
\vspace{-6pt}
\begin{align*}
	\textstyle\int\phi(s)\:\dd s\in \mq\qquad\quad\forall\: \phi\in C^0([a,b],\mq),\: a<b.
\end{align*}
We will furthermore use that $\bil{\cdot}{\cdot}$ is smooth (apply, e.g., the parts \ref{linear}, \ref{productrule} of Proposition \ref{iuiuiuiuuzuzuztztttrtrtr}).
\hspace*{\fill}\qed
\end{remark}
\noindent
Theorem \ref{dkjkjfdkjfdjfdkfdfdfd}, together with Theorem 4 in \cite{RGM} and  Theorem 1 in  \cite{AEH}, yields the following statement: 
\begin{corollary}
\label{kjdskjdsiudsiudsiudsds98798ds98ds98dsdsdsdsdsds}
If $(\mg,\bil{\cdot}{\cdot})$ is asymptotic estimate and sequentially complete, then the following assertions are equivalent:
\begingroup
\setlength{\leftmargini}{17pt}
{
\renewcommand{\theenumi}{\emph{\alph{enumi})}} 
\renewcommand{\labelenumi}{\theenumi}
\begin{enumerate}
\item
\label{kjdskjdsiudsiudsiudsds98798ds98ds98dsdsdsdsdsdsa1}
$G$ is $C^k$-semiregular for some $k\in \NN\cup \{\infty\}$, and $\evol_\kk$ is of class $C^\ell$ for some $\ell\in \NN\cup \{\infty\}$.
\item
\label{kjdskjdsiudsiudsiudsds98798ds98ds98dsdsdsdsdsdsa2} 
$G$ is $C^0$-semiregular, and $\evol_0$ is $C^0$-continuous (\he$G$ is locally $\mu$-convex\he).
\item
\label{kjdskjdsiudsiudsiudsds98798ds98ds98dsdsdsdsdsdsa3}
$G$ is $C^k$-regular for all $k\in \NN\cup \{\infty\}$.
\end{enumerate}}
\endgroup
\end{corollary}
\begin{proof}
The implication \ref{kjdskjdsiudsiudsiudsds98798ds98ds98dsdsdsdsdsdsa3}\hspace{1pt} $\Rightarrow$\hspace{1pt} \ref{kjdskjdsiudsiudsiudsds98798ds98ds98dsdsdsdsdsdsa1} is evident. Moreover:
\vspace{6pt}

\noindent
\ref{kjdskjdsiudsiudsiudsds98798ds98ds98dsdsdsdsdsdsa1}\hspace{1pt} $\Rightarrow$\hspace{1pt} \ref{kjdskjdsiudsiudsiudsds98798ds98ds98dsdsdsdsdsdsa2}:
If the assertion  \ref{kjdskjdsiudsiudsiudsds98798ds98ds98dsdsdsdsdsdsa1} holds, then 
\begingroup
\setlength{\leftmargini}{12pt}{
\begin{itemize}
\item
$G$ is $C^\infty$-semiregular, as $C^\infty([0,1],\mg)\subseteq C^k([0,1],\mg)$ holds. Hence, $G$ is weakly $C^\infty$-regular by Remark \ref{lksdlkdslkdslkds98ds09ds09ds09ds09ds09dsdsdsdsdsdscx}; thus, $G$ is $C^0$-semiregular by Theorem \ref{dkjkjfdkjfdjfdkfdfdfd}.
\item
$\evol_\kk$ is $C^k$-continuous, as of class $C^\ell$ (with $\ell\geq 0$). Hence, $\evol_0$ is $C^0$-continuous ($G$ is locally $\mu$-convex) by Theorem 1 in \cite{AEH}. 
\end{itemize}}
\endgroup 
\noindent
\ref{kjdskjdsiudsiudsiudsds98798ds98ds98dsdsdsdsdsdsa2}\hspace{1pt} $\Rightarrow$\hspace{1pt}  \ref{kjdskjdsiudsiudsiudsds98798ds98ds98dsdsdsdsdsdsa3}:
Assume that the assertion  \ref{kjdskjdsiudsiudsiudsds98798ds98ds98dsdsdsdsdsdsa2} holds, and let $k\in \NN\cup \{\infty\}$ be given. Then,   
\begingroup
\setlength{\leftmargini}{12pt}{
\begin{itemize}
\item
$G$ is $C^k$-semiregular, as $C^k([0,1],\mg)\subseteq C^0([0,1],\mg)$ holds.
\item
$\evol_\kk=\evol_0|_{C^k([0,1],\mg)}$ is $C^k$-continuous, as even $C^0$-continuous.
 \end{itemize}}
\endgroup
\noindent
Since $\mg$ is sequentially complete (hence, integral complete and Mackey complete), Theorem 4 in \cite{RGM} shows that $\evol_\kk$ is smooth.
\end{proof}

\subsection{The Lax Equation}
\label{kdsjlkdslkjsdjkds}
Let $a<b$ and $\psi \in \CP^0([a,b],\mq)$ be given.\footnote{We consider $C^0([a,b],\mq)\subseteq \CP^0([a,b],\mq)$ as a subset in the obvious way.} Motivated by Sect.\ \ref{kfdkjfdkjfdfd}, 
for $X\in \mq$ we set
\begin{align*}
	\APOL^\pm_{0,\psi}[X]\colon [a,b]\rightarrow \mq,\qquad  t\mapsto X,	
\end{align*}
\vspace{-26pt}

\noindent
as well as (recall \eqref{opofdpopfd})
\begin{align*}
	\textstyle \APOL^+_{\ell,\psi}[X]\colon [a,b]\rightarrow \mq,\qquad &\textstyle t\mapsto \hspace{34.45pt} \int_a^{t}\dd s_1\int_a^{s_1}\dd s_2 \: {\dots} \int_a^{s_{\ell-1}}\dd s_\ell \: (\bilbr{\psi(s_1)}\cp \dots \cp \bilbr{\psi(s_{\ell})})(X)\\
	\textstyle \APOL^-_{\ell,\psi}[X]\colon [a,b]\rightarrow \mq,\qquad &\textstyle t\mapsto  (-1)^\ell\cdot\int_a^{t}\dd s_1\int_a^{s_1}\dd s_2 \: {\dots} \int_a^{s_{\ell-1}}\dd s_\ell \: (\bilbr{\psi(s_\ell)}\cp \dots \cp \bilbr{\psi(s_{1})})(X)
\end{align*}
for each $\ell\geq 1$.  
We furthermore define (definedness is covered by Lemma  \ref{knkjfdsjkfasdkjdsjkdsa})
\begin{align*}
	\textstyle \Add^\pm_\psi[X](t):= \sum_{\ell=0}^\infty  \APOL^\pm_{\ell,\psi}[X](t)\qquad\quad\forall\:  X\in \mq,\: t\in [a,b],
\end{align*} 
and let
\vspace{-8pt}
\begin{align*}
	\etam_\psi^\pm\colon [a,b]\times \mq\rightarrow  \mq,\qquad (t,X)\mapsto \Add_\psi^\pm[X](t). 
\end{align*}
To simplify the notations,   
we define
\begin{align*}
	\Add_{\psi}^\pm[t]\colon \mq\ni X&\mapsto \Add_\psi^\pm[X](t)\in \mq
	\qquad\quad\forall\: t\in [a,b]\\
	\Add_\psi^\pm&:= \Add_{\psi}^\pm[b]\\[1pt]
	\APOL_{\ell,\psi}^\pm[t]\colon \mq\ni X&\mapsto \APOL_{\ell,\psi}^\pm[X](t)\in \mq
	\qquad\quad\hspace{4pt}\forall\: t\in [a,b],\:\: \ell\in \NN\\
	\APOL_{\ell,\psi}^\pm&:= \APOL_{\ell,\psi}^\pm[b]\qquad\quad\hspace{40pt}\forall\: \ell\in \NN.
\end{align*}
\subsubsection{Elementary Properties}
\label{lkdslkdlkdslk} 
In this section, we discuss the elementary properties of the objects defined in the beginning of Sect.\ \ref{kdsjlkdslkjsdjkds}. We start with the following lemma.
\begin{lemma}
\label{knkjfdsjkfasdkjdsjkdsa}
	Let $\psi \in C^0([a,b],\mq)$ be given. Then, the following assertions hold:
\begingroup
\setlength{\leftmargini}{17pt}
{
\renewcommand{\theenumi}{\emph{\arabic{enumi})}} 
\renewcommand{\labelenumi}{\theenumi}
\begin{enumerate}
\item
\label{aknkjfdsjkfasdkjdsjkdsa1}
	The maps $\etam_\psi^\pm$ 	
	 are defined and of class $C^1$, and we have 
\begin{align*}
	\partial_1 \etam_\psi^+(t,X)&\textstyle=\partial_t\Add_\psi^+[X](t)=\bil{\psi(t)}{\Add^+_\psi[X](t)}\\
	\partial_1 \etam^-_\psi(t,X)&\textstyle=\partial_t\Add_\psi^-[X](t)=-\Add^-_\psi[\bil{\psi(t)}{X}](t)
\end{align*}
for all $t\in [a,b]$ and $X\in \mq$.
\item
\label{aknkjfdsjkfasdkjdsjkdsa2}
	$\etam^\pm_\psi(t,\cdot)=\Add_\psi^\pm[t]$ is linear and continuous for each $t\in [a,b]$.
\item
\label{aknkjfdsjkfasdkjdsjkdsa3}
	If $\psi\in C^k([a,b],\mq)$ holds for $k\in \NN\cup\{\infty\}$, then $\etam_\psi^\pm$ is of class $C^{k+1}$.    	
\end{enumerate}}
\endgroup   	
\end{lemma}
\begin{proof}
	The proof is straightforward but technical. It is provided in Appendix \ref{asassadsdsdsdsdsdcxcxsdssdssddssd}.
\end{proof}
\begin{remark}
\label{iufdicxcxcxcxufdiufdiufddd}
The substitution formula \eqref{substitRI} yields the following:
\begingroup
\setlength{\leftmargini}{12pt}{
\begin{itemize}
\item
	Let $a<b$, $a'<b'$, $\psi\in C^0([a,b],\mq)$, as well as $\varrho\colon [a',b']\rightarrow [a,b]$ be of class $C^1$ with $\varrho(a')=a$ and $\varrho(b')=b$. We set $\phi:= \dot\varrho\cdot (\psi\cp\varrho)$, and obtain inductively from \eqref{substitRI} that
\begin{align*}
	 \textstyle\APOL^\pm_{\ell,\phi}[t]=\APOL^\pm_{\ell,\psi}[\varrho(t)]\qquad\quad\forall\: t\in[a',b'] 
\end{align*}	
	  holds for all $\ell\in \NN$. Hence, we have $
		\Add^\pm_{\dot\varrho\he\cdot\he (\psi\he\cp\he\varrho)}[\cdot]=\Add^\pm_{\psi}[\varrho(\cdot)]$. In particular, if additionally $\dot\varrho|_{(a,b)}>0$ holds, then we have
		\begin{align}
		\label{kjfdkjfdjkfdkjfdkjkjfdkjfdkjfdkjfd}
			\Add^\pm_{(\dot\varrho\he\cdot\he (\psi\he\cp\he\varrho))|_{[x',y']}}[\cdot]=\Add^\pm_{\psi|_{[\varrho(x'),\varrho(y')]}}[\varrho(\cdot)]\qquad\quad\forall\: a'\leq x'<y\leq b'.
		\end{align}
\item
	The previous point applied to affine transformations yields\footnote{Specifically, in the previous point set $\psi:=\chi(\cdot -x)\colon [a+x,b+x]\rightarrow 
	 \mq$ as well as $\varrho\colon [a,b]\ni t\mapsto t+x\in [a+x,b+x]$.}
\begin{align*}
	\Add^\pm_\chi[b]=\Add^\pm_{\chi(\cdot-x)}[b+x]
\end{align*} 
for $a<b$, $\chi\in C^0([a,b],\mq)$, and $x\in \RR$.\hspace*{\fill}\qed
\end{itemize}}
\endgroup
\end{remark} 
\begin{corollary}
\label{lkjfdkjfdkjkjldfjldf}
	For $\psi \in C^0([a,b],\mq)$ and $t\in [a,b]$, we have $\Add_\psi^-[t]\cp \Add_\psi^+[t]=\id_\mq$. 
\end{corollary}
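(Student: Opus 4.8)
The statement $\Add_\psi^-[t]\cp \Add_\psi^+[t]=\id_\mq$ for $\psi\in C^0([a,b],\mq)$ and $t\in[a,b]$ is the infinite-dimensional (sequentially complete, asymptotic estimate) analogue of Corollary~\ref{lksdlkdslkdslk} in the Lie group case, and the plan is to mimic that argument using the differential equations established in Lemma~\ref{knkjfdsjkfasdkjdsjkdsa}.\ref{knkjfdsjkfasdkjdsjkdsa1}. Fix $X\in\mq$, and consider the curve $\beta\colon[a,b]\rightarrow\mq$ defined by $\beta(t):=\Add_\psi^-[t]\big(\Add_\psi^+[X](t)\big)=\etam_\psi^-\big(t,\etam_\psi^+(t,X)\big)$. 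First I would note that $\beta$ is of class $C^1$: by Lemma~\ref{knkjfdsjkfasdkjdsjkdsa}.\ref{knkjfdsjkfasdkjdsjkdsa1} the map $\etam_\psi^+$ is $C^1$, so $t\mapsto \Add_\psi^+[X](t)$ is $C^1$, and then $\beta$ is the composite of $t\mapsto(t,\Add_\psi^+[X](t))$ with the $C^1$-map $\etam_\psi^-$, hence $C^1$ by the chain rule (Proposition~\ref{iuiuiuiuuzuzuztztttrtrtr}).

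The main computation is to show $\dot\beta\equiv 0$. Writing $\alpha(t):=\Add_\psi^+[X](t)$, so $\dot\alpha(t)=\bil{\psi(t)}{\alpha(t)}$ by Lemma~\ref{knkjfdsjkfasdkjdsjkdsa}.\ref{knkjfdsjkfasdkjdsjkdsa1}, and $\beta(t)=\etam_\psi^-(t,\alpha(t))$, the chain rule together with linearity of $\etam_\psi^-(t,\cdot)=\Add_\psi^-[t]$ (Lemma~\ref{knkjfdsjkfasdkjdsjkdsa}.\ref{knkjfdsjkfasdkjdsjkdsa2}) gives
\begin{align*}
	\dot\beta(t) &= \big(\partial_1\etam_\psi^-\big)(t,\alpha(t)) + \Add_\psi^-[t]\big(\dot\alpha(t)\big)\\
	&= -\Add_\psi^-\big[\bil{\psi(t)}{\alpha(t)}\big](t) + \Add_\psi^-[t]\big(\bil{\psi(t)}{\alpha(t)}\big)\\
	&= -\Add_\psi^-\big[\bil{\psi(t)}{\alpha(t)}\big](t) + \Add_\psi^-\big[\bil{\psi(t)}{\alpha(t)}\big](t) = 0,
\end{align*}
where in the second line I used the formula for $\partial_1\etam_\psi^-$ from Lemma~\ref{knkjfdsjkfasdkjdsjkdsa}.\ref{knkjfdsjkfasdkjdsjkdsa1} with the fixed vector $\bil{\psi(t)}{\alpha(t)}\in\mq$, and in the third line only the notational identity $\Add_\psi^-[t](Y)=\Add_\psi^-[Y](t)$. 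Since $\dot\beta\equiv 0$ and $\beta(a)=\etam_\psi^-(a,\etam_\psi^+(a,X))=X$ (because $\APOL^\pm_{0,\psi}[X](a)=X$ and $\APOL^\pm_{\ell,\psi}[X](a)=0$ for $\ell\geq1$), the fundamental theorem~\eqref{isdsdoisdiosd} forces $\beta\equiv X$. As $X\in\mq$ was arbitrary, $\Add_\psi^-[t]\cp\Add_\psi^+[t]=\id_\mq$ for every $t\in[a,b]$.

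I do not expect any serious obstacle here; the result is essentially bookkeeping once Lemma~\ref{knkjfdsjkfasdkjdsjkdsa} is available. The only point requiring a little care is making sure the product rule for $\dot\beta$ is applied legitimately — i.e., that $\etam_\psi^-$ being jointly $C^1$ and linear-and-continuous in the second slot licenses the splitting $\dot\beta(t) = (\partial_1\etam_\psi^-)(t,\alpha(t)) + \dd_2\etam_\psi^-(t,\alpha(t))(\dot\alpha(t))$ with $\dd_2\etam_\psi^-(t,\alpha(t)) = \Add_\psi^-[t]$ — but this is exactly the content of the chain/product rules in Proposition~\ref{iuiuiuiuuzuzuztztttrtrtr} applied to the $C^1$ map $(t,Y)\mapsto\etam_\psi^-(t,Y)$ and the $C^1$ curve $t\mapsto(t,\alpha(t))$. (Alternatively, and perhaps more cleanly, one can avoid the product rule entirely: note $\etam_\psi^-(\psi,\etam_\psi^+(\psi,\chi))=\chi$ is precisely the identity $\etamm^\pm(\psi,\etamm^\mp(\psi,\chi))=\chi$ from Sect.~\ref{kfdkjfdkjfdfd} once uniqueness of solutions to the Lax equation is in hand; but since uniqueness is established only later in Sect.~\ref{ofdoifdoifdojkdnmcvnmc}, the direct $\dot\beta=0$ argument above is the self-contained route.)
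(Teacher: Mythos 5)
Your proof is correct and follows essentially the same route as the paper: the paper also fixes $Z\in\mq$, defines $\alpha_Z(t)=\etam_\psi^-(t,\etam_\psi^+(t,Z))$, invokes Lemma \ref{knkjfdsjkfasdkjdsjkdsa} for $C^1$-ness and the partial-derivative formulas, applies the differentiation rules of Proposition \ref{iuiuiuiuuzuzuztztttrtrtr} to get $\dot\alpha_Z=0$, and concludes $\alpha_Z\equiv Z$ via \eqref{isdsdoisdiosd}. Your cancellation of the two terms and the observation $\Add_\psi^-[t](Y)=\Add_\psi^-[Y](t)$ is exactly the computation in the paper's proof.
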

\begin{proof}
	Let $Z\in \mq$ be given, and define 
\begin{align*}
	\alpha_Z\colon [a,b]\ni t\mapsto \etam_\psi^-(t,\etam_\psi^+(t,Z))=(\Add^-_\psi[t]\cp\Add^+_\psi[t])(Z)\in \mq.
\end{align*}	
	 Then, Lemma \ref{knkjfdsjkfasdkjdsjkdsa} shows $\alpha_Z\in C^1([a,b],\mq)$; 
and the parts \ref{linear}, \ref{chainrule}, \ref{productrule} of Proposition \ref{iuiuiuiuuzuzuztztttrtrtr} yield
\begin{align*}
	\dot\alpha_Z(t)&\textstyle = 
	-\etam_\psi^-(t,\bil{\psi(t)}{\etam_\psi^+(t,Z)}) + \etam_\psi^-(t,\bil{\psi(t)}{\etam_\psi^+(t,Z)})=0\qquad\quad\forall\: t\in [a,b],
\end{align*}
hence $\alpha_Z(t)=Z+ \int_a^t \dot\alpha_Z(s) \:\dd s=Z$ for each $t\in [a,b]$ by \eqref{isdsdoisdiosd}. 
\end{proof}

\begin{lemma}
\label{jdjdfkjfdkjfd}
For $\psi\in \CP^0([a,b],\mq)$ and $a< c< b$, we have $\Add^+_\psi=\Add^+_{\psi|_{[c,b]}}\cp\Add^+_{\psi|_{[a,c]}}$.
\end{lemma}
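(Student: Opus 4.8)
The statement is the semigroup/concatenation property for the ``generalized adjoint'' $\Add^+_\psi$ in the sequentially complete asymptotic estimate setting, exactly mirroring Property \ref{pogfpogf} of the product integral. The plan is to prove the pointwise identity $\Add^+_\psi[X](b) = \Add^+_{\psi|_{[c,b]}}\big[\Add^+_{\psi|_{[a,c]}}[X](c)\big](b)$ for every $X\in\mq$ by a uniqueness-of-ODE-solutions argument, using Lemma \ref{knkjfdsjkfasdkjdsjkdsa} (which is stated earlier and which I may assume), together with the fact that $\mq$ is sequentially complete so that $\int_a^t(\cdot)\dd s\in\mq$ and \eqref{isdsdoisdiosd} applies.

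\textbf{Key steps.} First I would fix $X\in\mq$ and set $Y:=\Add^+_{\psi|_{[a,c]}}[X](c)\in\mq$. Consider the two curves $\beta\colon [c,b]\ni t\mapsto \Add^+_\psi[X](t)\in\mq$ and $\gamma\colon [c,b]\ni t\mapsto \Add^+_{\psi|_{[c,b]}}[Y](t)\in\mq$. By Lemma \ref{knkjfdsjkfasdkjdsjkdsa}.\ref{knkjfdsjkfasdkjdsjkdsa1}, both are of class $C^1$ on $[c,b]$ and both satisfy the Lax equation $\dot\alpha(t)=\bil{\psi(t)}{\alpha(t)}$ there (for $\beta$ this uses that $\psi|_{[c,b]}$ is the restriction of $\psi$; for $\gamma$ it is the defining property). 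Moreover $\gamma(c)=Y=\beta(c)$ by construction. Hence I need the uniqueness statement: if $\alpha_1,\alpha_2\in C^1([c,b],\mq)$ both solve $\dot\alpha=\bil{\psi}{\alpha}$ with $\alpha_1(c)=\alpha_2(c)$, then $\alpha_1=\alpha_2$. This is the analogue of Corollary \ref{lksdlkdslkdslk}, proven the same way: set $\delta:=\etam^-_{\psi|_{[c,b]}}(\cdot,\alpha_1)-\etam^-_{\psi|_{[c,b]}}(\cdot,\alpha_2)$, compute $\dot\delta=0$ using Lemma \ref{knkjfdsjkfasdkjdsjkdsa}.\ref{knkjfdsjkfasdkjdsjkdsa1} and linearity of $\Add^-_{\psi|_{[c,b]}}[t]$ (Lemma \ref{knkjfdsjkfasdkjdsjkdsa}.\ref{knkjfdsjkfasdkjdsjkdsa2}), conclude $\delta\equiv 0$ from \eqref{isdsdoisdiosd}, then apply $\Add^+_{\psi|_{[c,b]}}[t]$ and use Corollary \ref{lkjfdkjfdkjkjldfjldf} (which gives $\Add^+_{\psi|_{[c,b]}}[t]\cp\Add^-_{\psi|_{[c,b]}}[t]=\id_\mq$ — note one direction of this composition identity may need to be recorded, but it follows from Corollary \ref{lkjfdkjfdkjkjldfjldf} together with linearity once one knows $\Add^-_{\psi|_{[c,b]}}[t]$ is surjective, or alternatively by the symmetric argument swapping $+$ and $-$). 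Thus $\beta=\gamma$ on $[c,b]$, and evaluating at $t=b$ gives $\Add^+_\psi[X](b)=\Add^+_{\psi|_{[c,b]}}\big[\Add^+_{\psi|_{[a,c]}}[X](c)\big](b)$, which is precisely $\Add^+_\psi=\Add^+_{\psi|_{[c,b]}}\cp\Add^+_{\psi|_{[a,c]}}$ since $X$ was arbitrary.

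\textbf{Main obstacle.} The only genuine subtlety is the uniqueness step, i.e. that the Lax equation on $[c,b]$ has at most one $C^1$ solution with a prescribed value at $c$ — infinite-dimensionally this does require the linear-automorphism machinery, not just Gronwall. But this is already handled by Lemma \ref{knkjfdsjkfasdkjdsjkdsa} (giving $C^1$-dependence and the derivative formulas) and Corollary \ref{lkjfdkjfdkjkjldfjldf}; the one-line computation $\dot\delta=0$ mirrors exactly the proof of Corollary \ref{lksdlkdslkdslk}. A secondary point is a trivial bookkeeping check that restricting $\psi$ to $[c,b]$ and then solving starting at $c$ is compatible with solving $\psi$ on $[a,b]$ and reading off values for $t\ge c$ — this is immediate from Lemma \ref{knkjfdsjkfasdkjdsjkdsa}.\ref{knkjfdsjkfasdkjdsjkdsa1} since that lemma's ODE for $\Add^+_\psi$ only involves $\psi(t)$ and $\Add^+_\psi[X](t)$ at the same time $t$, so the restriction of the solution is the solution of the restricted problem with the restricted initial data. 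Everything else is routine and follows the template already used in Section \ref{kfdkjfdkjfdfd}.
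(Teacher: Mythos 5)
There is a genuine gap, and it is the circularity of your uniqueness step. Your argument reduces everything to: if $\alpha_1,\alpha_2\in C^1([c,b],\mq)$ solve $\dot\alpha=\bil{\psi}{\alpha}$ with $\alpha_1(c)=\alpha_2(c)$, then $\alpha_1=\alpha_2$. You derive $\Add^-_{\psi|_{[c,b]}}[t](\alpha_1(t))=\Add^-_{\psi|_{[c,b]}}[t](\alpha_2(t))$ and then need to cancel $\Add^-_{\psi|_{[c,b]}}[t]$, i.e.\ you need this operator to be \emph{injective}. Corollary \ref{lkjfdkjfdkjkjldfjldf} only gives $\Add^-_\psi[t]\cp\Add^+_\psi[t]=\id_\mq$, i.e.\ surjectivity of $\Add^-_\psi[t]$ and injectivity of $\Add^+_\psi[t]$ -- in an infinite-dimensional space this does not yield injectivity of $\Add^-_\psi[t]$. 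The missing identity $\Add^+_\psi[t]\cp\Add^-_\psi[t]=\id_\mq$ is exactly Proposition \ref{lkjfdldlkkjfdYYX}.\ref{lkjfdldlkkjfdYYX1}, whose proof goes through Lemma \ref{kjdskjsdkjdskjkjds}, which in turn \emph{uses} Lemma \ref{jdjdfkjfdkjfd} (applied to piecewise constant curves). So you would be assuming a consequence of the statement you are proving; the paper's footnote in Sect.\ \ref{jkdkjdjkddsdsddds} explicitly flags that one-sided invertibility is not enough here. Your proposed escape, ``the symmetric argument swapping $+$ and $-$'', also fails: by Lemma \ref{knkjfdsjkfasdkjdsjkdsa}.\ref{knkjfdsjkfasdkjdsjkdsa1} the derivative of $t\mapsto \Add^+_\psi[t](\Add^-_\psi[t](Z))$ is $\bil{\psi(t)}{\Add^+_\psi[t](\Add^-_\psi[t](Z))}-\Add^+_\psi[t](\Add^-_\psi[t](\bil{\psi(t)}{Z}))$, and these two terms only cancel if one already knows that $\Add^+_\psi[t]\cp\Add^-_\psi[t]$ commutes with $\com{\psi(t)}$; the telescoping works only for the composition order of Corollary \ref{lkjfdkjfdkjkjldfjldf}. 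A second, smaller gap: the lemma is stated for $\psi\in\CP^0([a,b],\mq)$, and this generality is essential for its application to piecewise constant approximations in Lemma \ref{kjdskjsdkjdskjkjds}; for such $\psi$ the curve $\Add^+_\psi[X]$ is not $C^1$ at the break points and Lemma \ref{knkjfdsjkfasdkjdsjkdsa} is not available, so the ODE framework you invoke does not even apply as stated.

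For comparison, the paper avoids uniqueness entirely: it proves by induction the Cauchy-product identity $\APOL^+_{\ell,\psi}[X](s)=\sum_{m=0}^\ell \APOL^+_{m,\psi|_{[c,b]}}[\APOL^+_{\ell-m,\psi|_{[a,c]}}[X](c)](s)$ for $s\in(c,b]$, and then controls the difference between the diagonal partial sums and the product of partial sums using \eqref{assaaass} and the binomial theorem. This is purely algebraic-analytic, works verbatim for $\CP^0$ curves, and is logically prior to the invertibility results. If you want to keep an ODE flavour, you could prove uniqueness non-circularly by iterating the integral equation $\delta(t)=\int_c^t\bil{\psi(s)}{\delta(s)}\:\dd s$ and estimating the $n$-fold iterate by $\frac{(t-c)^n}{n!}\ww_\infty(\psi)^n\ww_\infty(\delta)$ via \eqref{assaaass} -- but that is a different argument from the one you wrote, and you would still have to handle the piecewise continuous case separately.
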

\begin{proof}
	Let $X\in \mq$ be fixed. We first prove by induction that 
	\begin{align}
	\label{uidfuidfuiduifdlkdfdklklfd}
	\APOL^+_{\ell,\psi}[X](s)\textstyle=\sum_{m=0}^\ell \APOL^+_{m,\psi|_{[c,b]}}[\APOL^+_{\ell-m,\psi|_{[a,c]}}[X](c)](s)\qquad\quad\forall\: c<s\leq b
	\end{align}
	holds for each $\ell\in \NN$. 
	It is clear from the definitions that 
		\begin{align*}
		\APOL^+_{0,\psi}[X](s)=X= \APOL^+_{0,\psi|_{[c,b]}}[\APOL^+_{0,\psi|_{[a,c]}}[X](c)](s) \qquad\quad\forall\: c<s\leq b. 
	\end{align*}
	We thus can assume that \eqref{uidfuidfuiduifdlkdfdklklfd} holds for some $\ell\in \NN$; and obtain for $c<s\leq b$ that
\begin{align*}
	\APOL^+_{\ell+1,\psi}[X](s)
	&\textstyle=\int_a^s \bil{\psi(s_0)}{\APOL^+_{\ell,\psi}[X](s_0)} \:\dd s_0\\[4pt]
	&\textstyle=\int_a^c \bil{\psi(s_0)}{\APOL^+_{\ell,\psi|_{[a,c]}}[X](s_0)} \:\dd s_0 + \int_c^s \bil{\psi(s_0)}{\APOL^+_{\ell,\psi}[X](s_0)} \:\dd s_0 \\[1pt]
	&\textstyle= \APOL^+_{\ell+1,\psi|_{[a,c]}}[X](c) + \sum_{m=0}^\ell \int_c^s \bil{\psi(s_0)}{ \APOL^+_{m,\psi|_{[c,b]}}[\APOL^+_{\ell-m,\psi|_{[a,c]}}[X](c)](s_0)} \:\dd s_0\\
	&\textstyle=\APOL^+_{\ell+1,\psi|_{[a,c]}}[X](c) + \underbracket{\textstyle\sum_{m=0}^\ell \APOL^+_{m+1,\psi|_{[c,b]}}[\APOL^+_{(\ell+1)-(m+1),\psi|_{[a,c]}}[X](c)](s)}_{\sum_{m=1}^{\ell+1} \APOL^+_{m,\psi|_{[c,b]}}[\APOL^+_{(\ell+1)-m,\psi|_{[a,c]}}[X](c)](s)}\\
	&\textstyle=\sum_{m=0}^{\ell+1} \APOL^+_{m,\psi|_{[c,b]}}[\APOL^+_{\ell+1-m,\psi|_{[a,c]}}[X](c)](s) 
\end{align*}	 
holds, so that \eqref{uidfuidfuiduifdlkdfdklklfd} follows by induction. 
Then, for $n\geq 1$ we have
\begin{align}
\label{dsdsdsdsds1}
	\Delta_n:=&\he\textstyle\sum_{\ell=0}^{2 n}\APOL^+_{\ell,\psi}(X) - \sum_{p=0}^{n}\APOL^+_{p,\psi|_{[c,b]}}( \sum_{q=0}^{n}\APOL^+_{q,\psi|_{[a,c]}}(X))\\
\label{dsdsdsdsds2}
   =&\he\textstyle\sum_{\ell=0}^{2 n}\sum_{m=0}^\ell \APOL^+_{m,\psi|_{[c,b]}}(\APOL^+_{\ell-m,\psi|_{[a,c]}}(X)) - \sum_{p,q=0}^{n}\APOL^+_{p,\psi|_{[c,b]}}(\APOL^+_{q,\psi|_{[a,c]}}(X)).
\end{align}
For $\vv\leq \ww$ as in \eqref{assaaass}, we obtain from \eqref{dsdsdsdsds2} and the binomial theorem that
\begin{align*}
	\textstyle\vv(\Delta_n)&\textstyle\leq \sum_{\ell=n+1}^{2 n}\sum_{m=0}^\ell \vv(\APOL^+_{m,\psi|_{[c,b]}}(\APOL^+_{\ell-m,\psi|_{[a,c]}}(X)))\\
	&\textstyle\leq \ww(X)\cdot \sum_{\ell=n+1}^{\infty} (\sum_{m=0}^\ell \frac{1}{m!\cdot(\ell-m)!}\cdot (b-c)^m\cdot (c-a)^{\ell-m})\cdot\ww_\infty(\psi)^\ell\\
	&\textstyle= \ww(X)\cdot \sum_{\ell=n+1}^{\infty} \frac{1}{\ell!}\cdot (b-a)^\ell\cdot  \ww_\infty(\psi)^\ell 
\end{align*}
holds, which yields
\vspace{-15pt}
\begin{align*}
	\textstyle 0=\lim_n \Delta_n\stackrel{\eqref{dsdsdsdsds1}}{=}\Add^+_\psi(X) - \lim_n\he \overbracket{\textstyle\sum_{p=0}^{n}\APOL^+_{p,\psi|_{[c,b]}}(\sum_{q=0}^{n}\APOL^+_{q,\psi|_{[a,c]}}(X))}^{=:\he \beta_n}.
\end{align*}	
It thus remains to show $\lim_n \beta_n=\Add^+_{\psi|_{[c,b]}}(\Add^+_{\psi|_{[a,c]}}(X))$. For this, we observe that
\begin{align*}
	\textstyle\wt{\Delta}_n:=
	&\he\textstyle\Add^+_{\psi|_{[c,b]}}(\Add^+_{\psi|_{[a,c]}}(X))\textstyle- \beta_n\\
	=&\he\textstyle \underbracket{\textstyle\sum_{p=0}^{n}\APOL^+_{p,\psi|_{[c,b]}}(\sum_{q=n+1}^{\infty}\APOL^+_{q,\psi|_{[a,c]}}(X))}_{\mathrm{T}_1(n)} +  
					\underbracket{\textstyle\textstyle\sum_{p=n+1}^{\infty}\APOL^+_{p,\psi|_{[c,b]}}( \Add^+_{\psi|_{[a,c]}}(X))}_{\mathrm{T}_2(n)}
\end{align*}
holds for each $n\in \NN$; and choose $\ww\leq \mm$ as in \eqref{assaaass} for $\vv\equiv \ww$ and $\ww\equiv \mm$ there. We obtain 
\begin{align*}
	\textstyle\vv(\mathrm{T}_1(n))
	&\textstyle \leq \e^{(b-c)\cdot \ww_\infty(\psi)}\cdot \ww(\sum_{q=n+1}^{\infty}\APOL^+_{q,\psi|_{[a,c]}}(X))\\
		&\textstyle \leq \e^{(b-c)\cdot \ww_\infty(\psi)}\cdot \sum_{q=n+1}^{\infty} \frac{1}{q!}\cdot (c-a)^q\cdot \mm_\infty(\psi)^p,
\end{align*}
hence $\lim_n \mathrm{T}_1(n)=0$. 
We furthermore obtain
\begin{align*}
	\textstyle\vv(\mathrm{T}_2(n))
	&\textstyle \leq (\sum_{p=n+1}^{\infty} \frac{1}{p!}\cdot (b-c)^q\cdot \ww_\infty(\psi)^p) \cdot \ww(\Add^+_{\psi|_{[a,c]}}(X))\\
	&\textstyle \leq  (\sum_{p=n+1}^{\infty}\frac{1}{p!}\cdot (b-c)^p\cdot \ww_\infty(\psi)^p )\cdot  \e^{(c-a)\cdot \mm_\infty(\psi)} \cdot \mm(X),
\end{align*}
hence $\lim_n \mathrm{T}_2(n)=0$. It follows that $\lim_n\wt{\Delta}_n=0$ holds, which proves the claim.
\end{proof}
\begin{remark}
\label{pofdpofdpofdpofdfdfdcvcvvvc}
	Lemma \ref{jdjdfkjfdkjfd}, together with the second point in Remark \ref{iufdicxcxcxcxufdiufdiufddd} shows
\begin{align*}
	\Add^+_\psi[b]=\Add^+_{\psi(\cdot-x)}[b+x]
\end{align*} 
for $a<b$, $\psi\in \CP^0([a,b],\mq)$, and $x\in \RR$.
\hspace*{\fill}\qed
\end{remark}
\begin{lemma}
\label{kjfdkjdfljllkjfd}
Let $\psi\in \CP^0([a,b],\mq)$ and $\{\psi_n\}_{n\in \NN}\subseteq \CP^0([a,b],\mq)$ be given, with $\{\psi_n\}_{n\in \NN}\rightarrow\psi$ w.r.t.\ the $C^0$-topology. Then, for each $X\in \mq$, we have  $\{\Add^\pm_{\psi_n}[X]\}_{n\in \NN} \rightarrow \Add_\psi^\pm[X]$ 
w.r.t.\ the $C^0$-topology.  
\end{lemma}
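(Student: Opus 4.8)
The plan is to exploit the series representation $\Add^\pm_\chi[X]=\sum_{\ell=0}^\infty \APOL^\pm_{\ell,\chi}[X]$ (whose convergence is covered, as recorded right before its definition, by Lemma \ref{knkjfdsjkfasdkjdsjkdsa}) and to split the difference $\Add^\pm_{\psi_n}[X]-\Add^\pm_\psi[X]$ into a tail that is small uniformly in $n$ and in $t\in[a,b]$, plus a finite sum of summands that each converge uniformly. First I would fix $X\in\mq$ and $\vv\in\Sem{\mq}$, choose $\vv\leq\ww\in\Sem{\mq}$ as in \eqref{assaaass}, and observe that, since $\{\psi_n\}_{n\in\NN}\rightarrow\psi$ w.r.t.\ the $C^0$-topology, the supremum $C$ of $\ww_\infty(\psi)$ and of all $\ww_\infty(\psi_n)$, $n\in\NN$, is finite. (If the curves are genuinely piecewise continuous, one first passes to a common decomposition of $[a,b]$ underlying all the elements of $\CP^0([a,b],\mq)$ involved, so that all the integrals below are governed by \eqref{opofdpopfd} on each subinterval.)

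For the tail, I would argue exactly as in the proof of Corollary \ref{ljkdskjdslkjsda} (with $\mq$ in place of $\mg$; the relevant estimate there is a pure iterated-integral bound and does not use $\psi\in\DIDE$): this yields $\vv(\APOL^\pm_{\ell,\chi}[X](t))\leq \frac{(b-a)^\ell}{\ell!}\cdot C^\ell\cdot\ww(X)$ for all $t\in[a,b]$, all $\ell\in\NN$, and every $\chi\in\{\psi\}\cup\{\psi_n\:|\:n\in\NN\}$. Hence, given $\varepsilon>0$, I can pick $N\in\NN$ with $\sum_{\ell>N}\frac{(b-a)^\ell}{\ell!}C^\ell\ww(X)<\varepsilon/3$; this controls the tails of $\Add^\pm_\psi[X]$ and of every $\Add^\pm_{\psi_n}[X]$ simultaneously and uniformly in $t$.

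For the finitely many summands $\ell=0,\dots,N$, I would prove $\APOL^\pm_{\ell,\psi_n}[X]\to\APOL^\pm_{\ell,\psi}[X]$ uniformly on $[a,b]$ by a telescoping argument: the integrand difference $(\bilbr{\psi_n(s_1)}\cp\dots\cp\bilbr{\psi_n(s_\ell)})(X)-(\bilbr{\psi(s_1)}\cp\dots\cp\bilbr{\psi(s_\ell)})(X)$ (and the reversed product for the ``$-$'' case) equals $\sum_{j=1}^\ell(\bilbr{\psi(s_1)}\cp\dots\cp\bilbr{\psi(s_{j-1})}\cp\bilbr{\psi_n(s_j)-\psi(s_j)}\cp\bilbr{\psi_n(s_{j+1})}\cp\dots\cp\bilbr{\psi_n(s_\ell)})(X)$, and \eqref{assaaass} bounds the $\vv$-seminorm of each of these $\ell$ terms by $C^{\ell-1}\cdot\ww_\infty(\psi_n-\psi)\cdot\ww(X)$, uniformly in $(s_1,\dots,s_\ell)$. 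Integrating over the corresponding simplex (of volume $\leq (b-a)^\ell/\ell!$) and using $C^0$-continuity of the Riemann integral \eqref{ffdlkfdlkfd} (resp.\ \eqref{opofdpopfd}) gives $\vv(\APOL^\pm_{\ell,\psi_n}[X](t)-\APOL^\pm_{\ell,\psi}[X](t))\leq \frac{(b-a)^\ell}{\ell!}\cdot\ell\cdot C^{\ell-1}\cdot\ww_\infty(\psi_n-\psi)\cdot\ww(X)$ for all $t\in[a,b]$ (the case $\ell=0$ being trivial). Summing over $\ell=0,\dots,N$ and using $\ww_\infty(\psi_n-\psi)\to 0$, there is $N_\varepsilon$ with $\sum_{\ell=0}^N\vv(\APOL^\pm_{\ell,\psi_n}[X](t)-\APOL^\pm_{\ell,\psi}[X](t))<\varepsilon/3$ for all $n\geq N_\varepsilon$ and all $t\in[a,b]$.

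Putting the two tail bounds and this finite-sum bound together via the triangle inequality gives $\vv_\infty(\Add^\pm_{\psi_n}[X]-\Add^\pm_\psi[X])<\varepsilon$ for $n\geq N_\varepsilon$, and since $\vv\in\Sem{\mq}$ was arbitrary this is exactly the asserted $C^0$-convergence. I expect no real obstacle here; the only things needing care are the combinatorial bookkeeping in the telescoping estimate of Step 2 and, in the genuinely piecewise-continuous case, keeping a common decomposition of $[a,b]$ fixed throughout so that \eqref{opofdpopfd} and all the above estimates apply verbatim on each subinterval.
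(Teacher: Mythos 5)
Your proof is correct and follows essentially the same route as the paper: the same telescoping identity for differences of iterated brackets, the same asymptotic-estimate bound \eqref{assaaass} on each telescoped term, and the same simplex-volume factor $(b-a)^\ell/\ell!$. The only difference is that the paper sums the resulting per-term bounds over all $\ell$ at once, obtaining $\vv_\infty(\Add^\pm_{\psi_n}[X]-\Add^\pm_\psi[X])\leq (b-a)\cdot \ww_\infty(\psi_n-\psi)\cdot\ww(X)\cdot \e^{2(b-a)\ww_\infty(\psi)}$ directly, which renders your $\varepsilon/3$ tail-splitting unnecessary.
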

\begin{proof}
For each $\ell\geq 1$ and $X_1,\dots,X_\ell,Y_1,\dots,Y_\ell,X\in \mq$, we have
\begin{align}
\begin{split}
\label{oioigfoifdd}
	(\com{X_1}\cp{\dots}\cp\com{X_\ell})(X)&\textstyle-(\com{Y_1}\cp{\dots}\cp\com{Y_\ell})(X)\\[2pt]
	&\textstyle= \sum_{p=1}^\ell\: (\com{Y_1}\cp\dots\cp \com{Y_{p-1}}\cp \com{X_p-Y_p}\cp \com{X_{p+1}}\cp\dots\cp \com{X_\ell})(X).
\end{split} 
\end{align}	
Let $\vv\leq \ww$ be as in \eqref{assaaass}. For $N\in \NN$ suitably large, we have $\ww_\infty(\psi_n)\leq 2\cdot \ww_\infty(\psi)$ for each $n\geq N$. 
We obtain for $n\geq N$ from \eqref{oioigfoifdd} (second step) that
\begin{align*}
	\vv(\Add^\pm_{\psi_n}[X](t)-\Add^\pm_\psi[X](t))&\textstyle\leq \sum_{\ell=0}^\infty \vv(\APOL^\pm_{\ell,\psi_n}[X](t)-\APOL^\pm_{\ell,\psi}[X](t))\\
	&\textstyle\leq (b-a)\cdot \ww_\infty(\psi_n-\psi) \cdot \ww(X)\cdot\sum_{\ell=1}^\infty \frac{(b-a)^{\ell-1}}{(\ell-1)!}\cdot 2^{\ell-1}\cdot \ww_\infty(\psi)^{\ell-1}\\
	&\textstyle= (b-a)\cdot \ww_\infty(\psi_n-\psi) \cdot \ww(X)\cdot\e ^{2\cdot (b-a)\cdot \ww_\infty(\psi)}
\end{align*}
holds for each $t\in [a,b]$, which proves the claim.
\end{proof}

\subsubsection{Uniqueness of the Solution}
\label{ofdoifdoifdojkdnmcvnmc}
For $\psi\in C^0([a,b],\mq)$ with $a<b$, we define $\inverse{\psi}\in C^0([a,b],\mq)$ as in Example \ref{fdpofdopdpof} by 
\begin{align*}
	\inverse{\psi}\colon [a,b]\ni t\mapsto -\psi(a+b-t)\in \mq. 
\end{align*}
Notably, we have $\inverse{\inverse{\psi}}=\psi$. 
In this section, we prove the following proposition.\footnote{For $a<b$ and $\chi\in C^0([a,b],\mq)$, set $\Add^\pm_{\chi|_{[a,a]}}:=\id_\mq$. } 
\begin{proposition}
\label{lkjfdldlkkjfdYYX}
	Let $\psi\in C^0([a,b],\mq)$ be given. 
	Then, the following assertions hold:
\begingroup
\setlength{\leftmargini}{17pt}
{
\renewcommand{\theenumi}{\emph{\arabic{enumi})}} 
\renewcommand{\labelenumi}{\theenumi}
\begin{enumerate}
\item
\label{lkjfdldlkkjfdYYX1}
	We have $\Add^\pm_\psi[t]\in \Aut(\mq)$ for each $t\in [a,b]$,   with $\Add^\pm_\psi[t]^{-1}=\Add^\mp_\psi[t]=\Add^\pm_{\inverse{\psi|_{[a,t]}}}$.
\item
\label{lkjfdldlkkjfdYYX2}
Let $X\in \mq$ be given. Then, $\Add^+_\psi[X]$ is the unique solution $\alpha\in C^1([a,b],\mq)$ to the differential equation (Lax equation) 
\begin{align*}
	\dot\alpha=\bil{\psi}{\alpha}\qquad\:\:\text{with initial condition}\qquad\:\: \alpha(a)=X.
\end{align*}
\end{enumerate}}
\endgroup		
\end{proposition}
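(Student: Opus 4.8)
The plan is to obtain part~\ref{lkjfdldlkkjfdYYX1} by upgrading the one-sided relation of Corollary~\ref{lkjfdkjfdkjkjldfjldf} to a two-sided one via a time-reversal identity, and then to prove part~\ref{lkjfdldlkkjfdYYX2} by the cancellation argument already used for Corollary~\ref{lksdlkdslkdslk}, now invoking part~\ref{lkjfdldlkkjfdYYX1} to invert $\Add^+_\psi[t]$.

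For part~\ref{lkjfdldlkkjfdYYX1}, the first step is the reversal identity
\begin{align*}
	\Add^\pm_{\inverse{\psi|_{[a,t]}}}[t]=\Add^\mp_\psi[t]\qquad\quad\forall\: t\in [a,b],
\end{align*}
where for $t=a$ this is the convention $\Add^\pm_{\inverse{\psi|_{[a,a]}}}=\id_\mq$. For $t>a$ I would prove it termwise: fixing $\ell\geq 1$ and $X\in\mq$, the substitution $u_i:=a+t-s_i$ (performed one integration at a time via~\eqref{substitRI}, with $\varrho(s)=a+t-s$) reverses the ordering of the simplex $\{a\leq s_\ell\leq\dots\leq s_1\leq t\}$ and turns $\com{-\psi(a+t-s_1)}\cp\dots\cp\com{-\psi(a+t-s_\ell)}$ into $(-1)^\ell\,\com{\psi(u_\ell)}\cp\dots\cp\com{\psi(u_1)}$; reindexing the simplex identifies the result with $\APOL^-_{\ell,\psi}[X](t)$, and the case $\ell=0$ is trivial. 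Since both $\sum_\ell\APOL^+_{\ell,\inverse{\psi|_{[a,t]}}}[X]$ and $\sum_\ell\APOL^-_{\ell,\psi}[X]$ converge absolutely by the asymptotic estimate bound~\eqref{assaaass} (Lemma~\ref{knkjfdsjkfasdkjdsjkdsa}), summing over $\ell$ yields $\Add^+_{\inverse{\psi|_{[a,t]}}}[t]=\Add^-_\psi[t]$, and the $\mp$-version is the same computation carrying the extra sign from the definition of $\APOL^-$. Next, for $t>a$ I would apply Corollary~\ref{lkjfdkjfdkjkjldfjldf} to the curve $\chi:=\inverse{\psi|_{[a,t]}}\in C^0([a,t],\mq)$ to get $\Add^-_\chi[t]\cp\Add^+_\chi[t]=\id_\mq$, which by the reversal identity reads $\Add^+_\psi[t]\cp\Add^-_\psi[t]=\id_\mq$; together with $\Add^-_\psi[t]\cp\Add^+_\psi[t]=\id_\mq$ from Corollary~\ref{lkjfdkjfdkjkjldfjldf} itself, the linear map $\Add^+_\psi[t]$ is bijective with inverse $\Add^-_\psi[t]=\Add^+_{\inverse{\psi|_{[a,t]}}}[t]$. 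By Lemma~\ref{knkjfdsjkfasdkjdsjkdsa}.\ref{knkjfdsjkfasdkjdsjkdsa2} both maps are continuous, hence $\Add^+_\psi[t]\in\Aut(\mq)$; applying this to $\inverse{\psi|_{[a,t]}}$ (and using $\inverse{\inverse{\psi|_{[a,t]}}}=\psi|_{[a,t]}$) gives the statement for $\Add^-_\psi[t]$, while $t=a$ is covered by convention.

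For part~\ref{lkjfdldlkkjfdYYX2}, existence is immediate from Lemma~\ref{knkjfdsjkfasdkjdsjkdsa}.\ref{knkjfdsjkfasdkjdsjkdsa1}: $\Add^+_\psi[X]\in C^1([a,b],\mq)$ with $\partial_t\Add^+_\psi[X](t)=\bil{\psi(t)}{\Add^+_\psi[X](t)}$, and $\Add^+_\psi[X](a)=X$ since $\APOL^+_{\ell,\psi}[X](a)=0$ for $\ell\geq1$. For uniqueness, suppose $\alpha\in C^1([a,b],\mq)$ satisfies $\dot\alpha=\bil{\psi}{\alpha}$ with $\alpha(a)=X$, and put $\beta(t):=\etam^-_\psi(t,\alpha(t))=\Add^-_\psi[t](\alpha(t))$. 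By Lemma~\ref{knkjfdsjkfasdkjdsjkdsa} and the parts~\ref{linear},~\ref{chainrule},~\ref{productrule} of Proposition~\ref{iuiuiuiuuzuzuztztttrtrtr}, $\beta\in C^1$ and
\begin{align*}
	\dot\beta(t)=-\Add^-_\psi[\bil{\psi(t)}{\alpha(t)}](t)+\Add^-_\psi[\dot\alpha(t)](t)=0
\end{align*}
because $\dot\alpha(t)=\bil{\psi(t)}{\alpha(t)}$; hence $\beta\equiv\beta(a)=\Add^-_\psi[a](X)=X$ by~\eqref{isdsdoisdiosd}. Applying $\Add^+_\psi[t]$ and using part~\ref{lkjfdldlkkjfdYYX1} ($\Add^+_\psi[t]\cp\Add^-_\psi[t]=\id_\mq$) gives $\alpha(t)=\Add^+_\psi[t](X)=\Add^+_\psi[X](t)$ for every $t\in[a,b]$, which is the asserted uniqueness.

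The main obstacle — and the reason the present section is needed at all — is the construction of a \emph{right} inverse of $\Add^+_\psi[t]$: in the infinite-dimensional setting the left-inverse relation of Corollary~\ref{lkjfdkjfdkjkjldfjldf} does not by itself force bijectivity, so essentially all the weight of part~\ref{lkjfdldlkkjfdYYX1} rests on the reversal identity; its only delicate points are the (routine but bookkeeping-heavy) termwise substitution and the interchange of summation with integration, both legitimised by the asymptotic estimate bound~\eqref{assaaass}.
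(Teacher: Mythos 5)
Your argument is correct, and your Part \ref{lkjfdldlkkjfdYYX2}) coincides with the paper's; but for the decisive step of Part \ref{lkjfdldlkkjfdYYX1}) -- producing a \emph{right} inverse of $\Add^+_\psi[t]$ -- you take a genuinely different route. The paper first establishes $\Add^+_\psi\cp\Add^+_{\inverse{\psi}}=\id_\mq$ (Lemma \ref{kjdskjsdkjdskjkjds}) by approximating $\psi$ with piecewise constant curves, for which the identity reduces to the exponential series \eqref{dslkklldslkdowqpowqpowq}, and then passing to the limit via the composition rule of Lemma \ref{jdjdfkjfdkjfd} and the $C^0$-continuity statement of Lemma \ref{kjfdkjdfljllkjfd}; the reversal identity $\Add^-_\psi[b]=\Add^+_{\inverse{\psi}}[b]$ is then \emph{deduced} from this together with the left-inverse relation of Corollary \ref{lkjfdkjfdkjkjldfjldf}. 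You instead \emph{prove} the reversal identity first, termwise in $\ell$, by the change of variables $u_i=a+t-s_i$ in $\APOL^\pm_{\ell,\cdot}$, and then obtain the right-inverse relation for free by applying Corollary \ref{lkjfdkjfdkjkjldfjldf} to the time-reversed curve -- which neatly exploits the symmetry that the paper's footnote identifies as the whole difficulty (a one-sided inverse does not suffice in infinite dimensions). Both arguments are sound, and I checked that your termwise identity $\APOL^+_{\ell,\inverse{\psi|_{[a,t]}}}[X](t)=\APOL^-_{\ell,\psi}[X](t)$ (and its $\mp$ companion) comes out with the correct signs. Your route avoids discretization and limits entirely and is in that sense more elementary; its only cost is the ``reindexing of the simplex'', i.e.\ interchanging the order of the iterated Riemann integrals over $\{a\le u_1\le\dots\le u_\ell\le t\}$, a Fubini-type statement for continuous $\mq$-valued integrands that the paper never records. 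It is provable with the tools at hand (differentiate both iterated integrals in the upper limit using \eqref{oidsoidoisdoidsoioidsiods} and Corollary \ref{lkjjslkjdlkjdskjldskjs}, then induct on $\ell$), and the interchange of summation over $\ell$ with integration is justified exactly as you say by \eqref{assaaass}; so there is no gap, only one routine lemma you would need to write out.
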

\noindent
We obtain the following corollaries.
\begin{corollary}
\label{fdjkfdlkjfdkjfd}
For each $\psi\in C^0([a,b],\mq)$, we have
\begin{align*}
	\Add^\pm_\psi[\bil{X}{Y}]=\bil{\Add^\pm_\psi[X]}{\Add^\pm_\psi[Y]}\qquad\quad\forall\: X,Y\in \mq.
\end{align*}
\end{corollary}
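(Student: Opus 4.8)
\textbf{Proof proposal for Corollary \ref{fdjkfdlkjfdkjf}.}

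The plan is to fix $\psi\in C^0([a,b],\mq)$ and $X,Y\in\mq$, and to prove the identity for the plus-sign case; the minus-sign case then follows by replacing $\psi$ with $\inverse{\psi|_{[a,t]}}$ and invoking Proposition \ref{lkjfdldlkkjfdYYX}.\ref{lkjfdldlkkjfdYYX1}. By Proposition \ref{lkjfdldlkkjfdYYX}.\ref{lkjfdldlkkjfdYYX2}, the curve $\alpha:=\Add^+_\psi[\bil{X}{Y}]\in C^1([a,b],\mq)$ is the unique solution of the Lax equation $\dot\alpha=\bil{\psi}{\alpha}$ with $\alpha(a)=\bil{X}{Y}$. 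So it suffices to show that the candidate curve
\begin{align*}
	\beta\colon [a,b]\ni t\mapsto \bil{\Add^+_\psi[X](t)}{\Add^+_\psi[Y](t)}\in\mq
\end{align*}
also solves this initial value problem, and then uniqueness forces $\beta=\alpha$, which is exactly the asserted equality.

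The verification has two parts. First, $\beta(a)=\bil{\Add^+_\psi[X](a)}{\Add^+_\psi[Y](a)}=\bil{X}{Y}$, since $\Add^+_\psi[Z](a)=Z$ for all $Z$ (this is the $\ell=0$ term, all higher $\APOL^+_{\ell,\psi}[Z]$ vanishing at $a$; cf.\ the definitions in Sect.\ \ref{kdsjlkdslkjsdjkds} and Lemma \ref{knkjfdsjkfasdkjdsjkdsa}). Second, $\beta\in C^1([a,b],\mq)$ because $\bil{\cdot}{\cdot}$ is smooth and $\Add^+_\psi[X],\Add^+_\psi[Y]$ are of class $C^1$ by Lemma \ref{knkjfdsjkfasdkjdsjkdsa}.\ref{knkjfdsjkfasdkjdsjkdsa1}; applying the bilinear product rule (parts \ref{linear} and \ref{productrule} of Proposition \ref{iuiuiuiuuzuzuztztttrtrtr}) together with the differential equations $\partial_t\Add^+_\psi[X](t)=\bil{\psi(t)}{\Add^+_\psi[X](t)}$ and $\partial_t\Add^+_\psi[Y](t)=\bil{\psi(t)}{\Add^+_\psi[Y](t)}$ from Lemma \ref{knkjfdsjkfasdkjdsjkdsa}.\ref{knkjfdsjkfasdkjdsjkdsa1}, we get
\begin{align*}
	\dot\beta(t)=\bil{\bil{\psi(t)}{\Add^+_\psi[X](t)}}{\Add^+_\psi[Y](t)}+\bil{\Add^+_\psi[X](t)}{\bil{\psi(t)}{\Add^+_\psi[Y](t)}}.
\end{align*}
By the Jacobi identity \eqref{nmvcnmvcnmnkjsakjsakjsaa} (with $Z\equiv\psi(t)$), the right-hand side equals $\bil{\psi(t)}{\bil{\Add^+_\psi[X](t)}{\Add^+_\psi[Y](t)}}=\bil{\psi(t)}{\beta(t)}$, so $\dot\beta=\bil{\psi}{\beta}$.

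Thus $\beta$ solves the same initial value problem as $\alpha$, and Proposition \ref{lkjfdldlkkjfdYYX}.\ref{lkjfdldlkkjfdYYX2} gives $\beta=\alpha$, i.e., $\bil{\Add^+_\psi[X]}{\Add^+_\psi[Y]}=\Add^+_\psi[\bil{X}{Y}]$. For the minus case, apply Proposition \ref{lkjfdldlkkjfdYYX}.\ref{lkjfdldlkkjfdYYX1}: fixing $t\in[a,b]$ and writing $\chi:=\inverse{\psi|_{[a,t]}}$, we have $\Add^-_\psi[t]=\Add^+_\chi[t']$ for the appropriate endpoint $t'$ of the reversed interval, so the plus-case identity already proved yields $\bil{\Add^-_\psi[t](X)}{\Add^-_\psi[t](Y)}=\Add^-_\psi[t](\bil{X}{Y})$, completing the proof. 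I do not anticipate a serious obstacle here; the only point requiring a little care is the bookkeeping in the minus-sign reduction, making sure the interval-reversal identity from Proposition \ref{lkjfdldlkkjfdYYX}.\ref{lkjfdldlkkjfdYYX1} is applied with the correct endpoints, but this is routine.
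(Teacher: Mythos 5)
Your proposal is correct and follows essentially the same route as the paper: show that $\bil{\Add^+_\psi[X]}{\Add^+_\psi[Y]}$ solves the Lax equation with initial value $\bil{X}{Y}$ via the product rule and the Jacobi identity, then invoke the uniqueness statement of Proposition \ref{lkjfdldlkkjfdYYX}.\ref{lkjfdldlkkjfdYYX2}. The only (immaterial) difference is the minus-sign step: the paper sandwiches with $\Add^+_\psi[t]\cp\Add^-_\psi[t]=\id_\mq$ and applies the plus-case to the images, whereas you rewrite $\Add^-_\psi[t]=\Add^+_{\inverse{\psi|_{[a,t]}}}$ — both rest on Proposition \ref{lkjfdldlkkjfdYYX}.\ref{lkjfdldlkkjfdYYX1} and are equally valid.
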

\begin{proof}
	Let $X,Y\in \mg$ and $\psi\in C^0([a,b],\mq)$ be given.  
	Lemma \ref{knkjfdsjkfasdkjdsjkdsa} shows 
	\begin{align*}
		C^1([a,b],\mq)\ni\alpha\colon [a,b]\ni t\mapsto \bil{\Add^+_\psi[X](t)}{\Add^+_\psi[Y](t)}\in \mq,
	\end{align*}		
	with $\alpha(a)=\bil{X}{Y}$. The parts \ref{linear},   \ref{productrule} of Proposition \ref{iuiuiuiuuzuzuztztttrtrtr} (first step), and the Jacobi identity \eqref{nmvcnmvcnmnkjsakjsakjsaa} (second step) show that
	\begin{align*}
		\dot\alpha(t)&\textstyle=\bil{\bil{\psi(t)}{\Add^+_\psi[X](t)}}{\Add^+_\psi[Y](t)} + \bil{\Add^+_\psi[X](t)}{\bil{\psi(t)}{\Add^+_\psi[Y](t)}}\\
				  &\textstyle= \bil{\psi(t)}{\bil{\Add^+_\psi[X](t)}{\Add^+_\psi[Y](t)}}\\
				  &\textstyle=\bil{\psi(t)}{\alpha(t)}		
	\end{align*}
	holds for each $t\in [a,b]$. Proposition \ref{lkjfdldlkkjfdYYX}.\ref{lkjfdldlkkjfdYYX2} yields $\alpha=\Add^+_\psi[\bil{X}{Y}]$, which proves the one part of the statement.  Then,  Proposition \ref{lkjfdldlkkjfdYYX}.\ref{lkjfdldlkkjfdYYX1} shows
\begin{align*}
	\Add^-_\psi[\bil{X}{Y}](t)&= \Add^-_\psi[t](\bil{(\Add^+_\psi[t] \cp\Add^-_\psi[t])(X)}{(\Add^+_\psi[t]\cp \Add^-_\psi[t])(Y)})\\
	&= (\Add^-_\psi[t]\cp \Add^+_\psi[t])(\bil{\Add^-_\psi[X](t)}{\Add^-_\psi[Y](t)})\\
	&=\bil{\Add^-_\psi[X](t)}{\Add^-_\psi[Y](t)}
\end{align*}	
for each $t\in [a,b]$ and $X,Y\in \mq$, 
which establishes the claim.
\end{proof}
\begin{corollary}
\label{oidspodspodspod}
	For $\psi\in C^0([a,b],\mq)$ and $\varrho\colon [a',b']\rightarrow [a,b]$ of class $C^1$, we have
\begin{align*}
	 \textstyle\Add^+_{\psi}[\varrho(t)]=\Add^+_{\dot\varrho\he\cdot \he(\psi\he\cp\he\varrho)}[t]\cp \Add_{\psi}^+[\varrho(a')]\qquad\quad\forall\: t\in [a',b'].
\end{align*}  
\end{corollary}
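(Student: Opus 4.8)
The statement is a substitution-type formula for the Lax evolution $\Add^+_\psi[\cdot]$, analogous to the substitution property \ref{subst} of the product integral and to Remark \ref{iufdicxcxcxcxufdiufdiufddd}. The plan is to reduce it to two already-established facts: the substitution behaviour of $\Add^+$ under reparametrizations from Remark \ref{iufdicxcxcxcxufdiufdiufddd}, and the uniqueness of solutions to the Lax equation from Proposition \ref{lkjfdldlkkjfdYYX}.\ref{lkjfdldlkkjfdYYX2}. First I would fix $\psi\in C^0([a,b],\mq)$ and a $C^1$-map $\varrho\colon[a',b']\to[a,b]$, and set $\phi:=\dot\varrho\cdot(\psi\cp\varrho)\in C^0([a',b'],\mq)$. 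The goal is the identity
\begin{align*}
	\Add^+_{\psi}[\varrho(t)]=\Add^+_{\phi}[t]\cp \Add^+_{\psi}[\varrho(a')]\qquad\forall\: t\in[a',b'].
\end{align*}
Note that $\varrho$ here is not assumed to be monotone or surjective, so \eqref{kjfdkjfdjkfdkjfdkjkjfdkjfdkjfdkjfd} in Remark \ref{iufdicxcxcxcxufdiufdiufddd} does not directly apply; instead I would argue via the defining differential equation.

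\textbf{Key steps.}
Fix $X\in\mq$ and consider the curve $\beta\colon[a',b']\ni t\mapsto \Add^+_{\psi}[\varrho(t)](X)\in\mq$, where I read $\Add^+_\psi[s](X)=\etam^+(\psi,\mathcal{C}_X)(s)$ — wait, more precisely $\Add^+_{\psi}[\varrho(t)]$ should be read as $\Add^+_{\psi|_{[a,\varrho(t)]}}[b']$... Actually the cleanest reading: set $Y:=\Add^+_{\psi}[\varrho(a')](X)$, and let $\alpha\colon[a',b']\to\mq$ be $\alpha(t):=\Add^+_{\phi}[Y](t)$. By Lemma \ref{knkjfdsjkfasdkjdsjkdsa}.\ref{knkjfdsjkfasdkjdsjkdsa1}, $\alpha\in C^1$ with $\alpha(a')=Y$ and $\dot\alpha(t)=\bil{\phi(t)}{\alpha(t)}=\bil{\dot\varrho(t)\cdot\psi(\varrho(t))}{\alpha(t)}$. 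On the other hand, I would check that $\gamma(t):=\Add^+_{\psi}[\varrho(t)](X)$ — interpreting $\Add^+_{\psi}[s]=\Add^+_{\psi|_{[a,s]}}$, which equals $\Add^+_\psi[\cdot](s)$ evaluated appropriately using that $\Add^+_\psi[X]$ solves $\dot{(\cdot)}=\bil{\psi}{\cdot}$ with value $X$ at $a$ — is also $C^1$: since $\Add^+_\psi[X]\in C^1([a,b],\mq)$ by Lemma \ref{knkjfdsjkfasdkjdsjkdsa} and $\varrho\in C^1$, the chain rule (Proposition \ref{iuiuiuiuuzuzuztztttrtrtr}, parts \ref{linear}, \ref{chainrule}) gives $\dot\gamma(t)=\dot\varrho(t)\cdot\bil{\psi(\varrho(t))}{\Add^+_\psi[X](\varrho(t))}=\bil{\dot\varrho(t)\cdot\psi(\varrho(t))}{\gamma(t)}=\bil{\phi(t)}{\gamma(t)}$. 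Both $\alpha$ and $\gamma$ thus solve the same linear ODE $\dot{(\cdot)}=\bil{\phi}{\cdot}$ on $[a',b']$. For the initial condition: $\gamma(a')=\Add^+_\psi[X](\varrho(a'))=\Add^+_{\psi}[\varrho(a')](X)=Y=\alpha(a')$. By the uniqueness part of Proposition \ref{lkjfdldlkkjfdYYX}.\ref{lkjfdldlkkjfdYYX2}, $\gamma=\alpha$, i.e. $\Add^+_{\psi}[\varrho(t)](X)=\Add^+_{\phi}[t](Y)=\Add^+_{\phi}[t](\Add^+_\psi[\varrho(a')](X))$ for all $t$. Since $X\in\mq$ was arbitrary and all maps involved are linear in $X$ (Lemma \ref{knkjfdsjkfasdkjdsjkdsa}.\ref{knkjfdsjkfasdkjdsjkdsa2}), the operator identity follows.

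\textbf{Main obstacle.}
The main subtlety is making the reading of the symbol $\Add^+_{\psi}[\varrho(t)]$ precise and checking the $C^1$-regularity of $t\mapsto\gamma(t)$ carefully — one must be sure that $\Add^+_\psi[X](s)$ is genuinely $C^1$ in the endpoint $s$ (not just in the "running" variable), which is exactly the content of Lemma \ref{knkjfdsjkfasdkjdsjkdsa}.\ref{knkjfdsjkfasdkjdsjkdsa1} since $\Add^+_\psi[X]=\etam^+_\psi(\cdot,X)$ is a $C^1$-curve, so composing with $\varrho\in C^1$ and applying the chain rule is legitimate. A secondary point is that $\varrho$ need not be monotone, so I cannot invoke the restriction-based identity \eqref{kjfdkjfdjkfdkjfdkjkjfdkjfdkjfdkjfd}; the ODE-uniqueness argument is what makes the general (non-monotone) case work, and this is why I route everything through Proposition \ref{lkjfdldlkkjfdYYX} rather than through Remark \ref{iufdicxcxcxcxufdiufdiufddd} directly. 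Everything else is a routine application of the linearity of $\bil{\cdot}{\cdot}$ and the product/chain rules.
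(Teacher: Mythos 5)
Your proof is correct and follows essentially the same route as the paper: both identify $t\mapsto\Add^+_{\psi}[X](\varrho(t))$ as a $C^1$-solution of the Lax equation for $\dot\varrho\cdot(\psi\cp\varrho)$ via Lemma \ref{knkjfdsjkfasdkjdsjkdsa} and the chain rule, and conclude by the uniqueness statement in Proposition \ref{lkjfdldlkkjfdYYX}.\ref{lkjfdldlkkjfdYYX2}. The only (cosmetic) difference is that the paper normalizes the initial value by pre-composing with $\Add^-_{\psi}[\varrho(a')]$ and then undoes this using Proposition \ref{lkjfdldlkkjfdYYX}.\ref{lkjfdldlkkjfdYYX1}, whereas you match the two solutions directly at the common initial value $Y=\Add^+_{\psi}[\varrho(a')](X)$.
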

\begin{proof}
	For $Z\in \mq$, define
\begin{align*}
	\alpha_Z(t):=\Add^+_{\psi}[\varrho(t)](\Add_\psi^-[\varrho(a')](Z))\qquad\quad\forall\: t\in [a',b'].
\end{align*}	
	 Proposition \ref{lkjfdldlkkjfdYYX} shows $\alpha_Z\in C^1([a',b'],\mq)$, with $\alpha_Z(a')=Z$ and (also apply \ref{chainrule} in Proposition \ref{iuiuiuiuuzuzuztztttrtrtr})
	 \begin{align*}
		\dot\alpha_Z(t)= \bil{\dot\varrho(t)\cdot (\psi\cp\varrho)(t)}{ \alpha_Z(t)} \qquad\quad\forall\: t\in [a',b'].	
	\end{align*}	 
	The uniqueness statement in Proposition \ref{lkjfdldlkkjfdYYX} (second identity) yields 
	\begin{align*}
		\Add^+_{\psi}[\varrho(t)](\Add_\psi^-[\varrho(a')](Z))=\alpha_Z(t)= \Add^+_{\dot\varrho\he\cdot \he(\psi\he\cp\he\varrho)}[t](Z)\qquad\quad\forall\: t\in [a,b],\: Z\in \mq. 
	\end{align*}
	Given $X\in \mq$, we set $Z:=\Add_\psi^+[\varrho(a')](X)$, and obtain from Proposition \ref{lkjfdldlkkjfdYYX}.\ref{lkjfdldlkkjfdYYX1} that 
\begin{align*}
	 \textstyle\Add^+_{\psi}[\varrho(t)](X)=\Add^+_{\dot\varrho\he\cdot \he(\psi\he\cp\he\varrho)}[t]( \Add_{\psi}^+[\varrho(a')](X))\qquad\quad\forall\: t\in [a',b']
\end{align*} 
holds, from which the claim is clear.
\end{proof}

\begin{remark}
\label{kfdkjfdjkfdfd}
	Let $\mathcal{L}\colon \mq\rightarrow \mq$ be linear and  continuous, with 
	\begin{align}
	\label{podsposdposdposd}
		\mathcal{L}(\bil{X}{Y})=\bil{\mathcal{L}(X)}{\mathcal{L}(Y)}\qquad\quad\forall\: X,Y\in \mq. 
	\end{align}	 
	It follows by iterated application of \eqref{podsposdposdposd} and  \eqref{pofdpofdpofdsddsdsfd} that
		\begin{align}
		\label{oidsoidsoidsds}
		\mathcal{L}\cp \Add^\pm_\psi[t] = \Add^\pm_{\mathcal{L}\he\cp\he \psi}[t]\cp \mathcal{L}\qquad\quad\forall\: a<b,\: \psi\in C^0([a,b],\mq),\:t\in [a,b]
	\end{align} 	
	holds. 
	This equality alternatively follows from Proposition \ref{lkjfdldlkkjfdYYX}: 
\begin{proof}[Proof of Equation \eqref{oidsoidsoidsds}]
	Given $\psi\in C^0([a,b],\mq)$ and $Z\in \mq$, define $\alpha:=\mathcal{L}\cp \Add^+_{\psi}[Z]\in C^1([a,b],\mq)$. Then, we have $\alpha(a)=\mathcal{L}(Z)$, with
	\begin{align*}
		\dot\alpha=\mathcal{L}(\bil{\psi}{\Add^+_\psi[Z]})=\bil{\mathcal{L}\cp\psi}{\alpha}.
	\end{align*}
	Proposition \ref{lkjfdldlkkjfdYYX}.\ref{lkjfdldlkkjfdYYX2} shows $\alpha=\Add^+_{\mathcal{L}\he\cp\he \psi}[\mathcal{L}(Z)]$, which establishes the ``$+$''-case in \eqref{oidsoidsoidsds}. 
	Together with Proposition \ref{lkjfdldlkkjfdYYX}.\ref{lkjfdldlkkjfdYYX1}, we obtain 	
\begin{align*}
		\mathcal{L}\cp \Add^-_{\psi}[t](Z)&=\mathcal{L}\cp \Add^+_{\inverse{\psi}|_{[a,t]}}(Z)\\
		&=\Add^+_{\mathcal{L}\he \cp\he\inverse{\psi|_{[a,t]}}}(\mathcal{L}(Z))\\
		&=\Add^+_{\inverse{\mathcal{L}\he\cp\he\psi|_{[a,t]}}}(\mathcal{L}(Z))\\
		&=\Add^-_{\inverse{\inverse{\mathcal{L}\he\cp\he\psi|_{[a,t]}}}}(\mathcal{L}(Z))\\
		&=\Add^-_{\mathcal{L}\he\cp\he\psi|_{[a,t]}}(\mathcal{L}(Z))
	\end{align*}
	for $a<t\leq b$ and $\psi\in C^0([a,b],\mq)$, which establishes the ``$-$''-case in \eqref{oidsoidsoidsds}.
\end{proof}
\end{remark}
\noindent
For the proof of Proposition \ref{lkjfdldlkkjfdYYX}, we shall need the following facts and definitions:
\begingroup
\setlength{\leftmargini}{12pt}{
\begin{itemize}
\item
	For $\phi\equiv \{\phi[p]\}_{0\leq p\leq n-1}\in \CP^0([a,b],\mq)$, we define $\inverse{\phi}\in \CP^0([a,b],\mq)$ by 
	\begin{align*}
		\inverse{\phi}:= \{\inverse{\phi[(n-1)-p]}\}_{0\leq p\leq n-1}.
	\end{align*}
	\vspace{-18pt}
\item
	Let $\phi\equiv \{\phi[p]\}_{0\leq p\leq n-1}\in \CP^0([a,b],\mq)$ ($a<b$, $n\geq 1$) and $\phi'\equiv \{\phi'[q]\}_{0\leq q\leq n'-1}\in \CP^0([a',b'],\mq)$ ($a'<b'$, $n'\geq 1$) be given. We set
$$
\psi[\ell]:= 
\begin{cases}
\phi[\ell] &\text{for}\quad 0\leq \ell\leq n-1\\
\phi'[\ell-n]  &\text{for}\quad n\leq \ell\leq n+n'-1, 
\end{cases}
$$ 	
and define $\phi\triangleright \phi'\in \CP^0([a,b+(b'-a')],\mq)$ by
\begin{align*}
	\phi\triangleright \phi':= \{\psi[\ell]\}_{0\leq \ell\leq n+n'-1}. 
\end{align*}
Lemma \ref{jdjdfkjfdkjfd} together with Remark \ref{pofdpofdpofdpofdfdfdcvcvvvc} shows
\begin{align}
\label{ckjkjdskjkjdds}
	\Add^+_{\phi\:\triangleright\: \phi'}\textstyle=\Add^+_{\phi\:\triangleright\: \phi'}|_{[b,b + (b'-a')]}\cp \Add^+_{\phi\:\triangleright\: \phi'}|_{[a,b]}
	= \Add^+_{\phi'}\cp \Add^+_{\phi}.
\end{align}
\item
We have $\inverse{\inverse{\psi}}=\psi$ for each $\psi\in C^0([a,b],\mq)$.
\item
	For $a<b$ and $Z\in \mq$, we have (observe $\inverse{\mathcal{C}_Z|_{[a,b]}}=-\mathcal{C}_Z|_{[a,b]}$)
	\begin{align}
	\label{dslkklldslkdowqpowqpowq}
	\textstyle\Add^+_{\mathcal{C}_Z|_{[a,b]}}[b]\textstyle=\sum_{\ell=0}^\infty\:  \frac{(b-a)^\ell}{\ell!}\cdot \com{Z}^\ell=\Add^-_{\inverse{\mathcal{C}_Z|_{[a,b]}}}[b]. 
	\end{align}
	Corollary \ref{lkjfdkjfdkjkjldfjldf} (second step) yields 
\begin{align}
\label{iudiufdiufdiufdiufd}
	\Add^+_{\mathcal{C}_Z|_{[a,b]}}[b]\cp \Add^+_{\inverse{\mathcal{C}_Z|_{[a,b]}}}[b]\stackrel{\eqref{dslkklldslkdowqpowqpowq}}{=}\Add^-_{\inverse{\mathcal{C}_Z|_{[a,b]}}}[b]\cp \Add^+_{\inverse{\mathcal{C}_Z|_{[a,b]}}}[b]=\id_\mq.
\end{align}
\end{itemize}}
\endgroup  
\noindent
We obtain the following statement. 
\begin{lemma}
\label{kjdskjsdkjdskjkjds}
	For $\psi \in C^0([a,b],\mq)$, we have $\Add^+_\psi\cp \Add^+_{\inverse{\psi}}=\id_\mq$. 
\end{lemma}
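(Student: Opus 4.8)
The statement $\Add^+_\psi\cp \Add^+_{\inverse{\psi}}=\id_\mq$ for $\psi\in C^0([a,b],\mq)$ should follow by approximating $\psi$ by piecewise-constant curves and using the relations already collected. The plan is to reduce the general case to the piecewise-constant case, where the identity becomes a telescoping consequence of \eqref{iudiufdiufdiufdiufd} (which handles a single constant piece) together with \eqref{ckjkjdskjkjdds} (which governs concatenation).

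First I would treat the piecewise-constant case. Let $a=t_0<\dots<t_n=b$ and let $\phi\equiv\{\mathcal{C}_{Z_p}|_{[t_p,t_{p+1}]}\}_{0\leq p\leq n-1}\in\CP^0([a,b],\mq)$. Writing $\phi=\phi[n-1]\triangleright(\dots\triangleright(\phi[1]\triangleright\phi[0])\dots)$ (after shifting the pieces appropriately, which is harmless by Remark \ref{pofdpofdpofdpofdfdfdcvcvvvc}), iterated application of \eqref{ckjkjdskjkjdds} gives $\Add^+_\phi=\Add^+_{\phi[n-1]}\cp\dots\cp\Add^+_{\phi[0]}$, and since $\inverse{\phi}=\{\inverse{\phi[(n-1)-p]}\}_{0\leq p\leq n-1}$ the same reasoning yields $\Add^+_{\inverse{\phi}}=\Add^+_{\inverse{\phi[0]}}\cp\dots\cp\Add^+_{\inverse{\phi[n-1]}}$. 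Composing and cancelling from the inside out via \eqref{iudiufdiufdiufdiufd} (each $\Add^+_{\phi[p]}\cp\Add^+_{\inverse{\phi[p]}}=\id_\mq$, since each piece is a constant curve) proves $\Add^+_\phi\cp\Add^+_{\inverse{\phi}}=\id_\mq$ for all such $\phi$.

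Next I would pass to the limit. Given $\psi\in C^0([a,b],\mq)$, choose piecewise-constant curves $\{\psi_n\}_{n\in\NN}\subseteq\CP^0([a,b],\mq)$ with $\psi_n\rightarrow\psi$ w.r.t.\ the $C^0$-topology (e.g.\ right step-function approximations on ever finer partitions, which converge uniformly by uniform continuity of $\psi$). Then $\inverse{\psi_n}\rightarrow\inverse{\psi}$ w.r.t.\ the $C^0$-topology as well, since $\inversee$ is continuous. Fix $X\in\mq$. By the piecewise-constant case, $\Add^+_{\psi_n}[\Add^+_{\inverse{\psi_n}}[X](b)](b)=X$ for every $n$. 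Now apply Lemma \ref{kjfdkjdfljllkjfd}: first $\{\Add^+_{\inverse{\psi_n}}[X]\}_{n\in\NN}\rightarrow\Add^+_{\inverse{\psi}}[X]$ w.r.t.\ the $C^0$-topology, so in particular $Y_n:=\Add^+_{\inverse{\psi_n}}[X](b)\rightarrow Y:=\Add^+_{\inverse{\psi}}[X](b)$ in $\mq$; then, writing $\Add^+_{\psi_n}[Y_n](b)=\Add^+_{\psi_n}[b](Y_n)$ and using continuity of $\etam^+_{\psi_n}$ in the space variable (Lemma \ref{knkjfdsjkfasdkjdsjkdsa}.2) together with the $C^0$-convergence $\Add^+_{\psi_n}[Y]\rightarrow\Add^+_\psi[Y]$, one concludes $\Add^+_{\psi_n}[Y_n](b)\rightarrow\Add^+_\psi[Y](b)$. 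Hence $\Add^+_\psi[\Add^+_{\inverse{\psi}}[X](b)](b)=X$, i.e.\ $(\Add^+_\psi\cp\Add^+_{\inverse{\psi}})(X)=X$; since $X$ was arbitrary, this is the claim.

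The main obstacle is the last convergence step: Lemma \ref{kjfdkjdfljllkjfd} as stated controls $\Add^\pm_{\psi_n}[X]$ for a \emph{fixed} $X$, whereas here the argument $Y_n$ also varies with $n$. I would handle this by an $\varepsilon/2$-splitting, estimating $\vv(\Add^+_{\psi_n}[Y_n](b)-\Add^+_\psi[Y](b))\leq\vv(\Add^+_{\psi_n}[b](Y_n-Y))+\vv(\Add^+_{\psi_n}[Y](b)-\Add^+_\psi[Y](b))$ for $\vv\in\Sem{\mq}$: the second term goes to $0$ by Lemma \ref{kjfdkjdfljllkjfd}, and the first is bounded, via the estimate in the proof of that lemma (or directly via \eqref{assaaass} applied to the series defining $\Add^+$), by $(b-a)\cdot\e^{2(b-a)\ww_\infty(\psi)}\cdot\ww(Y_n-Y)$ for a suitable $\ww\geq\vv$ and $n$ large, which tends to $0$. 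An alternative, cleaner route avoiding the varying argument altogether is to define $\chi:=\inverse{\psi}\triangleright\psi\in\CP^0([a,b+(b-a)],\mq)$ (after a shift) and apply \eqref{ckjkjdskjkjdds} to get $\Add^+_\chi=\Add^+_\psi\cp\Add^+_{\inverse{\psi}}$, then approximate $\chi$ by piecewise-constant curves and invoke Lemma \ref{kjfdkjdfljllkjfd} once, at the single fixed argument $X$; I would present whichever of these two comes out shorter.
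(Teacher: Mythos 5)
Your proposal is correct and follows essentially the same approach as the paper: approximate $\psi$ by piecewise-constant step functions built from the values $\psi(t_{n,p})$, cancel telescopically via \eqref{iudiufdiufdiufdiufd} and \eqref{ckjkjdskjkjdds}, and pass to the limit with Lemma \ref{kjfdkjdfljllkjfd}. In fact, the ``alternative, cleaner route'' you sketch at the end --- forming the concatenation $\inverse{\psi}\triangleright\psi$ so that Lemma \ref{kjfdkjdfljllkjfd} need only be invoked at the single fixed argument $X$ --- is precisely how the paper argues, and your primary route with the $\varepsilon/2$-splitting (using linearity and the uniform bound $\vv\cp\Add^+_{\psi_n}[b]\leq \e^{2(b-a)\ww_\infty(\psi)}\cdot\ww$ for large $n$) is also sound.
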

\begin{proof}
For $n\geq 1$ and $p=0,\dots,n$, we define $t_{n,p}:=a+ \frac{p}{n}\cdot (b-a)$ and  $Z_{n,p}:=\psi(t_{n,p})$. We set
\begin{align*}
	\chi_{n,p}:=\mathcal{C}_{Z_{n,p}}|_{[t_{n,p},t_{n,p+1}]}
	\qquad\quad\forall\: n\geq 1,\: p=0,\dots,n-1.
\end{align*}
For $n\geq 1$, we define $\psi^\pm_n\equiv\{\psi_n^\pm[p]\}_{0\leq p\leq n-1}\in \CP^0([a,b],\mq)$ by  
\begin{align*}
	\textstyle\psi_n^+[p] := \chi_{n,p}\qquad\text{as well as}\qquad \psi_n^-[p] := \inverse{\chi_{n,(n-1) -p}}\qquad\text{for}\qquad p=0,\dots,n-1. 
\end{align*}
(hence $\psi_n^-=\inverse{\psi_n^+}$. 
Obviously, we have $\{\psi^+_n\}_{n\geq 1}\rightarrow \psi$ 
as well as $\{\psi^-_n\}_{n\geq 1}\rightarrow \inverse{\psi}$ 
w.r.t.\ the $C^0$-topology. Moreover, for $n\geq 1$, we have by Lemma \ref{jdjdfkjfdkjfd} and \eqref{iudiufdiufdiufdiufd} that
\begin{align}
\label{jdskkjdsjds}
	\Add^+_{\psi^+_n}\cp\Add^+_{\psi^-_n}=\Add^+_{\chi_{n,0}}\cp\dots\cp (\Add^+_{\chi_{n,n-1}} \cp \Add^+_{\inverse{\chi_{n,n-1}}} )\cp{\dots}\cp \Add^+_{\inverse{\chi_{n,0}}}=\id_\mq
\end{align}
holds. 
We define $\phi:=\inverse{\psi}\triangleright \psi$, as well as $\phi_n:=\psi^-_n\triangleright \psi^+_n$ for each $n\geq 1$. Then, $\{\phi_n\}_{n\geq 1}\rightarrow \phi$ converges w.r.t.\ the $C^0$-topology; and we obtain from Lemma \ref{kjfdkjdfljllkjfd} (second step) that
\begin{align*}
	\textstyle
	 (\Add^+_{\psi}\cp\Add^+_{\inverse{\psi}})(X)\stackrel{\eqref{ckjkjdskjkjdds}}{=} \Add^+_{\phi}(X)=\lim_n \Add^+_{\phi_n}(X)\stackrel{\eqref{ckjkjdskjkjdds}}{=} \lim_n (\Add^+_{\psi_n^+}\cp \Add^+_{\psi_n^-})(X)\stackrel{\eqref{jdskkjdsjds}}{=}X
\end{align*}
holds for each $X\in \mq$, which proves the claim. 
\end{proof}
\noindent
We are ready for the proof of Proposition \ref{lkjfdldlkkjfdYYX}.
\begin{proof}[Proof of Proposition \ref{lkjfdldlkkjfdYYX}]
\begingroup
\setlength{\leftmargini}{17pt}
{
\renewcommand{\theenumi}{\emph{\arabic{enumi})}} 
\renewcommand{\labelenumi}{\theenumi}
\begin{enumerate}
\item[\ref{lkjfdldlkkjfdYYX1}]
The claim is clear for $t=a$. Then, it suffices to prove the statement for $t=b$, as then we can apply it to the restriction $\psi|_{[a,t]}$ for each $a<t<b$.   
By Lemma \ref{kjdskjsdkjdskjkjds} (first step), and Corollary \ref{lkjfdkjfdkjkjldfjldf} (third step),  
we have 
\begin{align*}
	\Add_\psi^-[b]
	&=\Add_\psi^-[b]\cp (\Add_\psi^+[b]\cp \Add^+_{\inverse{\psi}}[b])\\
	&=(\Add_\psi^-[b]\cp \Add_\psi^+[b])\cp \Add^+_{\inverse{\psi}}[b]
	=\Add^+_{\inverse{\psi}}[b].
\end{align*}
Applying $\Add_\psi^+$ from the left, Lemma \ref{kjdskjsdkjdskjkjds} yields (the second equality is due to Corollary \ref{lkjfdkjfdkjkjldfjldf})
\begin{align*}
	\Add_\psi^+[b]\cp \Add_\psi^-[b]=\id_\mq=\Add_\psi^-[b]\cp \Add_\psi^+[b].
\end{align*}
Together, we have shown the following properties:
\begingroup
\setlength{\leftmarginii}{12pt}{
\begin{itemize}
\item
$\Add^\pm_{\psi}[b]\in \Aut(\mq)$\quad with\quad 
$\Add^\pm_{\psi}[b]^{-1}=\Add^\mp_{\psi}[b]$,
\item
$\Add_\psi^-[b]=\Add^+_{\inverse{\psi}}[b]$.
\end{itemize}}
\endgroup
\noindent 
Applying the third property to 
$\wt{\psi}:=\inverse{\psi}$ (second step), we obtain from $\inverse{\inverse{\psi}}=\psi$ (third step) that
\begin{align*}
	\Add^-_{\inverse{\psi}}[b]=\Add^-_{\wt{\psi}}[b]=\Add_{\inverse{\wt{\psi}}}^+[b]=\Add_\psi^+[b]	 	
\end{align*}
holds, which completes the proof.
\item[\ref{lkjfdldlkkjfdYYX2}]
By Lemma \ref{knkjfdsjkfasdkjdsjkdsa}, for $\alpha\equiv\Add^+_\psi[X]$,  we have $\alpha\in C^1([a,b],\mq)$ with $\alpha(a)=X$ and $\dot\alpha=\bil{\psi}{\alpha}$. This shows the solution property. For uniqueness, let $\alpha\in C^1([a,b],\mq)$ be given, such that $\alpha(a)=X$ and $\dot\alpha=\bil{\psi}{\alpha}$ holds. By Lemma \ref{knkjfdsjkfasdkjdsjkdsa}, we have $\beta:=\etam_\psi^-(\cdot,\alpha(\cdot))\in C^1([a,b],\mq)$ with (use Part \ref{productrule} of Proposition \ref{iuiuiuiuuzuzuztztttrtrtr} on the left side, as well as \eqref{oidsoidoisdoidsoioidsiods} on the right side)
\begin{align*}
	\dot\beta=-\etam_\psi^-(\cdot,\bil{\psi(\cdot)}{\alpha(\cdot)})+ \etam_\psi^-(\cdot,\bil{\psi(\cdot)}{\alpha(\cdot)})=0\qquad\:\:\:\textstyle\Longrightarrow\qquad\:\:\: \beta=\beta(a)+ \int_a^\bullet \dot\beta(s) \:\dd s=X.
\end{align*}
Applying Part \ref{lkjfdldlkkjfdYYX1}, we obtain
\begin{align*}
	\Add^+_\psi[X](t)=\Add_\psi^+[t](\beta(t))=(\Add_\psi^+[t]\cp \Add_\psi^-[t])(\alpha(t))=\alpha(t)\qquad\quad\forall\: t\in [a,b],
\end{align*}
which proves the claim.\qedhere
\end{enumerate}}
\endgroup	
\end{proof}

\subsubsection{A Group Structure}
\label{sdklsdklsdklsd}
For $\phi,\psi\in C^0([a,b],\mq)$ with $a<b$, we define
\begin{align*}
	 C^0([a,b],\mq)\ni\hspace{5.9pt} \psi^{-1}:=\phantom{\phi(\cdot)} - \etam_\psi^-(\cdot,\psi(\cdot))&\colon [a,b]\ni t \mapsto \phantom{\phi(t)} - \Add_\psi^-[t](\psi(t))\in\mq\\
	 C^0([a,b],\mq)\ni \phi\starm\psi:= \phi(\cdot) + \etam^+_\phi(\cdot,\psi(\cdot))&\colon [a,b]\ni t\mapsto \phi(t) + \Add^+_\phi[t](\psi(t))\in \mq.
\end{align*}
We observe the following.
\begin{lemma}
\label{hdshjdskjdsjkds}
Let $\phi,\psi\in C^0([a,b],\mq)$ be fixed. Then, the following assertions hold:
\begingroup
\setlength{\leftmargini}{17pt}
{
\renewcommand{\theenumi}{\emph{\arabic{enumi})}} 
\renewcommand{\labelenumi}{\theenumi}
\begin{enumerate}
\item
\label{hdshjdskjdsjkds1}
We have $\Add^+_{\psi^{-1}}[t]=\Add_\psi^-[t]$\hspace{45.3pt} for each $t\in [a,b]$.
\item
\label{hdshjdskjdsjkds2}
We have $\Add^+_{\phi\he\starm\he\psi}[t]=\Add^+_\phi[t]\cp\Add^+_\psi[t]$\: for each $t\in [a,b]$.
\end{enumerate}}
\endgroup		
\end{lemma}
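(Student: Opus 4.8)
The plan is to prove both identities by the standard uniqueness-of-solutions technique based on Proposition \ref{lkjfdldlkkjfdYYX}.\ref{lkjfdldlkkjfdYYX2}, i.e.\ by checking that the claimed expressions solve the appropriate Lax equation with the correct initial value. For Part \ref{hdshjdskjdsjkds1}, fix $X\in \mq$ and set $\alpha\colon [a,b]\ni t\mapsto \Add_\psi^-[t](X)\in \mq$; by Lemma \ref{knkjfdsjkfasdkjdsjkdsa}.\ref{knkjfdsjkfasdkjdsjkdsa1} this is of class $C^1$, with $\alpha(a)=X$ and $\dot\alpha(t)=-\Add^-_\psi[\bil{\psi(t)}{X}](t)$. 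The point is to rewrite the right-hand side in terms of $\psi^{-1}$: using linearity of $\Add^-_\psi[t]$ and Corollary \ref{fdjkfdlkjfdkjfd} (which gives $\Add^-_\psi[t](\bil{Y}{Z})=\bil{\Add^-_\psi[t](Y)}{\Add^-_\psi[t](Z)}$), one obtains $\dot\alpha(t)=-\bil{\Add^-_\psi[t](\psi(t))}{\Add^-_\psi[t](X)}=\bil{\psi^{-1}(t)}{\alpha(t)}$. Hence $\alpha$ solves $\dot\alpha=\bil{\psi^{-1}}{\alpha}$ with $\alpha(a)=X$, and Proposition \ref{lkjfdldlkkjfdYYX}.\ref{lkjfdldlkkjfdYYX2} identifies $\alpha=\Add^+_{\psi^{-1}}[X]$. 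Since $X\in\mq$ was arbitrary, $\Add^+_{\psi^{-1}}[t]=\Add^-_\psi[t]$ for all $t$.

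For Part \ref{hdshjdskjdsjkds2} I would argue analogously. Fix $X\in\mq$ and set $\beta\colon [a,b]\ni t\mapsto (\Add^+_\phi[t]\cp \Add^+_\psi[t])(X)\in \mq$. By Lemma \ref{knkjfdsjkfasdkjdsjkdsa}.\ref{knkjfdsjkfasdkjdsjkdsa1} (applied twice, together with the product rule, Part \ref{productrule} of Proposition \ref{iuiuiuiuuzuzuztztttrtrtr}), $\beta$ is of class $C^1$ with $\beta(a)=X$, and
\begin{align*}
	\dot\beta(t)&=\bil{\phi(t)}{(\Add^+_\phi[t]\cp \Add^+_\psi[t])(X)} + \Add^+_\phi[t]\big(\bil{\psi(t)}{\Add^+_\psi[t](X)}\big)\\
	&=\bil{\phi(t)}{\beta(t)} + \bil{\Add^+_\phi[t](\psi(t))}{(\Add^+_\phi[t]\cp\Add^+_\psi[t])(X)}\\
	&=\bil{\phi(t) + \Add^+_\phi[t](\psi(t))}{\beta(t)} = \bil{(\phi\starm\psi)(t)}{\beta(t)},
\end{align*}
where the second equality uses linearity of $\Add^+_\phi[t]$ and Corollary \ref{fdjkfdlkjfdkjfd} to pull $\Add^+_\phi[t]$ inside the bracket, and the third uses bilinearity. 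Thus $\beta$ solves the Lax equation with curve $\phi\starm\psi$ and initial value $X$, so Proposition \ref{lkjfdldlkkjfdYYX}.\ref{lkjfdldlkkjfdYYX2} gives $\beta=\Add^+_{\phi\starm\psi}[X]$; since $X$ was arbitrary, $\Add^+_{\phi\starm\psi}[t]=\Add^+_\phi[t]\cp\Add^+_\psi[t]$.

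I do not anticipate a genuine obstacle here: the statement is essentially bookkeeping once Corollary \ref{fdjkfdlkjfdkjfd} and the differentiation rules from Lemma \ref{knkjfdsjkfasdkjdsjkdsa} are in hand. The one place requiring slight care is the regularity input needed to invoke Proposition \ref{lkjfdldlkkjfdYYX}.\ref{lkjfdldlkkjfdYYX2}: both $\psi^{-1}$ and $\phi\starm\psi$ lie in $C^0([a,b],\mq)$ (by the definitions preceding the lemma, together with continuity of $\etam^\pm$ from Lemma \ref{knkjfdsjkfasdkjdsjkdsa}), so the Lax equation for these curves has a well-defined unique solution, and the candidate curves $\alpha,\beta$ are genuinely $C^1$ as required. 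A subtle but routine point is that we must check $\dot\alpha$ and $\dot\beta$ are continuous (not merely that the pointwise derivatives exist), which again follows from the stated differentiation rules in Lemma \ref{knkjfdsjkfasdkjdsjkdsa} and smoothness of $\bil{\cdot}{\cdot}$. Once these are noted, the proof closes immediately.
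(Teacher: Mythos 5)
Your proof is correct and follows essentially the same route as the paper: both parts are established by showing the candidate curve solves the Lax equation for $\psi^{-1}$ resp.\ $\phi\starm\psi$ with initial value $X$ (via Lemma \ref{knkjfdsjkfasdkjdsjkdsa} and Corollary \ref{fdjkfdlkjfdkjfd}) and then invoking the uniqueness statement of Proposition \ref{lkjfdldlkkjfdYYX}.\ref{lkjfdldlkkjfdYYX2}. The computations match the paper's line for line, so there is nothing further to add.
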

\begin{proof}
\begingroup
\setlength{\leftmargini}{17pt}
{
\renewcommand{\theenumi}{\emph{\arabic{enumi})}} 
\renewcommand{\labelenumi}{\theenumi}
\begin{enumerate}
\item
Let $X\in \mq$ be given, and set $\alpha_X:= \Add_\psi^-[X]\in C^1([a,b],\mq)$. We have $\alpha_X(a)=X$; and Lemma \ref{knkjfdsjkfasdkjdsjkdsa} shows $\alpha_X:= \Add_\psi^-[X]\in C^1([a,b],\mq)$ with (for the second step use Corollary \ref{fdjkfdlkjfdkjfd})     
\begin{align*}
	\dot\alpha_X(t)\textstyle= -\Add_\psi^-[t](\bil{\psi(t)}{X})=\bil{-\Add_\psi^-[t](\psi(t))}{\Add_\psi^-[X](t)}=\bil{\psi^{-1}(t)}{\alpha_X(t)}
\end{align*}
for each $t\in [a,b]$. 
Then, Proposition \ref{lkjfdldlkkjfdYYX}.\ref{lkjfdldlkkjfdYYX2} gives  $\alpha_X=\Add^+_{\psi^{-1}}[X]$.
\item
For $X\in \mq$, we define 
\begin{align*}
	\alpha_X\colon [a,b]\ni t\mapsto (\Add^+_\phi[t]\cp\Add^+_\psi[t])(X) = \etam^+_\phi(t,\Add^+_\psi[X](t))\in \mq.
\end{align*}
We have $\alpha_X(a)=X$; and Lemma \ref{knkjfdsjkfasdkjdsjkdsa} shows $\alpha_X\in C^1([a,b],\mq)$ with (for the first step additionally apply the parts \ref{chainrule} and \ref{productrule} 
of Proposition \ref{iuiuiuiuuzuzuztztttrtrtr}; and for the second step apply Corollary \ref{fdjkfdlkjfdkjfd})
\begin{align*} 
	\dot\alpha_X(t)	&\textstyle = \bil{\phi(t)}{\alpha_X(t)}+ \Add^+_\phi[t](\bil{\psi(t)}{\Add^+_\psi[t](X)})\\
		&\textstyle = \bil{\phi(t)}{\alpha_X(t)} + \bil{\Add^+_{\phi}[t](\psi(t))}{\alpha_X(t)} \\
		&\textstyle = \bil{(\phi\star\psi)(t)}{\alpha_X(t)}.
\end{align*}
Proposition \ref{lkjfdldlkkjfdYYX}.\ref{lkjfdldlkkjfdYYX2} yields $\alpha_X=\Add^+_{\phi\he\starm\he\psi}[X]$, which proves the claim.\qedhere
\end{enumerate}}
\endgroup	
\end{proof}

\begin{lemma}
\label{lkjkjsjlksdlkjdskj}
Given $\phi,\psi,\chi \in C^0([a,b],\mq)$, then we have
\begin{align*}
&\hspace{15pt}0\starm\psi=\psi=\psi\starm 0,
\\[1pt]
&\hspace{3pt}\psi^{-1}\starm\psi=0=\psi\starm\psi^{-1},
\\[1pt]
&(\phi\starm \psi)\starm \chi=\phi\starm (\psi\starm \chi).
\end{align*}
\end{lemma}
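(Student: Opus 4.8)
The plan is to verify the three group axioms --- the neutral element law, the inverse law, and associativity --- by exploiting the homomorphism $\psi\mapsto\Add^+_\psi[t]$ established in Lemma \ref{hdshjdskjdsjkds}, together with the injectivity-type argument that a curve in $C^0([a,b],\mq)$ is determined by its pointwise data. The key observation is that for any $\phi,\psi\in C^0([a,b],\mq)$ and $t\in[a,b]$, the first entry of the definition of $\phi\starm\psi$ at $t$, namely $\phi(t)+\Add^+_\phi[t](\psi(t))$, can be recovered from the two maps $\Add^+_\phi[t]$ and $\Add^+_\psi[t]$ via the formula $(\phi\starm\psi)(t)=\phi(t)+\Add^+_\phi[t](\psi(t))$, and conversely, once one knows $(\phi\starm\psi)(t)$ as an element of $\mq$, one has all the needed information. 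So I would reduce each identity to a combination of (i) a pointwise identity in $\mq$, and (ii) the composition law $\Add^+_{\phi\starm\psi}[t]=\Add^+_\phi[t]\cp\Add^+_\psi[t]$ from Lemma \ref{hdshjdskjdsjkds}.\ref{hdshjdskjdsjkds2}, plus $\Add^+_{\psi^{-1}}[t]=\Add^-_\psi[t]$ from Lemma \ref{hdshjdskjdsjkds}.\ref{hdshjdskjdsjkds1}.

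First I would do the neutral element: since $\Add^+_0[t]=\id_\mq$ (immediate from the series definition, as every $\APOL^+_{\ell,0}$ vanishes for $\ell\geq 1$), we get $0\starm\psi=0+\Add^+_0[\cdot](\psi(\cdot))=\psi$, and $\psi\starm 0=\psi(\cdot)+\Add^+_\psi[\cdot](0)=\psi$. Next the inverse law: by definition $\psi^{-1}(t)=-\Add^-_\psi[t](\psi(t))$, so
\begin{align*}
(\psi\starm\psi^{-1})(t)&=\psi(t)+\Add^+_\psi[t](\psi^{-1}(t))=\psi(t)-\Add^+_\psi[t](\Add^-_\psi[t](\psi(t)))=\psi(t)-\psi(t)=0,
\end{align*}
using Proposition \ref{lkjfdldlkkjfdYYX}.\ref{lkjfdldlkkjfdYYX1} (or Corollary \ref{lkjfdkjfdkjkjldfjldf}) for $\Add^+_\psi[t]\cp\Add^-_\psi[t]=\id_\mq$. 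For $\psi^{-1}\starm\psi$ one computes $(\psi^{-1}\starm\psi)(t)=\psi^{-1}(t)+\Add^+_{\psi^{-1}}[t](\psi(t))=-\Add^-_\psi[t](\psi(t))+\Add^-_\psi[t](\psi(t))=0$, where the middle step uses Lemma \ref{hdshjdskjdsjkds}.\ref{hdshjdskjdsjkds1} to replace $\Add^+_{\psi^{-1}}[t]$ by $\Add^-_\psi[t]$.

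For associativity, I would evaluate both sides pointwise at $t\in[a,b]$. Using the definition twice and then Lemma \ref{hdshjdskjdsjkds}.\ref{hdshjdskjdsjkds2},
\begin{align*}
((\phi\starm\psi)\starm\chi)(t)&=(\phi\starm\psi)(t)+\Add^+_{\phi\starm\psi}[t](\chi(t))\\
&=\phi(t)+\Add^+_\phi[t](\psi(t))+\Add^+_\phi[t]\big(\Add^+_\psi[t](\chi(t))\big)\\
&=\phi(t)+\Add^+_\phi[t]\big(\psi(t)+\Add^+_\psi[t](\chi(t))\big)\\
&=\phi(t)+\Add^+_\phi[t]\big((\psi\starm\chi)(t)\big)=(\phi\starm(\psi\starm\chi))(t),
\end{align*}
where the third equality is just linearity of $\Add^+_\phi[t]$ (Lemma \ref{knkjfdsjkfasdkjdsjkdsa}.\ref{knkjfdsjkfasdkjdsjkdsa2}). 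Since $t$ was arbitrary, the two curves agree. I do not anticipate a genuine obstacle here; the only mild care needed is to make sure that all the objects involved are indeed defined and lie in $C^0([a,b],\mq)$ --- but this is already guaranteed by Lemma \ref{knkjfdsjkfasdkjdsjkdsa} and the closure remarks preceding the statement (or by Lemma \ref{knkjfdsjkfasdkjfasdkjdsjkdsa} for definedness of $\starm$ and $\cdot^{-1}$), so the whole proof is a short bookkeeping exercise once Lemma \ref{hdshjdskjdsjkds} is in hand. If one wanted to be even more economical, one could instead deduce all three identities abstractly from the fact that $\psi\mapsto\Add^+_\psi[\cdot]$ is an injective (by Proposition \ref{lkjfdldlkkjfdYYX}.\ref{lkjfdldlkkjfdYYX2}, since $\psi(t)=\partial_t\Add^+_\psi[X](t)\big|$ recovers $\psi$ from $\Add^+_\psi[\cdot]$) map into a group under pointwise composition, but spelling it out pointwise as above is cleaner.
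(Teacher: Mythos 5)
Your proposal is correct and follows essentially the same route as the paper: pointwise verification of the three axioms using $\Add^+_0[t]=\id_\mq$, Proposition \ref{lkjfdldlkkjfdYYX}.\ref{lkjfdldlkkjfdYYX1} for $\psi\starm\psi^{-1}=0$, Lemma \ref{hdshjdskjdsjkds}.\ref{hdshjdskjdsjkds1} for $\psi^{-1}\starm\psi=0$, and Lemma \ref{hdshjdskjdsjkds}.\ref{hdshjdskjdsjkds2} together with linearity of $\Add^+_\phi[t]$ for associativity. No gaps.
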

\begin{proof}
We have $\Add^+_0[t]=\id_\mq$ and $\Add^+_\psi[t](0)=0$ for each $t\in [a,b]$; hence, 
\begin{align*}
	(0\starm \psi)(t)= 0 + \Add^+_0[t](\psi(t))=  \psi(t)=
\psi(t) + \Add^+_\psi[t](0)=	(\psi\starm 0)(t)\qquad\quad \forall\: t\in [a,b].
\end{align*}
Next, Proposition \ref{lkjfdldlkkjfdYYX}.\ref{lkjfdldlkkjfdYYX1} yields
\begin{align*}
	(\psi\starm \psi^{-1})(t)&= \psi(t) - (\Add^+_\psi[t]\cp\Add^{-}_\psi[t])(\psi(t))=  0\qquad\quad\forall\: t\in [a,b].
\end{align*}
Moreover, Lemma \ref{hdshjdskjdsjkds}.\ref{hdshjdskjdsjkds1} shows
\begin{align*}
	(\psi^{-1}\starm \psi)(t)&= \psi^{-1}(t) + \Add^+_{\psi^{-1}}[t](\psi(t))= \psi^{-1}(t) + \Add^-_{\psi}[t](\psi(t))= \psi^{-1}(t)-\psi^{-1}(t)=0
\end{align*}
for each $t\in [a,b]$. Finally, we obtain from Lemma \ref{hdshjdskjdsjkds}.\ref{hdshjdskjdsjkds2} that 
\begin{align*}
	((\phi\starm \psi)\starm \chi)(t)&= (\phi\starm \psi)(t) + \Add^+_{\phi\he\starm\he \psi}[t](\chi(t))\\
	&=\phi(t) + \Add^+_\phi[t](\psi(t)) + (\Add^+_{\phi}[t]\cp \Add^+_\psi[t])(\chi(t))\\
	&= \phi(t) + \Add^+_\phi[t]((\psi\star\chi)(t))\\
	&= (\phi\star (\psi\star\chi))(t) 
\end{align*}
holds for each $t\in [a,b]$, which establishes the proof.
\end{proof}

\begin{corollary}
\label{dlkfdlkjfdlkfd}
For $\phi,\psi\in  C^0([a,b],\mq)$ and $t\in [a,b]$, we have 
\begin{align*}
	\Add^+_{\phi+\psi}[t]&=\Add^+_{\phi}[t]\cp\Add^+_{\Add^-_{\phi}[\cdot](\psi(\cdot))}[t].
\end{align*}
\end{corollary}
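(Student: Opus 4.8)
The plan is to mimic the proof technique used throughout Sect.\ \ref{sdklsdklsdklsd} --- namely, to fix $X\in \mq$, exhibit both sides (applied to $X$) as solutions of the same Lax equation with the same initial condition, and invoke the uniqueness statement in Proposition \ref{lkjfdldlkkjfdYYX}.\ref{lkjfdldlkkjfdYYX2}. Concretely, I would fix $X\in \mq$ and set
\begin{align*}
	\alpha_X\colon [a,b]\ni t\mapsto \big(\Add^+_{\phi}[t]\cp\Add^+_{\Add^-_{\phi}[\cdot](\psi(\cdot))}[t]\big)(X)
	=\etam^+_\phi\big(t,\Add^+_{\Add^-_{\phi}[\cdot](\psi(\cdot))}[X](t)\big)\in \mq,
\end{align*}
where the inner curve $t\mapsto \Add^-_{\phi}[t](\psi(t))=\etam^-_\phi(t,\psi(t))$ lies in $C^0([a,b],\mq)$ by Lemma \ref{knkjfdsjkfasdkjdsjkdsa} (it is even $C^1$ since $\psi\in C^0$ makes $\etam^-_\phi$ of class $C^1$ in both arguments, but continuity is all that is needed for $\Add^+$ to be defined).

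First I would record that $\alpha_X(a)=X$, since $\Add^+_\chi[a]=\id_\mq$ for any $\chi$. Next I would differentiate: by Lemma \ref{knkjfdsjkfasdkjdsjkdsa}, $\beta_X:=\Add^+_{\Add^-_{\phi}[\cdot](\psi(\cdot))}[X]$ is of class $C^1$ with $\dot\beta_X(t)=\bil{\Add^-_{\phi}[t](\psi(t))}{\beta_X(t)}$, and $\alpha_X(t)=\etam^+_\phi(t,\beta_X(t))$, so the parts \ref{linear}, \ref{chainrule}, \ref{productrule} of Proposition \ref{iuiuiuiuuzuzuztztttrtrtr} together with Lemma \ref{knkjfdsjkfasdkjdsjkdsa} give
\begin{align*}
	\dot\alpha_X(t)&=\partial_1\etam^+_\phi(t,\beta_X(t)) + \etam^+_\phi\big(t,\dot\beta_X(t)\big)\\
	&=\bil{\phi(t)}{\alpha_X(t)} + \Add^+_\phi[t]\big(\bil{\Add^-_\phi[t](\psi(t))}{\beta_X(t)}\big).
\end{align*}
For the second term I would apply Corollary \ref{fdjkfdlkjfdkjfd} (so $\Add^+_\phi[t]$ distributes over the bracket) and then Corollary \ref{lkjfdkjfdkjkjldfjldf} (so $\Add^+_\phi[t]\cp\Add^-_\phi[t]=\id_\mq$), obtaining
\begin{align*}
	\Add^+_\phi[t]\big(\bil{\Add^-_\phi[t](\psi(t))}{\beta_X(t)}\big)
	=\bil{\psi(t)}{\Add^+_\phi[t](\beta_X(t))}=\bil{\psi(t)}{\alpha_X(t)}.
\end{align*}
Hence $\dot\alpha_X(t)=\bil{\phi(t)}{\alpha_X(t)}+\bil{\psi(t)}{\alpha_X(t)}=\bil{(\phi+\psi)(t)}{\alpha_X(t)}$ by bilinearity of $\bil{\cdot}{\cdot}$.

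Thus $\alpha_X\in C^1([a,b],\mq)$ solves $\dot\alpha=\bil{\phi+\psi}{\alpha}$ with $\alpha(a)=X$, so Proposition \ref{lkjfdldlkkjfdYYX}.\ref{lkjfdldlkkjfdYYX2} forces $\alpha_X=\Add^+_{\phi+\psi}[X]$; since $X\in\mq$ was arbitrary, $\Add^+_{\phi+\psi}[t]=\Add^+_{\phi}[t]\cp\Add^+_{\Add^-_{\phi}[\cdot](\psi(\cdot))}[t]$ for all $t\in[a,b]$. I do not expect a genuine obstacle here; the only point requiring a little care is the bookkeeping in the differentiation step --- making sure that the ``extra'' term produced by the product/chain rule is exactly $\Add^+_\phi[t](\bil{\Add^-_\phi[t](\psi(t))}{\beta_X(t)})$ and that the two applications of Corollary \ref{fdjkfdlkjfdkjfd} and Corollary \ref{lkjfdkjfdkjkjldfjldf} collapse it to $\bil{\psi(t)}{\alpha_X(t)}$. (This corollary is, in effect, the infinitesimal shadow of the identity $\phi\starm\psi=\phi+\Add^+_\phi[\cdot](\psi(\cdot))$ combined with Lemma \ref{hdshjdskjdsjkds}.\ref{hdshjdskjdsjkds2}; one could alternatively derive it purely algebraically from those, but the Lax-equation argument is cleaner and self-contained.)
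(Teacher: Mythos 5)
Your argument is correct, but it is not the route the paper takes. The paper's proof is purely algebraic: it computes $\phi^{-1}\starm(\phi+\psi)=\Add^+_{\phi^{-1}}[\cdot](\psi(\cdot))=\Add^-_{\phi}[\cdot](\psi(\cdot))$ using Lemma \ref{lkjkjsjlksdlkjdskj}, converts this via Lemma \ref{hdshjdskjdsjkds}.\ref{hdshjdskjdsjkds2} into $\Add^+_{\phi^{-1}}[t]\cp\Add^+_{\phi+\psi}[t]=\Add^+_{\Add^-_\phi[\cdot](\psi(\cdot))}[t]$, and then composes with $\Add^+_\phi[t]$ from the left --- exactly the ``infinitesimal shadow'' alternative you mention in your closing parenthesis. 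Your version instead re-runs the Lax-equation uniqueness argument (the same mechanism used to prove Lemma \ref{hdshjdskjdsjkds} itself), so in effect you inline that lemma's proof for this particular instance; this is self-contained and avoids the group axioms of Lemma \ref{lkjkjsjlksdlkjdskj}, at the cost of redoing a differentiation that the paper has already packaged. Two small points to tidy up: (i) your parenthetical claim that $t\mapsto\etam^-_\phi(t,\psi(t))$ is of class $C^1$ is false for $\psi$ merely continuous (the composition inherits only the regularity of $\psi$), though as you say only continuity is needed; (ii) the identity $\Add^+_\phi[t]\cp\Add^-_\phi[t]=\id_\mq$ that you use to collapse $\Add^+_\phi[t](\Add^-_\phi[t](\psi(t)))$ to $\psi(t)$ is the composition in the order \emph{opposite} to the one stated in Corollary \ref{lkjfdkjfdkjkjldfjldf}; you should cite Proposition \ref{lkjfdldlkkjfdYYX}.\ref{lkjfdldlkkjfdYYX1}, which gives the two-sided inverse and is available at this point in the paper.
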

\begin{proof}
	By Lemma \ref{lkjkjsjlksdlkjdskj}, we have 
	\begin{align*}
		(\phi^{-1}\starm  (\phi + \psi))(t)&= \phi^{-1}(t) + \Add^+_{\phi^{-1}}[t](\phi(t)) + \Add^+_{\phi^{-1}}[t](\psi(t))\\
		&=(\phi^{-1}\starm \phi)(t) + \Add^+_{\phi^{-1}}[t](\phi(t))\\
		&=\Add^+_{\phi^{-1}}[t](\phi(t))
	\end{align*}
	for each $t\in [a,b]$. 
	 Lemma \ref{hdshjdskjdsjkds}.\ref{hdshjdskjdsjkds2} (first step) and Lemma \ref{hdshjdskjdsjkds}.\ref{hdshjdskjdsjkds1}  (third step) yield
	\begin{align*}
		\Add^+_{\phi^{-1}}[t]\cp \Add^+_{\phi + \psi}[t]=\Add^+_{\phi^{-1}\he\starm \he (\phi + \psi)}[t]=\Add^+_{\Add^+_{\phi^{-1}}[\cdot](\psi(\cdot))}[t]=\Add^+_{\Add^-_{\phi}[\cdot](\psi(\cdot))}[t]
	\end{align*}
	for each $t\in [a,b]$. Applying $\Add^+_\phi[t]$ to both hand sides, the claim follows from Lemma \ref{hdshjdskjdsjkds}.\ref{hdshjdskjdsjkds2}, Lemma \ref{lkjkjsjlksdlkjdskj}, as well as $\Add^+_0[t]=\id_\mq$ for each $t\in [a,b]$.
\end{proof}

\subsubsection{An Integral Transformation}
\label{oidsoidsoidsoidsds}
In analogy to \eqref{dfdfdfdfdfd}, for $a<b$ and $\phi\in C^0([a,b],\mq)$, we set
\begin{align}
\label{kjdkjdkjkjdfkjfdkjfdjkfd}
	\textstyle \TMAP(\phi)\colon [0,1]\ni t\mapsto \int_a^b \Add^+_{t\cdot \phi|_{[s,b]}}(\phi(s)) \:\dd s\in \mq
\end{align}
as well as $\TMAP(\phi|_{[a,a]})\colon [0,1]\ni t \mapsto 0\in \mq$. Observe that \eqref{kjdkjdkjkjdfkjfdkjfdjkfd} is defined  as the integrand is continuous; because, by Lemma \ref{jdjdfkjfdkjfd} and  Proposition \ref{lkjfdldlkkjfdYYX} we have
\begin{align*}
	\Add^+_{t\cdot \phi|_{[s,b]}}(\phi(s))=\Add^+_{t\cdot \phi}[b](\Add^-_{t\cdot \phi}[s](\phi(s)))\qquad\quad\forall\: s\in [a,b]. 
\end{align*}
Continuity of the integrand also implies the following statement.
\begin{remark}
\label{pofdpofdpofdpofdfdfdcvcvvvcjk}
	Let $a<b$ and $\psi\in C^0([a,b],\mq)$. Let $\varrho\colon [a',b']\rightarrow [a,b]$ ($a'<b'$) be of class $C^1$,  with $\dot\varrho|_{(a',b')}>0$ as well as $\varrho(a')=a$ and $\varrho(b')=b$. Then, \eqref{kjfdkjfdjkfdkjfdkjkjfdkjfdkjfdkjfd} in Remark \ref{iufdicxcxcxcxufdiufdiufddd} (second step) shows
	\begin{align*}
		\textstyle\TMAP(\dot\varrho\cdot (\psi\cp\varrho))(t)
		&\textstyle=\int_{a'}^{b'} \Add^+_{ (\dot\varrho\he\cdot\he ((t\cdot \psi)\he\cp\he\varrho))|_{[s,b']}}(\dot\varrho(s)\cdot (\psi\cp\varrho)(s)) \:\dd s\\
		&\textstyle=\int_{a'}^{b'} \dot\varrho(s)\cdot  \Add^+_{t\cdot \psi|_{[\varrho(s),b]}}( \psi(\varrho(s))) \:\dd s\\
		&\textstyle=\int_{a}^{b} \Add^+_{t\cdot \psi|_{[s,b]}}( \psi(s)) \:\dd s\\
		&=\TMAP(\psi)
	\end{align*}
	where we have applied the substitution formula \eqref{substitRI} in the third step.
\hspace*{\fill}\qed
\end{remark}
\noindent
In this section, we investigate the elementary properties of this map, and provide some statements that we shall need in Sect.\ \ref{mncxnmnxcmnxmnmnxciooioiweioewwe}. We start with some technical properties:
\begin{lemma}
\label{kjfdkjfdjkfdjkfd}
Let $a<b$, $\psi\in C^0([a,b],\mq)$, and $X\in\mq$ be given. Then, 
$\eta_{\psi,X}\colon \RR\ni t \mapsto \Add^+_{t\cdot \psi}[b](X)\in \mq$ 
is smooth, with
\begin{align*}
	\textstyle\eta_{\psi,X}^{(\ell)}\colon \RR\ni t\mapsto\sum_{p=\ell}^\infty  \frac{p!}{(p - \ell)!}\cdot t^{p-\ell}\cdot \APOL^+_{p,\psi}[b](X)\qquad\quad\forall\: n\in \NN.
\end{align*}
\end{lemma}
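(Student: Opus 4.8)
\textbf{Proof proposal for Lemma \ref{kjfdkjfdjkfdjkfd}.} The plan is to reduce the claim to the power-series differentiation result already available as Corollary \ref{dsdsdsdsdsdsdsds}. First I would fix $a<b$, $\psi\in C^0([a,b],\mq)$, and $X\in\mq$, and introduce the elements $X_p:=\APOL^+_{p,\psi}[b](X)\in\mq$ for $p\in\NN$, together with the monomial curves $\gamma_p\colon\RR\ni t\mapsto t^p\cdot X_p\in\mq$. By definition of $\Add^+_{t\cdot\psi}[b]$ we have
\begin{align*}
	\textstyle\eta_{\psi,X}(t)=\Add^+_{t\cdot\psi}[b](X)=\sum_{p=0}^\infty\APOL^+_{p,t\cdot\psi}[b](X)\qquad\quad\forall\: t\in\RR,
\end{align*}
so the first step is to check the homogeneity identity $\APOL^+_{p,t\cdot\psi}[b](X)=t^p\cdot\APOL^+_{p,\psi}[b](X)=\gamma_p(t)$; this is immediate from the defining $p$-fold iterated integral of $\APOL^+_{p,\psi}$, since each of the $p$ bracket factors $\bilbr{(t\cdot\psi)(s_j)}=t\cdot\bilbr{\psi(s_j)}$ contributes exactly one factor of $t$ by bilinearity of $\bil{\cdot}{\cdot}$ and linearity of the Riemann integral (using \eqref{pofdpofdpofdsddsdsfd}). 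Hence $\eta_{\psi,X}=\sum_{p=0}^\infty\gamma_p$ as a pointwise sum on $\RR$.

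Next I would verify the hypotheses of Corollary \ref{dsdsdsdsdsdsdsds}. Here $\mq$ is sequentially complete by the standing assumption of Sect.\ \ref{jkdkjdjkddsdsddds}, so I may take any fundamental system $\SEMMM\subseteq\Sem{\mq}$; by the asymptotic estimate property \eqref{assaaass} it suffices, after replacing each $\qq\in\SEMMM$ by a suitable larger seminorm, to assume $\SEMMM$ consists of seminorms $\ww$ for which the estimate \eqref{assaaass} holds with $\vv\equiv\ww\equiv\ww$. For such $\ww$ the usual bound on iterated integrals over nested commutators (exactly as in the proof of Corollary \ref{ljkdskjdslkjsda}, Part \ref{ljkdskjdslkjsda1}) gives
\begin{align*}
	\textstyle\ww(X_p)=\ww\big(\APOL^+_{p,\psi}[b](X)\big)\leq\frac{(b-a)^p}{p!}\cdot\ww_\infty(\psi)^p\cdot\ww(X)\qquad\quad\forall\: p\in\NN,
\end{align*}
so for every $\delta>0$ the series $t\mapsto\sum_{p=0}^\infty t^p\cdot\ww(X_p)$ is dominated by $\ww(X)\cdot\sum_p\frac{(|t|(b-a)\ww_\infty(\psi))^p}{p!}=\ww(X)\cdot\e^{|t|(b-a)\ww_\infty(\psi)}$ and hence converges on all of $\RR$. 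Thus the hypothesis of Corollary \ref{dsdsdsdsdsdsdsds} is met with $\delta$ arbitrarily large, and Part \ref{lksdkltzastzsatzsatzsatzsatzastz2} of that corollary yields $\eta_{\psi,X}=\sum_{p=0}^\infty\gamma_p\in C^\infty(\RR,\mq)$ with
\begin{align*}
	\textstyle\eta_{\psi,X}^{(\ell)}(t)=\sum_{p=0}^\infty\gamma_p^{(\ell)}(t)=\sum_{p=\ell}^\infty\frac{p!}{(p-\ell)!}\cdot t^{p-\ell}\cdot X_p=\sum_{p=\ell}^\infty\frac{p!}{(p-\ell)!}\cdot t^{p-\ell}\cdot\APOL^+_{p,\psi}[b](X)
\end{align*}
for all $\ell\in\NN$, which is precisely the asserted formula.

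The only mild subtlety — and the step I would be most careful about — is making sure the domination is uniform enough for Corollary \ref{dsdsdsdsdsdsdsds}: the corollary requires that $\gamma[\ww]\colon(-\delta,\delta)\ni t\mapsto\sum_p t^p\ww(X_p)$ be \emph{defined} (i.e.\ finite) for each $\ww$ in the fundamental system, which the $\frac{(b-a)^p}{p!}$ factorial decay secures for every $\delta$; passing to the fundamental system of seminorms satisfying the self-referential estimate \eqref{assaaass} is the place where the asymptotic estimate hypothesis is genuinely used, and one should note (via Lemma \ref{xcxcxccxxcxc}) that enlarging seminorms within $\Sem{\mq}$ preserves the fundamental-system property. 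Everything else is a routine bookkeeping of the iterated-integral estimate already carried out elsewhere in the paper.
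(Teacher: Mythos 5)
Your proposal follows essentially the same route as the paper: expand $\eta_{\psi,X}(t)=\sum_{p=0}^\infty t^p\cdot \APOL^+_{p,\psi}[b](X)$ via the homogeneity of the iterated integrals, establish the factorial estimate on $\APOL^+_{p,\psi}[b](X)$, and invoke Corollary \ref{dsdsdsdsdsdsdsds}. The substance is right, and the homogeneity check you spell out is indeed the content of the paper's ``follows from the definitions''.

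The one step I would flag is your reduction to a fundamental system of seminorms satisfying \eqref{assaaass} with $\vv\equiv\ww$. The asymptotic estimate hypothesis only provides, for each $\vv$, \emph{some} larger $\ww$ controlling the $\vv$-size of nested brackets; it does not assert that $\ww$ itself satisfies the estimate self-referentially, and in general there need not exist a fundamental system of such ``submultiplicative'' seminorms (if there were, the two-seminorm formulation of \eqref{assaaass} would be redundant). Fortunately this detour is unnecessary: take $\SEMMM=\Sem{\mq}$ and, for each $\vv\in\Sem{\mq}$, choose $\vv\leq\ww$ as in \eqref{assaaass} to get $\vv\big(\APOL^+_{p,\psi}[b](X)\big)\leq \ww(X)\cdot\frac{(b-a)^p}{p!}\cdot\ww_\infty(\psi)^p$; this already shows that $t\mapsto\sum_{p}t^p\cdot\vv\big(\APOL^+_{p,\psi}[b](X)\big)$ is finite on all of $\RR$ for every $\vv$, which is all that Corollary \ref{dsdsdsdsdsdsdsds} requires. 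With that correction your argument coincides with the paper's proof.
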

\begin{proof} 
It follows from the definitions that $\eta_{\psi,X}(t)=\sum_{p=0}^\infty t^p\cdot\APOL^+_{p,\psi}[b](X)$ holds for each $t\in \RR$. Now, for $\vv\leq \ww$ as in \eqref{assaaass}, we have
\begin{align*}
	\textstyle\vv(\APOL^+_{p,\psi}[b](X))\leq \ww(X)\cdot \frac{(b-a)^p}{p!}\cdot \ww_\infty(\psi)^p\qquad\quad\forall\: p\in \NN.
\end{align*} 	 
 Consequently,  
	$\textstyle \gamma[\qq]\colon \RR\ni t\mapsto\sum_{p=0}^\infty t^p\cdot \qq(\APOL^+_{p,\psi}[b](X))\in [0,\infty)$ 
	is defined for each $\qq\in \Sem{\mq}$, so that the claim is clear  from Corollary \ref{dsdsdsdsdsdsdsds}. 
\end{proof}
\noindent
Next, let $a<b$ and $\psi\in  C^0([a,b],\mq)$ be given: 
\begingroup
\setlength{\leftmargini}{12pt}{
\begin{itemize}
\item
For $a\leq  s< z\leq b$, we define $\Yps_0(\psi,s,z):=\psi(s)$, as well as
	\begin{align*}
		\textstyle\Yps_p(\psi,s,z):= \int_s^{z}\dd s_1\int_s^{s_1}\dd s_2 \: {\dots} \int_s^{s_{p-1}}\dd s_p \: (\bilbr{\psi(s_1)}\cp \dots \cp \bilbr{\psi(s_{p})})(\psi(s))\qquad\quad\forall\: p \geq 1.
	\end{align*}
	For each $t\in [0,1]$ and $a<z\leq b$, we have
\begin{align}
\label{slksdklklsdsd}
	\TMAP(\psi|_{[a,z]})(t)= \textstyle\int_a^z \sum_{p=0}^\infty  \APOL^+_{p,\he t\cdot \psi|_{[s,z]}}(\psi(s))\: \dd s=\textstyle\int_a^z \sum_{p=0}^\infty  t^p \cdot \Yps_p(\psi,s,z)\: \dd s.
\end{align}
\vspace{-23pt}
\item
We define
\begin{align}
\label{nmvcnmvckjfdkjfdkjriuiureiure}
\textstyle \Alp_p(\psi,z,t):= \textstyle t^p\cdot \int_a^z \Yps_p(\psi,s,z)\: \dd s
\qquad\quad\forall\: p\in \NN,\:\: t\in[0,1],\:\: z\in[a,b].
\end{align}
\end{itemize}}
\endgroup
\noindent
We have the following statement. 
\begin{lemma}
\label{klsdjklsdkjlsddjklsjklsd}
For each $a<b$, we have $\TMAP\colon C^0([a,b],\mq)\rightarrow C^\infty([0,1],\mq)$. Moreover, 
\label{klsdjklsdkjlsddjklsjklsd1}
\begin{align*}
	\textstyle\TMAP(\psi|_{[a,z]})=  \sum_{p=0}^\infty  \Alp_p(\psi,z,\cdot)
\end{align*}
converges w.r.t.\ the $C^\infty$-topology for each $\psi\in C^0([a,b],\mq)$ and $a<z\leq b$; and we have
\begin{align}
\label{kjdskjdskjds}
	\sup\{a\leq z\leq b\:|\: \qq_\infty^\dind(\TMAP(\psi|_{[a,z]}))\}<\infty\qquad\quad\forall\: \psi\in C^0([a,b],\mq),\: \qq\in \Sem{\mq},\:\dind\in \NN.
\end{align}
\end{lemma}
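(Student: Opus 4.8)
\textbf{Proof plan for Lemma \ref{klsdjklsdkjlsddjklsjklsd}.}

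The plan is to reduce everything to the power-series machinery already set up in Corollary \ref{dsdsdsdsdsdsdsds}, exactly as in the proof of Lemma \ref{kjfdkjfdjkfdjkfd}, but now with the $z$-dependence carried along as a harmless continuous parameter. First I would fix $\psi\in C^0([a,b],\mq)$ and an arbitrary $a<z\leq b$, and record from \eqref{slksdklklsdsd} that $\TMAP(\psi|_{[a,z]})(t)=\sum_{p=0}^\infty t^p\cdot W_p(z)$ where $W_p(z):=\int_a^z\Yps_p(\psi,s,z)\:\dd s\in\mq$, so that $\Alp_p(\psi,z,t)=t^p\cdot W_p(z)$ in the notation of \eqref{nmvcnmvckjfdkjfdkjriuiureiure}. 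The key estimate is the iterated-integral bound: for $\vv\leq\ww$ as in \eqref{assaaass} one has, by pulling $\ww$ inside the $p$-fold integral and using \eqref{ffdlkfdlkfd} together with \eqref{assaaass},
\begin{align*}
	\vv(\Yps_p(\psi,s,z))\leq \tfrac{(b-a)^p}{p!}\cdot \ww_\infty(\psi)^{p+1}\qquad\quad\forall\: a\leq s<z\leq b,\: p\in\NN,
\end{align*}
hence $\vv(W_p(z))\leq (b-a)\cdot\tfrac{(b-a)^p}{p!}\cdot\ww_\infty(\psi)^{p+1}$ for all $z\in[a,b]$ and all $p$; the same bound holds verbatim with $\vv$ replaced by any $\qq\in\Sem{\mq}$ after enlarging $\ww$. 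In particular $\sum_{p=0}^\infty t^p\cdot\qq(W_p(z))$ has infinite radius of convergence for every $\qq\in\Sem{\mq}$, uniformly in $z$.

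Next I would feed this into Corollary \ref{dsdsdsdsdsdsdsds} (with $F\equiv\mq$, which is sequentially complete, $X_p\equiv W_p(z)$, and $\delta$ arbitrary, say $\delta:=2$ so that $[0,1]\subseteq(-\delta,\delta)$): Point \ref{lksdkltzastzsatzsatzsatzsatzastz1} of that corollary gives that $\sum_{p=0}^\infty\Alp_p(\psi,z,\cdot)=\sum_{p=0}^\infty t^p\cdot W_p(z)$ converges in the $C^\infty([0,1],\mq)$-topology, with the expected termwise-differentiated series, and its limit is therefore an element of $C^\infty([0,1],\mq)$ equal (by \eqref{slksdklklsdsd}) to $\TMAP(\psi|_{[a,z]})$. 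Applied with $z=b$ this already yields $\TMAP\colon C^0([a,b],\mq)\rightarrow C^\infty([0,1],\mq)$; applied with general $z$ it gives the asserted $C^\infty$-convergence of the series $\sum_p\Alp_p(\psi,z,\cdot)$. The convention $\TMAP(\psi|_{[a,a]})\equiv 0$ trivially lies in $C^\infty([0,1],\mq)$, so the edge case $z=a$ is fine.

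For the uniform bound \eqref{kjdskjdskjds}, fix $\qq\in\Sem{\mq}$ and $\dind\in\NN$; I would use the explicit formula for the derivatives from Corollary \ref{dsdsdsdsdsdsdsds}.\ref{lksdkltzastzsatzsatzsatzsatzastz1}, namely $\TMAP(\psi|_{[a,z]})^{(\ell)}(t)=\sum_{p=\ell}^\infty\tfrac{p!}{(p-\ell)!}\cdot t^{p-\ell}\cdot W_p(z)$, together with the bound on $\qq(W_p(z))$ above and $|t|\leq 1$, to get
\begin{align*}
	\qq\big(\TMAP(\psi|_{[a,z]})^{(\ell)}(t)\big)\leq (b-a)\cdot\sum_{p=\ell}^\infty \tfrac{p!}{(p-\ell)!}\cdot\tfrac{(b-a)^p}{p!}\cdot\ww_\infty(\psi)^{p+1}
\end{align*}
for all $0\leq\ell\leq\dind$, all $t\in[0,1]$, and all $z\in[a,b]$, with $\ww$ as above; the right-hand side is a convergent numerical series independent of $z$ and $t$, so taking the supremum over $0\leq\ell\leq\dind$ and $t\in[0,1]$ (and then over $z$) gives a finite bound, establishing \eqref{kjdskjdskjds}. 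I do not expect a genuine obstacle here: the only mildly delicate point is bookkeeping the quantifier order so that the constants from \eqref{assaaass} and from Corollary \ref{dsdsdsdsdsdsdsds} are chosen uniformly in $z$, which the $z$-free bound on $\qq(W_p(z))$ handles cleanly.
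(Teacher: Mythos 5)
Your proposal is correct and follows essentially the same route as the paper's proof: the same asymptotic-estimate bound $\vv(\Yps_p(\psi,s,z))\leq \frac{(b-a)^p}{p!}\cdot\ww_\infty(\psi)^{p+1}$, the resulting $z$-uniform bound on $\qq\big(\int_a^z\Yps_p(\psi,s,z)\:\dd s\big)$, and an appeal to Corollary \ref{dsdsdsdsdsdsdsds} to obtain $C^\infty$-convergence and the supremum bound \eqref{kjdskjdskjds}. The only detail you gloss over slightly is the interchange of $\sum_p$ and $\int_a^z$ when passing from \eqref{slksdklklsdsd} to $\sum_p t^p\cdot W_p(z)$, which the paper justifies via uniform convergence of the integrand series together with $C^0$-continuity of the Riemann integral; your estimate on $\Yps_p$ already supplies exactly that uniform convergence, so this is a matter of wording rather than a gap.
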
 
\begin{proof}
	Let $\psi\in C^0([a,b],\mq)$ be given, and let $\vv\leq \ww$ be as in \eqref{assaaass}. Then, we have
\begin{align*}
	\textstyle\vv_\infty( \Yps_p(\psi,\cdot,z))\leq \ww_\infty(\psi)\cdot \frac{(b-a)^p}{p!}\cdot \ww_\infty(\psi)^p\qquad\quad\forall\: p\in \NN,\: a< z\leq b.
\end{align*}
It follows that $\textstyle\{\sum_{p=0}^\ell  t^p\cdot \Yps_p(\psi,\cdot,z)\}_{\ell\in \NN}\rightarrow \sum_{p=0}^\infty t^p\cdot \Yps_p(\psi,\cdot,z)$ converges uniformly on $[a,b]$, for each fixed $t\in [0,1]$ and $a< z\leq b$. Since the Riemann integral is $C^0$-continuous, we obtain
\begin{align*}
	\TMAP(\psi|_{[a,z]})(t)\stackrel{\eqref{slksdklklsdsd}}{=} \textstyle 
	\textstyle \sum_{p=0}^\infty \overbracket{t^p\cdot  \underbracket{\textstyle\int_a^z \Yps_{p}(\psi,s,z)\: \dd s}_{=:\:X_p(z)}}^{=\: \Alp_p(\psi,z,t)}\qquad\quad\forall\: t\in [0,1],\: a<z\leq b.
\end{align*}
Now, for each $p\in \NN$ and $a<z\leq b$, we have 
\begin{align*}
	\textstyle\vv(X_p(z))\leq (b-a)\cdot \ww_\infty(\psi)\cdot \frac{(b-a)^p}{p!}\cdot \ww_\infty(\psi)^p. 
\end{align*}
	 Consequently,  
	$\textstyle \gamma[\qq]\colon \RR\ni t\mapsto\sum_{p=0}^\infty t^p\cdot \qq(X_p(z))\in [0,\infty)$ 
	is defined for each $\qq\in \Sem{\mq}$ and $a<z\leq b$, so that the claim is clear from Corollary \ref{dsdsdsdsdsdsdsds}. 
\end{proof}

\begin{lemma}
\label{kjfdkjfdjkfdjkoioioiouiuiuuzud}
For $\psi\in C^0([a,b],\mq)$, we have
\begin{align*}
	\Add^+_{t\cdot \psi}[b]=\Add^+_{\TMAP(\psi)}[t]\qquad\quad\forall\:t\in [0,1].
\end{align*}
\end{lemma}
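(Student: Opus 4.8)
## Proof Plan

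The plan is to prove the identity $\Add^+_{t\cdot\psi}[b]=\Add^+_{\TMAP(\psi)}[t]$ for all $t\in[0,1]$ by showing that both sides, as functions of $t$, are solutions to one and the same Lax-type differential equation in the time variable $t$, and then invoking the uniqueness statement from Proposition \ref{lkjfdldlkkjfdYYX}.\ref{lkjfdldlkkjfdYYX2}. More precisely, fix an arbitrary $Z\in\mq$ and set $\alpha(t):=\Add^+_{t\cdot\psi}[b](Z)=\eta_{\psi,Z}(t)$ and $\beta(t):=\Add^+_{\TMAP(\psi)}[t](Z)$. By Lemma \ref{kjfdkjfdjkfdjkfd}, $\alpha\in C^\infty([0,1],\mq)$, and by Lemma \ref{klsdjklsdkjlsddjklsjklsd} together with Lemma \ref{knkjfdsjkfasdkjdsjkdsa} we have $\beta\in C^1([0,1],\mq)$. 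Both satisfy the initial condition at $t=0$: indeed $\alpha(0)=\Add^+_0[b](Z)=Z$ and $\beta(0)=\Add^+_{\TMAP(\psi)}[0](Z)=Z$. By Proposition \ref{lkjfdldlkkjfdYYX}.\ref{lkjfdldlkkjfdYYX2}, it then suffices to verify that $\alpha$ solves $\dot\alpha(t)=\bil{\TMAP(\psi)(t)}{\alpha(t)}$, since $\beta$ solves exactly this equation by construction (Lemma \ref{knkjfdsjkfasdkjdsjkdsa}.\ref{knkjfdsjkfasdkjdsjkdsa1}).

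So the heart of the matter is computing $\dot\alpha(t)=\frac{\dd}{\dd t}\Add^+_{t\cdot\psi}[b](Z)$ and recognizing it as $\bil{\TMAP(\psi)(t)}{\alpha(t)}$. First I would use the splitting property (Lemma \ref{jdjdfkjfdkjfd}) to write, for each $s\in[a,b]$,
\begin{align*}
	\Add^+_{t\cdot\psi}[b]=\Add^+_{t\cdot\psi|_{[s,b]}}[b]\cp\Add^+_{t\cdot\psi|_{[a,s]}}[s]\qquad\text{hence}\qquad Z'_s:=\Add^+_{t\cdot\psi|_{[a,s]}}[s](Z)
\end{align*}
is, for fixed $t$, a way of rewriting; but the cleaner route is to differentiate $\Add^+_{t\cdot\psi}[b](Z)$ directly using the power series representation from Lemma \ref{kjfdkjfdjkfdjkfd}: $\alpha(t)=\sum_{p=0}^\infty t^p\,\APOL^+_{p,\psi}[b](Z)$, so $\dot\alpha(t)=\sum_{p=1}^\infty p\,t^{p-1}\,\APOL^+_{p,\psi}[b](Z)$. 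The task is then to reorganize this sum. Using the recursion $\APOL^+_{p,\psi}[b](Z)=\int_a^b\bil{\psi(s_1)}{\APOL^+_{p-1,\psi}[s_1](Z)}\,\dd s_1$ and integration by parts / Fubini-type manipulations to peel off one integration variable at the top, one expresses $p\,\APOL^+_{p,\psi}[b](Z)$ as a sum over which of the $p$ integration ``slots'' is differentiated; a standard rearrangement (the same combinatorial identity underlying the binomial-type bookkeeping in the proof of Lemma \ref{jdjdfkjfdkjfd}) identifies $\dot\alpha(t)=\bil{\int_a^b\Add^+_{t\cdot\psi|_{[s,b]}}[b](\psi(s))\,\dd s}{\Add^+_{t\cdot\psi}[b](Z)}$, where the first slot of the bracket is precisely $\TMAP(\psi)(t)$ by definition \eqref{kjdkjdkjkjdfkjfdkjfdjkfd}. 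This requires justifying that all the series manipulations converge, which follows from the exponential-type bounds $\vv(\APOL^+_{p,\psi}[b](Z))\le\ww(Z)\cdot\frac{(b-a)^p}{p!}\cdot\ww_\infty(\psi)^p$ already used repeatedly, so the rearrangements are term-by-term legitimate in each continuous seminorm.

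The main obstacle I anticipate is the bookkeeping in the derivative computation: showing cleanly that $\frac{\dd}{\dd t}\sum_p t^p\APOL^+_{p,\psi}[b](Z)$ collapses to the single bracket $\bil{\TMAP(\psi)(t)}{\alpha(t)}$ rather than some more complicated nested expression. An alternative, possibly smoother, implementation avoids the series entirely: rescaling. For $t\in(0,1]$ fixed, the substitution $\varrho\colon[a,a+t(b-a)]\to[a,b]$, $\varrho(r)=a+(r-a)/t$ (affine, with $\dot\varrho\equiv 1/t>0$) combined with Remark \ref{iufdicxcxcxcxufdiufdiufddd} gives $\Add^+_{t\cdot\psi}[b]=\Add^+_{(1/t)\cdot(t\cdot\psi)(\varrho(\cdot))}[\cdot]$ evaluated appropriately — but this merely shuffles the parameter and does not by itself produce the $t$-derivative. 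So I would fall back on the series argument, using the differentiation-under-the-integral results (Lemma \ref{lkjfdlkjfdkjf} or Corollary \ref{lkjjslkjdlkjdskjldskjs} in the vector-space setting of Remark \ref{kjfdkjdkjfdkj}) applied to $\Theta^0(t,s):=\Add^+_{t\cdot\psi|_{[s,b]}}[b](\psi(s))$ to control $\TMAP(\psi)$ and its derivatives, and then verifying the ODE $\dot\alpha=\bil{\TMAP(\psi)}{\alpha}$ by a direct term-by-term computation justified by the stated exponential estimates. Once the ODE and initial condition match those of $\beta$, uniqueness in Proposition \ref{lkjfdldlkkjfdYYX}.\ref{lkjfdldlkkjfdYYX2} closes the argument for arbitrary $Z$, hence proves the operator identity.
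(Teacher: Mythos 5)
Your overall frame coincides with the paper's: fix $X\in\mq$, show that $\alpha\colon t\mapsto\Add^+_{t\cdot\psi}[b](X)$ solves $\dot\alpha=\bil{\TMAP(\psi)(t)}{\alpha}$ with $\alpha(0)=X$, and conclude by the uniqueness statement of Proposition \ref{lkjfdldlkkjfdYYX}.\ref{lkjfdldlkkjfdYYX2}, since $\Add^+_{\TMAP(\psi)}[\cdot](X)$ solves the same initial value problem by Lemma \ref{knkjfdsjkfasdkjdsjkdsa}. The initial-condition checks and the smoothness input from Lemma \ref{kjfdkjfdjkfdjkfd} are fine.

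The gap is in the one step that carries the entire content, namely the identification $\dot\alpha(t)=\bil{\TMAP(\psi)(t)}{\alpha(t)}$. Term-by-term differentiation of $\alpha(t)=\sum_{p}t^p\cdot\APOL^+_{p,\psi}[b](X)$ is legitimate, but matching coefficients of $t^{p-1}$ against $\bil{\TMAP(\psi)(t)}{\alpha(t)}=\sum_{q,r}t^{q+r}\he\bil{\int_a^b\Yps_q(\psi,s,b)\:\dd s}{\APOL^+_{r,\psi}[b](X)}$ requires the identity
\begin{align*}
	\textstyle p\cdot\APOL^+_{p,\psi}[b](X)\;=\;\sum_{q=0}^{p-1}\:\bil{\int_a^b\Yps_q(\psi,s,b)\:\dd s}{\APOL^+_{p-1-q,\psi}[b](X)},
\end{align*}
and this is \emph{not} ``the same combinatorial identity underlying the binomial-type bookkeeping in the proof of Lemma \ref{jdjdfkjfdkjfd}''. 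That bookkeeping is pure domain-splitting of iterated simplex integrals and uses no Lie-algebraic structure at all, whereas the displayed identity is a Duhamel-type formula that is false without the Jacobi identity: already for $p=2$ one must expand $\bil{\bil{\psi(s_1)}{\psi(s_2)}}{X}=\bil{\psi(s_1)}{\bil{\psi(s_2)}{X}}-\bil{\psi(s_2)}{\bil{\psi(s_1)}{X}}$ and symmetrize the integration domain before the two sides agree, and for general $p$ a separate induction is needed. Since you neither state this identity precisely nor prove it, and the justification you offer for it is incorrect, the proof is incomplete at its central point (your fallback via differentiation under the integral only controls $\TMAP(\psi)$ itself, not $\dot\alpha$). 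For comparison, the paper sidesteps the combinatorics entirely: by Corollary \ref{dlkfdlkjfdlkfd} one has $\alpha(t+h)=\Add^+_{t\cdot\psi}[b]\big(\Add^+_{h\cdot\chi_t}[b](X)\big)$ with $\chi_t:=\Add^-_{t\cdot\psi}[\cdot](\psi(\cdot))$, the $h$-derivative at $0$ is read off from the degree-one coefficient $\APOL^+_{1,\chi_t}[b](X)=\bil{\int_a^b\chi_t(s)\:\dd s}{X}$, and $\Add^+_{t\cdot\psi}[b]$ is then pushed through the bracket using Corollary \ref{fdjkfdlkjfdkjfd} together with Lemma \ref{jdjdfkjfdkjfd} and Proposition \ref{lkjfdldlkkjfdYYX}.\ref{lkjfdldlkkjfdYYX1} to produce $\bil{\TMAP(\psi)(t)}{\alpha(t)}$. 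If you wish to keep your direct route, you must supply the induction for the displayed identity.
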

\begin{proof}
Let $X\in \mq$ be given. Lemma \ref {kjfdkjfdjkfdjkfd} yields the following: 
\begingroup
\setlength{\leftmargini}{12pt}{
\begin{itemize}
\item
We have $C^\infty(\RR,\mq)\ni \eta_{\psi,X}\colon \RR\ni t \mapsto \Add^+_{t\cdot \psi}[b](X)\in \mq$.
\item
By Lemma \ref{knkjfdsjkfasdkjdsjkdsa}, we have 
\begin{align*}
	\chi_t:=\Add^-_{t\cdot\psi}[\cdot](\psi(\cdot))\in C^0([a,b],\mq)\qquad\quad\forall\: t\in \RR. 
\end{align*}
Lemma \ref{kjfdkjfdjkfdjkfd} shows that $\eta_{\chi_t,X}\colon \RR\ni h\mapsto \Add^+_{h\cdot \chi_t}[b](X)\in \mq$ is smooth for each $t\in \RR$, with
\begin{align}
\label{aoiaoisoiosasa}
	\dot\eta_{\chi_t,X}(0)\textstyle = \APOL^+_{1,\chi_t}[b](X)\stackrel{\eqref{pofdpofdpofdsddsdsfd}}{=}\bil{\int_a^b \chi_t(s)\:\dd s}{X}.
\end{align}
\end{itemize}}
\endgroup
\noindent
By Lemma \ref{jdjdfkjfdkjfd} (second step) and Proposition \ref{lkjfdldlkkjfdYYX}.\ref{lkjfdldlkkjfdYYX1} (third step), we have 
\begin{align*}
	\Add^+_{t\cdot\psi}[b]\cp\Add^-_{t\cdot\psi}[s]=(\Add^+_{t\cdot \psi|_{[s,b]}}\cp \Add^+_{t\cdot \psi|_{[a,s]}})\cp \Add^-_{t\cdot \psi|_{[a,s]}}=\Add^+_{t\cdot \psi|_{[s,b]}}\qquad\quad\forall\: t\in \RR,\:\: a\leq s\leq b.
\end{align*}
This yields (second step), together with Corollary \ref{fdjkfdlkjfdkjfd} and \eqref{aoiaoisoiosasa} (first step) that
\begin{align}
\label{sdlkkdslkdsds}
\begin{split}
	\Add^+_{t\cdot\psi}[b](\dot\eta_{\chi_t,X}(0))&\textstyle =  \bil{\int_a^b\Add^+_{t\cdot\psi}[b](\chi_t(s))\:\dd s}{\Add^+_{t\cdot\psi}[b](X)}) \\
	&\textstyle= \bil{\int_a^b \Add^+_{t\cdot\psi|_{[s,b]}}[b](\psi(s))\:\dd s}{\Add^+_{t\cdot\psi}[b](X)}) \\
	&\textstyle= \bil{\TMAP(\psi)(t)}{\eta_{\psi,X}(t)})
\end{split}
\end{align}
holds for each $t\in [0,1]$. 
Now, Corollary \ref{dlkfdlkjfdlkfd} (second step) shows 
 \begin{align*}
	\eta_{\psi,X}(t+h)\textstyle&=\Add^+_{t\cdot \psi + h\cdot \psi}[b](X)\\[5pt]
	&\textstyle=\Add^+_{t\cdot\psi}[b]\big( \Add^+_{h\cdot\Add^-_{t\cdot\psi}[\cdot](\psi(\cdot))}[b](X)\big)\\
	&=\Add^+_{t\cdot\psi}[b](\eta_{\chi_t,X}(h))
\end{align*}
for $t,h\in \RR$. Hence, \eqref{sdlkkdslkdsds}, together with Lemma \ref{knkjfdsjkfasdkjdsjkdsa} and Part \ref{linear} of Proposition \ref{iuiuiuiuuzuzuztztttrtrtr} yields
\begin{align*}
	\dot\eta_{\psi,X}(t)= \bil{\TMAP(\psi)(t)}{\eta_{\psi,X}})\qquad\quad\forall\: t\in [0,1],
\end{align*}
with $\eta_{\psi,X}(0)=X$. 
Then, Proposition \ref{lkjfdldlkkjfdYYX}.\ref{lkjfdldlkkjfdYYX2} ( second step) shows
\begin{align*}
	 \Add^+_{t\cdot \psi}[b](X)=\eta_{\psi,X}(t)=\Add^+_{\TMAP(\psi)}[t](X)\qquad\quad \forall\: t\in [0,1].
\end{align*}
 Since $X\in \mq$ was arbitrary, the claim follows.
\end{proof}
\noindent
We obtain the following statement.
\begin{lemma}
\label{ksdkskjdsdssd}
Let $\psi\in C^0([a,b+\varepsilon],\mq)$ with $a< b$ and $\varepsilon>0$ be given. Then, 
we have
\begin{align*}
	\TMAP(\psi|_{[a,z+h]})=\TMAP(\psi|_{[z,z+h]})\starm \TMAP(\psi|_{[a,z]})\qquad\quad\forall\: a< z\leq b,\:\: 0< h\leq \varepsilon.
\end{align*}
\end{lemma}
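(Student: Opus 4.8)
The plan is to reduce the claimed identity $\TMAP(\psi|_{[a,z+h]})=\TMAP(\psi|_{[z,z+h]})\starm \TMAP(\psi|_{[a,z]})$ to the ``$\starm$-compatibility'' of the operators $\Add^+$ established in Lemma \ref{hdshjdskjdsjkds}.\ref{hdshjdskjdsjkds2}, using Lemma \ref{kjfdkjfdjkfdjkoioioiouiuiuuzud} as the dictionary between $\TMAP$ and $\Add^+$, together with the splitting property of $\Add^+$ from Lemma \ref{jdjdfkjfdkjfd} (extended via Remark \ref{pofdpofdpofdpofdfdfdcvcvvvc} to arbitrary base intervals). The key observation is that both sides of the claimed identity are elements of $C^\infty([0,1],\mq)$ (by Lemma \ref{klsdjklsdkjlsddjklsjklsd}, and since $C^0([0,1],\mq)$ is a group under $\starm$ by Lemma \ref{lkjkjsjlksdlkjdskj}), so it suffices to show they produce the same $\Add^+[\cdot]$-operator; but actually it is cleaner to prove the identity directly via the explicit integral formula, checking it pointwise in $t\in [0,1]$.

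First I would fix $a<z\le b$, $0<h\le \varepsilon$, and $t\in [0,1]$, and unwind the right-hand side using the definition of $\starm$:
\begin{align*}
	\big(\TMAP(\psi|_{[z,z+h]})\starm \TMAP(\psi|_{[a,z]})\big)(t)= \TMAP(\psi|_{[z,z+h]})(t) + \Add^+_{\TMAP(\psi|_{[z,z+h]})}[t]\big(\TMAP(\psi|_{[a,z]})(t)\big).
\end{align*}
By Lemma \ref{kjfdkjfdjkfdjkoioioiouiuiuuzud} applied on $[z,z+h]$ (using $\Add^+_{t\cdot\psi|_{[z,z+h]}}[z+h]=\Add^+_{\TMAP(\psi|_{[z,z+h]})}[t]$), the operator $\Add^+_{\TMAP(\psi|_{[z,z+h]})}[t]$ equals $\Add^+_{t\cdot\psi|_{[z,z+h]}}[z+h]$. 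Plugging in the integral formula \eqref{kjdkjdkjkjdfkjfdkjfdjkfd} for each of the three occurrences of $\TMAP$, and using linearity of $\Add^+_{t\cdot\psi|_{[z,z+h]}}[z+h]$ together with \eqref{pofdpofdpofdsddsdsfd} to pull it inside the Riemann integral, one gets
\begin{align*}
	\big(\TMAP(\psi|_{[z,z+h]})\starm \TMAP(\psi|_{[a,z]})\big)(t)= \int_z^{z+h}\Add^+_{t\cdot\psi|_{[s,z+h]}}(\psi(s))\:\dd s + \int_a^z \Add^+_{t\cdot\psi|_{[z,z+h]}}[z+h]\big(\Add^+_{t\cdot\psi|_{[s,z]}}(\psi(s))\big)\:\dd s.
\end{align*}
For the first integral, $\Add^+_{t\cdot\psi|_{[s,z+h]}}$ with $z\le s\le z+h$ is already in the right form. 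For the second, I would invoke Lemma \ref{jdjdfkjfdkjfd} (with the decomposition $a\le s < z < z+h$, applied to $t\cdot\psi|_{[s,z+h]}$) to obtain $\Add^+_{t\cdot\psi|_{[z,z+h]}}\cp\Add^+_{t\cdot\psi|_{[s,z]}}=\Add^+_{t\cdot\psi|_{[s,z+h]}}$, so the integrand collapses to $\Add^+_{t\cdot\psi|_{[s,z+h]}}(\psi(s))$. Here I should be slightly careful: Lemma \ref{jdjdfkjfdkjfd} is stated for $\CP^0$-curves on a common interval with a midpoint, and it applies verbatim to the restriction $t\cdot\psi|_{[s,z+h]}$ split at $z$. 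Combining the two integrals via \eqref{isdsdoisdiosd0} then gives exactly $\int_a^{z+h}\Add^+_{t\cdot\psi|_{[s,z+h]}}(\psi(s))\:\dd s = \TMAP(\psi|_{[a,z+h]})(t)$, which is the left-hand side.

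The main obstacle I anticipate is purely bookkeeping: making sure all the $\Add^+$-operators are indexed over the correct subintervals so that Lemma \ref{jdjdfkjfdkjfd} (the multiplicativity under concatenation) and Lemma \ref{kjfdkjfdjkfdjkoioioiouiuiuuzud} (the $\TMAP$-vs-$\Add^+$ identity, which is stated with upper limit $b$ and parameter range $[0,1]$) can be invoked with the right endpoints — in particular Lemma \ref{kjfdkjfdjkfdjkoioioiouiuiuuzud} is for the interval starting at the left endpoint and ending at the right endpoint, so it applies directly to $\psi|_{[z,z+h]}$ and to $\psi|_{[a,z]}$, $\psi|_{[a,z+h]}$ but one must not accidentally apply it to $\psi|_{[s,\cdot]}$ with $s$ in the interior. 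The edge case $z=a$ (where $\TMAP(\psi|_{[a,a]})$ is the constant curve $0$ by convention and $0$ is the $\starm$-identity by Lemma \ref{lkjkjsjlksdlkjdskj}) should be mentioned but is immediate. Once the interval indices are lined up, the proof is a one-line application of linearity of the adjoint operators under the Riemann integral and additivity \eqref{isdsdoisdiosd0}.
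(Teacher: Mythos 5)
Your proposal is correct and follows essentially the same route as the paper's proof: both split the defining integral of $\TMAP(\psi|_{[a,z+h]})$ at $z$, use Lemma \ref{jdjdfkjfdkjfd} together with \eqref{pofdpofdpofdsddsdsfd} to factor $\Add^+_{t\cdot\psi|_{[s,z+h]}}=\Add^+_{t\cdot\psi|_{[z,z+h]}}\cp\Add^+_{t\cdot\psi|_{[s,z]}}$ across the Riemann integral, and invoke Lemma \ref{kjfdkjfdjkfdjkoioioiouiuiuuzud} to identify $\Add^+_{t\cdot\psi|_{[z,z+h]}}[z+h]$ with $\Add^+_{\TMAP(\psi|_{[z,z+h]})}[t]$; you merely run the computation from the right-hand side to the left instead of the reverse. (The edge case $z=a$ you mention is excluded by the hypothesis $a<z$, so no separate treatment is needed.)
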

\begin{proof}
Let $a< z\leq b$ and $0< h\leq \varepsilon$ be fixed. 
For $t\in [0,1]$, we have
\begin{align*}
	\textstyle \TMAP(\psi|_{[a,z+h]})(t)&\textstyle=\int_a^{z+h} \Add^+_{\he t\cdot \psi|_{[s,z+h]}}[\psi(s)]	\:\dd s\\[1pt]
	&\textstyle= \underbracket{\textstyle\int_z^{z+h} \Add^+_{\he t\cdot \psi|_{[s,z+h]}}[\psi(s)]	\:\dd s}_{=\:\TMAP(\psi|_{[z,z+h]})}\he +\he \underbracket{\textstyle\int_a^{z} \Add^+_{\he t\cdot \psi|_{[s,z+h]}}[\psi(s)]	\:\dd s}_{=:\: \alpha(t)}.
\end{align*} 
Now, Lemma \ref{jdjdfkjfdkjfd} and \eqref{pofdpofdpofdsddsdsfd} (first step), as well as Lemma \ref{kjfdkjfdjkfdjkoioioiouiuiuuzud} (third step) yield
\begin{align*}
	\alpha(t)&\textstyle= \Add^+_{\he t\cdot \psi|_{[z,z+h]}}[\int_a^{z} \Add^+_{\he t\cdot \psi|_{[s,z]}}[\psi(s)]	\:\dd s]\\
	&\textstyle= \Add^+_{\he t\cdot \psi|_{[z,z+h]}}[z+h](\TMAP(\psi|_{[a,z]})(t))\\
	&\textstyle=\Add^+_{\TMAP(\psi|_{[z,z+h]})}[t](\TMAP(\psi|_{[a,z]})(t))
\end{align*}
for each $t\in [0,1]$. We obtain 
\begin{align*}
	\TMAP(\psi|_{[a,z+h]})(t)= \TMAP(\psi|_{[z,z+h]}) + \Add^+_{\TMAP(\psi|_{[z,z+h]})}[t](\TMAP(\psi|_{[a,z]})(t))= \big(\TMAP(\psi|_{[z,z+h]})\starm \TMAP(\psi|_{[a,z]})\big)(t)
\end{align*}
for each $t\in [0,1]$, which proves the claim.
\end{proof}
\noindent
We obtain the following statement (in analogy to Remark \ref{oidsoidsoidsiods}).
\begin{proposition}
\label{oidsoidsoidsiodsdffdd}
Let $a<c<b$, $\psi\in C^k([a,b],\mq)$, and $\varrho\colon [a',b']\rightarrow [a,b]$ ($a'<b'$) be of class $C^1$ with $\dot\varrho|_{(a',b')}>0$, $\varrho(a')=a$, $\varrho(b')=b$. Then, we have
\begin{align}
\label{pdpofdpofdpofds1}
\TMAP(\psi)^{-1}&=\TMAP(\inverse{\psi})\\[2pt]
\label{pdpofdpofdpofds2}
	\TMAP(\psi)&=\TMAP(\psi|_{[c,b]})\star \TMAP(\psi|_{[a,c]})\\
\label{pdpofdpofdpofds3}
	\TMAP(\psi)&=\TMAP(\dot\varrho\cdot (\psi\cp\varrho)).
\end{align}
\end{proposition}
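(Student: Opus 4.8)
The plan is to dispose of \eqref{pdpofdpofdpofds3} and \eqref{pdpofdpofdpofds2} immediately and to concentrate on \eqref{pdpofdpofdpofds1}. Identity \eqref{pdpofdpofdpofds3} is exactly the assertion of Remark~\ref{pofdpofdpofdpofdfdfdcvcvvvcjk}, whose hypotheses ($\dot\varrho|_{(a',b')}>0$, $\varrho(a')=a$, $\varrho(b')=b$) coincide with ours, so nothing is left to prove. For \eqref{pdpofdpofdpofds2} I would first extend $\psi$ to some $\tilde\psi\in C^0([a,b+(b-c)],\mq)$ (e.g.\ constantly past $b$), and then apply Lemma~\ref{ksdkskjdsdssd} with $z:=c$ and $h:=b-c$; since $\tilde\psi|_{[a,b]}=\psi$, $\tilde\psi|_{[c,b]}=\psi|_{[c,b]}$ and $\tilde\psi|_{[a,c]}=\psi|_{[a,c]}$, this gives $\TMAP(\psi)=\TMAP(\psi|_{[c,b]})\star\TMAP(\psi|_{[a,c]})$ at once.

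For \eqref{pdpofdpofdpofds1} I would argue by a direct computation, rewriting both sides through $\Add^-_{t\cdot\psi}$. Recalling the definition of $\psi^{-1}$ from Sect.~\ref{sdklsdklsdklsd} together with Lemma~\ref{kjfdkjfdjkfdjkoioioiouiuiuuzud} and Proposition~\ref{lkjfdldlkkjfdYYX}.\ref{lkjfdldlkkjfdYYX1}, one has $\TMAP(\psi)^{-1}(t)=-\Add^-_{\TMAP(\psi)}[t](\TMAP(\psi)(t))=-\Add^-_{t\cdot\psi}[b](\TMAP(\psi)(t))$ for $t\in[0,1]$. On the other hand Lemma~\ref{jdjdfkjfdkjfd}, Proposition~\ref{lkjfdldlkkjfdYYX}.\ref{lkjfdldlkkjfdYYX1} and \eqref{pofdpofdpofdsddsdsfd} give $\Add^+_{t\cdot\psi|_{[s,b]}}[b]=\Add^+_{t\cdot\psi}[b]\cp\Add^-_{t\cdot\psi}[s]$, hence
$$\TMAP(\psi)(t)=\Add^+_{t\cdot\psi}[b]\Big(\int_a^b\Add^-_{t\cdot\psi}[s](\psi(s))\:\dd s\Big),$$
so that, using $\Add^-_{t\cdot\psi}[b]\cp\Add^+_{t\cdot\psi}[b]=\id_\mq$ (Corollary~\ref{lkjfdkjfdkjkjldfjldf}), we obtain $\TMAP(\psi)^{-1}(t)=-\int_a^b\Add^-_{t\cdot\psi}[s](\psi(s))\:\dd s$. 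It thus remains to show that $\TMAP(\inverse{\psi})(t)$ equals this same expression.

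To compute $\TMAP(\inverse{\psi})$, note first $t\cdot\inverse{\psi}=\inverse{t\cdot\psi}$. The key step is the identity $\Add^+_{(\inverse{t\cdot\psi})|_{[s,b]}}[b]=\Add^-_{t\cdot\psi}[a+b-s]$ for $a<s<b$: one checks directly that $(\inverse{t\cdot\psi})|_{[s,b]}$ is, after a shift of the variable by $s-a$, the inverse--reversal $\inverse{(t\cdot\psi)|_{[a,a+b-s]}}$, and then combines the affine--shift invariance of $\Add^+$ (Remark~\ref{iufdicxcxcxcxufdiufdiufddd}, second point), the relation $\Add^+_{\inverse{\chi}}[b]=\Add^-_\chi[b]$ (a case of Proposition~\ref{lkjfdldlkkjfdYYX}.\ref{lkjfdldlkkjfdYYX1}), and the fact that $\Add^-_\chi[r]$ depends only on $\chi|_{[a,r]}$. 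Granting this, the definition of $\TMAP$ yields $\TMAP(\inverse{\psi})(t)=\int_a^b\Add^-_{t\cdot\psi}[a+b-s](-\psi(a+b-s))\:\dd s$; the reversal property of the Riemann integral (second point of Remark~\ref{seqcomp}) applied to $s\mapsto a+b-s$, together with linearity of $\Add^-_{t\cdot\psi}[\cdot]$, turns this into $-\int_a^b\Add^-_{t\cdot\psi}[s](\psi(s))\:\dd s$, which is what we wanted. Definedness of all curves appearing is covered by Lemma~\ref{knkjfdsjkfasdkjdsjkdsa} and Lemma~\ref{klsdjklsdkjlsddjklsjklsd}.

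I expect the bookkeeping around the inverse--reversal operation $\inverse{\cdot}$ under restriction and affine reparametrization to be the only genuine obstacle. A seemingly shorter route would be to show $\TMAP(\psi)\star\TMAP(\inverse{\psi})=0$ by computing $\Add^+_{\TMAP(\psi)\star\TMAP(\inverse{\psi})}[t]=\Add^+_{t\cdot\psi}[b]\cp\Add^+_{t\cdot\psi}[b]^{-1}=\id_\mq=\Add^+_0[t]$ via Lemma~\ref{hdshjdskjdsjkds} and Proposition~\ref{lkjfdldlkkjfdYYX}; but this does \emph{not} close, because $\chi\mapsto\Add^+_\chi[\cdot]$ fails to be injective (a curve with central values is collapsed to $\id_\mq$), so the explicit computation above is genuinely needed.
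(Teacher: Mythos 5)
Your proofs of \eqref{pdpofdpofdpofds2} and \eqref{pdpofdpofdpofds3} coincide with the paper's (Lemma \ref{ksdkskjdsdssd} and Remark \ref{pofdpofdpofdpofdfdfdcvcvvvcjk}, respectively; the extension of $\psi$ past $b$ is not even needed, since one may apply Lemma \ref{ksdkskjdsdssd} with its ``$b$'' equal to $c$ and its ``$\varepsilon$'' equal to $b-c$). For \eqref{pdpofdpofdpofds1} you take a genuinely different route, and it is correct. The paper reparametrizes $\psi$ and $\inverse{\psi}$ by bump functions, concatenates them into a single curve $\phi\in C^0([a,2b+a],\mq)$, uses \eqref{pdpofdpofdpofds2} and \eqref{pdpofdpofdpofds3} to identify $\TMAP(\psi)\starm\TMAP(\inverse{\psi})$ with $\TMAP(\phi)$, and then shows $\TMAP(\phi)(t)=0$ by an explicit integral computation; uniqueness of inverses in the group then gives the claim. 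You instead compute both sides of \eqref{pdpofdpofdpofds1} in closed form as $-\int_a^b\Add^-_{t\cdot\psi}[s](\psi(s))\:\dd s$: for the left side via the definition of the group inverse, Lemma \ref{kjfdkjfdjkfdjkoioioiouiuiuuzud}, the factorization $\Add^+_{t\cdot\psi|_{[s,b]}}=\Add^+_{t\cdot\psi}[b]\cp\Add^-_{t\cdot\psi}[s]$, and \eqref{pofdpofdpofdsddsdsfd}; for the right side via the reversal identity $\Add^+_{(\inverse{t\cdot\psi})|_{[s,b]}}[b]=\Add^-_{t\cdot\psi}[a+b-s]$ and the reflection invariance of the Riemann integral. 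The computational core (reversal of $\Add^+$ under $\inversee$ plus Remark \ref{seqcomp}) is the same in both arguments; what your version buys is that it avoids the bump-function concatenation machinery entirely, at the price of having to identify $\TMAP(\psi)^{-1}$ explicitly, which is where Lemma \ref{kjfdkjfdjkfdjkoioioiouiuiuuzud} becomes essential. Your closing remark is also accurate and worth keeping in mind: since $\chi\mapsto\Add^+_\chi[\cdot]$ is not injective (curves with central values are collapsed to $\id_\mq$), one cannot conclude $\TMAP(\psi)\starm\TMAP(\inverse{\psi})=0$ merely from $\Add^+_{\TMAP(\psi)\starm\TMAP(\inverse{\psi})}[t]=\id_\mq$; the paper circumvents this by computing the curve $\TMAP(\phi)$ itself rather than its $\Add^+$.
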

\begin{proof}
Lemma  \ref{ksdkskjdsdssd} shows \eqref{pdpofdpofdpofds2}, and Remark \ref{pofdpofdpofdpofdfdfdcvcvvvcjk} shows \eqref{pdpofdpofdpofds3}. To prove \eqref{pdpofdpofdpofds1}, let $\varrho_1\colon [a,b]\rightarrow [a,b]$ with $\varrho_1(a)=a$, $\varrho_1(b)=b$, as well as $\varrho_2\colon [b,2b+a]\rightarrow [a,b]$ with $\varrho_2(b)=a$, $\varrho_2(2b+a)=b$ both be of class $C^1$, such that $\dot\varrho_1|_{(a,b)}>0$ as well as $\dot\varrho_2|_{(b,2b+a)}>0$ holds with
\begin{align*}
	\dot\varrho_1(a),\he\dot\varrho_1(b),\he \dot\varrho_2(b),\he\dot\varrho_2(2b+a)=0.
\end{align*}
We set $\chi_1:=\dot\varrho_1\cdot (\psi\cp\varrho_1)\in C^0([a,b],\mq)$ as well as $\chi_2:=\dot\varrho_2\cdot (\inverse{\psi}\cp\varrho_2)\in C^0([b,2b+a],\mq)$, and define $\phi\in C^0([a,2b+a],\mq)$ by 
\begin{align*}
\phi(t):= 
\begin{cases}
\chi_1(t) &\text{for}\quad t\in [a,b]\\
\chi_2(t) &\text{for}\quad t\in (b,2b+a] 
\end{cases}
\end{align*}	
for each $t\in [a,2b+a]$. Then, we have
\begin{align}
\label{dflfdlkfdkfdlkkfdfd}
	\TMAP(\psi)\starm\TMAP(\inverse{\psi})\stackrel{\eqref{pdpofdpofdpofds3}}{=}\TMAP(\chi_1) \starm \TMAP(\chi_2)\stackrel{\eqref{pdpofdpofdpofds2}}{=} \TMAP(\phi).
\end{align}
Let now $t\in [0,1]$ and $X\in \mq$ be given (and observe $t\cdot \inverse{\psi}=\inverse{t\cdot \psi}$).
\begingroup
\setlength{\leftmargini}{12pt}{
\begin{itemize}
\item
	For $s\in [a,b]$, we have by Lemma \ref{jdjdfkjfdkjfd} (first step),    \eqref{kjfdkjfdjkfdkjfdkjkjfdkjfdkjfdkjfd} in Remark \ref{iufdicxcxcxcxufdiufdiufddd} (third step),  as well as Lemma \ref{jdjdfkjfdkjfd} and Proposition \ref{lkjfdldlkkjfdYYX}.\ref{lkjfdldlkkjfdYYX1} (fourth step) that
\begin{align}
\label{oifdoioioifdoifd1}
\begin{split}
	\Add^+_{t\cdot \phi|_{[s,2b+a]}}(X)&=\Add^+_{t\cdot \phi|_{[b,2b+a]}}\big(\Add^+_{t\cdot \phi|_{[s,b]}}(X)\big)\\
	&=\Add^+_{t\cdot \chi_2}\big(\Add^+_{t\cdot \chi_1|_{[s,b]}}(X)\big)\\
	&=\Add^+_{ \inverse{t\cdot\psi}}\big(\Add^+_{t\cdot \psi|_{[\varrho_1(s),b]}}(X)\big)\\
	&=\Add^-_{t\cdot \psi|_{[a,\varrho_1(s)]}}(X).
\end{split}
\end{align}
\item
For $s\in [b,2b+a]$, we define 
\begin{align*}
	\chi\colon [\varrho_2(s),b]\ni z \mapsto  \inverse{t\cdot \psi|_{[a,a+b-\varrho_2(s)]}}(z-(\varrho_2(s)-a))  \in \mq.
\end{align*}
We observe the following:
\begingroup
\setlength{\leftmarginii}{17pt}
{
\renewcommand{\theenumi}{\roman{enumi})} 
\renewcommand{\labelenumi}{\theenumi}
\begin{enumerate}
\item 
\label{oizzuzu1}
We have $\chi=\inverse{t\cdot \psi}|_{[\varrho_2(s),b]}$ as
\begin{align*}
	\chi(z)=t\cdot \psi(2a+b-\varrho_2(s)- (z-(\varrho_2(s)-a)))=t\cdot \psi(a+b-z)=\inverse{t\cdot \psi}|_{[\varrho_2(s),b]}(z)
\end{align*}
holds for each $z\in [\varrho_2(s),b]$.
\item
\label{oizzuzu2}
By the second point in Remark \ref{iufdicxcxcxcxufdiufdiufddd} (second step), we have
\begin{align*}
	\Add^+_{\chi}&=\Add^+_{\chi}[b]\\
	&=	\Add^+_{\chi(\cdot-(a-\varrho_2(s)))}[b+(a-\varrho_2(s))]\\
	&=\Add^+_{\inverse{t\cdot\psi|_{[a,a+b-\varrho_2(s)]}}}[b+(a-\varrho_2(s))]\\
	&=\Add^+_{\inverse{t\cdot\psi|_{[a,a+b-\varrho_2(s)]}}}.
\end{align*}
\end{enumerate}}
\endgroup
\noindent
We obtain from \eqref{kjfdkjfdjkfdkjfdkjkjfdkjfdkjfdkjfd} in Remark \ref{iufdicxcxcxcxufdiufdiufddd} (second step), Point \ref{oizzuzu1} (third step), Point \ref{oizzuzu2} (fourth step), as well as  Proposition \ref{lkjfdldlkkjfdYYX}.\ref{lkjfdldlkkjfdYYX1} (fifth  step) that
\begin{align}
\label{oifdoioioifdoifd2}
\begin{split}
	\Add^+_{t\cdot \phi|_{[s,2b+a]}}(X)&=\Add^+_{t\cdot \chi_2|_{[s,2b+a]}}(X)\\
	&=\Add^+_{\inverse{t\cdot\psi}|_{[\varrho_2(s),b]}}(X)\\
	&=\Add^+_{\chi}(X)\\[2pt]
		&=\Add^+_{\inverse{t\cdot\psi|_{[a,a+b-\varrho_2(s)]}}}(X)\\
	&=\Add^-_{t\cdot \psi|_{[a,a+b-\varrho_2(s)]}}(X).
\end{split}
\end{align}
\end{itemize}}
\endgroup
\noindent 
We obtain from  
\eqref{oifdoioioifdoifd1} and \eqref{oifdoioioifdoifd2} (third step), \eqref{substitRI} (fourth step), as well as the second point in Remark \ref{seqcomp} (sixth step) that
\begin{align*}
	\TMAP(\phi)(t)&\textstyle= \int_a^{2b + a} \Add^+_{t\cdot \phi|_{[s,2b+a]}}(\phi(s)) \:\dd s\\
	&\textstyle =  \int_a^{b} \Add^+_{t\cdot \phi|_{[s,2b+a]}}(\phi(s)) \:\dd s +  \int_b^{2b + a} \Add^+_{t\cdot \phi|_{[s,2b+a]}}(\phi(s)) \:\dd s\\
	&\textstyle =  \int_a^{b} \dot\varrho_1(s)\cdot\Add^-_{t\cdot \psi|_{[a,\varrho_1(s)]}}(\psi(\varrho_1(s))) \:\dd s 
	+  \int_b^{2b + a}\dot\varrho_2(s)\cdot \Add^-_{t\cdot \psi|_{[a, a+b-\varrho_2(s)]}}(\inverse{\psi}(\varrho_2(s))) \:\dd s\\
	&\textstyle =  \int_a^{b} \Add^-_{t\cdot \psi|_{[a,s]}}(\psi(s)) \:\dd s +  \int_a^{b}\Add^-_{t\cdot \psi|_{[a,a+b-s]}}(\inverse{\psi}(s)) \:\dd s\\
	&\textstyle =  \int_a^{b} \Add^-_{t\cdot \psi|_{[a,s]}}(\psi(s)) \:\dd s -  \int_a^{b}\Add^-_{t\cdot \psi|_{[a,a+b-s]}}(\psi(a+b-s)) \:\dd s\\
	&=0
\end{align*}
holds for each $t\in [0,1]$. Together with \eqref{dflfdlkfdkfdlkkfdfd}, this proves \eqref{pdpofdpofdpofds3}.
\end{proof}
\noindent
Moreover, applying Lemma \ref{ksdkskjdsdssd} to the situation where $(\mq,\bil{\cdot}{\cdot})\equiv (\mg,\bil{\cdot}{\cdot})$ is an asymptotic estimate and sequentially complete Lie algebra that is inherited by a Lie group $G$, we obtain the following statement.
\begin{corollary}
\label{kjfdkjfdkjfd}
Let $G$ be weakly $C^\infty$-regular, with sequentially complete and asymptotic estimate Lie algebra $(\mg,\bil{\cdot}{\cdot})$. Then, for $\psi\in C^0([a,b+\varepsilon])$ with $a< b$ and $\varepsilon>0$, we have
\begin{align*}
	\textstyle\innt_0^1 \TMAP(\psi|_{[a,z+h]})=\innt_0^1 \TMAP(\psi|_{[z,z+h]})\cdot \innt_0^1 \TMAP(\psi|_{[a,z]})\qquad\quad\forall\: a< z\leq b,\: 0< h\leq \varepsilon.
\end{align*}
\begin{proof}
	Let $a< z\leq b$ and $0< h\leq \varepsilon$ be fixed.  
	Corollary \ref{ljkdskjdslkjsda} shows\footnote{Recall that in Sect.\ \ref{kfdkjfdkjfdfd}, $\Add^+_\psi[X]$ had been defined in Equation \eqref{kldslkdslkdslcxcxcxcx} by $\Ad_{\innt_0^\bullet\psi}(X)$, for $\psi\in \DIDE_{[0,1]}\supseteq C^\infty([0,1],\mg)$ and $X\in \mg$. Furthermore, observe that the right side of \eqref{sdssddsds} in Corollary \ref{ljkdskjdslkjsda} obviously equals the definition of $\Add^\pm_\psi[X]$ in the beginning of Sect.\ \ref{kdsjlkdslkjsdjkds}.}
	\begin{align*}
		\Ad_{\innt_0^t \TMAP(\psi|_{[z,z+h]})}(X)=\Add^+_{\TMAP(\psi|_{[z,z+h]})}[t](X)\qquad\quad\forall\: t\in [0,1],\: X\in \mg.
	\end{align*}
	We thus obtain from \ref{kdsasaasassaas} (first step) as well as Lemma \ref{ksdkskjdsdssd} (last step) that
	\begin{align*}
		\textstyle\innt_0^1 \TMAP(\psi|_{[z,z+h]})\cdot \innt_0^1 \TMAP(\psi|_{[a,z]})&\textstyle= \innt_0^1 \TMAP(\psi|_{[z,z+h]}) + \Ad_{\innt_0^\bullet \TMAP(\psi|_{[z,z+h]})}(\TMAP(\psi|_{[a,z]}))\\
		&\textstyle= \innt_0^1 \TMAP(\psi|_{[z,z+h]}) + \Add^+_{\TMAP(\psi|_{[z,z+h]})}[\cdot](\TMAP(\psi|_{[a,z]})(\cdot))\\
		&=\textstyle\innt_0^1 \TMAP(\psi|_{[z,z+h]}) \starm  \TMAP(\psi|_{[a,z]})\\
		&=\textstyle\innt_0^1 \TMAP(\psi|_{[a,z+h]})
	\end{align*}
	holds, which proves the claim.
\end{proof}
\end{corollary}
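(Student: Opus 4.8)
The plan is to reduce the asserted identity between product integrals to the $\starm$-splitting already available for the integral transformation on curves (Lemma \ref{ksdkskjdsdssd}), and to pass between the two worlds using Property \ref{kdsasaasassaas} of the product integral. First I would fix $a<z\le b$ and $0<h\le\varepsilon$. Since $(\mg,\bil{\cdot}{\cdot})$ is sequentially complete and asymptotic estimate, Lemma \ref{klsdjklsdkjlsddjklsjklsd} shows that $\TMAP(\psi|_{[z,z+h]})$, $\TMAP(\psi|_{[a,z]})$, and $\TMAP(\psi|_{[a,z+h]})$ all lie in $C^\infty([0,1],\mg)$; as $G$ is weakly $C^\infty$-regular it is $C^\infty$-semiregular, so these curves belong to $\DIDE_{[0,1]}$ and the product integrals $\innt_0^t(\cdot)$ are defined on $[0,1]$.

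The key technical point is to identify the abstract operator $\Add^+_\phi[t]$ attached to a smooth $\mg$-valued curve with the adjoint action along its product integral. For $\phi\in C^\infty([0,1],\mg)\subseteq\DIDE_{[0,1]}$ one has $\im[\phi]\subseteq\mg$, and $\mg$ is asymptotic estimate by hypothesis; hence Corollary \ref{ljkdskjdslkjsda}, together with the definition \eqref{kldslkdslkdslcxcxcxcx} of $\Add^+$ in Sect.\ \ref{kfdkjfdkjfdfd}, gives $\Ad_{\innt_0^t\phi}(X)=\Add^+_\phi[t](X)$ for all $X\in\mg$ and $t\in[0,1]$. In particular the right side of \eqref{sdssddsds} coincides with the series defining $\Add^+$ in the beginning of Sect.\ \ref{kdsjlkdslkjsdjkds}, so the two transformations denoted $\TMAP$ — the one in \eqref{dfdfdfdfdfd} and the one in \eqref{kjdkjdkjkjdfkjfdkjfdjkfd} — agree when $\mq=\mg$, which is exactly what makes Lemma \ref{ksdkskjdsdssd} applicable in the present setting.

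With these preparations I would apply Property \ref{kdsasaasassaas} to $\phi:=\TMAP(\psi|_{[z,z+h]})$ and $\psi':=\TMAP(\psi|_{[a,z]})$: for every $t\in[0,1]$,
\[
\innt_0^t \TMAP(\psi|_{[z,z+h]})\cdot\innt_0^t \TMAP(\psi|_{[a,z]})
=\innt_0^t\Big(\TMAP(\psi|_{[z,z+h]})+\Ad_{\innt_0^\bullet \TMAP(\psi|_{[z,z+h]})}\big(\TMAP(\psi|_{[a,z]})\big)\Big).
\]
By the identification of the previous paragraph, the integrand on the right equals $\TMAP(\psi|_{[z,z+h]})+\Add^+_{\TMAP(\psi|_{[z,z+h]})}[\cdot]\big(\TMAP(\psi|_{[a,z]})(\cdot)\big)=\big(\TMAP(\psi|_{[z,z+h]})\starm\TMAP(\psi|_{[a,z]})\big)$, which by Lemma \ref{ksdkskjdsdssd} is precisely $\TMAP(\psi|_{[a,z+h]})$. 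Evaluating the resulting equality $\innt_0^t \TMAP(\psi|_{[z,z+h]})\cdot\innt_0^t \TMAP(\psi|_{[a,z]})=\innt_0^t \TMAP(\psi|_{[a,z+h]})$ at $t=1$ yields the claim.

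I expect the only genuinely delicate part to be the bookkeeping between the two meanings of $\TMAP$ and of $\Add^+$: one must check that the hypotheses of Corollary \ref{ljkdskjdslkjsda} are met (the image of a smooth curve lies in $\mg$, which is an \AAE), and that $\TMAP$ really lands in $C^\infty([0,1],\mg)$ so that $C^\infty$-semiregularity of $G$ can be invoked. Once these are in place, the computation itself is a formal application of Property \ref{kdsasaasassaas} and Lemma \ref{ksdkskjdsdssd}.
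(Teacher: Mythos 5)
Your proposal is correct and follows essentially the same route as the paper's own proof: identify $\Ad_{\innt_0^\bullet\TMAP(\psi|_{[z,z+h]})}$ with $\Add^+_{\TMAP(\psi|_{[z,z+h]})}[\cdot]$ via Corollary \ref{ljkdskjdslkjsda}, apply Property \ref{kdsasaasassaas}, and conclude with the $\starm$-splitting of Lemma \ref{ksdkskjdsdssd}. The extra care you take in verifying that $\TMAP$ lands in $C^\infty([0,1],\mg)$ so that $C^\infty$-semiregularity applies is a sensible addition but does not change the argument.
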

\subsection{Weak Regularity}
\label{mncxnmnxcmnxmnmnxciooioiweioewwe}
In this section, we prove Theorem \ref{dkjkjfdkjfdjfdkfdfdfd}. 
In the following, let thus $G$ be weakly $C^\infty$-regular, with asymptotic estimate and sequentially complete Lie algebra   $(\mg,\bil{\cdot}{\cdot})$. 
\vspace{6pt}

\noindent
Let $a\leq b$, $0<\varepsilon<1$, and $\psi\in C^0([a,b+\varepsilon],\mg)$ be given. 
\begingroup
\setlength{\leftmargini}{12pt}{
\begin{itemize}
\item
We define $\Phi_\psi\colon (a-1,b+\varepsilon)\times [0,1]\rightarrow \mg$ by
\begin{align}
\label{nmdfnmfdnmfd}
\begin{split}
\Phi_\psi(z,t) := \begin{cases}
(z-a)\cdot \psi(a) &\text{for}\quad z\in (a-1,a)\\
 \TMAP(\psi|_{[a,z]})(t) &\text{for}\quad z\in [a,b+\varepsilon) 
\end{cases}
\end{split}
\end{align}
for each $t\in [0,1]$. Lemma \ref{klsdjklsdkjlsddjklsjklsd} yields the following:
\begingroup
\setlength{\leftmarginii}{12pt}{
\begin{itemize}
\item
For $z\in (a-1,b+\varepsilon)$, we have $\Phi_\psi(z,\cdot)\in C^\infty([0,1],\mg)$. 
\item
For $z\in [a,b+\varepsilon)$, we have
\begin{align*}
	\textstyle\Phi_\psi(z,t)=\int_a^z \Add^+_{\he t\cdot \psi|_{[s,z]}}[\psi(s)] \:\dd s
	=\sum_{\ell=0}^\infty \Alp_\ell(\psi,z,t)\qquad\quad\forall\: t\in [0,1],
\end{align*} 
where $\Phi_\psi(z,\cdot)
	=\sum_{\ell=0}^\infty \Alp_\ell(\psi,z,\cdot)$ converges w.r.t.\ the $C^\infty$-topology. 
\item
	For $\pp\in \Sem{E}$ and $\dind\in \NN$, we have
\begin{align}
\label{cxcxcxcxcxcxcxoidsoidsoidsdsdsccv}
	\sup\{a-1\leq z\leq b+\varepsilon\:|\: \pp_\infty^\dind(\Phi_\psi(z,\cdot))\}<\infty.
\end{align}
\vspace{-27pt}
\end{itemize}}
\endgroup
\noindent
\item
For each $t\in [0,1]$, we define the map 
\begin{align}
\label{skjskjsjksdcxcx}
	\textstyle\mu_{t,\psi}\colon (a-1,b+\varepsilon)\ni z\mapsto \innt_0^t \Phi_\psi(z,\cdot)\in G.
\end{align}
We observe the following:
\begingroup
\setlength{\leftmarginii}{12pt}{
\begin{itemize}
\item
We have 
\vspace{-5pt}
\begin{align}
\label{lkfdlkkdllkfdlkfdkfd}
	\textstyle\mu_{t,\psi}(z)=\innt_0^t\TMAP(\psi|_{[a,z]})\qquad\quad\forall\: z\in [a,b+\varepsilon).
\end{align}
\item
We have
\begin{align}
\label{kjdskjdskjdsjkjdskkjdsds}
	 \Phi_\psi(a,\cdot)=\TMAP(\psi|_{[a,a]})(\cdot)=0 \qquad\text{hence}\qquad \mu_{t,\psi}(a)=e\qquad\text{for each}\qquad t\in [0,1].
\end{align} 
\end{itemize}}
\endgroup
\end{itemize}}
\endgroup
\noindent 
We obtain the following lemma.
\begin{lemma}
\label{kjdskjdskjdskjsdxyxyyxy}
Let $a\leq b$, $0<\varepsilon<1$, and $\psi\in C^0([a,b+\varepsilon],\mg)$. Define $\ppsi\in C^0((a-1,b+\varepsilon),\mg)$ by
$$
\ppsi(z) := \begin{cases}
\psi(a) &\text{for}\quad z\in (a-1,a)\\
 \psi(z) &\text{for}\quad z\in [a,b+\varepsilon), 
\end{cases}
$$ 
\vspace{-15pt}

\noindent
and set
\begin{align*}
	\DDelta(z,t):=\ppsi(z)+t\cdot \bil{\ppsi(z)}{\Phi_\psi(z,t)}\qquad\quad\forall\: z\in (a-1,b+\varepsilon),\: t\in [0,1].
\end{align*}
Then, the following assertions hold:
\begingroup
\setlength{\leftmargini}{16pt}
{
\renewcommand{\theenumi}{{\arabic{enumi})}} 
\renewcommand{\labelenumi}{\theenumi}
\begin{enumerate}
\item
\label{kjdskjdskjdskjsdxyxyyxy0}
For $z\in (a-1,b+\varepsilon)$, $\pp\in \Sem{E}$, and $\dind\in \NN$, we have 
\begin{align}
\label{uiuieiuewewewewpopoewew}
	\textstyle\lim_{ h\rightarrow 0}\frac{1}{|h|}\cdot\pp^\dind_\infty\big(\Phi_\psi(z+h,\cdot)-\Phi_\psi(z,\cdot)- h\cdot\DDelta(z,\cdot)\big)=0.
	\end{align}
	\item
\label{kjdskjdskjdskjsdxyxyyxy1}
	The following assertions hold:
	\begingroup
\setlength{\leftmarginii}{20pt}
{
\renewcommand{\theenumi}{{\alph{enumi})}} 
\renewcommand{\labelenumi}{\theenumi}
\begin{enumerate}
\item
\label{subbbbb0}
	We have $\partial_1\Phi_\psi(z,\cdot)=\DDelta(z,\cdot)\in C^\infty([0,1],\mg)$ for each $z\in (a-1,b+\varepsilon)$. 
\item
\label{subbbbb2}
	To each $\pp\in \Sem{E}$, $\dind\llleq k$, and $z\in (a-1,b+\varepsilon)$, there exists $L_{\pp,\dind}\geq 0$ as well as $I_{\pp,\dind}\subseteq (a-1,b+\varepsilon)$ open with $z\in I_{\pp,\dind}$, such that
\begin{align*}
	\textstyle\pp^\dind_\infty(\Phi(z+h,\cdot)-\Phi(z,\cdot))\leq |h|\cdot L_{\pp,\dind}\qquad\quad \forall\: h\in \RR_{\neq 0}\:\text{ with }\: z+h\in I_{\pp,\dind}.
	\end{align*}
\item
\label{subbbbb1}
	$\Phi_\psi$ is continuous.
\end{enumerate}}
\endgroup
\item
\label{kjdskjdskjdskjsdxyxyyxy2}
	We have $\mu_{t,\psi}\in C^1((a-1,b+\varepsilon),G)$ for each $t\in [0,1]$, with
	\begin{align*}
	\textstyle\dot\mu_{t,\psi}(z)\textstyle=\dd_e\LT_{\mu_{t,\psi}(z)}\big(\int_0^t \Ad_{\mu_{s,\psi}(z)^{-1}}\big(\ppsi(z)+s\cdot \bil{\ppsi(z)}{\Phi_\psi(z,s)}\big)\:\dd s\he\big)\qquad\forall\:z\in (a-1,b+\varepsilon). 
\end{align*}
In particular, for each $t\in [0,1]$ and $z\in [a,b+\varepsilon)$, we have 
\begin{align}
\label{jkkjkjfdjfdkjkjdfjkfkjfdkjdf}
\textstyle\dot\mu_{t,\psi}(z)\textstyle=\dd_e\LT_{\mu_{t,\psi}(z)}\big(\int_0^t \Ad_{\mu_{s,\psi}(z)^{-1}}(\psi(z)+s\cdot \bil{\psi(z)}{\TMAP(\psi|_{[a,z]})(s)})\:\dd s\he\big).
\end{align}
\end{enumerate}}
\endgroup
\end{lemma}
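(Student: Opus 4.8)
The plan is to prove the three assertions in sequence, with the first one being the substantive computational core and the other two being (essentially) applications of it together with results already available in the paper, in particular Theorem~\ref{ofdpofdpofdpofdpofd}, Proposition~\ref{ableiti}, Lemma~\ref{klsdjklsdkjlsddjklsjklsd}, and Lemma~\ref{ksdkskjdsdssd}.

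\textbf{Proof of Part~\ref{kjdskjdskjdskjsdxyxyyxy0}.} First I would dispose of the trivial range $z\in(a-1,a)$: there $\Phi_\psi(z,\cdot)=(z-a)\cdot\psi(a)$ is affine in $z$, so the difference quotient is exact and \eqref{uiuieiuewewewewpopoewew} holds once one checks $\DDelta(z,\cdot)=\psi(a)=\partial_1\Phi_\psi(z,\cdot)$, which is immediate since $\Phi_\psi(z,\cdot)$ is a constant curve (so the bracket term vanishes). The main case is $z\in[a,b+\varepsilon)$. Here I would use the expansion $\Phi_\psi(z,\cdot)=\sum_{\ell=0}^\infty\Alp_\ell(\psi,z,\cdot)$ from Lemma~\ref{klsdjklsdkjlsddjklsjklsd}, where $\Alp_\ell(\psi,z,t)=t^\ell\cdot\int_a^z\Yps_\ell(\psi,s,z)\,\dd s$ with $\Yps_\ell$ the nested-commutator integrand. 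For each fixed $\ell$ one differentiates $z\mapsto\int_a^z\Yps_\ell(\psi,s,z)\,\dd s$ using \eqref{oidsoidoisdoidsoioidsiods} and the fundamental theorem of calculus in both the outer limit and the inner endpoint (a Leibniz-rule computation), producing a boundary term $\Yps_\ell(\psi,z,z)=0$ for $\ell\geq1$ (the inner integral collapses) plus a sum, over the $\ell$ inner upper limits set to $z$, of terms that are exactly the pieces assembled in $\DDelta$; the term $\ell=0$ gives $\psi(z)$. The key analytic point is uniformity: using the asymptotic-estimate estimate \eqref{assaaass} one gets, for $\vv\leq\ww$ as there, $\vv_\infty(\Yps_\ell(\psi,\cdot,z'))\leq\ww_\infty(\psi)^{\ell+1}\cdot(b+\varepsilon-a)^\ell/\ell!$ uniformly for $z'$ in a compact neighbourhood of $z$, hence the series of derivatives converges in the $C^\infty$-topology uniformly in $z'$, which legitimises differentiating term by term and passing the limit $h\to0$ through the sum. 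This identifies $\partial_1\Phi_\psi(z,\cdot)$ with $\DDelta(z,\cdot)$ and, because the convergence is uniform in $h$ near $0$, upgrades it to the quantitative statement \eqref{uiuieiuewewewewpopoewew} for all seminorms $\pp^\dind_\infty$.

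\textbf{Proof of Part~\ref{kjdskjdskjdskjsdxyxyyxy1}.} Statement \ref{subbbbb0} is Part~\ref{kjdskjdskjdskjsdxyxyyxy0} together with the observation that $\DDelta(z,\cdot)$ lies in $C^\infty([0,1],\mg)$: indeed $\Phi_\psi(z,\cdot)\in C^\infty([0,1],\mg)$ by Lemma~\ref{klsdjklsdkjlsddjklsjklsd}, and $t\mapsto t\cdot\bil{\ppsi(z)}{\Phi_\psi(z,t)}$ is smooth by smoothness and bilinearity of $\bil{\cdot}{\cdot}$. Statement \ref{subbbbb2} follows from \ref{subbbbb0} by a mean-value / Riemann-integral argument: writing $\Phi_\psi(z+h,\cdot)-\Phi_\psi(z,\cdot)=\int_z^{z+h}\partial_1\Phi_\psi(\zeta,\cdot)\,\dd\zeta$ (valid by \ref{subbbbb0} and \eqref{isdsdoisdiosd} applied to the $\mg$-valued $C^1$ curve $\zeta\mapsto\Phi_\psi(\zeta,\cdot)$ in the Fréchet space $C^\infty([0,1],\mg)$, which is sequentially complete by Lemma~\ref{kjdskjskjds}), one bounds $\pp^\dind_\infty$ of the left side by $|h|$ times the supremum of $\pp^\dind_\infty(\DDelta(\zeta,\cdot))$ over $\zeta$ in a small interval; this supremum is finite by continuity of $\zeta\mapsto\DDelta(\zeta,\cdot)$ in $C^\infty$, and that continuity is in turn visible from the uniformly convergent series representation of $\Phi_\psi$ and the bound \eqref{cxcxcxcxcxcxcxoidsoidsoidsdsdsccv}. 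Finally \ref{subbbbb1} (continuity of $\Phi_\psi$ jointly in $(z,t)$): on $(a-1,a)\times[0,1]$ this is obvious, on $[a,b+\varepsilon)\times[0,1]$ it follows from the $C^\infty$-convergence of $\sum_\ell\Alp_\ell(\psi,z,\cdot)$ being uniform in $z$ (each $\Alp_\ell$ is jointly continuous, being built from Riemann integrals of continuous integrands), and joint continuity across $z=a$ is checked from $\Alp_\ell(\psi,a,\cdot)=0$ for $\ell\geq1$ together with $\Alp_0(\psi,z,t)=\int_a^z\psi(s)\,\dd s\to0$ as $z\to a$, matching the left branch's value $0$ at $z=a$.

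\textbf{Proof of Part~\ref{kjdskjdskjdskjsdxyxyyxy2}.} The plan is to apply Theorem~\ref{ofdpofdpofdpofdpofd} with $\Phi\equiv\Phi_\psi(\cdot,\cdot)$, the interval $[a,b]$ of that theorem being $[0,1]$, the open parameter interval being $(a-1,b+\varepsilon)$, and $k=\infty$. The hypotheses are exactly \ref{subbbbb0} (which gives condition~\ref{saasaassasasa2}, $\partial_1\Phi_\psi(z,\cdot)\in C^\infty([0,1],\mg)$) and \ref{subbbbb2} (which gives the Lipschitz condition~\ref{saasaassasasa1}); $G$ is Mackey $\infty$-continuous because it is $C^\infty$-semiregular (weak $C^\infty$-regularity) via Theorem~\ref{weakdiffdf}, and $\mg$ is Mackey complete since it is sequentially complete, so the Riemann integral on the right of Theorem~\ref{ofdpofdpofdpofdpofd} exists in $\mg$. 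Theorem~\ref{ofdpofdpofdpofdpofd} then yields that $z\mapsto\innt_0^t\Phi_\psi(z,\cdot)=\mu_{t,\psi}(z)$ is differentiable with $\dot\mu_{t,\psi}(z)=\dd_e\LT_{\mu_{t,\psi}(z)}\big(\int_0^t\Ad_{[\innt_0^s\Phi_\psi(z,\cdot)]^{-1}}(\partial_1\Phi_\psi(z,s))\,\dd s\big)$; substituting $[\innt_0^s\Phi_\psi(z,\cdot)]^{-1}=\mu_{s,\psi}(z)^{-1}$ and $\partial_1\Phi_\psi(z,s)=\DDelta(z,s)=\ppsi(z)+s\cdot\bil{\ppsi(z)}{\Phi_\psi(z,s)}$ gives the displayed formula. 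That this is genuinely $C^1$ in $z$ (not merely differentiable) follows from continuity of the right-hand side in $z$: $\mu_{t,\psi}$ and $\mu_{s,\psi}$ are continuous (use \ref{subbbbb1} and continuity of $\innt_0^\bullet$ on $C^\infty$-curves, e.g.\ via Proposition~\ref{ableiti} applied after an affine reparametrisation, or simply the continuity part of the conclusion just obtained), $\Phi_\psi$ is continuous by \ref{subbbbb1}, $\Ad$ and $\bil{\cdot}{\cdot}$ are smooth, and the integral over $s\in[0,t]$ of a jointly continuous family depends continuously on the parameter by \eqref{ffdlkfdlkfd} and compactness. The specialisation \eqref{jkkjkjfdjfdkjkjdfjkfkjfdkjdf} is then immediate from $\ppsi(z)=\psi(z)$ and $\Phi_\psi(z,\cdot)=\TMAP(\psi|_{[a,z]})(\cdot)$ for $z\in[a,b+\varepsilon)$. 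The main obstacle throughout is Part~\ref{kjdskjdskjdskjsdxyxyyxy0}: getting the term-by-term differentiation of the series defining $\Phi_\psi$ in the $z$-variable together with the uniform-in-$h$ control needed for \eqref{uiuieiuewewewewpopoewew}, for which the asymptotic-estimate bound \eqref{assaaass} and Corollary~\ref{dsdsdsdsdsdsdsds}-type power-series-in-$C^\infty$ arguments are the essential tools.
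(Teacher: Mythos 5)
Your proposal is correct and follows essentially the same route as the paper: term-by-term differentiation of the series $\sum_{\ell}\Alp_\ell(\psi,z,\cdot)$ in $z$ with uniform, factorially decaying asymptotic-estimate bounds for Part 1), deduction of the Lipschitz estimate and continuity from Part 1) together with the uniform bound on $\pp^\dind_\infty(\DDelta(z,\cdot))$ for Part 2), and an application of Theorem \ref{ofdpofdpofdpofdpofd} (with Mackey $\infty$-continuity supplied by Theorem \ref{weakdiffdf}) for Part 3). One small descriptive slip: in $\Yps_\ell(\psi,s,z)$ only the outermost integration variable $s_1$ has upper limit $z$, so the Leibniz computation produces a single term (with $s_1$ set to $z$) rather than a sum over $\ell$ inner upper limits --- but that single term is exactly $t\cdot\bil{\psi(z)}{\Alp_{\ell-1}(\psi,z,t)}$, so the identification of $\partial_1\Phi_\psi(z,\cdot)$ with $\DDelta(z,\cdot)$ and the rest of the argument are unaffected.
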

\begin{proof} 
\begingroup
\setlength{\leftmargini}{16pt}
{
\renewcommand{\theenumi}{{\arabic{enumi})}} 
\renewcommand{\labelenumi}{\theenumi}
\begin{enumerate}
\item
For $t\in [0,1]$, $z\in (a-1,a]$ and $h\neq 0$ with $z+h\in (a-1,a]$, we have
\begin{align*}
	\Phi_\psi(z+h,t)\:&-\Phi_\psi(z,t)- h\cdot\DDelta(z,t)\\
	&\textstyle=(z+h-a)\cdot \psi(a)- (z-a)\cdot \psi(a) -h\cdot(\psi(a) + t\cdot \bil{\psi(a)}{(z-a)\cdot \psi(a)})\\
	&=0.
\end{align*}
Hence,  
\eqref{uiuieiuewewewewpopoewew} holds for each $z\in (a-1,a)$, and we have 
\begin{align}
	\label{yxxyxyyxyxxyuiuieiuewewewewpopoewew}
	\textstyle\lim_{0> h\rightarrow 0}\frac{1}{|h|}\cdot\pp^\dind_\infty\big(\Phi_\psi(a+h,\cdot)-\Phi_\psi(a,\cdot)- h\cdot\DDelta(a,\cdot)\big)=0\qquad\quad\forall\: \pp\in \Sem{E},\: \dind\in \NN.
	\end{align}
	For the remaining cases, we need the following observations:
\begingroup
\setlength{\leftmarginii}{12pt}{
\begin{itemize}
\item
	For $t\in [0,1]$ and $z\in [a,b+\varepsilon)$, we define $L_0(z,t):=\psi(z)$ as well as (recall \eqref{nmvcnmvckjfdkjfdkjriuiureiure}) 
\begin{align*}
	\textstyle L_\ell(z,t):=&\:\textstyle t\cdot \bil{\psi(z)}{\Alp_{\ell-1}(\psi,z,t)}\\
	=&\:\textstyle t\cdot \bil{\psi(z)}{t^{\ell-1}\cdot \int_a^z \Yps_{\ell-1}(\psi,s,z)\: \dd s} \\
	=&\:\textstyle t^\ell\cdot  \int_a^{z}\dd s  \int_s^{z}\dd s_2 \: {\dots} \int_s^{s_{\ell-1}}\dd s_\ell \: (\bilbr{\psi(z)}\cp \bilbr{\psi(s_2)}\cp \dots \cp \bilbr{\psi(s_{\ell})})(\psi(s))
\end{align*}
for $\ell\geq 1$. 
Since $(\mg,\bil{\cdot}{\cdot})$ is asymptotic estimate and sequentially complete, Corollary  \ref{dsdsdsdsdsdsdsds} shows that the map
\begin{align}
\label{posdpodsopdsods}
	\textstyle  L\colon [a,b+\varepsilon)\times[0,1]\rightarrow \mg,\qquad (z,t)\mapsto \sum_{\ell=0}^\infty L_\ell(z,t)
\end{align}
is defined, such that $C^\infty([0,1],\mg)\ni L(z,\cdot)=\sum_{\ell=0}^\infty L_\ell(z,t)$ converges w.r.t.\ the $C^\infty$-topology for each $z\in [a,b+\varepsilon)$. 
\item
Let $t\in [0,1]$, $z\in [a,b+\varepsilon)$, and $h>0$ be fixed.
\begingroup
\setlength{\leftmarginiii}{12pt}{
\begin{itemize}
\item
	If $z+h<b$ holds, we have $\Alp_0(\psi,z+h,t)-\Alp_0(\psi,z,t)=\int_z^{z+h} \psi(s)\: \dd s$, as well as for $\ell\geq 1$
	\begin{align*}
		\Alp_\ell(\psi,z+h,t)&\textstyle
		-\Alp_\ell(\psi,z,t)\\[1pt]
		&=\textstyle t^\ell\cdot\int_a^{z} \dd s \textstyle\int_z^{z+h}\dd s_1\int_s^{s_1}\dd s_2 \: {\dots} \int_s^{s_{\ell-1}}\dd s_\ell \: (\bilbr{\psi(s_1)}\cp \dots \cp \bilbr{\psi(s_{\ell})})(\psi(s))\\
	&\quad\textstyle + t^\ell\cdot\int_z^{z+h}\dd s  \textstyle\int_s^{z+h}\dd s_1\int_s^{s_1}\dd s_2 \: {\dots} \int_s^{s_{\ell-1}}\dd s_\ell \: (\bilbr{\psi(s_1)}\cp \dots \cp \bilbr{\psi(s_{\ell})})(\psi(s))\\[1pt]
		&=\textstyle t^\ell\cdot\int_a^{z} \dd s \textstyle\int_z^{z+h}\dd s_1\int_s^{z}\dd s_2 \: {\dots} \int_s^{s_{\ell-1}}\dd s_\ell \: (\bilbr{\psi(s_1)}\cp \dots \cp \bilbr{\psi(s_{\ell})})(\psi(s))\\
		&\quad\textstyle +t^\ell\cdot\int_a^{z} \dd s \textstyle\int_z^{z+h}\dd s_1\int_z^{s_1}\dd s_2 \: {\dots} \int_s^{s_{\ell-1}}\dd s_\ell \: (\bilbr{\psi(s_1)}\cp \dots \cp \bilbr{\psi(s_{\ell})})(\psi(s))\\
	&\quad\textstyle + t^\ell\cdot\int_z^{z+h}\dd s  \textstyle\int_s^{z+h}\dd s_1\int_s^{s_1}\dd s_2 \: {\dots} \int_s^{s_{\ell-1}}\dd s_\ell \: (\bilbr{\psi(s_1)}\cp \dots \cp \bilbr{\psi(s_{\ell})})(\psi(s)).
	\end{align*} 
	(In the second step, we have split the third integral in the first summand at $z$.)
	\vspace{5pt}
\item 
	If $a<z-h$ holds, we have $\Alp_0(\psi,z,t)-\Alp_0(\psi,z-h,t)=\int_{z-h}^{z} \psi(s)\: \dd s$, as well as for $\ell\geq 1$
	\begin{align*}
		\Alp_\ell(\psi,z,t)&\textstyle-\Alp_\ell(\psi,z-h,t)\\[1pt]
		&=\textstyle t^\ell\cdot\int_a^{z-h} \dd s \textstyle\int_{z-h}^{z}\dd s_1\int_s^{s_1}\dd s_2 \: {\dots} \int_s^{s_{\ell-1}}\dd s_\ell \: (\bilbr{\psi(s_1)}\cp \dots \cp \bilbr{\psi(s_{\ell})})(\psi(s))\\
	&\quad\textstyle + t^\ell\cdot\int_{z-h}^{z}\dd s  \textstyle\int_s^{z}\dd s_1\int_s^{s_1}\dd s_2 \: {\dots} \int_s^{s_{\ell-1}}\dd s_\ell \: (\bilbr{\psi(s_1)}\cp \dots \cp \bilbr{\psi(s_{\ell})})(\psi(s))\\[1pt]
	&=\textstyle t^\ell\cdot\int_a^{z-h} \dd s \textstyle\int_{z-h}^{z}\dd s_1\int_s^{z-h}\dd s_2 \: {\dots} \int_s^{s_{\ell-1}}\dd s_\ell \: (\bilbr{\psi(s_1)}\cp \dots \cp \bilbr{\psi(s_{\ell})})(\psi(s))\\
	&\quad\textstyle + t^\ell\cdot\int_a^{z-h} \dd s \textstyle\int_{z-h}^{z}\dd s_1\int_{z-h}^{s_1}\dd s_2 \: {\dots} \int_s^{s_{\ell-1}}\dd s_\ell \: (\bilbr{\psi(s_1)}\cp \dots \cp \bilbr{\psi(s_{\ell})})(\psi(s))\\
	&\quad\textstyle + t^\ell\cdot\int_{z-h}^{z}\dd s  \textstyle\int_s^{z}\dd s_1\int_s^{s_1}\dd s_2 \: {\dots} \int_s^{s_{\ell-1}}\dd s_\ell \: (\bilbr{\psi(s_1)}\cp \dots \cp \bilbr{\psi(s_{\ell})})(\psi(s)).\hspace{33pt}
	\end{align*}
	(In the second step, we have split the third integral in the first summand at $z-h$.)
\end{itemize}}
\endgroup
\end{itemize}}
\endgroup
\noindent 
Let $\vv\leq \ww$ be as in \eqref{assaaass}, and set
\begin{align*}
	\textstyle\Delta(\ww,z,h):=\ww_\infty\big(\psi(z)-\psi|_{[z-|h|,z+|h|]\cap [a,b+\varepsilon]}\big)\qquad\quad\forall\:z\in [a,b+\varepsilon),\: h\neq 0. 
\end{align*}
Let $z\in [a,b+\varepsilon)$ be fixed, and set
\begin{align*}
	D:=\{x-z\:|\: x\in [a,b+\varepsilon)\}. 
\end{align*}
For $\dind\in \NN$, $\ell\geq 1$, and $h\in D_z$, we obtain  
\begin{align*}
	\vv^\dind_\infty(\Alp_0(\psi,z+h,\cdot)&\textstyle-\Alp_0(\psi,z,\cdot)-h\cdot L_0(z,\cdot))\\
	&\textstyle=
	\vv\big(\!\int_z^{z+h} \psi(s)\:\dd s-\psi(z)\big)\\
	&\textstyle\leq |h|\cdot \Delta(\ww,z,h),\\[8pt]
	\textstyle\vv^\dind_\infty(\Alp_\ell(\psi,z+h,\cdot)&-\Alp_\ell(\psi,z,\cdot)-h\cdot L_\ell(z,\cdot))\\
&\textstyle\leq \dind!\cdot |h|\cdot\Delta(\ww,z,h)\cdot  (b+\varepsilon-a)\cdot \ww_\infty(\psi) \cdot\frac{(b+\varepsilon-a)^{\ell-1}}{(\ell-1)!}\cdot
 \ww_\infty(\psi)^{\ell-1}\\
&\textstyle\quad\he + \dind!\cdot(b+\varepsilon-a)\cdot \ww_\infty(\psi) \cdot|h|^2\cdot \ww_\infty(\psi)^2\cdot \frac{(b+\varepsilon-a)^{\ell-2}}{(\ell-2)!}\cdot \ww_\infty(\psi)^{\ell-2} \\
&\textstyle\quad\he + \dind!\cdot|h|^2\cdot \ww_\infty(\psi)^2\cdot \frac{(b+\varepsilon-a)^{\ell-1}}{(\ell-1)!}\cdot \ww_\infty(\psi)^{\ell-1}.
\end{align*}
Now, the maps (with $h\in D$)
\begin{align*}
	\textstyle\Phi_\psi(z+h,\cdot)=\sum_{\ell=0}^\infty\Alp_\ell(\psi,z+h,\cdot),\quad\:\:\:
	\Phi_\psi(z,\cdot)=\sum_{\ell=0}^\infty\Alp_\ell(\psi,z,\cdot),\quad\:\:\:
    L(z,\cdot)=\sum_{\ell=0}^\infty L_\ell(z,\cdot)
\end{align*} 
converge w.r.t.\ the $C^\infty$-topology; and continuity of $\psi$ implies $\lim_{h\rightarrow 0}\Delta(\ww,z,h)=0$. The triangle inequality thus yields 
\begin{align*}
	\textstyle\lim_{D\ni h\rightarrow 0}\frac{1}{|h|}\cdot\vv^\dind_\infty(\Phi_\psi(z+h,\cdot)-\Phi_\psi(z,\cdot)- h\cdot L(z,\cdot))=0.
\end{align*}
Together with \eqref{yxxyxyyxyxxyuiuieiuewewewewpopoewew} this proves the claim, because for $z\in [a,b+\varepsilon)$ and $t\in [0,1]$, we have
\begin{align*}
	\textstyle	L(z,t)\stackrel{\eqref{posdpodsopdsods}}{=}\psi(z) + t\cdot \sum_{\ell=1}^\infty \bil{\psi(z)}{\Alp_{\ell-1}(\psi,z,t)}=\psi(z) + t\cdot \bil{\psi(z)}{\Phi_\psi(z,t)}=\DDelta(z,t)
\end{align*} 
by continuity of $\bil{\cdot}{\cdot}$.
\item
Point (a) is clear from Part \ref{kjdskjdskjdskjsdxyxyyxy0}. For the points (b) and (c), we observe the following: 
\begingroup
\setlength{\leftmarginii}{12pt}{
\begin{itemize}
\item
For $\pp\in \Sem{E}$, $\dind\in \NN$, $z\in (a-1,b+\varepsilon)$, and $h\in \RR$ with $z+h\in (a-1,b+\varepsilon)$, we have 	
\begin{align}
\label{kjdskjdskjsd}
\begin{split}
		\pp^\dind_\infty(\Phi_\psi(z+h,\cdot)&-\Phi_\psi(z,\cdot))\\[2pt]
		&\leq \pp^\dind_\infty(\Phi_\psi(z+h,\cdot)-\Phi_\psi(z,\cdot)-h\cdot \DDelta(z,\cdot)) + |h|\cdot\pp^\dind_\infty( \DDelta(z,\cdot)).
\end{split}
	\end{align}
\item
Since $\bil{\cdot}{\cdot}$ is continuous and bilinear, there exists $\qq\in \Sem{E}$ with
\begin{align*}
	\pp(\bil{X}{Y})\leq \qq(X)\cdot \qq(Y) \qquad\quad\forall\: X,Y\in \mg. 
\end{align*}
Consequently, \eqref{cxcxcxcxcxcxcxoidsoidsoidsdsdsccv} implies
\begin{align}
\label{cxcxcxcxcxcxcxoidsoidsoidsdsdsccvxc}
	\sup\{a-1\leq z\leq b+\varepsilon\:|\: \pp_\infty^\dind(\DDelta(z,\cdot))\}<\infty\qquad\quad\forall\: \pp\in \Sem{E}.
\end{align}
\end{itemize}}
\endgroup
\noindent 
Point (b) is now clear from Part \ref{kjdskjdskjdskjsdxyxyyxy0}, \eqref{kjdskjdskjsd}, and \eqref{cxcxcxcxcxcxcxoidsoidsoidsdsdsccvxc}. 
For point (c), we set $\dind:=0$. Then, by Part \ref{kjdskjdskjdskjsdxyxyyxy0} and  \eqref{cxcxcxcxcxcxcxoidsoidsoidsdsdsccvxc}, both summands in \eqref{kjdskjdskjsd} tend to zero if $h$ tends to zero. Point (c) now follows from the triangle inequality, as well as continuity of $\Phi_\psi(z,\cdot)$ for each $z\in (a-1,b+\varepsilon)$.   
\item
Let $z\in (a-1,b+\varepsilon)$ and 
$\RR_{\neq 0}\supseteq \{h_n\}_{n\in \NN}\rightarrow 0$ be given. Then, Part \ref{kjdskjdskjdskjsdxyxyyxy1}.(b) implies that
	$\{\Phi_\psi(z+h_n,\cdot)\}_{n\in \NN}\mackarr{\infty} \Phi_\psi(z,\cdot)$ holds (recall \eqref{iudsiudsudspodsisdpoids}).
Theorem \ref{weakdiffdf} and Lemma \ref{fdhjfdkjj} yield
\begin{align*}
	\textstyle\limin \innt_0^\bullet \Phi_\psi(z+h_n,\cdot) = \innt_0^\bullet \Phi_\psi(z,\cdot).
\end{align*}
It follows that the map
\begin{align}
\label{mncxnmcxnmcxnmdhjdshjdshjdszuew}
	\textstyle(a-1,b+\varepsilon)\times [0,1]\ni (z,s)\mapsto \innt_0^s \Phi_\psi(z,\cdot) =\mu_{s,\psi}(z)\in G
\end{align}
is continuous. In particular, $\mu_{t,\psi}$ is continuous for each $t\in [0,1]$.

Let now $t\in [0,1]$  be fixed. The points (a) and (b) in Part  \ref{kjdskjdskjdskjsdxyxyyxy1} show that the map $\Phi_\psi|_{(a-1,b+\varepsilon)\times [0,t]}$ fulfills the assumptions in Theorem \ref{ofdpofdpofdpofdpofd}. We obtain (apply Part \ref{kjdskjdskjdskjsdxyxyyxy1}.(a) in the third step) 
\begin{align*}
	\textstyle\dot\mu_{t,\psi}(z)&\textstyle=\frac{\dd}{\dd h}\big|_{h=0} \he \innt_0^t\Phi_\psi(z+h,\cdot)\\
	&=\textstyle \dd_e\LT_{\innt_0^t \Phi_\psi(z,\cdot)}\big(\int_0^t \Ad_{[\innt_0^s\Phi_\psi(z,\cdot)]^{-1}}(\partial_1\Phi_\psi(z,s))\:\dd s\he\big)\\
	&\textstyle=\dd_e\LT_{\mu_{t,\psi}(z)}\big(\int_0^t \Ad_{\mu_{s,\psi}(z)^{-1}}\big(\ppsi(z)+s\cdot \bil{\ppsi(z)}{\Phi_\psi(z,s)}\big)\:\dd s\he\big)
\end{align*}
for each $z\in (a-1,b+\varepsilon)$.  
It follows from smoothness of the group operations, continuity of \eqref{mncxnmcxnmcxnmdhjdshjdshjdszuew}, and continuity of $\Phi_\psi$   (by Part \ref{kjdskjdskjdskjsdxyxyyxy1}.(c)) that $\dot\mu_{t,\psi}$ is continuous. This shows that $\mu_{t,\psi}$ is of class $C^1$, which proves the claim.
\qedhere
\end{enumerate}}
\endgroup
\end{proof}
\noindent
We are ready for the proof of Theorem \ref{dkjkjfdkjfdjfdkfdfdfd}.
\begin{proof}[Proof of Theorem \ref{dkjkjfdkjfdjfdkfdfdfd}]
	Since $\mg$ is sequentially complete, $\mg$ is both Mackey complete and integral complete. To prove the claim, it thus suffices to show that $G$ is $C^0$-semiregular. For this, let $\phi\in C^0([0,1],\mg)$ be given, and fix a continuous extension $\psi\colon [0,1+\varepsilon]\rightarrow \mg$ for some $\varepsilon>0$. We define $\mu_{1,\psi}\colon (-1,1+\varepsilon)\rightarrow G$ as in \eqref{skjskjsjksdcxcx} (for $a\equiv 0$ and $b\equiv 1$ there). Lemma \ref{kjdskjdskjdskjsdxyxyyxy}.\ref{kjdskjdskjdskjsdxyxyyxy2} shows that $\mu_{1,\psi}$ is of class $C^1$, so that it remains to verify $\Der(\mu_{1,\psi}|_{[0,1]})=\phi$. 
	For this, let $z\in [0,1]$ be fixed. Corollary \ref{kjfdkjfdkjfd} shows ($\Phi_\psi$ is defined as in \eqref{nmdfnmfdnmfd})
\vspace{-10pt}
\begin{align}
\label{oidoidoidoidxxxxx}
	\textstyle\mu_{1,\psi}(z+h)= \innt_0^1 \Phi_\psi(z+h,\cdot)\stackrel{\eqref{lkfdlkkdllkfdlkfdkfd}}{=}\innt_0^1 \TMAP(\psi|_{[0,z+h]}) =\innt_0^1 \TMAP(\psi|_{[z,z+h]})\cdot \overbracket{\textstyle\innt_0^1 \TMAP(\psi|_{[0,z]})}^{=\: \mu_{1,\psi}(z)} 
\end{align}	 
for each $h\in [0,\varepsilon)$. 
Set $\wh{\psi}:=\psi|_{[z,z+\varepsilon]}$. Then, 
\begin{align*}
	\textstyle\mu_{1,\wh{\psi}}(z+h)=\innt_0^1 \TMAP(\wh{\psi}|_{[z,z+h]})=\innt_0^1\TMAP(\psi|_{[z,z+h]})\qquad\quad\forall\: h\in [0,\varepsilon)
\end{align*}
holds by definition, so that we have
\begin{align}
\label{oidsoisdoids}
	\mu_{1,\psi}(z+h)\cdot \mu_{1,\psi}(z)^{-1}\stackrel{\eqref{oidoidoidoidxxxxx}}{=} \mu_{1,\wh{\psi}}(z+h)\qquad\quad\forall\: h\in [0,\varepsilon).
\end{align}
Lemma \ref{kjdskjdskjdskjsdxyxyyxy}.\ref{kjdskjdskjdskjsdxyxyyxy2}  shows $\mu_{1,\wh{\psi}}\in C^1((z-1,z+\varepsilon),G)$, and Equation \eqref{jkkjkjfdjfdkjkjdfjkfkjfdkjdf} in Lemma \ref{kjdskjdskjdskjsdxyxyyxy}.\ref{kjdskjdskjdskjsdxyxyyxy2} together with both sides of \eqref{kjdskjdskjdsjkjdskkjdsds} (first step)  yields
\begin{align*}
	\dot\mu_{1,\wh{\psi}}(z)=\wh{\psi}(z)=\psi(z)=\phi(z).
\end{align*}
Then, \eqref{oidsoisdoids} shows   
$\Der(\mu_{1,\psi})(z)=\phi(z)$, which proves the claim. 
\end{proof}

\section*{Acknowledgements}   
The author thanks Stefan Waldmann for drawing his attention to the construction made by Duistermaat and Kolk \cite{DUIS} to prove Lie's third theorem in the finite-dimensional context.    
This research was supported by the Deutsche Forschungsgemeinschaft, DFG, Project Number HA 8616/1-1.

\addtocontents{toc}{\protect\setcounter{tocdepth}{0}}
\appendix

\section*{APPENDIX}

\section{Appendix}
\subsection{Bastiani's Differential Calculus}
\label{Diffcalc}
In this appendix, we recall Bastiani's differential calculus, confer also  \cite{HA,HG,MIL,KHN,KHN2,KHNM}.  
Let $E,F\in \HLCV$ be given.  
A map $f\colon U\rightarrow F$, with $U\subseteq E$ open, is said to be  
differentiable if
\begin{align*}
	\textstyle(D_v f)(x):=\lim_{t\rightarrow 0}1/t\cdot (f(x+t\cdot v)-f(x))\in F
\end{align*} 
exists for each $x\in U$ and $v\in E$. The map $f$ is said to be $k$-times differentiable for $k\geq 1$ if 
	\begin{align*}
	D_{v_k,\dots,v_1}f:= D_{v_k}(D_{v_{k-1}}( {\dots} (D_{v_1}(f))\dots))\colon U\rightarrow F
\end{align*}
is defined for all $v_1,\dots,v_k\in E$. Implicitly, this means that $f$ is $p$-times differentiable for each $1\leq p\leq k$, and we set
\begin{align*}
	\dd^p_xf(v_1,\dots,v_p)\equiv \dd^p f(x,v_1,\dots,v_p):=D_{v_p,\dots,v_1}f(x)\qquad\quad\forall\: x\in U,\:v_1,\dots,v_p\in E
\end{align*} 	
for $p=1,\dots,k$. We furthermore define $\dd f:= \dd^1 f$, as well as $\dd_x f:= \dd^1_x f$ for each $x\in U$.  
The map $f\colon U\rightarrow F$ is said to be  
\begingroup
\setlength{\leftmargini}{12pt}
\begin{itemize}
\item
of class $C^0$ if it is continuous. In this case, we define $\dd^0 f:= f$.
\item
of class $C^k$ for $k\geq 1$ if it is $k$-times differentiable, such that 
\begin{align*}
	\dd^p f\colon U\times E^p\rightarrow F,\qquad (x,v_1,\dots,v_p)\mapsto D_{v_p,\dots,v_1}f(x)
\end{align*} 
is continuous for $p=0,\dots,k$.  
In this case, $\dd^p_x f$ is symmetric and $p$-multilinear for each $x\in U$ and $p=1,\dots,k$, cf.\ \cite{HG}.  
\item
of class $C^\infty$ if it is of class $C^k$ for each $k\in \NN$. 
\end{itemize}
\endgroup
\begin{rem}
Assume that $E,F$ are normed spaces. Let $U\subseteq E$ be non-empty open, as well as $k\in \NN\cup\{\infty\}$. Let $\mathcal{F}C^k(U,F)$ denote the set of all $k$-times Fr\'{e}chet differentiable maps $U\rightarrow F$. Then, $C^{k+1}(U,F)\subseteq \mathcal{F}C^k(U,F)\subseteq C^k(U,F)$ holds \cite{KHN,WALTER}, hence $C^\infty(U,F)=\mathcal{F}C^\infty(U,F)$.
\hspace*{\fill}\qed
\end{rem}
\noindent
We have the following differentiation rules \cite{HG}.
\begin{custompr}{A.1}
\label{iuiuiuiuuzuzuztztttrtrtr}
\noindent

\vspace{-6pt}
\begingroup
\setlength{\leftmargini}{17pt}
{
\renewcommand{\theenumi}{{\alph{enumi}})} 
\renewcommand{\labelenumi}{\theenumi}
\begin{enumerate}
\item
\label{iterated}
A map $f\colon E\supseteq U\rightarrow F$ is of class $C^k$ for $k\geq 1$ if and only if $\dd f$ is of class $C^{k-1}$ when considered as a map $E \times E \supseteq U\times E \rightarrow F$.
\item
\label{linear}
Let $f\colon E\rightarrow F$ be linear and continuous. Then, $f$ is smooth, with $\dd^1_xf=f$ for each $x\in E$, as well as $\dd^pf=0$ for each $p\geq 2$.  
\item
\label{speccombo}
Let $F_1,\dots,F_m$ be Hausdorff locally convex vector spaces, and 
$f_q\colon E\supseteq U\rightarrow F_q$ be of class $C^k$ for $k\geq 1$ and $q=1,\dots,m$. Then, 
\begin{align*}
	f:=f_1\times{\dots}\times f_m \colon U\rightarrow F_1\times{\dots}\times F_m,\qquad x\mapsto (f_1(x),\dots,f_m(x))
\end{align*}
if of class $C^k$, with $\dd^p f=\dd^pf_1{\times} \dots\times \dd^pf_m$ for $p=1,\dots,k$.
\item
\label{chainrule}
	Let $F,\bar{F},\bar{\bar{F}}\in \HLCV$, $1\leq k\leq \infty$, as well as  
	 $f\colon F\supseteq U\rightarrow \bar{U}\subseteq \bar{F}$ and $\bar{f}\colon \bar{F}\supseteq \bar{U}\rightarrow \bar{\bar{U}}\subseteq \bar{\bar{F}}$ be of class $C^k$. Then, $\bar{f}\cp f\colon U\rightarrow \bar{\bar{F}}$ is of class $C^k$, with 
	\begin{align*}
		\dd_x(\bar{f}\cp f)=\dd_{f(x)}\bar{f}\cp \dd_x f\qquad\quad \forall\: x\in U.
	\end{align*}
	\vspace{-18pt}
\item
\label{productrule}
	Let $F_1,\dots,F_m,E\in \HLCV$, and $f\colon F_1\times {\dots} \times F_m\supseteq U\rightarrow E$ be of class $C^0$. Then, $f$ is of class $C^1$ if and only if for $p=1,\dots,m$, the partial derivatives
	\begin{align*}
		\partial_p f \colon U\times F_p\ni((x_1,\dots,x_m),v_p)&\textstyle\mapsto \lim_{t\rightarrow 0} 1/t\cdot (f(x_1,\dots, x_p+t\cdot v_p,\dots,x_m)-f(x_1,\dots,x_m))
	\end{align*}
	exist in $E$ and are continuous. In this case, we have
	\begin{align*}
		\textstyle\dd f((x_1,\dots,x_m),v_1,\dots,v_m)&\textstyle=\sum_{p=1}^m\partial_p f((x_1,\dots,x_m),v_p)\\
		\big(\!&\textstyle= \sum_{p=1}^m \hspace{4.5pt}\dd f((x_1,\dots,x_m),(0,\dots,0, v_p,0,\dots,0))\he\big)
	\end{align*}
	for each $(x_1,\dots,x_m)\in U$, and $v_p\in F_p$ for $p=1,\dots,m$.
\end{enumerate}}
\endgroup
\end{custompr}

\subsection{Proof of Equation \eqref{kjsdkjdsjdsa}}
\label{asassadsdsdsdsdsdsdsaaa}
The claim is clear for $m=1$ and $n\geq 1$. Assume now that \eqref{kjsdkjdsjdsa} holds for some $m\geq 1$, as well as each $n\geq 1$. For $X\in \SP_1(S)$, $\alpha\in \SP_m(S)$, and $\beta\in \SP_n(S)$ with $n\geq 1$, we obtain from \eqref{nmvcnmvcnmnkjsakjsakjsaa} and antysimmetry of $\bil{\cdot}{\cdot}$ that 
\begin{align*}
	\bil{\bil{X}{\alpha}}{\beta}=\bil{X}{\bil{\alpha}{\beta}}-\bil{\alpha}{\bil{X}{\beta}}\in \SP(S)_{(m+1)+n}
\end{align*}
holds. Since $\bil{\cdot}{\cdot}$ is bilinear, this implies $\SPAN{\bil{\mathcal{V}_{m+1}(S)}{\mathcal{V}_n(S)}}\subseteq \mathcal{V}_{(m+1)+n}(S)$ for each $n\geq 1$, so that the claim follows inductively.

\subsection{Proof of Lemma \ref{hjfdkjfdkjfd}}
\label{asassadsdsdsdsdsdsds}
\begin{customlem}{\ref{hjfdkjfdkjfd}}
Let $V\subseteq F$ be open with $0\in V$. Let furthermore  
$\Psi\colon V\times E\rightarrow E$ 
be smooth with $\Psi(0,\cdot)=\id_{E}$, such that $\Psi(x,\cdot)$ is linear for each $x \in V$. Then, to each $\pp\in \Sem{E}$, there exist  $\qq\in \Sem{F}$ and $\ww\in \Sem{E}$ with
\begin{align*}
	\pp(\Psi(x,Y)-Y)\leq \qq(x)\cdot \ww(Y)\qquad\quad\forall\: x\in \B_{\qq,1}\subseteq V,\: Y\in E.
\end{align*}
\end{customlem}
\begin{proof}[Proof of Lemma \ref{hjfdkjfdkjfd}]
We consider the continuous map
	\begin{align*}
		\Phi\colon V \times E \times F \rightarrow E,\qquad (x,Y,Z)\mapsto \partial_1 \Psi((x,Y),Z)
	\end{align*}
	that is linear in the last two arguments. Then, for $\pp\in \Sem{E}$ fixed, by Lemma \ref{alalskkskaskaskas} there exist $\qq\in \Sem{F}$  and $\pp\leq \ww\in \Sem{E}$ with
	\begin{align}
	\label{jdkjkjdkjdkjfd}
		(\pp\cp\Phi)((x,Y),Z)&\leq \ww(Y)\cdot \qq(Z)\qquad\quad\forall\: x\in \B_{\qq,1}\subseteq V,\: Y\in E,\: Z\in F.
	\end{align}
For $x\in \B_{\qq,1}$, we let $\gamma\colon [0,1]\ni t\mapsto t\cdot x \in \B_{\qq,1}$, and define 
\begin{align*}
	C^\infty([0,1],E)\ni\alpha_Y\colon [0,1]\ni t\mapsto \Psi(\gamma(t),Y)\qquad\quad\forall\: Y\in E.
\end{align*}
Then, the parts \ref{chainrule} and \ref{productrule} of Proposition \ref{iuiuiuiuuzuzuztztttrtrtr} yield $\dot\alpha_Y=\Phi((\gamma,Y),\dot\gamma)=\Phi((\gamma,Y),x)$, and we obtain 	
\begin{align*}
	\pp(\Psi(x,Y)-Y)&=\pp(\alpha_Y(1)-\alpha_Y(0))
	\textstyle \stackrel{\eqref{isdsdoisdiosd1}}{\leq} \int_0^1 \pp(\dot\alpha_Y(s)) \:\dd s\\[-5pt]
	&\textstyle\leq \int_0^1 \pp(\Phi((\gamma(s),Y),x)) \:\dd s
	\textstyle\leq 
	 \pp_\infty(\Phi((\gamma,Y),x))
	 \textstyle \stackrel{\eqref{jdkjkjdkjdkjfd}}{\leq} \ww(Y)\cdot\qq(x)
\end{align*}	
for all $x\in \B_{\qq,1}$ and $Y\in E$.
\end{proof}

\subsection{Appendix to Sect.\ \ref{ouwuiewuiewuoew}}
\label{asassadsdsdsdfdfdsdsdsds}
For combinatorial reasons, there exist $c[p]_0,\dots,c[p]_{p^2}\in \korp$ with 
\begin{align*}
	f_p(g_p(z))&\textstyle=(f_p\diamond g_p)(z)=\sum_{n=0}^{p^2} c[p]_{n}\cdot z^n\\[4pt]
	f_p(g_p(\model))&\textstyle=(f_p\diamond g_p)(\model)=\sum_{n=0}^q c[p]_{n}\cdot \model^n\\
	f_p(g_p(\banel))&\textstyle=(f_p\diamond g_p)(\banel)=\sum_{n=0}^{p^2} c[p]_{n}\cdot \banel^n.
\end{align*}
Set $c[p]_n:=0$ for $n\geq p^2+1$. Since $\{f_p\cp g_p\}_{p\in \NN}\rightarrow f\cp g$ converges compactly on $\mU_{S}(0)$, the Weierstrass convergence theorem yields 
\begin{align}
\label{kjfdkjfkjfdf}
	\textstyle \lim_{p\rightarrow\infty} c[p]_{n}= c_n\qquad\quad\forall\: n\in \NN.
\end{align}
\begin{proof}[Proof of Equation \eqref{ljksdkljsdklsdnil}]
Since $b_0=0$ holds, we have $c[q+\ell]_{n}=c[q]_{n}$ for $n=0,\dots,q$ and $\ell\in \NN$, hence $c[q]_n=c_n$ for $n=0,\dots,q$ by \eqref{kjfdkjfkjfdf}.  
We obtain  
\begin{align*}
	\textstyle f(g(\model))=\sum_{n=0}^q a_n\cdot (\sum_{\ell=1}^q b_\ell\cdot \model^\ell)^n=f_{q}(g_{q}(\model))=\sum_{n=0}^q c[q]_n\cdot \model^n=\sum_{n=0}^q c_n\cdot \model^n=(f\diamond g)(\model)
\end{align*}
which proves \eqref{ljksdkljsdklsdnil}. 
\end{proof}
\begin{proof}[Proof of Equation \eqref{ljksdkljsdklsd}]
Fix $\bannorm{\banel}< s <S$, and set $M:=|f|_{|g|_s}$. Then,  
$|(f_p\cp g_p)(z)|\leq M$ holds for all $p\in \NN$ and $z\in \CC$ with $|z|\leq s$. 
Cauchy's estimate yields $\textstyle|c[p]_n|\leq \frac{M}{s^n}$ for all $p,n\in \NN$, hence
\begin{align*}
	\textstyle \sum_{n=N+1}^\infty |c[p]_n|\cdot (\bannorm{\banel})^n= \sum_{n=N+1}^\infty |c[p]_n|\cdot s^n\cdot \frac{(\bannorm{\banel})^n}{s^n}\leq M\cdot \sum_{n=N+1}^\infty  \frac{(\bannorm{\banel})^n}{s^n}
\end{align*} 
for all $N,p\in \NN$. Given $\varepsilon>0$, there thus exists $N_\varepsilon\in \NN$ with 
\begin{align}
\label{ddffdfdiofdiofdoi1}
	\textstyle\bannorm{\sum_{n=N_\varepsilon+1}^\infty c_n\cdot \banel^n}<\frac{\varepsilon}{4}\qquad\quad\text{as well as}\qquad\quad \bannorm{\sum_{n=N_\varepsilon+1}^\infty c[p]_n\cdot \banel^n}<\frac{\varepsilon}{4} \qquad\forall\: p\in \NN. 
\end{align}
By \eqref{kjfdkjfkjfdf}, there exists $m\in \NN$ with 
\begin{align}
\label{ddffdfdiofdiofdoi2}
	\textstyle|c[p]_n-c_n|<\frac{\varepsilon}{4(N_\varepsilon+1)\cdot\max(\bannorm{\banel^q}\:|\: 0\leq q\leq N_\varepsilon)}\qquad\quad\forall\: 0\leq n\leq N_\varepsilon,\: p\geq m. 
\end{align}
Increasing $m$ if necessary, we can additionally assume\footnote{Apply the triangle inequality in the form $\bannorm{f(g(\banel))-f_p(g_p(\banel))}\leq \bannorm{f(g(\banel))-f(g_p(\banel))}+\bannorm{f(g_p(\banel))-f_p(g_p(\banel))}$.} 
\begin{align*}
	\textstyle\bannorm{f(g(\banel))-f_p(g_p(\banel))}<\frac{\varepsilon}{4}\qquad\quad\forall\: p\geq m. 
\end{align*}
Set $p:= m$. Then, we have
\begin{align*}
	\textstyle\bannorm{f(g(\banel))-(f\diamond g)(\banel)}
	&\textstyle\leq \underbracket{\bannorm{f(g(\banel))-f_p(g_p(\banel))}}_{<\frac{\varepsilon}{4}}+ \underbracket{\textstyle\bannorm{f_p(g_p(\banel)) - \sum_{n=0}^\infty c_n\cdot \banel^n }}_{=:\: \mathrm{A}}
\end{align*}
\vspace{-20pt}

\noindent
with
\begin{align*}
	\mathrm{A}&\textstyle \leq \bannorm{\sum_{n=0}^{N_\varepsilon} c[p]_n\cdot \banel^n - \sum_{n=0}^{N_\varepsilon} c_n\cdot \banel^n }
	\textstyle+ \bannorm{\sum_{n=N_\varepsilon+1}^\infty c[p]_n\cdot \banel^n - \sum_{n=N_\varepsilon+1}^\infty c_n\cdot \banel^n}\\[2pt]
	&\textstyle \leq \sum_{n=0}^{N_\varepsilon} |c[p]_n-c_n|\cdot \bannorm{\banel^n}\textstyle+ \bannorm{\sum_{n=N_\varepsilon+1}^\infty c[p]_n\cdot \banel^n} + \bannorm{\sum_{n=N_\varepsilon+1}^\infty c_n\cdot \banel^n} \\[2pt]
	&\textstyle\leq \frac{3}{4}\cdot \varepsilon 
\end{align*}
by \eqref{ddffdfdiofdiofdoi1} and \eqref{ddffdfdiofdiofdoi2}.
\end{proof}

\subsection{Proof of Lemma \ref{lkjflkjfdjkdflkjfd}}
\label{asassadsdsdsdsdsdcxcxsds}

\begin{customlem}{\ref{lkjflkjfdjkdflkjfd}}
Assume that $G$ is weakly $C^k$-regular for $k\in \NN\cup\{\lip,\infty\}$, and let $\phi\in C^k([a,b])$ ($a<b$) be given. Then, 
$\kappa\colon \RR\times [a,b]\ni (t,x)\mapsto \innt_x^bt\cdot \phi\in G$ is continuous.  
\end{customlem}
\begin{proof}
Let $z\in [a,b]$, $t\in \RR$, and $U\subseteq G$ be an open neighbourhood of $e$. We fix $W\subseteq G$ open with $e\in W$ and $W\cdot W\subseteq U$, and observe that 
\begin{align*}
	\textstyle \kappa(t,\cdot)\colon [a,b]\ni x\mapsto [\innt_a^{b} t\cdot \phi] \cdot [\innt_a^x t\cdot \phi]^{-1}\in G
\end{align*} 
is continuous. Since $[a,b]$ is compact, there exists $V\subseteq G$ open with $e\in V=V^{-1}$, such that 
\begin{align*}
	\textstyle (\innt_x^b t\cdot \phi)^{-1} \cdot V\cdot (\innt_x^b t\cdot \phi) \cdot V \subseteq W\qquad\quad\forall\: x\in [a,b].
\end{align*}
Theorem \ref{weakdiffdf} together with Lemma 15 in \cite{MDM} implies that there exists $\delta>0$ with
\begin{align*}
	\textstyle\innt_a^\bullet (t+h)\cdot \phi \in 
	V\cdot [\innt_a^\bullet t\cdot \phi]
	\qquad\quad\forall\: |h|<\delta.  
\end{align*}
For 
$a\leq x\leq b$ and
 $|h|<\delta$, we obtain 
\begin{align*}
	\textstyle\innt_x^b(t+h)\cdot \phi &\textstyle = [\innt_a^b (t+h)\cdot \phi]\cdot[\innt_a^x(t+h)\cdot \phi]^{-1}\\
	&\textstyle\in V\cdot ([\innt_a^b t\cdot \phi]\cdot[\innt_a^x t\cdot \phi]^{-1}) \cdot V^{-1}\\
	&\textstyle=  V\cdot (\innt_x^b t\cdot \phi) \cdot V\\
	&\textstyle\subseteq W\cdot \innt_x^b t\cdot \phi.
\end{align*}
Since $\kappa(t,\cdot)$ is continuous,  
we can shrink $\delta>0$ such that for each $x\in (z-\delta,z+\delta) \cap [a,b]$, we have $[\innt_x^b t\cdot \phi]\cdot[\innt_z^b t\cdot\phi]^{-1}\in W$. 
We obtain 
\begin{align*}
	\textstyle[\innt_x^b (t+h)\cdot \phi]\cdot [\innt_z^b t\cdot\phi]^{-1}= ([\innt_x^b (t+h)\cdot \phi]\cdot [\innt_x^b t\cdot \phi]^{-1})\cdot ([\innt_x^b t\cdot \phi]\cdot[\innt_z^b t\cdot\phi]^{-1})\in W\cdot W\subseteq U
\end{align*}
for $|h|<\delta$ and $x\in (z-\delta,z+\delta) \cap [a,b]$. 
\end{proof}

\subsection{Proof of Equation \eqref{iureiureiueriureiure}}
\label{asassadsdsdsdsdsdcxcxsdscvvcvcvcvcvcc}

\begin{proof}[Proof of Equation \eqref{iureiureiueriureiure}]
By definition, we have $\Der(\mu)|_{[a,b]}=\psi$ for certain $\mu\in C^1((a-\varepsilon,b+\varepsilon),G)$ and $\varepsilon>0$. Replacing $\mu$ by $\mu\cdot \mu(a)^{-1}$ if necessary, we can assume that $\mu(a)=e$ holds, hence 
\begin{align*}
	\textstyle\mu|_{[a,b]}=\innt_a^\bullet \phi=\innt_a^\bullet \psi\qquad\text{for}\qquad \phi:=\Der(\mu).
\end{align*}
Then, $\alpha^\pm:= \Ad_{\mu^\pm}(X)\in C^1((a-\varepsilon,b+\varepsilon))$ holds by Lemma \ref{Adlip}, with $\alpha^\pm|_{[a,b]}=\Add^\pm_\psi[X]$. This shows $\Add_\psi^\pm[X]\in C^1([a,b],\mg)$, with $\Add^\pm_\psi[X](a)=\Ad_e(X)=X$. Let now $t\in [a,b]$ and $0<h<\varepsilon$ be given. 
We observe 
\begin{align*}
	\textstyle[\innt_t^{t+h}\phi]^{-1}\stackrel{\ref{oitoioiztoiztoiztoizt}}{=}\innt_t^{t+h} -\Ad_{[\innt_t^\bullet\phi]^{-1}}(\phi),
\end{align*} 
and obtain from \ref{pogfpogf} that 
\begin{align*}
	\textstyle\alpha^+(t+h)&\textstyle=\Ad_{\innt_t^{t+h}\phi}(\Add^+_\psi[X](t))\\
	\textstyle\alpha^-(t+h)&\textstyle=\Add_\psi^-[\Ad_{[\innt_t^{t+h}\phi]^{-1}}(X)](t)\\
	&\textstyle= \Add_\psi^-[\Ad_{\innt_t^{t+h} -\Ad_{[\innt_t^\bullet\phi]^{-1}}(\phi)}(X)](t).
\end{align*}
Differentiating both hand sides and then applying \eqref{iufiugfiugfiugfiugfgf}, we obtain \eqref{iureiureiueriureiure}.
\end{proof}

\subsection{The Proof of Lemma \ref{knkjfdsjkfasdkjdsjkdsa}}
\label{asassadsdsdsdsdsdcxcxsdssdssddssd}
\begin{customlem}{\ref{knkjfdsjkfasdkjdsjkdsa}}
	Let $\psi \in C^0([a,b],\mq)$ be given. Then, the following assertions hold:
\begingroup
\setlength{\leftmargini}{17pt}
{
\renewcommand{\theenumi}{\emph{\arabic{enumi})}} 
\renewcommand{\labelenumi}{\theenumi}
\begin{enumerate}
\item
\label{knkjfdsjkfasdkjdsjkdsa1}
	The maps $\etam_\psi^\pm$ 	
	 are defined and of class $C^1$, and we have 
\begin{align*}
	\partial_1 \etam_\psi^+(t,X)&\textstyle=\partial_t\Add_\psi^+[X](t)=\bil{\psi(t)}{\Add^+_\psi[X](t)}\\
	\partial_1 \etam^-_\psi(t,X)&\textstyle=\partial_t\Add_\psi^-[X](t)=-\Add^-_\psi[\bil{\psi(t)}{X}](t)
\end{align*}
for all $t\in [a,b]$ and $X\in \mq$.
\item
\label{knkjfdsjkfasdkjdsjkdsa2}
	$\etam^\pm_\psi(t,\cdot)=\Add_\psi^\pm[t]$ is linear and continuous for each $t\in [a,b]$.
\item
\label{knkjfdsjkfasdkjdsjkdsa3}
	If $\psi\in C^k([a,b],\mq)$ holds for $k\in \NN\cup\{\infty\}$, then $\etam_\psi^\pm$ is of class $C^{k+1}$.    	
\end{enumerate}}
\endgroup   	   	
\end{customlem}
\begin{proof}
Let $X\in \mq$ be fixed. Then, $\APOL^\pm_{\ell,\psi}[X]$ is of class $C^1$ for each $\ell\in \NN$ by \eqref{oidsoidoisdoidsoioidsiods}; with $\partial_t\APOL^\pm_{0,\psi}[X]=0$ as well as
\begin{align*}
	\partial_t\APOL^+_{\ell,\psi}[X](t)= \bil{\psi(t)}{\APOL^+_{{\ell-1},\psi}[X](t)}\qquad\quad\text{and}\qquad\quad
	\partial_t\APOL^-_{\ell,\psi}[X](t)= -\APOL^-_{{\ell-1},\psi}[\bil{\psi(t)}{X}](t)
\end{align*}	
for each $\ell \geq 1$ and $t\in [0,1]$. 
	For $\vv\leq \ww$ as in \eqref{assaaass}, we have 
\begin{align}
\label{lkdsklslkdsds}
	\textstyle\sum_{\ell=0}^\infty\vv_\infty(\APOL^\pm_{\ell,\psi}[X])\textstyle\leq \ww(X)\cdot \e^{(b-a)\cdot \ww_\infty(\psi)}<\infty.
\end{align}	
Corollary \ref{dsdsdsdsdsdsdsdsasas} thus implies $\etam^\pm(\cdot,X)=\Add_\psi^\pm[X]\in C^1([a,b],\mq)$, with 
\begin{align*}
	\textstyle\partial_t\Add_\psi^+[X](t)&\textstyle=\hspace{10.4pt}\sum_{\ell=0}^\infty \bil{\psi(t)}{\APOL_\psi^+[X](t)}=\bil{\psi(t)}{\Add^+_\psi[X](t)}\\
	\textstyle\partial_t\Add_\psi^-[X](t)&\textstyle=-\sum_{\ell=0}^\infty \APOL_\psi^-[\bil{\psi(t)}{X}](t)=-\Add^-_\psi[\bil{\psi(t)}{X}](t)
\end{align*}
for each $t\in [a,b]$; which establishes Part \ref{knkjfdsjkfasdkjdsjkdsa1}. Now, 
it is clear from the definitions that $\etam^\pm_\psi(t,\cdot)$ is linear for each $t\in [a,b]$; and we furthermore observe that
\begin{align}
\label{lkdsklslkdsdsd}
\begin{split}
	\textstyle\vv\big(\sum_{\ell=0}^\infty\APOL^\pm_{\ell,\psi}[X](t')&\textstyle-\sum_{\ell=0}^\infty\APOL^\pm_{\ell,\psi}[X](t)\big)\\
		&\textstyle \leq |t-t'| \cdot \ww(X)\cdot\ww_\infty(\psi)\cdot  \sum_{\ell=1}^\infty  \frac{(b-a)^{\ell-1}}{(\ell-1)!}\cdot \ww_\infty(\psi)^{\ell-1}\\
		&\textstyle = |t-t'| \cdot \ww(X)\cdot\ww_\infty(\psi)\cdot  \e^{(b-a)\cdot \ww_\infty(\psi)}
\end{split}
\end{align}
holds for all $t,t'\in [a,b]$ and $X\in \mq$. We obtain from \eqref{lkdsklslkdsds} and \eqref{lkdsklslkdsdsd} that
\begin{align*}
	\vv(\etam_\psi^\pm(t',X')-\etam_\psi^\pm(t,X))&\textstyle \leq \vv(\etam_\psi^\pm(t',X')-\etam_\psi^\pm(t',X))+\vv(\etam_\psi^\pm(t',X)-\etam_\psi^\pm(t,X))\\
	&\leq \textstyle (\ww(X'-X) + |t-t'| \cdot \ww(X)\cdot\ww_\infty(\psi) )\cdot \e^{(b-a)\cdot \ww_\infty(\psi)}	
\end{align*}
for all $t,t'\in [a,b]$ and $X,X'\in \mq$. This shows that $\etam_\psi^\pm$ is continuous, which establishes Part \ref{knkjfdsjkfasdkjdsjkdsa2}.  We furthermore conclude that the partial derivatives 
\begin{align*}
	\partial_1\etam_\psi^+((t,X),\lambda)\:&= \lambda\cdot \bil{\psi(t)}{\etam_\psi^+(t,X)}\qquad\quad\hspace{8.5pt}\forall\: t\in[a,b],\:\lambda\in \RR,\: X\in \mq \\
	\partial_1\etam_\psi^-((t,X),\lambda)\:&= -\lambda\cdot  \etam_\psi^-(t,\bil{\psi(t)}{X})\qquad\quad\forall\: t\in[a,b],\:\lambda\in \RR,\: X\in \mq \\
    \partial_2\etam_\psi^\pm((t,X),Y)&=   \etam_\psi^\pm(t,Y) \qquad\qquad\qquad\qquad\hspace{1pt}\forall\: t\in [a,b],\: X,Y\in \mq
\end{align*}
are continuous. Part \ref{productrule} of Proposition \ref{iuiuiuiuuzuzuztztttrtrtr} thus shows that the maps $\etam_\psi^\pm$ are of class $C^1$, with
\begin{align}
\label{hjdshjhjdshjdshjdshjds}
\begin{split}
	\dd\etam_\psi^+((t,X),(\lambda,Y))&=  \hspace{8.2pt}\lambda\cdot \bil{\psi(t)}{\etam_\psi^+(t,X)} + \etam_\psi^+(t,Y)\\
	\dd\etam_\psi^-((t,X),(\lambda,Y))&=  -\lambda\cdot \etam_\psi^-(t,\bil{\psi(t)}{X}) + \etam_\psi^-(t,Y).
\end{split}
\end{align}  
Finally, assume that $\psi\in C^k([a,b],\mq)$ holds for $k \geq 1$. Then, \eqref{hjdshjhjdshjdshjdshjds} together with the parts \ref{iterated}, \ref{speccombo}, \ref{chainrule} of Proposition \ref{iuiuiuiuuzuzuztztttrtrtr} imply that given $1\leq \ell\leq k$, then
$\etam_\psi^\pm$ is of class $C^{\ell+1}$ if $\etam_\psi^\pm$ is of class $C^\ell$, which establishes Part \ref{knkjfdsjkfasdkjdsjkdsa3}. 
It thus follows by induction that $\etam_\psi^\pm$ is of class $C^{k+1}$. 
\end{proof}


\end{document}